\documentclass[12pt,reqno]{amsart}

\usepackage{amsmath,amsthm,amssymb, amscd, amsfonts,enumerate,array,bbm,bm}
\usepackage{pb-diagram,lamsarrow,pb-lams}
\usepackage{hyperref}
\usepackage[mathscr]{eucal}

\allowdisplaybreaks[2]
\textwidth15.1cm
\textheight22.7cm
\headheight12pt
\oddsidemargin.4cm
\evensidemargin.4cm
\topmargin0cm
\makeatletter
\def\mhline{\noalign{\ifnum0=`}\fi\hrule height 4\arrayrulewidth \futurelet
   \@tempa\oxhline}
\def\oxhline{\ifx\@tempa\hline\vskip \doublerulesep\fi
      \ifnum0=`{\fi}}

\def\Frac{\mathscr{F}{\rm rac}}

\def\Im{\operatorname{Im}}

\makeatother

\numberwithin{equation}{section}

\newtheorem{theorem}{Theorem}[section]
\newtheorem{proposition}[theorem]{Proposition}
\newtheorem{conjecture}[theorem]{Conjecture}
\newtheorem{corollary}[theorem]{Corollary}
\newtheorem{lemma}[theorem]{Lemma}

\theoremstyle{definition}

\newtheorem{remark}[theorem]{Remark}
\newtheorem{example}[theorem]{Example}
\newtheorem{definition}[theorem]{Definition}

\newtheorem{problem}[theorem]{Problem}
\newcommand{\qbinom}[3][q]{\genfrac[]{0pt}0{#2}{#3}_{#1}}

\DeclareMathOperator{\ad}{ad}

\newcommand{\lie}[1]{\mathfrak{#1}}

\newcommand{\tensor}{\otimes}
\renewcommand{\eqref}[1]{{\rm (\ref{#1})}}

\def\citem#1*#2{\cite[#2]{#1}}
\def\cites#1{\cite{#1}}

\def\ad{\operatorname{ad}}
\DeclareMathOperator{\End}{End}

\def\AA{\mathbb{A}}

\def\ZZ{\mathbb{Z}}
\def\CC{\mathbb{C}}

\def\NN{\mathbb{N}}

\def\id{\operatorname{id}}

\def\kk{\Bbbk}

\def\bi{{\mathbf i}}\def\bj{{\mathbf j}}
\newcommand{\lr}[1]{{\langle #1\rangle}}

\def\lieg{\mathfrak{g}}

\def\nn{\mathfrak{n}}

\def\ii{\mathbf{i}}

\begin{document}

\title{Quantum folding}

\author{Arkady Berenstein}
\address{\noindent Department of Mathematics, University of Oregon,
Eugene, OR 97403, USA} \email{arkadiy@math.uoregon.edu}

\author{Jacob Greenstein}
\address{Department of Mathematics, University of
California, Riverside, CA 92521.} 
 \email{jacob.greenstein@ucr.edu}

\thanks{This work was partially supported by the NSF grants DMS-0800247 (A.~B.) and DMS-0654421~(J.~G.)}

\begin{abstract}
In the present paper we introduce a quantum analogue of the classical folding of a simply-laced Lie algebra~$\lie g$
to the non-simply-laced algebra~$\lie g^\sigma$ along a Dynkin diagram automorphism~$\sigma$ of~$\lie g$.
For each quantum folding we replace $\lie g^\sigma$ by its Langlands dual $\lie g^\sigma{}^\vee$ and
construct a nilpotent Lie algebra $\lie n$ which 
interpolates between the nilpotnent parts of~$\lie g$ and $\lie g^\sigma{}^\vee$, together with its
quantized enveloping algebra $U_q(\lie n)$ and a Poisson structure on~$S(\lie n)$. 
Remarkably, for the pair $(\lie g,\lie g^\sigma{}^\vee)=(\lie{so}_{2n+2},\lie{sp}_{2n})$, the algebra $U_q(\lie n)$
admits an action of the Artin braid group~$Br_n$ and 
contains a new algebra of quantum $n\times n$ matrices with an adjoint action of $U_q(\lie{sl}_n)$, which 
generalizes the algebras constructed by K.~Goodearl and M.~Yakimov in~\cite{GY}. The hardest case of quantum folding 
is, quite expectably, the pair $(\lie{so}_8,G_2)$ for which the PBW presentation of $U_q(\lie n)$ and 
the corresponding Poisson bracket on $S(\lie n)$ contain more than 700 terms each.
\end{abstract}
\maketitle

\tableofcontents
\section{Introduction and main results} 
This work is motivated by the classical ``folding" result for a  simply laced semisimple Lie algebra $\lieg$  and an
{\it admissible}  diagram automorphism $\sigma:\lieg\to \lieg$ (in the sense of~\citem{Lus}*{\S12.1.1}, see~Section~\ref{subsect:classical folding})
\begin{equation}
\label{eq:class fold}
\text{The fixed Lie algebra $\lieg^\sigma=\{x\in \lieg\,:\, \sigma(x)=x\}$ is also semisimple.} 
\end{equation}

Our goal is to find a quantum version of this result. Note, however, that the embedding of associative algebras $U(\lie g^\sigma)\hookrightarrow 
U(\lie g)^\sigma\subset U(\lie g)$ 
induced by the inclusion $\lie g^\sigma\hookrightarrow \lie g$ does 
not admit a naive quantum deformation (see Appendix~\ref{sec:non-existent}). On the other hand, there exists a ``crystal'' version of the desired homomorphism. 
Namely, let $B_\infty(\lie g)$ be the famous Kashiwara crystal introduced in~\cite{Kas}. The following result was proved by G.~Lusztig in~\citem{Lus}*{Section 14.4}.
\begin{proposition} 
\label{pr:crystal folding} 
Let $\sigma$ be an admissible diagram automorphism of~$\lieg$. Then $\sigma$ acts on $B_\infty(\lieg)$ and the fixed point set
$B_\infty(\lieg)^\sigma$ is naturally isomorphic to $B_\infty({\lieg^\sigma}^\vee)$, where ${\lieg^\sigma}^\vee$ is the Langlands dual Lie algebra of $\lieg^\sigma$.
 
\end{proposition}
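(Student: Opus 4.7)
The plan is to use Lusztig's geometric realization of $B_\infty(\lie g)$ as the set of irreducible components of the Lagrangian nilpotent variety $\Lambda_{\mathbf d}$ of the preprojective algebra of $\lie g$, and to equivariantize this picture under the $\sigma$-action. First, fix a $\sigma$-stable orientation $\Omega$ of the Dynkin diagram of $\lie g$; this exists precisely because the admissibility hypothesis forbids two vertices in the same $\sigma$-orbit from being joined by an edge. For each $\sigma$-invariant dimension vector $\mathbf d$, the variety $\Lambda_{\mathbf d}$ carries an algebraic $\sigma$-action permuting vertex spaces and edges, and I would define $\sigma\cdot Z:=\sigma(Z)$ on $B_\infty(\lie g)=\bigsqcup_{\mathbf d}\operatorname{Irr}\Lambda_{\mathbf d}$. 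Since the Kashiwara operators $\tilde e_i,\tilde f_i$ are described geometrically through the $i$-th coordinate data, they satisfy $\sigma\tilde e_i\sigma^{-1}=\tilde e_{\sigma(i)}$, so $\sigma$ is a crystal automorphism with respect to the permuted labelling of the operators.

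Next, I would build the crystal structure on $B_\infty(\lie g)^\sigma$ indexed by $\sigma$-orbits $I$ of vertices. Admissibility ensures that the operators $\{\tilde e_i\}_{i\in I}$ pairwise commute on $B_\infty(\lie g)^\sigma$, so the composite $\tilde e_I:=\prod_{i\in I}\tilde e_i$ is well-defined and preserves the fixed set; define $\tilde f_I$ analogously and set $\varepsilon_I(b):=\varepsilon_i(b)$ for any $i\in I$ (independent of the choice by $\sigma$-invariance of $b$). The identification with $B_\infty({\lie g^\sigma}^\vee)$ rather than $B_\infty(\lie g^\sigma)$ then emerges from the following observation: the weight shift per application of $\tilde e_I$ is $\wt(\tilde e_I b)-\wt(b)=\sum_{i\in I}\alpha_i$, the \emph{sum} over the orbit and not the \emph{average}. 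Using the simply-laced inner product on the weight lattice of $\lie g$, the vectors $\beta_I:=\sum_{i\in I}\alpha_i$ have Gram entries $(\beta_I,\beta_J)=\sum_{i\in I,\,j\in J}(\alpha_i,\alpha_j)$, and a direct computation (using admissibility to kill the intra-orbit cross terms) shows that the Cartan integers $2(\beta_I,\beta_J)/(\beta_I,\beta_I)$ coincide with those of ${\lie g^\sigma}^\vee$; the averaged vectors $|I|^{-1}\sum_{i\in I}\alpha_i$ would instead yield $\lie g^\sigma$. The crystal axioms for ${\lie g^\sigma}^\vee$ on $B_\infty(\lie g)^\sigma$ then follow by iterating the $\lie g$-axioms along the commuting strings in each orbit, and the isomorphism with $B_\infty({\lie g^\sigma}^\vee)$ follows from the standard uniqueness theorem for highest weight crystals, with the class of the zero representation serving as the distinguished highest weight vector.

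I expect the main obstacle to be the Cartan-matrix bookkeeping in the second paragraph, i.e.\ verifying uniformly over all admissible pairs $(\lie g,\sigma)$ that the sum lattice produces ${\lie g^\sigma}^\vee$ and that the crystal axioms transfer cleanly. This is especially delicate in the triality case $(\lie g,\lie g^\sigma)=(\lie{so}_8,G_2)$, where orbits of size three generate $\tilde e_I$-strings of triple length and where the long/short root distinction on $G_2$ must be reversed relative to the naively folded Dynkin diagram in order to match $G_2^\vee$.
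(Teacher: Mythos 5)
Your setup is fine: the $\sigma$-action on $B_\infty(\lieg)$ (geometrically or just from functoriality of the crystal in the Cartan datum), the commutation of $\tilde e_i$, $i\in{\mathcal O}_r$, coming from admissibility, the stability of the fixed-point set under $\tilde e_r:=\prod_{i\in{\mathcal O}_r}\tilde e_i$, and the Gram-matrix bookkeeping showing that the vectors $\beta_r=\sum_{i\in{\mathcal O}_r}\alpha_i$ reproduce the symmetrized Cartan matrix $C^\sigma$ of ${\lieg^\sigma}^\vee$ (this is exactly Lemma~\ref{le:Langlands dual}). The genuine gap is the last step: you identify the resulting abstract crystal with $B_\infty({\lieg^\sigma}^\vee)$ by invoking ``the standard uniqueness theorem for highest weight crystals, with the class of the zero representation serving as the distinguished highest weight vector.'' No such theorem applies here. $B_\infty$ is the crystal of $U_q^-$, not of an integrable highest weight module, and an abstract crystal possessing a unique element of weight $0$ annihilated by all $\tilde e_r$ is very far from unique; even for integrable $B(\lambda)$, uniqueness fails without substantially stronger axioms (normality plus Stembridge-type local conditions, or the knowledge that the crystal actually comes from a crystal base). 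For $B_\infty$ the available characterization is the Kashiwara--Saito recursive criterion (via the strict embeddings $B_\infty\hookrightarrow B_\infty\otimes B_i$), and verifying those axioms for the folded operators $\tilde e_r$, $\tilde f_r$ on $B_\infty(\lieg)^\sigma$ is precisely the hard content of the proposition (it is essentially the Naito--Sagaki fixed-point theorem); your sketch does not address it at all.

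By contrast, the paper does not reprove the statement: it quotes Lusztig (\S14.4 of his book), whose argument runs through canonical bases rather than abstract crystal axiomatics --- the canonical basis of $\mathbf f$ for the folded (non-symmetric) Cartan datum is constructed from $\sigma$-equivariant perverse sheaves on the quiver varieties for $\lieg$, and the resulting bijection between it and the $\sigma$-fixed part of the canonical basis of $U_q^+(\lieg)$ yields the crystal statement upon passing to the crystal parametrization. If you want to complete your route, you must replace the ``uniqueness'' appeal either by checking the Kashiwara--Saito axioms for $(B_\infty(\lieg)^\sigma,\tilde e_r,\tilde f_r)$, or by comparing explicit parametrizations (string or PBW parametrizations attached to a reduced word of $w_\circ$ and its lifting $\hat w_\circ$, in the spirit of Section~\ref{subs:PBW bases}), which amounts to redoing the Lusztig/Naito--Sagaki argument rather than bypassing it.
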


Note that one can identify (in many ways) the $\CC(q)$-linear span of $B_\infty(\lieg)$ with the quantized enveloping algebra $U_q^+(\lieg)$ of $\lieg_+$, where $\lieg_+$ stands
for the ``upper triangular'' Lie subalgebra of $\lieg$.
This leads us to the following definition.

\begin{definition} A {\it quantum folding} of $\lieg$ is a $\CC(q)$-linear embedding  (not necessarily algebra homomorphism!) 
\begin{equation}
\label{eq:first iota}
\iota:U_q^+({\lieg^\sigma}^\vee) \hookrightarrow U_q^+(\lieg)^\sigma\subset U_q^+(\lieg) 
\end{equation}
(here  $U_q^+({\lieg^\sigma}^\vee)$ comes with powers of $q$ depending on $\sigma$; see Section \ref{subsect:quantum enveloping} for the details).
\end{definition}
 
We construct all relevant quantum foldings below (Proposition \ref{pr:general folding}) and now focus on a rich algebraic structure that can be attached to each quantum folding.
 
\begin{definition}
\label{def:liftable}
We say that a $\kk$-algebra $A$ generated by a totally ordered set $X_A$ is {\it Poincare-Birkhoff-Witt} ({\it PBW}) if the set $M(X_A)$ of all ordered monomials 
in $X_A$ is a basis of $A$. More generally, we say that  $A$ is {\it sub-PBW} if  $M(X_A)$ spans $A$ as a $\kk$-vector space (but $M(X_A)$ is not necessarily 
linearly independent). 

For a given sub-PBW algebra $A$ we say that an algebra $U=U(A,X_A)$ is a {\it uberalgebra} of $A$ if 
\begin{enumerate}[(a)]
\item $U$ is generated by $X_A$ and is a PBW algebra with these generators;

\item The identity map $X_A\to X_A$ extends to a surjective algebra homomorphism $U\twoheadrightarrow A$.
\end{enumerate}
\end{definition}
In general it is not clear whether a given sub-PBW algebra $A$ admits a uberalgebra. A criterion for uniqueness is based on the following notion of tameness of $(A,X_A)$. We need some notation. First,  consider the natural filtration  $\kk=A_0\subset A_1\subset \cdots$ given by $A_k=\operatorname{\operatorname{Span}}\{1,X_A,X_A\cdot X_A,\ldots,(X_A)^k\}$, $k\in\ZZ_{\ge 0}$. Next, for each $X,X'\in X_A$ with $X<X'$ let $d(X,X')$ be the smallest number $d$ such that $X'X\in A_d$. We also denote by $d_0=d_0(A,X_A)$ the maximum of all the $d(X,X')$.

\begin{definition} We say that a sub-PBW algebra $(A,X_A)$ is {\it tame} if the set $M(X_A)\cap A_{d_0}$ is linearly independent.
\end{definition}

\begin{lemma} A tame sub-PBW algebra $(A,X_A)$ admits at most one (up to isomorphism) uberalgebra $U(A,X_A)$.
\end{lemma}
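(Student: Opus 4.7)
The plan is to show that in any uberalgebra $U$ of $(A,X_A)$, the PBW expansion of each quadratic product $X'X$ (for $X<X'$) is forced by the algebra $A$ and the tameness hypothesis, so that any two uberalgebras agree under the canonical identification of their PBW bases. The PBW property of $U$ gives a unique expansion
\[
X'X \;=\; \sum_{m\in M(X_A)} c^U_m\, m,
\]
and the key preliminary observation is that its support is contained in $M(X_A)\cap A_{d_0}$. The argument for this support statement uses the compatibility between the PBW basis of $U$ and the natural product-length filtration $U_k$ (so that $X'X\in U_2$ expands only into ordered monomials of bounded degree), together with minimality in the definition of $d(X,X')$ transferred via $\pi:U\twoheadrightarrow A$.

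Projecting by $\pi$, which is the identity on $X_A$, yields in $A$ the identity $X'X=\sum c^U_m\, m$ supported on $M(X_A)\cap A_{d_0}$. The same analysis applied to a second uberalgebra $U'$ produces $X'X=\sum c^{U'}_m\, m$ in $A$, also supported on $M(X_A)\cap A_{d_0}$. The tameness hypothesis asserts the linear independence of $M(X_A)\cap A_{d_0}$ in $A$, so the coefficients must agree: $c^U_m=c^{U'}_m$ for all $m$.

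I would conclude by defining $\phi:U\to U'$ to be the $\kk$-linear bijection sending each element of the PBW basis of $U$ to the corresponding ordered monomial in $U'$. Since the basic straightening rules $X'X=\sum c_m\, m$ coincide in $U$ and $U'$, and since any product in a PBW algebra can be reduced to ordered monomial form by iterated application of these rules together with associativity (a Bergman diamond style argument), $\phi$ respects multiplication and is therefore an isomorphism of algebras. The main obstacle is the preliminary observation above, namely the filtration-rigidity claim that the PBW expansion of $X'X$ in $U$ is supported on ordered monomials lying in $A_{d_0}$; establishing this requires a careful analysis of how the PBW basis of $U$ interacts with the generator-length filtration, and it is precisely the ingredient that allows the tameness hypothesis to do its work in the subsequent step.
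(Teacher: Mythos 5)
The paper states this lemma without proof, so your outline has to stand on its own, and it does not: the step you yourself single out as ``the main obstacle'' is genuinely missing, and the justification you sketch for it would fail. Nothing in the definition of an uberalgebra gives any compatibility between the PBW basis of $U$ and the generator-length filtration $U_k$: an element of $U_2$ may perfectly well expand into ordered monomials of length greater than $d_0$. For instance the algebra $\kk\langle x,y\rangle/\langle yx-xy-y^3\rangle$ is PBW on $\{x<y\}$ with basis $\{x^ay^b\}$, and the expansion of $yx\in U_2$ contains the length-three monomial $y^3$; so the parenthetical claim that $X'X\in U_2$ ``expands only into ordered monomials of bounded degree'' is not a property of PBW algebras. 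Nor does minimality of $d(X,X')$ transported through $\pi$ repair this: $\pi$ only tells you that the sum $\sum_m c^U_m\,m$ lies in $A_{d_0}$, not that each monomial in its support does, and tameness is a linear-independence statement about $M(X_A)\cap A_{d_0}$ only. Since this support claim is exactly what lets tameness act, your proposal reduces the lemma to an unproved, and in this generality false, filtration-rigidity statement.

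The gap is essential rather than cosmetic: with only the stated definitions some extra input is needed, and a complete proof must identify and use it. Indeed, take $A=\kk[x,y]/\langle x^2y-x^3\rangle$ with $X_A=\{x<y\}$: it is sub-PBW, $d_0=2$, and $M(X_A)\cap A_2=\{1,x,y,x^2,xy,y^2\}$ is linearly independent (the defining relation has degree $3$), so $(A,X_A)$ is tame; yet both the polynomial ring $\kk[x,y]$ and the noncommutative skew polynomial ring $\kk[x][y;\sigma,\delta]$ with $\sigma(x)=x+x^2$, $\delta(x)=-x^3$, in which $yx=xy+x^2y-x^3$, are PBW on $\{x<y\}$ and surject onto $A$ by the identity on the generators. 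In the situations where the lemma is applied in the paper, the uberalgebras are graded (by the root lattice) with homogeneous straightening relations, and it is this that forces the support of each $X'X$ into bounded filtration degree; your argument needs to invoke such structure, or an explicit hypothesis to the same effect, before tameness can be used. The same issue resurfaces in your final step: Bergman-style reduction needs an ordering with the descending chain condition under which each rewrite decreases, which is available when the right-hand sides are supported on monomials no longer than $X'X$ (or are homogeneous), but not for free from the definition of a PBW algebra; so even after matching the quadratic coefficients, the claim that they determine the whole multiplication requires an argument.
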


In what follows, we will construct uberalgebras $U(\iota)$ for several quantum foldings $\iota$ as in \eqref{eq:first iota}, and these uberalgebras will depend on $\lieg$ and $\sigma$ 
rather than on a particular choice of $\iota$ and most algebras generated by the image of~$\iota$ will be tame. We need more notation.

\begin{definition} 
\label{def:liftable iota}
Let $A$ and $B$ be PBW-algebras and let $\iota:A\hookrightarrow B$ be an injective map (not necessarily an algebra homomorphism). We say that $\iota$ is {\it liftable} if:
\begin{enumerate}[(i)]
\item\label{def:liftable iota.i} For any ordered monomial $X=X_1^{m_1}\cdots X_N^{m_N}\in M(X_A)$ one has 
$$\iota(X)=\iota(X_1)^{m_1}\cdots \iota(X_N)^{m_N}.$$

\item There exists a finite subset $Z_0\subset \langle A\rangle_\iota$, where $\langle A\rangle_\iota$ is the subalgebra of $B$ generated by $\iota(A)$, 
such that $\langle A\rangle_\iota$ is sub-PBW with respect to $\iota(X_A)\cup Z_0$ (with some ordering of $\iota(X_A)\cup Z_0$ compatible with the ordering of $X$).

\item There exists a uberalgebra $U(\iota):=U(\langle A\rangle_\iota,\iota(X_A)\cup Z_0)$ for $\langle A\rangle_\iota$ and
a surjective homomorphism $\mu:=\mu_\iota:U(\iota)\to A$ such that for all $x\in X_A$, $\mu(\iota(x))=x$ and $\mu(Z_0)=0$.
\end{enumerate}
\end{definition}
If $\iota$ is liftable and $(\langle A\rangle_\iota,\iota(X_A)\cup Z_0)$ is tame (hence $U(\iota)$ is unique), in what follows we refer to an $\iota$ as a {\it tame} liftable quantum folding.

For each liftable $\iota$ we have a diagram
\begin{equation}
\label{eq:cone over iota}
\begin{diagram}
 \node{U(\iota)}\arrow{e,t}{\hat\iota}\arrow{s,lr}{\tilde\iota\Bigg\uparrow}{\mu_\iota} \node{B}\\
 \node{A}\arrow{ne,l}{\iota}
\end{diagram}
\end{equation}
satisfying $\mu_\iota\circ \tilde\iota=\id_A$ and $\hat\iota\circ \tilde\iota=\iota$ where
\begin{itemize}

\item $\tilde \iota:A\hookrightarrow U(\iota)$ is the canonical splitting of~$\mu$ given by
$\tilde\iota(X_1^{m_1}\cdots X_N^{m_N})=\iota(X_1)^{m_1}\cdots \iota(X_N)^{m_N}$ in the notation of Definition~\ref{def:liftable iota}\eqref{def:liftable iota.i}.

\item $\hat \iota:U(\iota)\to B$ is the structural algebra homomorphism given by $\hat\iota(X)=X$ for $X\in \iota(X_A)\cup Z_0$ (e.g., the image of $\hat\iota$ is $\langle A\rangle_\iota$).
\end{itemize}
Note the following easy Lemma.
\begin{lemma}\label{lem:change of basis}
In the notation of Definition~\ref{def:liftable iota}, write $X_A=\{X_1,\dots,X_N\}$ as an ordered set.
Fix $1\le k\le N$ and $f\in\sum_{M\in M(X_A\setminus \{X_k\})}\kk M$. If the uberalgebra $U(\iota)$ is optimal PBW
(in the sense of Definition~\ref{def:specialization}) then 
\begin{enumerate}[{\rm(i)}]
 \item $(A,X'_A)$, where $X'_A=\{X_1,\dots,X_k':=X_k+f,X_{k+1},\dots,X_N\}$, is a PBW algebra.
 \item The injective linear map $\iota':A\to B$ given by the formula $$
 \iota'(X_1^{m_1}\cdots  (X_k')^{m_k}X_{k+1}^{m_{k+1}}\cdots X_N^{m_N})=
 \iota(X_1)^{m_1}\cdots  \iota(X_k')^{m_k}\iota (X_{k+1}^{m_{k+1}})\cdots \iota(X_N^{m_N})$$
 is liftable with $\lr{A}_{\iota'}=\lr{A}_\iota$,
 $U(\iota')=U(\iota)$ and $\mu_\iota=\mu_{\iota'}$.
\end{enumerate}
\end{lemma}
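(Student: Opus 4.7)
The plan is to reduce both parts to a single change-of-generators argument inside the uberalgebra $U(\iota)$, where the optimal PBW hypothesis of Definition~\ref{def:specialization} applies directly. Write $Y_i\in U(\iota)$ for the generator associated to $\iota(X_i)$, so that $\hat\iota(Y_i)=\iota(X_i)$ and $\mu_\iota(Y_i)=X_i$, and expand $f=\sum_M c_M M$ over $M\in M(X_A\setminus\{X_k\})$. Setting $\tilde f:=\sum_M c_M\tilde M\in U(\iota)$, with $\tilde M$ obtained from $M$ by the substitution $X_i\mapsto Y_i$ ($i\ne k$), Definition~\ref{def:liftable iota}\eqref{def:liftable iota.i} yields $\hat\iota(\tilde f)=\iota(f)$ and $\mu_\iota(\tilde f)=f$. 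The perturbed element $Y_k':=Y_k+\tilde f$ differs from $Y_k$ by an element of the subalgebra of $U(\iota)$ generated by $\{Y_i:i\ne k\}$, so the optimal PBW hypothesis ensures that $U(\iota)$ remains PBW on the modified generating set $\{Y_i:i\ne k\}\cup\{Y_k'\}\cup Z_0$, with $Y_k'$ occupying the slot of $Y_k$ in the ordering.

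For~(i), I would push this new PBW basis of $U(\iota)$ down via $\mu_\iota$. Any ordered monomial containing a factor from $Z_0$ maps to zero, while a monomial pure in $\{Y_i:i\ne k\}\cup\{Y_k'\}$ maps to the corresponding ordered monomial of $M(X_A')$, since $\mu_\iota(Y_k')=X_k+f=X_k'$. Surjectivity of $\mu_\iota$ thus shows that $M(X_A')$ spans $A$. A second consequence of optimal PBW is that $\ker\mu_\iota$ coincides with the span of new-basis monomials carrying at least one $Z_0$ factor; $\mu_\iota$ then restricts to a $\kk$-linear bijection from the span of the pure new-basis monomials onto $A$, identifying them with $M(X_A')$ and proving that $(A,X_A')$ is PBW.

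For~(ii), the $\kk$-linearity of $\iota$ and the defining formula for $\iota'$ give $\iota'(X_i)=\iota(X_i)$ for $i\ne k$ and $\iota'(X_k')=\iota(X_k)+\iota(f)=\hat\iota(Y_k')$, so $\iota'(X_A')$ and $\iota(X_A)$ generate the same subalgebra $\langle A\rangle_{\iota'}=\langle A\rangle_\iota$ of $B$. Taking $Z_0':=Z_0$ and $U(\iota'):=U(\iota)$ equipped with the modified PBW presentation from above, and letting the existing $\hat\iota$ and $\mu_\iota$ serve as the structural maps for $\iota'$, all three clauses of Definition~\ref{def:liftable iota} follow immediately from the diagram~\eqref{eq:cone over iota}. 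The chief obstacle is the invocation of optimal PBW, which packages both the preservation of the PBW structure of $U(\iota)$ under a unipotent modification of a single generator and the compatibility of $\ker\mu_\iota$ with the resulting new basis; absent that hypothesis there is no way to transfer the change of variables $X_k\mapsto X_k+f$ across the cone in~\eqref{eq:cone over iota}, and everything else amounts to bookkeeping.
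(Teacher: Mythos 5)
Your overall architecture---lift the substitution to $U(\iota)$ via $\tilde f$, argue there, then push down with $\mu_\iota$ and across with $\hat\iota$---is the natural one, and the bookkeeping in your part~(ii) (that $\lr{A}_{\iota'}=\lr{A}_\iota$, that $Z_0$ can be reused, and that $\hat\iota$, $\mu_\iota$ serve as the structural maps for $\iota'$) is fine once part~(i) and the new PBW basis of $U(\iota)$ are in hand. The paper states the lemma without proof, so there is nothing to compare against; but the two statements you attribute to the optimal PBW hypothesis are precisely the content of the lemma, and you assert rather than prove both. That $U(\iota)$ ``remains PBW'' on $\{Y_i: i\ne k\}\cup\{Y_k'\}\cup Z_0$ is not a formal consequence of optimality that can simply be cited: the straightening relations of $U(\iota)$ may produce monomials of arbitrary $Y_k$-degree, so no triangularity-in-$Y_k$ argument is available at generic $q$. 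The mechanism through which optimality enters is specialization: the $\kk_0$-form $\sum_{M}\kk_0 M$ of $U(\iota)$ reduces at $t=0$ to a polynomial algebra, where $Y_k\mapsto Y_k+\tilde f$ becomes a triangular change of variables; one then lifts linear independence by clearing denominators and establishes spanning by a separate graded or filtered argument. None of this mechanism appears in your write-up, and it is not entirely routine (for instance, the coefficients of $f$ are allowed to lie in $\kk$ rather than $\kk_0$, so even the reduction at $t=0$ needs care).

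More seriously, the independence step in your part~(i) is circular. What does follow formally, with no optimality at all, is that $\ker\mu_\iota$ equals the span of the \emph{old} impure monomials, i.e.\ the two-sided ideal generated by $Z_0$, and that every \emph{new} monomial containing a $Z_0$-factor lies in this kernel. Your claimed identity ``$\ker\mu_\iota$ coincides with the span of new-basis monomials carrying at least one $Z_0$ factor'' is, granted the new PBW basis of $U(\iota)$, logically equivalent to the linear independence of $M(X'_A)$ in $A$---exactly the assertion of~(i)---since it says that the span of the pure new monomials meets $\ker\mu_\iota$ trivially. It therefore cannot be invoked as ``a second consequence of optimal PBW''; it needs its own argument, again most naturally by specialization (at $t=0$ the substitution is an automorphism of the polynomial algebra preserving the ideal generated by $Z_0$, so that ideal is spanned by impure monomials in either set of variables, and one lifts). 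As written, the crux of the lemma is black-boxed twice, so the proposal is an outline of the correct strategy rather than a proof.
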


The following is our first main result (see Section \ref{sect:D_{n+1}} for greater details). 

\begin{theorem} 
\label{th:first folding}
For the pair $(\lieg,{\lieg^\sigma}^\vee)=(\lie{so}_{2n+2},\lie{sp}_{2n})$, $n\ge 3$ there exists a tame liftable quantum folding $\iota:
U_q^+(\lie{sp}_{2n})\hookrightarrow U_q^+(\lie{so}_{2n+2})$. The corresponding uberalgebra $U(\iota)$ is isomorphic  to $S_q(V\otimes V)\rtimes U_q^+(\lie{sl}_n)$, where 
$V$ is the standard $n$-dimensional $U_q(\lie{sl}_n)$-module, and $S_q(V\otimes V)$ is a quadratic  PBW-algebra in the category of  $U_q(\lie{sl}_n)$-modules. More precisely: 

\begin{enumerate}[{\rm(i)}]
\item\label{th:first folding.i}
The algebra $S_q(V\otimes V)$ is isomorphic to $T(V\otimes V)/\langle (\Psi-1)(V^{\otimes 4})\rangle$, where $\Psi:V^{\otimes 4}\to V^{\otimes 4}$ is a $\CC(q)$-linear map given by:
\begin{equation}
\label{eq:psi braiding}
\Psi=\Psi_2\Psi_1\Psi_3\Psi_2+(q-q^{-1})(
    \Psi_1\Psi_2\Psi_1+ \Psi_1\Psi_3\Psi_2)+(q-q^{-1})^2 \Psi_1\Psi_2
\end{equation}
where $\Psi_i:V^{\otimes 4}\to V^{\otimes 4}$, $1\le i\le 3$ is, up to a power of $q$, the  braiding operator in the category of $U_q(\lie{sl}_n)$-modules that acts 
in the $i$-th and $(i+1)$st factors and satisfies the normalized  Hecke equation $(\Psi_i-q^{-1})(\Psi_i+q)=0$.

\item\label{th:first folding.ii}
The covariant $U_q(\lie{sl}_n)$-action on the algebra $S_q(V\otimes V)$  is  determined by the natural action of the Hopf algebra $U_q(\lie{sl}_n)$ on $V\otimes V$.

\item\label{th:first folding.iii} The algebra $S_q(V\otimes V)$ is PBW with respect to any ordered  basis of~$V\otimes V$.
\end{enumerate}
\end{theorem}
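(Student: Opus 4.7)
The plan is to construct $\iota$ explicitly via Lusztig PBW root vectors, define the uberalgebra abstractly, and match the two pictures by a careful analysis of quadratic $\sigma$-invariants.

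\emph{Step 1 (construction of $\iota$).} Fix a $\sigma$-compatible reduced expression for the longest element $w_0$ of the Weyl group of $D_{n+1}$, so that the induced convex order on positive roots groups each $\sigma$-orbit into adjacent positions and the associated Lusztig PBW root vectors $E_\beta\in U_q^+(\lie{so}_{2n+2})$ are permuted by~$\sigma$ according to its action on roots. The $\sigma$-orbits on positive roots of $D_{n+1}$ biject with positive roots of $C_n=\lie{sp}_{2n}$: the $\sigma$-fixed orbits $\{e_i\pm e_j\}$ with $i<j\le n$ match short roots of $C_n$, while the swapped pairs $\{e_i-e_{n+1},e_i+e_{n+1}\}$ match the long roots $2e_i$. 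I define $\iota$ on the PBW generators $F_\gamma$ of $U_q^+(\lie{sp}_{2n})$ by $\iota(F_\gamma):=E_\beta$ on $\sigma$-fixed orbits and by an explicit $\sigma$-symmetric quadratic expression $E_\beta E_{\sigma\beta}+c(q)E_{\sigma\beta}E_\beta$ on paired orbits, with $c(q)\in\CC(q)$ chosen so that the image lies in $U_q^+(\lie{so}_{2n+2})^\sigma$ and is compatible with the Lusztig braid action. Definition~\ref{def:liftable iota}\eqref{def:liftable iota.i} then forces the extension to all ordered PBW monomials.

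\emph{Step 2 (the algebra $S_q(V\otimes V)$ and parts (i)--(iii)).} I would define $S_q(V\otimes V)$ abstractly by the quadratic relations $(\Psi-1)(V^{\otimes 4})=0$. Part~(ii) is immediate: each Hecke-normalized $\Psi_i$ is $U_q(\lie{sl}_n)$-equivariant by naturality, so the combination $\Psi$ in~\eqref{eq:psi braiding} is equivariant and the defining ideal is a $U_q(\lie{sl}_n)$-submodule of $T(V\otimes V)$, endowing $S_q(V\otimes V)$ with a module-algebra structure. For the PBW property~(iii), I would view $(V\otimes V,\Psi)$ as a braided vector space, recognize $S_q(V\otimes V)$ as the associated Nichols/Koszul algebra, and verify that $\Psi$ has the expected eigenvalue decomposition on $(V\otimes V)^{\otimes 2}$ by decomposing $V^{\otimes 4}$ into $U_q(\lie{sl}_n)$-isotypic components and diagonalising the leading term $\Psi_2\Psi_1\Psi_3\Psi_2$ there; the quantum polynomial Hilbert series $(1-t)^{-n^2}$ then follows. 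Equivalently, one applies the Bergman diamond lemma, whose only nontrivial overlap check lives in degree~$3$ of the $(V\otimes V)$-grading (i.e.\ in $V^{\otimes 6}$) and reduces to braid-type identities among the analogous Hecke-normalized braidings. Part~(i) is then just the explicit formula for $\Psi$.

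\emph{Step 3 (uberalgebra, $\mu_\iota$, tameness).} The $U_q^+(\lie{sl}_n)$-factor of the candidate uberalgebra embeds into $U_q^+(\lie{so}_{2n+2})^\sigma$ as the Hopf subalgebra attached to the $\sigma$-fixed $A_{n-1}$-chain $\alpha_1,\dots,\alpha_{n-1}$, while the $V\otimes V$-factor is realized by the $\sigma$-symmetric quadratics $E_\beta E_{\sigma\beta}+c(q)E_{\sigma\beta}E_\beta$ for paired orbits together with the root vectors $E_{e_i+e_j}$ for the non-$A_{n-1}$ fixed orbits; the covariant action of~(ii) is matched with the adjoint action of the $A_{n-1}$-subalgebra. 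The surjection $\mu_\iota:U(\iota)\twoheadrightarrow U_q^+(\lie{sp}_{2n})$ sends $U_q^+(\lie{sl}_n)$ onto the short-root $A_{n-1}$-subalgebra of $U_q^+(\lie{sp}_{2n})$, the $q$-symmetric part $S_q^2V\subset V\otimes V$ of dimension $n(n+1)/2$ onto the span of the remaining short root vectors $F_{e_i+e_j}$ together with the long root vectors $F_{2e_i}$, and the $q$-antisymmetric part $\Lambda_q^2V$ (supplying $Z_0$ of Definition~\ref{def:liftable iota}, of the correct size $n(n-1)/2$) to zero; a generator count confirms $n(n-1)/2+n^2=(3n^2-n)/2$ generators for $U(\iota)$ surjecting onto the $n^2$ PBW generators of $U_q^+(\lie{sp}_{2n})$. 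Well-definedness of $\mu_\iota$ reduces to the quantum Serre relations of $U_q^+(\lie{sp}_{2n})$. Tameness follows because the defining relations of $S_q(V\otimes V)$ are exactly quadratic in the $\iota(X_A)\cup Z_0$-filtration, so $d_0=2$ and the required linear independence reduces to the explicit low-degree computation already used to verify the PBW property.

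\emph{Main obstacle.} The hardest point is proving the PBW property~(iii) of $S_q(V\otimes V)$ in tandem with the identification of the abstract $U(\iota)$ with $\langle A\rangle_\iota\subset U_q^+(\lie{so}_{2n+2})$ via $\hat\iota$. Both hinge on a controlled bookkeeping of higher Serre-type relations in $U_q^+(D_{n+1})$, and on showing that the formula $\Psi=\Psi_2\Psi_1\Psi_3\Psi_2+(q-q^{-1})(\Psi_1\Psi_2\Psi_1+\Psi_1\Psi_3\Psi_2)+(q-q^{-1})^2\Psi_1\Psi_2$ is not a guess but precisely the braiding on $V\otimes V$ forced by reordering the $\sigma$-symmetric quadratics inside $U_q^+(\lie{so}_{2n+2})$; the $(q-q^{-1})$ and $(q-q^{-1})^2$ tails should emerge naturally as quantum corrections from commutators of $D_{n+1}$ root vectors straddling $\sigma$-pairs.
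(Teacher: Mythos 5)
Your outline follows the same broad architecture as the paper (fold the Lusztig PBW basis along $\sigma$, realize the extra generators as a braided space $V\otimes V$, take a braided cross product with $U_q^+(\lie{sl}_n)$, and let $\mu$ kill $\Lambda^2_qV$), and your generator count and the identification of $Z_0$ with $\Lambda^2_qV$ are correct. But the two load-bearing steps are asserted rather than proved, and they are exactly where the substance lies. For part~\eqref{th:first folding.iii} you need three nontrivial facts about the specific operator~$\Psi$ of~\eqref{eq:psi braiding}: that it satisfies the braid equation on $(V\otimes V)^{\otimes 3}$, that $\dim(\Psi-1)((V\otimes V)^{\otimes 2})=\dim\Lambda^2(V\otimes V)$, and a flatness mechanism converting these into the PBW property. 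Naturality of the $\Psi_i$ gives the equivariance of part~\eqref{th:first folding.ii} but contributes nothing to the braid equation for the perturbed combination $\Psi$; this is Proposition~\ref{prop:PsiProp}, which the paper verifies inside the Hecke algebras $H(S_6)$ and $H(S_4)$ using their semisimplicity and the Gelfand model, and the rank condition via quantum Schur--Weyl duality. It is not an ``expected eigenvalue decomposition'': the unperturbed braiding $\Psi_2\Psi_1\Psi_3\Psi_2$ satisfies the braid equation but \emph{fails} the rank condition (Remark~\ref{rem:various S_q}), so nothing here is automatic, and your claim that the Hilbert series ``then follows,'' or that a diamond-lemma check ``reduces to braid-type identities,'' skips the actual argument (Theorem~\ref{thm:flat quadratic algebra}, proved via Nichols--Woronowicz algebras, a specialization-of-ideals inequality, and quadratic duality).

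The second gap is the identification of the uberalgebra and the liftability/tameness of $\iota$. One must (a) prove that the abstract algebra on $u_i,w,z$ (equivalently on a basis of $V\otimes V$ and $U_q^+(\lie{sl}_n)$) is genuinely the cross product $S_q(V\otimes V)\rtimes U_q^+(\lie{sl}_n)$ and is a flat deformation of $U((V\otimes V)\rtimes(\lie{sl}_n)_+)$ --- in the paper this is a Hilbert-series comparison through specialization at $q=1$; (b) exhibit the algebra homomorphism $\hat\iota$ into $U_q^+(\lie{so}_{2n+2})$ sending $w\mapsto E_0E_{-1}$ and $z$ to an explicit degree-$3$ element, and verify the defining relations there --- this is the heaviest computation (done in the paper with the Kashiwara--Lusztig $r_i$-calculus, partly by computer); and (c) match the images of all PBW root vectors $\hat X_k=\hat c_k^{-1}\hat T_{r_1}\cdots\hat T_{r_{k-1}}(\hat E_{r_k})$ with monomials in the $X_{ij}$ (the paper's Proposition~\ref{pr:PBW-def}), which also shows your undetermined $c(q)$ is moot: in a paired orbit $\{e_i-e_{n+1},e_i+e_{n+1}\}$ the sum is not a root, the two root vectors commute, and the correct lift is just the normalized product. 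Your ``main obstacle'' paragraph acknowledges all of this but resolves none of it --- ``should emerge naturally as quantum corrections'' is the statement to be proved, not a proof. Finally, tameness in the sense of the paper is a linear-independence statement for ordered monomials of bounded degree inside the subalgebra $\langle A\rangle_\iota\subset U_q^+(\lie{so}_{2n+2})$, not inside the abstract quadratic algebra, so it cannot be deduced from the PBW property of $S_q(V\otimes V)$ alone without first establishing (b) and (c).
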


\begin{remark} Strictly speaking, the cross product $S_q(V\otimes V)\rtimes U_q^+(\lie{sl}_n)$ is ``braided''  in the sense of Majid (\cite{Maj}) because $U_q(\lie{sl}_n)$ is a braided Hopf algebra (see \cite{Lus} and Section \ref{subsect:quantum enveloping}). 

\end{remark}

We prove Theorem \ref{th:first folding} in Section~\ref{sect:D_{n+1}}. In particular, the key ingredient in our proof of part~\eqref{th:first folding.iii} is
the following surprising result.
\begin{proposition}\label{prop:PsiProp} The map $\Psi$ satisfies:
\begin{enumerate}[{\rm(i)}]
\item\label{prop:PsiProp.i} The braid equation in $(V\otimes V)^{\otimes 3}$:
\begin{equation}
\label{eq:braid equation}
(\Psi\otimes 1)(1\otimes \Psi)(\Psi\otimes 1)=(1\otimes \Psi)(\Psi\otimes 1)(1\otimes \Psi).
\end{equation}
\item\label{prop:PsiProp.ii} The cubic version of the Hecke equation:
$$(\Psi-1)(\Psi+q^2)(\Psi+q^{-2})=0.
$$
In particular, $\Psi$ is invertible and:
$$\Psi^{-1}=\Psi_2\Psi_1\Psi_3\Psi_2+(q-q^{-1})(
    \Psi_2\Psi_3\Psi_2+ \Psi_1\Psi_3\Psi_2)+(q-q^{-1})^2 \Psi_3\Psi_2\ .
$$

\item\label{prop:PsiProp.iii} $\dim (\Psi-1)(V\otimes V)=\dim \Lambda^2 V$.
\end{enumerate}
\end{proposition}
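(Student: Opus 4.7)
The plan is to view all three statements as identities in the type-$A$ Hecke algebra generated by the $\Psi_i$'s, which satisfy the braid relations along with the normalized quadratic relation $\Psi_i^2=1+(q^{-1}-q)\Psi_i$. As a preliminary simplification, I would rewrite the defining formula for $\Psi$ using the braid relation $\Psi_1\Psi_2\Psi_1=\Psi_2\Psi_1\Psi_2$ and the commutation $\Psi_1\Psi_3=\Psi_3\Psi_1$, obtaining the more compact factorization
\[
\Psi=\bigl(\Psi_2+(q-q^{-1})\bigr)\,\Psi_1\,\bigl(\Psi_3+(q-q^{-1})\bigr)\,\Psi_2,
\]
which streamlines the subsequent manipulations.

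For part~\eqref{prop:PsiProp.ii} I would compute $\Psi^2$ and $\Psi^3$ by expanding from this factorization, eliminating every occurrence of $\Psi_i^2$ via the quadratic relation, and reducing each product to the canonical Hecke basis $\{T_w\}_{w\in S_4}$. The cubic identity $(\Psi-1)(\Psi+q^2)(\Psi+q^{-2})=0$ then becomes a term-by-term verification of rational coefficients. The stated inverse formula is a formal consequence of the cubic relation: it gives $\Psi^{-1}=\Psi^2+(q^2+q^{-2}-1)\Psi-(q^2+q^{-2}-1)$, and substituting the precomputed expansion of $\Psi^2$ produces the closed form displayed in the proposition.

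For part~\eqref{prop:PsiProp.iii} I would combine~\eqref{prop:PsiProp.ii} with a trace computation. By~\eqref{prop:PsiProp.ii}, the Lagrange interpolation element
\[
P_1:=\frac{(\Psi+q^2)(\Psi+q^{-2})}{(1+q^2)(1+q^{-2})}
\]
acts as an idempotent on $V^{\otimes 4}$ with image $K_1:=\ker(\Psi-1)$, so $\dim K_1=\operatorname{tr}(P_1)$. Since $\operatorname{tr}(P_1)$ is a rational function of $q$ that is regular at $q=1$ and equals the constant integer $\dim K_1$ for all generic~$q$, it is identically constant. At $q=1$ the correction terms in the definition of $\Psi$ vanish and $\Psi$ specializes to the permutation $(13)(24)$ on $V^{\otimes 4}$, i.e.\ the swap of the two copies of $W:=V\otimes V$; correspondingly $P_1|_{q=1}$ becomes the projector onto $S^2 W$ of dimension $\binom{n^2+1}{2}$. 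Hence $\dim K_1=\binom{n^2+1}{2}$ and $\dim(\Psi-1)V^{\otimes 4}=\binom{n^2}{2}=\dim\Lambda^2(V\otimes V)$, which is the content of~\eqref{prop:PsiProp.iii}.

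Part~\eqref{prop:PsiProp.i} is the main obstacle. Both sides of~\eqref{eq:braid equation} are elements of the Hecke algebra $H_6$ generated by $\Psi_1,\dots,\Psi_5$; naively expanding $\Psi$ yields twenty-seven four-letter words on each side, and a brute-force reduction to the basis $\{T_w\}_{w\in S_6}$ via braid and quadratic relations would finish the verification. Before attempting the brute-force route, I would search for a structural interpretation of $\Psi$ as the braiding of $W=V\otimes V$ with itself in a suitable braided monoidal category --- for example, a $2$-cocycle twist of the category of $U_q(\lie{sl}_n)$-modules, or a fusion construction in which $W$ is a simple object on which a central element of the Hecke tower acts diagonally --- after which~\eqref{eq:braid equation} becomes automatic from the hexagon axiom. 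The cubic Hecke relation of~\eqref{prop:PsiProp.ii} significantly constrains the candidate categorical interpretations and should guide the search.
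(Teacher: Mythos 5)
Your overall strategy---treating all three parts as identities in the Hecke algebra generated by the $\Psi_i$---is sound, and two of your ingredients are genuinely different from (and in places cleaner than) the paper's. The factorization $\Psi=(\Psi_2+h)\Psi_1(\Psi_3+h)\Psi_2$ with $h=q-q^{-1}$ is correct (it is the defining formula after one use of $\Psi_1\Psi_2\Psi_1=\Psi_2\Psi_1\Psi_2$), and it buys more than you extract from it: since $\Psi_i^{-1}=\Psi_i+h$ and $(\Psi_i+h)^{-1}=\Psi_i$, inverting the factorization gives $\Psi^{-1}=(\Psi_2+h)\Psi_3(\Psi_1+h)\Psi_2$, which expands via $\Psi_1\Psi_3=\Psi_3\Psi_1$ into exactly the displayed formula for $\Psi^{-1}$, with no detour through the cubic relation. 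Your argument for part~\eqref{prop:PsiProp.iii} (the Lagrange idempotent $P_1$, $\dim\ker(\Psi-1)=\operatorname{tr}(P_1)$, constancy of the trace of an idempotent over $\CC(q)$, regularity at $q=1$, and the specialization of $\Psi$ to the flip of the two copies of $V\otimes V$) is correct and is a genuinely different route: the paper instead invokes quantum Schur--Weyl duality and compares $\dim(\Psi-1)(\mathscr S^\lambda)$ with $\dim(\tau-1)(\mathscr S^\lambda)$ on each simple $H(S_4)$-module. Your version avoids that case analysis over partitions; the paper's version yields the finer information of where the rank drop occurs. For parts~\eqref{prop:PsiProp.i} and~\eqref{prop:PsiProp.ii} both you and the paper ultimately rely on a finite verification, and the difference is organizational: the paper uses semisimplicity of $H(S_6)$ (resp.\ $H(S_4)$) to reduce to checking the identities on each simple module, carried out in the Gelfand model of Adin--Postnikov--Roichman, where the only nontrivial block for~\eqref{prop:PsiProp.ii} is the $6$-dimensional $\mathscr V_4^{(1)}$; your plan of reducing words to the $T_w$-basis of $H(S_6)$ is valid but substantially heavier.

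One caution: do not count on the hoped-for categorical shortcut for~\eqref{prop:PsiProp.i}. As Remark~\ref{rem:various S_q} notes, $\Psi$ is a deformation of, but not equal to, the braiding $\Psi'=\Psi_2\Psi_1\Psi_3\Psi_2$ of $V\otimes V$ with itself in the category of $U_q(\lie{sl}_n)$-modules (and $\Psi'$ fails condition~\eqref{thm:flat quadratic algebra.ii} of Theorem~\ref{thm:flat quadratic algebra}), so the hexagon axiom is not available off the shelf. Absent an actual categorical construction, your proof of~\eqref{prop:PsiProp.i} is the brute-force Hecke computation, which must actually be carried out---as the paper itself does, by an explicit (computer-checked) computation in the Gelfand model.
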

 
This and the following general fact that we failed to find in the literature, although
numerous special cases are well-known (cf. for example~\cites{P-P,Gur,D-S}),
settle Theorem \ref{th:first folding}\eqref{th:first folding.iii} (see Section \ref{sect:D_{n+1}} for details).
\begin{theorem} 
\label{thm:flat quadratic algebra} Let $Y$ be a finite-dimensional $\CC(q)$-vector space and let $\Psi$ be an invertible $\CC(q)$-linear map $Y\otimes Y\to Y\otimes Y$ satisfying the braid equation. 
Assume that: 
\begin{enumerate}[{\rm(i)}]
\item\label{thm:flat quadratic algebra.i} the specialization $\Psi|_{q=1}$ of $\Psi$ is the permutation of factors $\tau:Y\otimes Y\to Y\otimes Y$,

\item\label{thm:flat quadratic algebra.ii}  $\dim (\Psi-1)(Y\otimes Y)=\dim \Lambda^2 Y$. 
\end{enumerate}
Then the  algebra $S_\Psi(Y)=T(Y)/\langle (\Psi-1)(Y\otimes Y)\rangle$ is a flat deformation of the symmetric algebra $S(Y)$ (hence $S_\Psi(Y)$ is PBW for any ordered basis of $Y$).
\end{theorem}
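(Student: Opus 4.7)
The plan is to regard $\Psi$ as a one-parameter deformation of the flip $\tau$ in the parameter $q-1$, and to check that this deformation is flat by comparing dimensions degree by degree. The main device is a quantum antisymmetrizer on $Y^{\otimes 3}$ whose existence relies on the braid equation; this yields the crucial degree-$3$ match, from which flatness in all higher degrees then follows by standard Koszul deformation arguments.

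\textbf{Integral form.} Since $\Psi$ is regular at $q=1$ by hypothesis (i), I choose an $A$-form $Y_A$ of $Y$ over $A:=\CC[q]_{(q-1)}$ on which $\Psi$ acts $A$-linearly and reduces to $\tau$ modulo $(q-1)$. Using that any submodule of a free module over a DVR is free, hypothesis (ii) implies that $R_A:=(\Psi-1)(Y_A\otimes_A Y_A)$ is a free $A$-module of rank $\binom{n}{2}$, and a short Tor argument identifies $R_A\otimes_A\CC$ with the classical relation space $(\tau-1)(Y\otimes Y)\subset Y\otimes Y$. Consequently the graded $A$-algebra $S_\Psi(Y_A):=T_A(Y_A)/\langle R_A\rangle$ specializes at $q=1$ to $S(Y)$, and Nakayama applied to each graded piece yields the upper bound $\dim_{\CC(q)} S_\Psi(Y)_k \leq \binom{n+k-1}{k}$ for every $k$.

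\textbf{Lower bound via the quantum antisymmetrizer.} The matching lower bound reduces, by Koszul deformation theory for quadratic algebras (cf.\ Polishchuk--Positselski), to the degree-$3$ identity $\dim_{\CC(q)}((R\otimes Y)\cap(Y\otimes R))=\binom{n}{3}=\dim\Lambda^3 Y$. Upper semicontinuity of intersection dimension gives $\leq$ automatically. For $\geq$, I introduce the quantum antisymmetrizer
\[
\mathcal{A}_q := 1-\Psi_1-\Psi_2+\Psi_1\Psi_2+\Psi_2\Psi_1-\Psi_1\Psi_2\Psi_1 \in \End(Y^{\otimes 3}),
\]
well-defined thanks to the braid equation $\Psi_1\Psi_2\Psi_1=\Psi_2\Psi_1\Psi_2$. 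Direct expansion yields the two factorizations
\[
\mathcal{A}_q = (1-\Psi_1)(1-\Psi_2+\Psi_2\Psi_1) = (1-\Psi_2)(1-\Psi_1+\Psi_1\Psi_2),
\]
the second of which requires the braid equation; together they place $\Im\mathcal{A}_q$ inside $(R\otimes Y)\cap(Y\otimes R)$. At $q=1$ the operator $\mathcal{A}_q$ specializes to the classical antisymmetrizer $\sum_{\sigma\in S_3}\sgn(\sigma)\sigma$ on $Y^{\otimes 3}$, whose image is $\Lambda^3 Y$ of rank $\binom{n}{3}$; by lower semicontinuity of rank, $\operatorname{rank}\mathcal{A}_q \geq \binom{n}{3}$ at the generic point, and combined with the already-known upper bound this forces equality throughout.

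\textbf{Main obstacle.} The central subtlety is the factorization of $\mathcal{A}_q$ through both $(1-\Psi_1)$ and $(1-\Psi_2)$: it is the second factorization that uses the braid equation essentially, pinpointing where the Yang--Baxter hypothesis enters. Once the degree-$3$ identity is settled, propagation to all higher degrees follows from standard Koszul deformation theory since $S(Y)$ is Koszul, or alternatively by iterating the present argument with higher antisymmetrizers $\sum_{\sigma\in S_k}\sgn(\sigma)T_\sigma$ built from braid-group lifts of $\sigma\in S_k$ (again well-defined thanks to the braid equation).
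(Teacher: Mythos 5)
Your proof is correct, but it follows a genuinely different route from the paper's. The paper works on the dual (exterior) side: it introduces the Nichols--Woronowicz algebra $\mathcal B_{-\Psi^*}(Y^*)$, whose graded pieces are images of the full braided antisymmetrizers $[k]!_{-\Psi^*}=\sum_{w\in S_k}(-\Psi^*)_w$, and squeezes $\dim A_{-\Psi^*}(Y^*)_k$ in \emph{every} degree between two semicontinuity bounds (rank of the braided factorial from below, dimension of a specializable ideal from above, Lemmas~\ref{le:inequalities on dimension} and~\ref{lem:spec of ideals}); it then passes to $S_\Psi(Y)$ by quadratic duality, using hypothesis~\eqref{thm:flat quadratic algebra.ii} in the form $\dim\ker(-\Psi^*+1)=\dim S^2Y$. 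You instead stay on the symmetric side: an integral form over $\CC[q]_{(q-1)}$ plus hypothesis~\eqref{thm:flat quadratic algebra.ii} gives flatness of the relation module and the upper bound $\dim S_\Psi(Y)_k\le\binom{n+k-1}{k}$ (this is in substance the paper's Lemma~\ref{lem:spec of ideals}), and then you invoke the Koszul deformation principle (Drinfeld; Polishchuk--Positselski, Ch.~6) to reduce everything to degree $3$, which you settle with the cubic antisymmetrizer $\mathcal A_q=1-\Psi_1-\Psi_2+\Psi_1\Psi_2+\Psi_2\Psi_1-\Psi_1\Psi_2\Psi_1$; your two factorizations are correct (the second is exactly equivalent to the braid equation), $\Im\mathcal A_q\subset(R\tensor Y)\cap(Y\tensor R)$, and rank semicontinuity at $q=1$ gives the matching lower bound $\binom n3$. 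What each approach buys: yours is shorter, never needs invertibility of $\Psi$ or higher braided factorials, and isolates precisely where the Yang--Baxter hypothesis enters, at the price of citing the Koszul deformation principle as a black box (you should quote its precise one-parameter homogeneous form, which requires exactly the flatness of the relation module you established); the paper's argument is self-contained in all degrees and in addition identifies the dual algebra with a Nichols algebra, which is used elsewhere in the paper. One caution about your closing sentence: the ``alternative'' of iterating with higher antisymmetrizers $\sum_{\sigma\in S_k}\sgn(\sigma)\Psi_{T_\sigma}$ is not a routine iteration of your degree-$3$ argument, since for $k\ge4$ flatness is not controlled by a single intersection of the form $\bigcap_i Y^{\tensor i-1}\tensor R\tensor Y^{\tensor k-i-1}$ that an image inclusion would bound from below; making that route precise essentially amounts to the paper's Nichols-algebra argument (with the signs produced by passing to $-\Psi^*$), so it should be presented as such rather than as an immediate variant.
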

We prove Theorem~\ref{thm:flat quadratic algebra} in Section~\ref{subsect:diamond}.

An explicit PBW presentation of both  $S_q(V\otimes V)$ and $U(\iota)$ is more cumbersome, so we postpone it until 
Proposition~\ref{pr:SqPres}. Below we provide a presentation of $U(\iota)$ by a minimal set of Chevalley-like generators satisfying Serre-like relations. 

\begin{theorem}\label{th:Dn Chevalley} The uberalgebra $U(\iota)=S_q(V\otimes V)\rtimes U_q^+(\lie{sl}_n)$, $n\ge 2$ is generated by $u_1,\ldots,u_{n-1}$, $w$, and $z$ subject to the following relations (for all relevant $i,j$): 
\begin{align*}
&[u_i,[u_i,u_j]_q]_{q^{-1}}=0,\,\text{if $|i-j|=1$},\quad u_iu_j=u_ju_i,\, \text{if $|i-j|>1$},
\\
&u_i w=w u_i,\, \text{if $i\ne 1$},\quad u_i z=z u_i,\,\text{if $i\ne 2$}, \quad zw=wz,
\\
&[u_{1},[u_{1},[u_{1},w]]_{q^2}]_{q^{-2}}=0,\qquad [u_{2},[u_{2},z]_q]_{q^{-1}}=0,
\\
&[w,[w,u_{1}]_{q^2}]_{q^{-2}}=-h wz,\qquad [z,[u_2,[u_1,w]_{q^2}]_q]_q=[w,[u_1,[u_2,z]_q]_q]_{q^2},
\\
&2[z,[z,u_{2}]_q]_{q^{-1}}=h 
(z [u_{1},u_{2}]_q w+w [u_{2},u_{1}]_{q^{-1}} z+w u_{1}[z,u_{2}]_q
+[u_2,z]_{q^{-1}}u_1w),
\end{align*}
where $h=q-q^{-1}$ and we abbreviate $[a,b]_v=ab-v ba$ and $[a,b]=[a,b]_1=ab-ba$ (with the convention that $u_2=0$ if $n=2$). 
\end{theorem}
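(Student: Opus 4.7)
The plan is to realize the abstract generators as specific elements of $U(\iota)$, verify each relation by direct computation, and then promote the resulting surjection to an isomorphism by a basis-matching argument.

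First, I would take $u_1,\ldots,u_{n-1}$ to be the Chevalley generators of the $U_q^+(\lie{sl}_n)$-factor, $w$ to be a highest weight vector of the $U_q(\lie{sl}_n)$-subrepresentation $S^2V\subset V\otimes V$ (essentially $v_1\otimes v_1$), and $z$ to be a highest weight vector of $\Lambda^2V\subset V\otimes V$ (essentially $v_1\otimes v_2-q\,v_2\otimes v_1$), all inside $U(\iota)=S_q(V\otimes V)\rtimes U_q^+(\lie{sl}_n)$. Since $V\otimes V=S^2V\oplus\Lambda^2V$ as $U_q(\lie{sl}_n)$-modules and each simple summand is cyclic from its highest weight vector -- a cyclicity which inside the crossed product takes the form of iterated quantum commutation with the $u_k$'s -- the set $\{u_i,w,z\}$ generates $U(\iota)$ as an associative algebra.

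Next, I would verify each relation holds. The first two families are the standard quantum Serre relations for $U_q^+(\lie{sl}_n)$. The commutations $u_iw=wu_i$ for $i\ne 1$, $u_iz=zu_i$ for $i\ne 2$, together with the Chevalley-versus-$u_i$ identities $[u_1,[u_1,[u_1,w]]_{q^2}]_{q^{-2}}=0$ and $[u_2,[u_2,z]_q]_{q^{-1}}=0$, follow immediately from the weights of $w,z$ and the lengths ($3$ and $2$ respectively) of the $u_1$-string on $S^2V$ and of the $u_2$-string on $\Lambda^2V$. The four remaining relations -- $zw=wz$, the identity $[w,[w,u_1]_{q^2}]_{q^{-2}}=-hwz$, the mixed identity $[z,[u_2,[u_1,w]_{q^2}]_q]_q=[w,[u_1,[u_2,z]_q]_q]_{q^2}$, and the long identity on the last line -- encode specific quadratic relations of $S_q(V\otimes V)$ among translates of $w$ and $z$; I would verify these by direct substitution using the braiding formula \eqref{eq:psi braiding} and the explicit PBW presentation of $S_q(V\otimes V)$ established in Proposition~\ref{pr:SqPres}. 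In particular, the factor $-h=-(q-q^{-1})$ on the right of $[w,[w,u_1]_{q^2}]_{q^{-2}}=-hwz$ arises precisely from the Hecke correction in the definition of $\Psi$.

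Finally, letting $\tilde B$ denote the algebra abstractly defined by the presentation, the previous step gives a surjection $\pi:\tilde B\twoheadrightarrow U(\iota)$. To construct its inverse I would produce elements $\tilde X_{ij}\in\tilde B$ (for $1\le i,j\le n$) by iterated quantum commutators of $w$ and $z$ with the $u_k$'s, verify using the Serre and mixed relations that their $\CC(q)$-span carries a $U_q(\lie{sl}_n)$-module structure isomorphic to $V\otimes V$, and then check that the $\tilde X_{ij}$ satisfy all defining quadratic relations $(\Psi-1)(V\otimes V)=0$ of $S_q(V\otimes V)$. The main obstacle is this last step: the quadratic relations number on the order of $n^4$, while only four genuinely non-Serre Chevalley-like relations are available. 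The natural strategy is to use the $U_q(\lie{sl}_n)$-equivariance (available once the module structure on the $\tilde X_{ij}$ is established) to reduce all quadratic relations to a small set of $U_q(\lie{sl}_n)$-highest-weight representatives, and then match these representatives directly against the four non-Serre relations in the list. Once this matching is complete, one obtains the inverse algebra homomorphism $U(\iota)\to\tilde B$, and $\pi$ is an isomorphism.
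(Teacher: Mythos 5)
Your first step contains a concrete error. Inside $U(\iota)=S_q(V\tensor V)\rtimes U_q^+(\lie{sl}_n)$ only the raising operators are available: the cross relations give $[E_i,a]_{q^{(\alpha_i,\nu)}}=E_i(a)$ for $a\in(V\tensor V)_\nu$, so iterated quantum commutators with the $u_k$ implement the $E_k$-action, which kills the highest weight vectors $v_1\tensor v_1$ and $v_1\tensor v_2-q\,v_2\tensor v_1$. With your choice the subalgebra generated by $u_1,\dots,u_{n-1},w,z$ does not contain $V\tensor V$ (cyclicity from a highest weight vector uses the lowering operators $F_i$, which are absent), and the Serre-like string relations degenerate, since $u_1$ already $q$-commutes with $w$. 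One must instead take the \emph{lowest} weight vectors, $w=X_{n,n}$ and $z=q(X_{n-1,n}-qX_{n,n-1})$, with the convention $u_i=E_{n-i}$, exactly as in Remark~\ref{rem:various S_q} and Proposition~\ref{pr:crossprod isom}; after this correction, the verification of the listed relations (done in the paper partly by computer, by reducing each identity to membership in $\Im(\Psi-1)=\ker\bigl((\Psi+q^2)(\Psi+q^{-2})\bigr)$) and the resulting surjection $\pi$ from the presented algebra onto $U(\iota)$ coincide with the paper's first half.

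The injectivity step is where you genuinely diverge from the paper, and as written it has a gap. Your reduction of the $O(n^4)$ quadratic relations relies on ``$U_q(\lie{sl}_n)$-equivariance, available once the module structure on the $\tilde X_{ij}$ is established'', but the presented algebra carries no lowering operators, and defining a full $U_q(\lie{sl}_n)$-module structure on the span of the $\tilde X_{ij}$ presupposes that this span is $n^2$-dimensional, which is essentially the flatness you are trying to prove. What is actually available is only $U_q^+$-equivariance of a candidate map $V\tensor V\to\tilde B$; even the closure of the $u_i$-commutator action on the $\tilde X_{ij}$ (the cross relations) must first be derived inside the presented algebra, where no normal form is yet known. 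With only raising operators one must propagate from lowest-weight relation vectors, and one must account for the fact that the relation space is a deformation of $\Lambda^2(V\tensor V)\cong(S^2V\tensor\Lambda^2V)\oplus(\Lambda^2V\tensor S^2V)$, with two isotypic components each of multiplicity two, so a two-dimensional space of extremal relations at each of two weights has to be produced from the presentation (these do match $zw=wz$, $[w,[w,u_1]_{q^2}]_{q^{-2}}=-hwz$, and the two relations involving $u_2$, so the numerology is right, but none of these derivations is carried out in your sketch). The paper avoids all of this: after the surjection it proves injectivity by a specialization argument, grading the presented algebra by $\deg u_i=\deg w=1$, $\deg z=2$, bounding its graded dimensions above by those of the $q=1$ specialization $U\bigl((V\tensor V)\rtimes(\lie{sl}_n)_+\bigr)$ via Lemma~\ref{lem:spec of ideals}, and noting that these equal the graded dimensions of $S_q(V\tensor V)\rtimes U_q^+(\lie{sl}_n)$ because both are PBW on generating sets of the same cardinality (Theorem~\ref{th:D_n}\eqref{th:D_n.i}). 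To complete your route you would have to carry out the inverse construction in full, a substantial computation; otherwise the dimension-count argument is the efficient way to close the proof.
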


\begin{remark} \label{rem:various S_q}
Under the decomposition $V\otimes V=S^2_q V\oplus \Lambda^2_q V$ in the category of $U_q(\lie{sl}_n)$-modules the generator $w$ (respectively $z$) of $U(\iota)$  
is a lowest weight vector in the simple $U_q(\lie{sl}_n)$-module $S^2_q V$ (respectively $\Lambda^2_q V$), with the convention that $u_i$ equals the $(n-i)$th standard Chevalley generator $E_{n-i}$ of $U_q^+(\lie{sl}_n)$. 

It is easy to show that $S_q(V\otimes V)/\langle \Lambda_q^2 V\rangle \cong S_q(S^2 V)$, $S_q(V\otimes V)/\langle S_q^2 V\rangle \cong S_q(\Lambda^2 V)$, where $S_q(S^2 V)$ and $S_q(\Lambda^2 V)$ are respectively the algebras of quantum symmetric and quantum exterior matrices studied in~\cites{FRT,Nou,Kam,Str}. 
Due to this and the canonical identification $S(V\otimes V)=S(\Lambda^2 V\oplus S^2 V)=S(\Lambda^2 V)\otimes S(S^2 V)$, we can view $S_q(V\otimes V)$ as a deformation of the braided (in the category of $U_q(\lie{sl}_n)$-modules) tensor product $S_q(\Lambda^2 V)\otimes S_q(S^2 V)$  (see also Remark~\ref{rem:various S_q Poisson} for the Poisson version of this discussion). This point of view is  supported by the observation that our braiding operator $\Psi$ given by \eqref{eq:psi braiding} is a deformation of the braiding $\Psi':=\Psi_2\Psi_1\Psi_3\Psi_2$ of $V\otimes V$ with itself in the category of $U_q(\lie{sl}_n)$-modules. Note, however, that latter braiding $\Psi'$ does not satisfy the condition~\eqref{thm:flat quadratic algebra.ii} of Theorem~\ref{thm:flat quadratic algebra}, therefore, the quadratic algebra $S_{\Psi'}(V\otimes V)$ (as defined in Theorem~\ref{thm:flat quadratic algebra}) is not a flat deformation of $S(V\otimes V)$. 
\end{remark}

\begin{remark} In all quantum foldings we constructed so far the image of $\iota$ is contained in $U_q(\lieg)^{gr~\sigma}$, where $(\cdot)^{gr~\sigma}$ is the graded fixed point algebra defined for any graded algebra $A=\bigoplus_{\gamma\in  \Gamma} A_\gamma$ and any automorphism $\sigma$ of $A$ by:
$A^{gr~\sigma}=\textstyle\bigoplus_{\gamma\in \Gamma} \{a\in A_\gamma:\sigma(a)=a\}$ (in our case, $\Gamma$ is the root lattice of $\lieg$).
One can show that the subalgebra of $U_q^+(\lie{so}_{2n+2})$ generated by the image of $\iota:
U_q^+(\lie{sp}_{2n})\hookrightarrow U_q^+(\lie{so}_{2n+2})$ is isomorphic to $U_q^+(\lie{so}_{2n+2})^{gr~\sigma}$, 
but we do not expect that this to happen in general (e.g., it fails for the pair $(\lie g,\lie g^\sigma{}^\vee)=(\lie{so}_8,G_2)$). We will discuss the relationship between quantum foldings and graded fixed points of diagram auromorphisms in a separate publication.
\end{remark} 

Theorem \ref{th:first folding} implies that the ``classical limit'' $S(V\otimes V)$ of $S_q(V\otimes V)$ has a quadratic Poisson bracket which we present in the following
\begin{corollary} 
\label{cor:deformed Yakimov's bracket} In the notation of Theorem \ref{th:first folding}, let  $\{X_i\}$, $i=1,\ldots,n$ be the standard basis of~$V$. Then
the formulae (for all $1\le i\le j\le k\le l\le n$, where we abbreviated $X_{ij}=X_i\otimes X_j$ for $1\le i,j\le n$): 
\begin{align*}
&\{X_{ij},X_{kl}\}=(\delta_{ik}+\delta_{il}+\delta_{jk}+\delta_{jl}) X_{ij}
    X_{kl}-2 (X_{il} X_{kj}+X_{ki} X_{lj})\\
&\{X_{ij}, X_{lk}\}=(\delta_{ik}+\delta_{il}+\delta_{jk}+\delta_{jl}) X_{ij}
    X_{lk}-2 (X_{kj} X_{li}+ X_{ik}
    X_{lj})\\
&\{X_{ji},X_{kl}\}=(\delta_{ik}+\delta_{il}+\delta_{jk}+\delta_{jl}) X_{ji}
    X_{kl}-2 (X_{jl} X_{ki}+X_{kj} X_{li})\\
&\{X_{ji}, X_{lk}\}=(\delta_{ik}+\delta_{il}+\delta_{jk}+\delta_{jl})
    X_{ji} X_{lk}-2 (X_{jk}X_{li}+X_{ki} X_{lj})\\
&\{X_{ik},X_{jl}\}=(\delta_{ij}+\delta_{il}-\delta_{jk}+\delta_{kl}) X_{ik}
    X_{jl}-2 (X_{il} X_{jk}-X_{ij} X_{kl}+X_{ji}
    X_{lk})\\
&\{X_{ik},X_{lj}\}=(\delta_{ij}+\delta_{il}-\delta_{jk}+\delta_{kl}) X_{ik}
    X_{lj}-2 X_{jk}X_{li}\\
&\{X_{ki},X_{jl}\}=(\delta_{ij}+\delta_{il}-\delta_{jk}+\delta_{kl}) X_{jl}
    X_{ki}-2 X_{jk} X_{li}\\
&\{X_{ki}, X_{lj}\}=(\delta_{ij}+\delta_{il}-\delta_{jk}+\delta_{kl}) X_{ki}
    X_{lj}-2 X_{kj}X_{li}
\\
&\{X_{il},X_{jk}\}=(\delta_{ij}+\delta_{ik}-\delta_{jl}-\delta_{kl}) X_{il}
    X_{jk}+2( X_{ij} X_{lk}-X_{ji} X_{kl})\\
&\{X_{il},X_{kj}\}=(\delta_{ij}+\delta_{ik}-\delta_{jl}-\delta_{kl}) X_{il} X_{kj}+2(
    X_{ik} X_{lj}-X_{jl}
    X_{ki})\\
&\{X_{li},X_{kj}\}=(\delta_{ij}+\delta_{ik}-\delta_{jl}-\delta_{kl}) X_{kj}
    X_{li}\\
&\{X_{li},X_{jk}\}=(\delta_{ij}+\delta_{ik}-\delta_{jl}-\delta_{kl}) X_{jk}
    X_{li}
\end{align*}
define a Poisson bracket on $S(V\otimes V)$.
\end{corollary}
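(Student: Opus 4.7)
The plan is to extract the Poisson bracket as the semiclassical limit of the flat deformation $S_q(V\otimes V)$ established in Theorem~\ref{th:first folding}\eqref{th:first folding.iii}, and then to read off its explicit values on the generators from the formula~\eqref{eq:psi braiding} for the braiding $\Psi$.

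More precisely, by Theorem~\ref{thm:flat quadratic algebra} the algebra $S_q(V\otimes V)$ is a flat $\CC(q)$-deformation of the symmetric algebra $S(V\otimes V)$ with specialization parameter $h=q-q^{-1}$ (this is the natural deformation parameter because, by hypothesis~\eqref{thm:flat quadratic algebra.i}, the braiding $\Psi$ specializes at $q=1$ to the permutation of factors, so the relations of $S_q(V\otimes V)$ specialize to those of $S(V\otimes V)$). Standard general nonsense (see e.g.\ the discussion of deformation quantization) then produces a Poisson bracket on $S(V\otimes V)$ by the rule
\begin{equation*}
\{a,b\}=\frac{\tilde a\tilde b-\tilde b\tilde a}{h}\bigg|_{q=1},
\end{equation*}
where $\tilde a,\tilde b\in S_q(V\otimes V)$ are any lifts of $a,b\in S(V\otimes V)$. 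Bilinearity, antisymmetry, the Jacobi identity and the Leibniz rule are automatic, being consequences of the corresponding properties of the commutator in the flat family $S_q(V\otimes V)$; hence one only needs to verify that the bracket so defined agrees with the explicit formulas in the statement of the corollary on the generators $X_{ij}=X_i\otimes X_j$.

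To carry out this verification, I would use that the quadratic relations of $S_q(V\otimes V)$ are given exactly by $(\Psi-1)(V^{\otimes 4})=0$, so that in $S_q(V\otimes V)$ one has
\begin{equation*}
X_{ij}X_{kl}=\Psi(X_i\otimes X_j\otimes X_k\otimes X_l),
\end{equation*}
where on the right $\Psi$ is applied in $V^{\otimes 4}$ and the resulting tensor is reinterpreted as an element of $(V\otimes V)^{\otimes 2}$ by grouping factors $(1,2)$ and $(3,4)$. Expanding $\Psi$ via~\eqref{eq:psi braiding}, using the standard explicit form of each Hecke braiding $\Psi_s$ on the basis vectors $X_a\otimes X_b$ and keeping terms up to order $h=q-q^{-1}$, one obtains $X_{ij}X_{kl}-X_{kl}X_{ij}$ as a quadratic expression in the generators with coefficients that are polynomials in $h$, whose $h^1$-coefficient is then the desired Poisson bracket. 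The combinatorics of the $\delta$-terms in the formulas (which simply record how many of the indices $i,j,k,l$ are pairwise equal) arises from the coincidence terms in the products of the Hecke operators $\Psi_s$, while the bilinear correction terms come from the off-diagonal matrix elements of the $\Psi_s$'s; the various cases in the statement correspond precisely to the distinct orderings of the four indices.

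The expected main obstacle is purely computational: one has to keep track of the twelve separate cases listed in the corollary, which correspond to the inequivalent orderings of $i,j,k,l$ within the tensor $X_i\otimes X_j\otimes X_k\otimes X_l$ under the grouping $(12)(34)$, and patiently expand the four-fold product $\Psi_2\Psi_1\Psi_3\Psi_2$ (together with the lower-order corrections $(q-q^{-1})(\Psi_1\Psi_2\Psi_1+\Psi_1\Psi_3\Psi_2)+(q-q^{-1})^2\Psi_1\Psi_2$) on each basis vector to first order in $h$. No individual step is difficult, but the case analysis is lengthy; once the twelve cases have been checked, the Jacobi identity and Leibniz rule require no further work, being inherited automatically from the flat deformation established in Theorem~\ref{th:first folding}.
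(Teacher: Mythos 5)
Your proposal follows essentially the same route as the paper: the paper first computes the explicit quadratic PBW relations of $S_q(V\otimes V)$ by expanding the braiding $\Psi$ of \eqref{eq:psi braiding} on the products $X_{ab}X_{cd}$ (Proposition~\ref{pr:SqPres}), and then invokes its general specialization result (Proposition~\ref{prop:Specialization}), which is precisely the ``standard general nonsense'' you appeal to; so both the semiclassical-limit step and the first-order expansion of $\Psi$ you outline are exactly what the paper does. One caveat: with your normalization $\{a,b\}=\lim_{q\to 1}[\tilde a,\tilde b]/h$, $h=q-q^{-1}$, the coefficients you would obtain are (up to an overall sign) one half of those displayed in the corollary, since $h\sim 2(q-1)$ near $q=1$, whereas the displayed formulas correspond to differentiating the structure constants of the PBW presentation with respect to $q$ at $q=1$ as in \eqref{eq:explicit poisson}. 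This rescaling by a nonzero constant does not affect the conclusion that the formulas define a Poisson bracket, but the literal ``agreement on generators'' you plan to verify should be stated with the correct normalization of the deformation parameter.
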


\begin{remark} \label{rem:various S_q Poisson}
In~\cite{GY} K.~Goodearl and M.~Yakimov constructed quadratic Poisson brackets on $S(\Lambda^2 V)$ and $S(S^2 V)$. 
In parallel with  Remark \ref{rem:various S_q}, one can show that the ideal of $S(V\otimes V)$ generated by $\Lambda^2 V=\operatorname{\operatorname{Span}}\{X_{ij}-X_{ji}\}$ (respectively by $S^2 V=\operatorname{\operatorname{Span}}\{X_{ij}+X_{ji}\}$)  is Poisson   hence the  quotient of $S(V\tensor V)$ by this ideal is the 
Poisson algebra $S(S^2 V)$ (respectively $S(\Lambda^2 V)$) from \cite{GY}. Therefore, we can view the bracket given by Corollary \ref{cor:deformed Yakimov's bracket}
as a certain deformation of 
the  Poisson bracket on $S(V\otimes V)$ obtained by lifting  the brackets on $S(\Lambda^2 V)$ and $S(S^2 V)$. 
\end{remark}

We construct more liftable quantum foldings when $\sigma$ is an involution.
\begin{theorem} 
\label{th:first folding 2}
If $(\lieg,\lieg^\sigma{}^\vee)=(\lie{sl}_n\times \lie{sl}_n,\lie{sl}_n)$, $n=3,4$, then there exists a tame liftable quantum folding $\iota:U_q^+(\lieg^\sigma{}^\vee)\hookrightarrow
U_q^+(\lieg)$ 
such that $U(\iota)$ is a $q$-deformation of the universal enveloping algebra $U(V_n\rtimes (\lie{sl}_n)_+)$, $n=3,4$, 
where $V_n$ is a finite-dimensional module (regarded as an abelian Lie algebra) over $(\lie{sl}_n)_+$. More precisely,

\begin{enumerate}[{\rm(i)}]

\item\label{th:first folding 2.i}  For $(\lie{sl}_3\times \lie{sl}_3,\lie{sl}_3)$, $V_3={\bf 1}$ is the trivial one-dimensional $(\lie{sl}_3)_+$-module and
the uberalgebra $U(\iota)$ is generated by $u_1,u_2$, and $z$ subject to the following relations

$\bullet$ $z$ is central;

$\bullet$  $u_i^2 u_j-(q^2+q^{-2})u_i u_j u_i+u_j u_i^2=(q-q^{-1})u_i z$ for  $\{i,j\}=\{1,2\}$. 

\item\label{th:first folding 2.ii}
For $(\lie{sl}_4\times \lie{sl}_4,\lie{sl}_4)$,   the $(\lie{sl}_4)_+$-module $V_4$ has a basis $z_{12},z_{13},z_{23},z_{1,23},z_{12,3},z_{12,23}$, the action of Chevalley generators $e_1,e_2,e_3$ of $(\lie{sl}_4)_+$ on $V_4$ is given by the following
diagram
$$
\divide\dgARROWLENGTH by 3
\begin{diagram}
\node{z_{12}}\arrow{s,l}{e_3}\node{z_{13}}\node{z_{23}}\arrow{s,r}{e_1}\\
\node{z_{12,3}}\arrow{se,r}{e_2}\node{}\node{z_{1,23}}\arrow{sw,r}{e_2}\\
\node{}\node{z_{12,23}}
\end{diagram}
$$
where an arrow from $z$ to $z'$ labeled by $e_i$ means that $e_i(z)=z'$, while $e_j(z)=0$ for all $j\not=i$. The uberalgebra $U(\iota)$ is a quantized enveloping algebra of the Lie algebra $V_4\rtimes (\lie{sl}_4)_+$ and it is generated by $u_1,u_2,u_3$, $z_{12}=z_{21}$, $z_{23}=z_{32}$, and $z_{13}$ subject to the relations:
\begin{itemize}
\item $u_iz_{ij}=z_{ij}u_i$,  $i< j$, $u_1u_3=u_3u_1$,

\item $[u_i,[u_i,u_j]_{q^2}]_{q^{-2}}=hu_i z_{ij}$,  $|i-j|=1$, 

\item $[u_i,[u_i,z_{j2}]_{q^2}]_{q^{-2}} =hu_i z_{13}$ for $\{i,j\} = \{1,3\}$, 

\item $(q+q^{-1}) [z_{12},z_{23}]=[u_2,[z_{12},u_3]_{q^{-2}}]_{q^2}-[u_2,[z_{23},u_1]_{q^{-2}}]_{q^2}$,

\item $[u_2,z_{13}]+[z_{i2},z_{j2}]=h(z_{j2}u_i u_2-u_2 u_i z_{j2})$ for $\{i,j\} = \{1,3\}$,

\item $2[z_{i2},[z_{i2},u_j]_{q^{-2}}]_{q^2}+[z_{i2},[u_i,z_{2j}]_{q^{-2}}]_{q^2}+[u_i,[z_{i2},z_{2j}]_{q^{-2}}]_{q^2}=$

$=h \{z_{i2},z_{13}+(q^2+1+q^{-2})u_i u_2 u_j-u_j u_2 u_i-u_iu_ju_2-u_2u_iu_j\}$ for $\{i,j\}=\{1,3\}$, 

\item $
[z_{i2},z_{13}]-[u_i,[u_2,[u_j,z_{i2}]]]=h(u_i z_{j2}z_{i2}-z_{i2}z_{j2}u_i)+h^2(u_i u_2 u_j z_{i2}-z_{i2}u_j u_2 u_i)
$
\end{itemize}
for $\{i,j\}=\{1,3\}$, where we abbreviated $[a,b]_v=ab-vba$,  $[a,b]=ab-ba$, $\{a,b\}=ab+ba$, and $h=q-q^{-1}$.
\end{enumerate}
\end{theorem}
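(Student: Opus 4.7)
The plan is to construct $\iota$ as the natural diagonal embedding and then verify the specific presentations by direct computation in $U_q^+(\lie{sl}_n) \otimes U_q^+(\lie{sl}_n)$. Identify $U_q^+(\lieg) = U_q^+(\lie{sl}_n) \otimes U_q^+(\lie{sl}_n)$ and let $\sigma$ be the swap of tensor factors. The quantum parameter for $U_q^+({\lieg^\sigma}^\vee) = U_q^+(\lie{sl}_n)$ is $q^2$, so its Chevalley generators $E_1, \dots, E_{n-1}$ satisfy $q^2$-Serre relations. Define $\iota$ on the Lusztig PBW basis $\{E_{\beta_1}^{a_1} \cdots E_{\beta_N}^{a_N}\}$ (for a fixed convex order on positive roots) by setting $\iota(E_\beta) = E_\beta \otimes 1 + 1 \otimes E_\beta$ for each positive root $\beta$ and extending multiplicatively as in Definition~\ref{def:liftable iota}\eqref{def:liftable iota.i}. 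Injectivity and $\sigma$-invariance of the image are immediate.

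Next, identify $\langle A \rangle_\iota$ and $Z_0$. Setting $u_i = \iota(E_i)$, direct expansion of $u_i^2 u_j - (q^2 + q^{-2}) u_i u_j u_i + u_j u_i^2$ (for $|i - j| = 1$) into bihomogeneous components yields a sum in bidegrees $(2, 1)$ and $(1, 2)$ proportional to $(q-q^{-1})^2$, which factors as $h u_i z_{ij}$ for a specific $\sigma$-invariant element $z_{ij}$ of weight $\alpha_i + \alpha_j$ (for $n=3$, only $z=z_{12}$ arises; for $n=4$ one gets $z_{12}, z_{23}$ and also $z_{13}$ from the analogous computation at roots separated by two indices). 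The remaining generators $z_{1,23}, z_{12,3}, z_{12,23}$ of $Z_0$ for $n=4$ arise as $\iota$-images of further quantum root vectors modulo the subalgebra generated by the $u_i$'s and lower $z_\beta$'s; they are constructed inductively via relations such as $[u_3, z_{12}] = q^{-1} z_{12,3} + (\text{lower})$ that exactly mirror the arrows in the stated diagram.

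To establish the relations, expand each $q$-commutator explicitly in $U_q^+(\lie{sl}_n)^{\otimes 2}$ and simplify using the $q^2$-Serre relations in each factor together with commutativity between tensor factors. Centrality of $z$ in part~\eqref{th:first folding 2.i} follows since $z$ has weight equal to the highest root of $\lie{sl}_3$, so $[u_i, z] = 0$ on weight grounds. For part~\eqref{th:first folding 2.ii}, the $(\lie{sl}_4)_+$-action on $V_4$ is read off from the leading term of $[u_i, z_\beta]_{q^?}$: the diagram records precisely which $q$-commutators produce a new $z$ and which vanish. The quadratic relations among the $z_\beta$'s encode the fact that $V_4$ is abelian as a Lie algebra, in that every bracket $[z_\beta, z_\gamma]$ reduces to terms whose classical limit vanishes. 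The uberalgebra structure and tameness follow by producing an explicit PBW basis of $U(\iota)$ in the generators $u_i, z_\beta$ and checking that its graded dimensions match those of $U(V_n \rtimes (\lie{sl}_n)_+)$ via the classical limit.

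The main obstacle will be the quartic relations for $n=4$ (in particular the last three of part~\eqref{th:first folding 2.ii}), each expanding into dozens of tensor terms; success rests on systematic bookkeeping of bi-gradings and repeated use of the $q^2$-Serre relations inside each factor to produce the required cancellations. A secondary subtlety is showing the listed relations are complete, i.e.\ no hidden relation exists among $\{u_i, z_\beta\}$ beyond those stated---this is where tameness of $\langle A \rangle_\iota$ combined with the dimension match to $U(V_n \rtimes (\lie{sl}_n)_+)$ supplied by the PBW argument above becomes essential.
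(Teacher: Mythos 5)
Your proposal fails at its first step: the map you take for $\iota$, namely $\iota(E_\beta)=E_\beta\tensor 1+1\tensor E_\beta$ extended multiplicatively, is not the quantum folding of this paper, and --- decisively --- the relations of the theorem simply do not hold for it, for any choice of $z$. Work in $U_q^+(\lie{sl}_3)^{\tensor 2}$ with $u_i=E_i\tensor 1+1\tensor E_i$ and grade by $Q\times Q$. The component of $u_1^2u_2-(q^2+q^{-2})u_1u_2u_1+u_2u_1^2$ of degree $(2\alpha_1+\alpha_2,0)$ is $E_1^2E_2-(q^2+q^{-2})E_1E_2E_1+E_2E_1^2$ in the first factor, which by the $q$-Serre relation equals $-(q^2+q^{-2}-q-q^{-1})E_1E_2E_1\tensor 1\neq 0$; so your claim that only the balanced bidegrees $(2,1)$ and $(1,2)$ survive (with coefficient $(q-q^{-1})^2$) is false. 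Matching this against $(q-q^{-1})u_1z$ forces the $(\alpha_1+\alpha_2,0)$-component of $z$ to be $c\,E_2E_1\tensor 1$ with $c=-(q^2+q^{-2}-q-q^{-1})/(q-q^{-1})$ (the coefficient of $E_1E_2$ must vanish, else an $E_2E_1^2$ term appears). But then the $(\alpha_1+2\alpha_2,0)$-component of the relation with $\{i,j\}=\{2,1\}$ would require $(q-q^{-1})c\,E_2^2E_1$ to be proportional to $E_2E_1E_2$, which is impossible since these are linearly independent. Hence no $z$ (central or not) exists, and the whole construction collapses; this is exactly the phenomenon recorded in the paper's appendix that a ``naive'' additive quantum folding does not exist. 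Your centrality argument ``on weight grounds'' is also invalid: being of highest-root weight does not make an element of $U_q^+$ central.

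The paper's $\iota_\ii$ (Proposition~\ref{pr:general folding}, Lemma~\ref{le:iota PBW}) is defined on the modified PBW bases and sends each generator to the \emph{product} over its $\sigma$-orbit, so for the diagonal folding $E_r\mapsto E_r\tensor E_r$; the extra generators are elements such as $E_{12}\tensor E_{21}+E_{21}\tensor E_{12}$ for $\lie{sl}_3$ (the $Z_{k,n}$ of Section~\ref{subs:diag sl3}) and the explicit $\tilde z_\alpha$ of the $\lie{sl}_4$ section. With these, the Serre-like relations are verified directly (Lemma~\ref{lem:comm rels Z}, Proposition~\ref{prop:A(z) uber}), the abstract algebras $\mathcal A^{(2)}_{q,3}$ and $\mathcal A_{q,4}$ are shown to be PBW via the Diamond Lemma together with a specialization/dimension comparison against $U(V_n\rtimes(\lie{sl}_n)_+)$ (Lemma~\ref{lem:spec of ideals}), and the triangle \eqref{eq:cone over iota} is checked to commute. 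Your later steps (Diamond Lemma, classical-limit dimension count, completeness of relations via tameness) are in the right spirit and roughly parallel the paper, but they cannot be executed until the embedding and the set $Z_0$ are replaced by the multiplicative, orbit-product folding.
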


\begin{remark} In case of $(\lie{sl}_3\times \lie{sl}_3,\lie{sl}_3)$ the uberalgebra $U(\iota)$ is PBW on the ordered set $\{u_1,u_2,u_{21}=u_1u_2-q^{-2}u_2u_1-z,z\}$
subject to the following relations

\begin{itemize}

\item  the element $z$ is central, 
\item  $u_1 u_{21}=q^{2} u_{21}u_1$,  $u_2 u_{21}=q^{-2} u_{21}u_2$,
\item  $u_1 u_2=q^{-2}u_2 u_1+u_{21}+z$.
\end{itemize}

In particular, $S({\bf 1}\rtimes (\lie{sl}_3)_+)$ is generated by $\tilde u_1,\tilde u_2,\tilde u_{12},\tilde z$ and the following

\begin{itemize}
 \item 
$\{\tilde u_1,\tilde z\}=\{\tilde u_2,\tilde z\}=\{\tilde u_{21},\tilde z\}=0$,
 
\item $\{\tilde u_1,\tilde u_2\}=-2 \tilde u_2\tilde u_1 +4 \tilde u_{21}+2 \tilde z$,

\item  $\{\tilde u_1,\tilde u_{21}\}=2 \tilde u_{21} \tilde  u_1,~\{\tilde u_2,\tilde u_{21}\}=-2 \tilde u_{21}\tilde u_2$
\end{itemize}
defines a  Poisson bracket on $S({\bf 1}\rtimes (\lie{sl}_3)_+)$.

\end{remark}
  
The PBW-presentation of the uberalgebra $U(\iota)$ for the folding $(\lie{sl}_4\times\lie{sl}_4,\lie{sl}_4)$ 
is more cumbersome (see Theorem~\ref{thm:sl4 diag}). Similarly to the previous discussion, the PBW property of $U(\iota)$ defines a Poisson bracket  on $S(V_4\rtimes (\lie{sl}_4)_+)$  which, unlike that on  $S(V_3\rtimes (\lie{sl}_3)_+)$, includes cubic terms (Theorem~\ref{thm:sl4 diag Poisson}). It would be interesting to construct both the uberalgebra and the corresponding Poisson bracket for the folding $(\lie{sl}_n\times \lie{sl}_n,\lie{sl}_n)$, $n\ge 4$. 

Now we will explicitly construct all tame liftable quantum foldings $\iota$  used in Theorems \ref{th:first folding} and \ref{th:first folding 2} along with their (yet conjectural) generalizations to all semisimple Lie algebras. We need some notation.

Given a semisimple simply laced Lie algebra $\lieg$ with an admissible diagram automorphism~$\sigma$, let $I$ be the set of vertices of the Dynkin diagram of $\lieg$ and  we denote by the same letter $\sigma$ 
the induced bijection $\sigma:I\to I$. 
Denote by $s_i$,  $i\in I$ (respectively, by $s'_r$, $r\in I/\sigma$) the simple reflections  
of the root lattice of $\lieg$  (respectively, of ${\lieg^\sigma}^\vee$).  Let 
$W(\lieg)=\langle s_i: i\in I\rangle$ (respectively, $W({\lieg^\sigma}^\vee)=\langle s'_r: r\in I/\sigma\rangle$) be the corresponding Weyl group.

Denote by $\hat w_\circ$ (respectively, $w_\circ$) the longest element of $W(\lieg)$ (respectively, of $W({\lieg^\sigma}^\vee)$). Furthermore, denote by $R(w_\circ)$ the set of all reduced decompositions of $w_\circ$ i.e. of all sequences $\ii=(i_1,\ldots,i_m)\in (I/ \sigma)^m$ where $m=\ell(w_\circ)$  is the Coxeter length of $w_\circ$ such that $s_{i_1}\cdots s_{i_m}=w_\circ$. Similarly, one defines the set $R(\hat w_\circ)$ of all reduced decompositions of $\hat w_\circ$.

Note that each admissible diagram automorphism $\sigma$ defines an automorphism of  $W(\lieg)$ via $s_i\mapsto s_{\sigma(i)}$ and its fixed subgroup $W(\lieg)^\sigma$ is isomorphic to  
$W(\lieg^\sigma)= W({\lieg^\sigma}^\vee)$ via  $s_r\mapsto \hat s_r=\prod_{i\in {\mathcal O}_r} s_i$ where  ${\mathcal O}_r\subset I$ is the $r$-th $\sigma$-orbit in $I$ (see Proposition~\ref{prop:WeylArtin folding}). We denote this natural  isomorphism $W({\lieg^\sigma}^\vee)\widetilde \to W(\lieg)^\sigma$ by $w\mapsto \hat w$. 
 
Thus, one can assign to each $\ii=(i_1,\ldots,i_m)\in R(w_\circ)$   its {\it lifting}  $\hat{\ii}\in R(\hat w_\circ)$ via:
$$\hat \ii=({\mathcal O}_{i_1},\ldots,{\mathcal O}_{i_m})$$ 
(in fact, $\hat \ii$ is unique up to reordering of each set ${\mathcal O}_{r_k}$).

Following Lusztig (\citem{Lus}*{\S40.2}), for each $\hat \ii\in R(\hat w_\circ)$ (respectively, $\ii\in R(w_\circ)$ one defines a {\it modified} PBW-basis $M(X_{\hat \ii})$ of $U_q^+(\lieg)$ (respectively, $M(X_{\ii})$ of $U_q^+({\lieg^\sigma}^\vee$)), see Section~\ref{subs:PBW bases} for details (this modification will
ensure the commutativity of the triangle in~\eqref{eq:cone over iota}).

One can show (see Lemma~\ref{le:iota PBW}) 
that for any  $\hat \ii\in R(\hat w_\circ)$ the PBW basis $M(X_{\hat \ii})$ does not depend on the choice of a lifting $\hat \ii\in R(\hat w_\circ)$ of $\ii\in R(w_\circ)$. Moreover, the action of $\sigma$ on $U_q^+(\lieg)$ preserves $M(X_{\hat \ii})$ for each such lifting $\hat \ii$.

The following result serves as a definition of  quantum folding for all $\lieg$ and $\sigma$ (see Lemma \ref{le:iota PBW} for details). 

\begin{proposition} 
\label{pr:general folding}
Given an admissible diagram automorphism $\sigma$ of $\lieg$, for each  $\ii\in R(w_\circ)$ there is a natural injective $\CC(q)$-linear map
\begin{equation}
\label{eq:generalized quantum folding}
\iota_\ii:U_q^+({\lieg^\sigma}^\vee)\hookrightarrow U_q^+(\lieg)^\sigma\subset U_q^+(\lieg)
\end{equation}
which maps the modified PBW-basis $M(X_\ii)$ bijectively onto the fixed point set $M(X_{\hat \ii})^\sigma$ of~$M(X_{\hat \ii})$. 
\end{proposition}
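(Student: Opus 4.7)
The plan is to produce $\iota_\ii$ as the unique $\CC(q)$-linear extension of an explicit bijection
$$\phi:M(X_\ii)\xrightarrow{\ \sim\ } M(X_{\hat\ii})^\sigma.$$
Since $M(X_\ii)$ is a $\CC(q)$-basis of $U_q^+({\lieg^\sigma}^\vee)$ and $M(X_{\hat\ii})^\sigma\subset M(X_{\hat\ii})$ is linearly independent in $U_q^+(\lieg)$, any such bijection extends uniquely to an injective $\CC(q)$-linear map. The image then automatically lies in $U_q^+(\lieg)^\sigma$, since every element of the target basis is $\sigma$-invariant by construction.

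First, I would exploit admissibility of $\sigma$: by definition no two nodes of a single orbit $\mathcal{O}_r\subset I$ are adjacent, so $\{s_i:i\in\mathcal{O}_r\}$ pairwise commute, $\hat s_r=\prod_{i\in\mathcal{O}_r}s_i$ is a well-defined involution of $W(\lieg)^\sigma$, and any lifting of $\ii=(i_1,\dots,i_m)\in R(w_\circ)$ decomposes canonically into $m$ consecutive blocks $\hat\ii=(B_1\mid B_2\mid\cdots\mid B_m)$, where $B_k$ is an arbitrary total ordering of $\mathcal{O}_{i_k}$. Reordering within a block produces another element of $R(\hat w_\circ)$ lifting the same $\ii$, and because the underlying simple reflections commute, the corresponding PBW root vectors $\{X_\alpha:\alpha\in B_k\}$ pairwise commute in $U_q^+(\lieg)$.

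Next, the action of $\sigma$ on each block $B_k$ is exactly its action on $\mathcal{O}_{i_k}$, so a monomial $\prod_{k,\alpha}X_\alpha^{n_{k,\alpha}}\in M(X_{\hat\ii})$ is $\sigma$-fixed if and only if $n_{k,\alpha}$ depends only on $k$, say $n_{k,\alpha}=m_k$. The $\sigma$-fixed basis elements are therefore in bijection with tuples $(m_1,\dots,m_m)\in\ZZ_{\ge 0}^m$ via
$$(m_1,\dots,m_m)\longmapsto\prod_{k=1}^{m}\bigg(\prod_{\alpha\in B_k}X_\alpha\bigg)^{\!m_k},$$
the inner product being unambiguous by the commutation in the previous step. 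Define $\phi$ by sending the PBW monomial $X_{\ii,1}^{m_1}\cdots X_{\ii,m}^{m_m}\in M(X_\ii)$ to this symmetrized block monomial; injectivity of the linear extension $\iota_\ii$ is immediate, as is $\sigma$-invariance of the image.

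The main obstacle is verifying that $\phi$ is \emph{intrinsic}, i.e., independent of the choice of lifting $\hat\ii$ of $\ii$ and of the ordering inside each block $B_k$. This is the content of the invariance statement of Lemma~\ref{le:iota PBW}, and reduces to a compatibility check between the $q$-power normalizations defining the modified PBW bases $M(X_\ii)$ and $M(X_{\hat\ii})$ in the sense of \citem{Lus}*{\S40.2}: the weight of the $k$-th PBW generator of $U_q^+({\lieg^\sigma}^\vee)$ corresponds under Langlands duality to $\sum_{\alpha\in B_k}\wt(X_\alpha)$, so the two normalizations synchronize and the bijection $\phi$ is unaffected by reordering inside $B_k$ or by a different choice of lifting. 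Once this is in place, the naturality, injectivity, and $\sigma$-equivariance of $\iota_\ii$ all follow at once.
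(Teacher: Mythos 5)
Your proposal is correct and follows essentially the same route as the paper: its Lemma~\ref{le:iota PBW} records exactly your block decomposition — admissibility makes the nodes of each orbit orthogonal, so the within-block PBW vectors commute and $\sigma$ permutes them, hence the $\sigma$-fixed monomials of $M(X_{\hat\ii})$ are precisely the ``balanced'' ones $\hat X_\ii^a$, and $\iota_\ii$ is defined by $X_\ii^a\mapsto \hat X_\ii^a$ and extended linearly (Definition~\ref{defn:quantum folding}). The only quibble is your closing justification of independence of the lifting: it follows simply from $T_i(E_j)=E_j$ and $s_i(\alpha_j)=\alpha_j$ for orthogonal $i,j$ in one orbit (so the block generators and their normalizing scalars are literally unchanged by reordering), rather than from any matching of weights under Langlands duality.
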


In fact, the tame liftable foldings $\iota$ used in Theorems~\ref{th:first folding} and~\ref{th:first folding 2} were of the form $\iota_\ii$, $\ii\in R(w_\circ)$.

\begin{theorem} \label{th:folding ii}
Let $\lieg$ be a simply laced semisimple Lie algebra and let $\sigma$ be its admissible diagram automorphism of order $2$. Then
for any reduced decompositions $\ii,\ii'$ of $w_\circ$ the subalgebras of $U_q^+(\lieg)$ generated by the images of $\iota_\ii$ and $\iota_{\ii'}$  are isomorphic.
\end{theorem}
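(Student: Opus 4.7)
The plan is to reduce to single elementary moves via Matsumoto's theorem and then exploit the $\sigma$-equivariance of Lusztig's braid symmetries $T_i$ on $U_q(\lieg)$.

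First, any two $\ii,\ii'\in R(w_\circ)$ are connected by a finite sequence of elementary moves --- commutations $(r,s)\mapsto(s,r)$ and $A_2$ braid moves $(r,s,r)\mapsto(s,r,s)$ --- so it suffices to treat the case in which $\ii$ and $\ii'$ differ by a single such move. Admissibility of $\sigma$ guarantees that within each orbit $\mathcal{O}_r\subset I$ the simple reflections of $W(\lieg)$ pairwise commute, so that a single elementary move on $\ii$ lifts to a $\sigma$-equivariant sequence of elementary moves transforming $\hat\ii$ into $\hat\ii'$, with the order of factors within each orbit immaterial by Proposition~\ref{prop:WeylArtin folding}.

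Next, I would invoke Lusztig's intertwining relation $\sigma\circ T_i=T_{\sigma(i)}\circ\sigma$ for his braid operators on $U_q(\lieg)$. Since the modified PBW generators $X_{\hat\ii}^{(k)}$ are built from the $T_i$'s in a $\sigma$-compatible fashion (Section~\ref{subs:PBW bases}), the Levendorskii--Soibelman-type transition formulas between the PBW bases $M(X_{\hat\ii})$ and $M(X_{\hat\ii'})$ are $\sigma$-equivariant. Restricting to $\sigma$-fixed monomials and using that $\sigma$ has order $2$, this yields a unitriangular $\CC(q)$-linear bijection from the image of $\iota_\ii$ onto the image of $\iota_{\ii'}$, triangular with respect to the natural lexicographic ordering on exponent vectors.

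Finally, I would upgrade this linear bijection to an algebra isomorphism $\langle A\rangle_{\iota_\ii}\cong\langle A\rangle_{\iota_{\ii'}}$. Both generated subalgebras are sub-PBW on their respective $\sigma$-fixed monomial sets, and the unitriangularity of the transition ensures it corresponds to a change of generators compatible with the defining quantum Serre-type relations inherited from Lusztig's PBW theorem on $U_q^+(\lieg)$. In practice this reduces to a finite list of rank-$2$ cases (commutation and $A_2$ braid on a single orbit, and the simultaneous pair of such moves across a $\sigma$-pair of orbits), for which Lusztig's transition formulas are explicit. The hardest step is this last one: converting a $\sigma$-equivariant linear transition into an algebra isomorphism of the generated subalgebras, which requires tracking how products of $\sigma$-fixed monomials re-expand in the new basis. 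The order-$2$ hypothesis is essential here because orbits are singletons or pairs and the $\sigma$-fixed exponent combinatorics is very symmetric; for higher-order $\sigma$ (such as the $G_2$ case arising from $D_4$) the orbit structure is richer and the parallel argument breaks, consistent with the Remark preceding Proposition~\ref{pr:general folding}.
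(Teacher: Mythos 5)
Your opening reduction---Matsumoto's theorem to pass to two reduced decompositions differing by a single braid move, and then to the corresponding rank-two situation---is exactly how the paper's proof begins. The trouble is with the final step, which you yourself flag as the hardest: in your write-up it is asserted rather than proved, in two places. First, the claim that the two generated subalgebras are sub-PBW on their respective $\sigma$-fixed monomial sets is not available as an input: in the paper's own framework (Definition~\ref{def:liftable iota}) the subalgebra $\langle A\rangle_{\iota_\ii}$ is shown to be sub-PBW only with respect to $\iota_\ii(X_A)\cup Z_0$ for a finite set $Z_0$ of \emph{additional} generators (for $(\lie{so}_{2n+2},\lie{sp}_{2n})$ the element $\hat\iota(z)$ plays this role), and establishing this is the content of Theorems~\ref{th:first folding} and~\ref{th:images coincide}, not a formal consequence of Lusztig's PBW theorem. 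Second, and more fundamentally, a $\sigma$-equivariant unitriangular linear bijection between the images of $\iota_\ii$ and $\iota_{\ii'}$ (i.e.\ between the spans of the fixed monomials of $M(X_{\hat\ii})$ and $M(X_{\hat\ii'})$) carries no multiplicative information whatsoever: it says nothing about the subalgebras these sets generate, and the phrase ``compatible with the defining quantum Serre-type relations'' is precisely the statement that has to be proved.

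What actually closes the argument is an explicit computation in the two nontrivial folded rank-two cases, showing that each new $\sigma$-invariant PBW generator is a noncommutative polynomial in the old ones (so the generated subalgebras literally coincide, not merely are isomorphic). For $|\mathcal O_r|=2$, $|\mathcal O_s|=1$ (lifted words $(i,j,k,i,j,k)$ and $(k,i,j,k,i,j)$ inside a type $A_3$ subalgebra) one checks
\[
\hat X_2'=q^{-2}\bigl(\hat X_3+(q-q^{-1})^{-1}[\hat X_4,\hat X_1]\hat X_4\bigr),\qquad
\hat X_3'=\hat X_2+q^{-1}(q-q^{-1})^{-1}[\hat X_4,\hat X_1],
\]
with the opposite inclusion obtained by applying the anti-automorphism $*$; for $|\mathcal O_r|=|\mathcal O_s|=2$ the single identity $\hat X_2'=\hat X_2+q^{-1}(q-q^{-1})^{-1}[\hat X_3,\hat X_1]$ suffices, and interchanging $r$ and $s$ gives the reverse inclusion. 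These identities are then transported to an arbitrary position in the word by the algebra automorphism $T_{\hat w}$ attached to the common prefix, the generators outside the affected segment being unchanged. Note also that the length-four folded braid move is not a ``simultaneous pair'' of $A_2$ moves across a $\sigma$-pair of orbits, but a composite rearrangement inside type $A_3$; it is exactly there that the order-two hypothesis enters through computation rather than through symmetry of the fixed-point combinatorics. Without these explicit expressibility identities (or some substitute for them), your concluding step does not go through.
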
 

This theorem is proved in~Section~\ref{subs:PBW bases}.

However, if the order of $\sigma$ is at least $3$, it frequently happens that the image of $\iota$ generates a non-sub-PBW algebra hence the uberalgebra $U(\iota_\ii)$ does not always exists (see Section~\ref{subs:inf generated A(z)}). In order to restore the (sub-)PBW behavior of the algebras in question, we propose the modification, which we  refer to as the {\it enhanced} uberalgebra $\hat U(\iota)$.    

Indeed, in the assumptions of Definition \ref{def:liftable} let us relax the assumption that $Z_0\subset \langle A\rangle_\iota$ in Definition \ref{def:liftable iota}.
Suppose that $B$ is  PBW domain. Then we take $Z_0$ to be a finite subset of $\Frac(\iota(A))\cap B$, where $\Frac(\iota(A))\subset \Frac(B)$ is the skew-subfield of the skew-filed $\Frac(B)$ generated by $\iota(A)$ ($B$ is an Ore domain so its skew-field of fractions $\Frac(B)$ is well-defined, see \citem{BZ1}*{Appendix~A} for details). 

We will refer to a map $\iota$ satisfying  Definition \ref{def:liftable iota} ``relaxed'' in such a way as {\it enhanced} liftable and to its uberalgebra (which we denote by $\hat U(\iota)$) as an {\it enhanced} uberalgebra of $\iota$. (A {\it tame} enhanced liftable $\iota$ is  introduced accordingly).
By construction, $\hat U(\iota)$ satisfies the diagram \eqref{eq:cone over iota}, however, it need not be
generated by $A$ (unlike all known $U(\iota)$ for liftable~$\iota$).

\begin{theorem} 
\label{th:second folding A_2 cube} Let $n\ge 3$ and let $(\lieg,{\lieg^\sigma}^\vee)=(\lie{sl}_3^{\times n},\lie{sl}_3)$ where $\lie{sl}_3^{\times n}=\lie{sl}_3\times \cdots \times \lie{sl}_3$  and $\sigma$ is a cyclic permutation of factors. Then for both reduced decompositions $\ii_1=(121)$ and  $\ii_2=(212)$ of  $w_\circ\in W({\lieg^\sigma}^\vee)$   the quantum folding $\iota_{\ii_r}$, $r=1,2$ is enhanced liftable and the enhanced uberalgebras  $\hat U(\iota_{\ii_1})$ and  $\hat U(\iota_{\ii_2})$ are isomorphic. More precisely, 
\begin{enumerate}[{\rm(i)}]

\item \label{th:second folding A_2 cube.i}
$\hat U(\iota_{\ii_1})$ is generated by Chevalley-like generators $u_1,u_2$, and $z_1,\ldots,z_{n-1}$, subject to Serre-like relations

$\bullet$ $z_k z_l=z_l z_k$ for $k,l=1,\ldots,n-1$,

$\bullet$ $u_i z_{k,i}=q^{n-2k} z_{k,i} u_i$ for $i=1,2$, $k=1,\ldots,n-1$,

$\bullet$ $u_i^2 u_j-(q^n+q^{-n})u_i u_j u_i+u_j u_i^2=(q^{-1}-q) u_i \sum_{k=1}^{n-1} q^k z_{k,i}$ 
 for $\{i,j\}=\{1,2\}$

\noindent
where we abbreviated $z_{k,1}=z_{n-k,2}=z_k$.

\item \label{th:second folding A_2 cube.ii}
The enhanced uberalgebra $\hat U(\iota_{i_1})$ is a  PBW algebra in the totally ordered set of generators  $\{u_2, u_{21},u_1, z_1,\ldots,z_{n-1}\}$,  
where $u_{21}=u_1 u_2-q^{-n} u_2 u_1-\sum\limits_{k=1}^{n-1} \frac{q-q^{-1}}{q^k-q^{-k}} z_k$,   subject to the commutation relations:

$\bullet$ $z_k z_l=z_l z_k$ for all $1\le k,l\le n-1$,

$\bullet$ $u_1 u_{21}=q^n u_{21} u_1,u_2 u_{21}=q^{-n} u_{21}u_2$, 

$\bullet$ $u_1 z_k=q^{n-2k} z_{k}u_1$, $u_2 z_k=q^{2k-n} z_k u_2$, $u_{21} z_k=z_k u_{21}$ for all $1\le k\le n-1$,

$\bullet$ $u_1 u_2=q^{-n} u_2 u_1+u_{21}+\sum\limits_{k=1}^{n-1} \displaystyle\frac{q-q^{-1}}{q^k-q^{-k}}\,z_k$.

\item\label{th:second folding A_2 cube.iii}
$\hat U(\iota_{\ii_1})$ a quantum deformation of the enveloping algebra  $U(\mathbf 1^{n-1}\rtimes (\lie{sl}_3)_+)$, where  $\mathbf 1$ is the trivial one-dimensional $(\lie{sl}_3)_+$-module.
\end{enumerate}
\end{theorem}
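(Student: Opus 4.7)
The plan is to exhibit $\iota_{\ii_1}$ as a tame enhanced liftable quantum folding by computing it directly on a modified PBW basis of $U_q^+(\lie{sl}_3)$, identifying the auxiliary generators $z_1,\dots,z_{n-1}$ from one commutator calculation, and then checking the asserted presentations via a confluence argument. By Proposition~\ref{pr:general folding} I would fix the lift $\hat\ii_1$ of $(1,2,1)$ obtained by writing each $\hat s_i=s_i^{(1)}\cdots s_i^{(n)}$, and write down $\iota(u_1),\iota(u_{21}),\iota(u_2)$ explicitly as $\sigma$-symmetrizations of the modified Lusztig root vectors in $U_q^+(\lie{sl}_3^{\times n})$ (the $q$-power renormalization from~\S\ref{subs:PBW bases} is precisely what keeps $\iota$ well-defined). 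Because distinct Cartan factors commute, all subsequent computations take place in the direct sum of the triples of root vectors $\{E_1^{(k)},E_{21}^{(k)},E_2^{(k)}\}_{k=1}^n$.

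Next I would evaluate
\[
C:=\iota(u_1)\iota(u_2)-q^{-n}\iota(u_2)\iota(u_1)-\iota(u_{21})
\]
directly. The diagonal contribution ($k=l$) cancels by the quantum Serre relation inside each factor $\lie{sl}_3^{(k)}$, so $C$ lies in the off-diagonal span of $\{E_1^{(k)}E_2^{(l)}:k\ne l\}$. The cyclic automorphism $\sigma$ acts on this span with fixed subspace of dimension $n-1$; diagonalizing $\ad\iota(u_1)$ on this fixed subspace yields eigenvectors $z_1,\dots,z_{n-1}$ of eigenvalues $q^{n-2k}$, and by $\sigma$-symmetry $\ad\iota(u_2)$ acts with eigenvalue $q^{2k-n}$. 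The key observation is that each $z_k$ is expressible as a rational combination of iterated $q$-commutators of $\iota(u_1)$ and $\iota(u_2)$, placing $z_k$ in $\Frac(\lr{U_q^+(\lie{sl}_3)}_\iota)\cap U_q^+(\lie{sl}_3^{\times n})$ but typically not in $\lr{U_q^+(\lie{sl}_3)}_\iota$ itself; this is precisely the feature that makes $\iota_{\ii_1}$ enhanced liftable rather than liftable in the strict sense.

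With the $z_k$ in hand I would verify each Serre-like relation in~(i) by direct substitution in $U_q^+(\lie{sl}_3^{\times n})$, using only commutativity of distinct factors and the quantum Serre relation in a single $\lie{sl}_3$. For the PBW claim~(ii), consider the abstract algebra $A$ given by the presentation and order its generators as $u_2<u_{21}<u_1<z_1<\cdots<z_{n-1}$; each relation rewrites as a reduction rule whose right-hand side lies in $A_2$. Applying Bergman's Diamond Lemma, the nontrivial overlap ambiguities are $u_1\cdot u_2\cdot u_{21}$, $u_1 u_2\cdot z_k$, and $u_{21}\cdot u_1 u_2$, and confluence at each is verified by a careful expansion. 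This yields PBW-ness of $A$ on $\{u_2,u_{21},u_1,z_1,\dots,z_{n-1}\}$, hence tameness with $d_0\le2$; the surjection $A\twoheadrightarrow\lr{\iota(X_A)\cup Z_0}$ from step two is then an isomorphism by a Poincar\'e series comparison, establishing $\hat U(\iota_{\ii_1})\cong A$.

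For the isomorphism $\hat U(\iota_{\ii_1})\cong\hat U(\iota_{\ii_2})$, note that $(2,1,2)$ differs from $(1,2,1)$ by a single braid move, and the Lusztig symmetry on $U_q^+(\lie{sl}_3^{\times n})$ intertwines the two PBW bases while preserving $\sigma$-invariants; alternatively, the presentation~(i) is manifestly symmetric under $u_1\leftrightarrow u_2$, $z_k\leftrightarrow z_{n-k}$, and both $\iota_{\ii_r}$ satisfy it, so the universal property of presentations yields the desired isomorphism. Part~(iii) follows by specializing~(ii) at $q=1$: the relations $u_iz_k=q^{n-2k}z_ku_i$ become $u_iz_k=z_ku_i$ so the $z_k$ are central, and after setting $\tilde u_{21}=u_{21}+\sum_k\frac{1}{k}z_k=[u_1,u_2]$ (using $\frac{q-q^{-1}}{q^k-q^{-k}}\to\frac{1}{k}$) the remaining relations reduce to the classical Chevalley relations in $(\lie{sl}_3)_+$ together with commutativity of the $z_k$, i.e.\ $U(\mathbf 1^{n-1}\rtimes(\lie{sl}_3)_+)$. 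The main obstacle is the Diamond Lemma confluence check in the third paragraph: the genuine mixing of Chevalley degrees and $z_k$-degrees makes the computation non-reducible to Lusztig's braid theorem inside a single quantum group, and one must simultaneously reconcile the abstract check with the explicit calculation in $U_q^+(\lie{sl}_3^{\times n})$ while tracking the $q$-power normalizations inherited from the modified PBW basis.
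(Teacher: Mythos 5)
Your outline goes wrong at the central computational step: you are implicitly treating $\iota_{\ii_1}$ as the \emph{additive} folding, with $\iota(u_i)$ a sum (``$\sigma$-symmetrization'') of the per-factor root vectors, and consequently you place the defect $C=\iota(u_1)\iota(u_2)-q^{-n}\iota(u_2)\iota(u_1)-\iota(u_{21})$ in the span of the quadratic monomials $E_1^{(k)}E_2^{(l)}$, $k\ne l$, and define the $z_k$ as $\ad\iota(u_1)$-eigenvectors there. But by Definition~\ref{defn:quantum folding} the folding is multiplicative on the modified PBW basis with $\hat E_r=\prod_{i\in\mathcal O_r}E_i$: here $\iota_{\ii_1}(y_1^ay_{21}^by_2^c)=Y_{1,n}^aY_{21,n}^bY_{2,n}^c$ with $Y_{1,n}=E_{1,1}\cdots E_{1,n}$, $Y_{21,n}=\prod_i E_{21,i}$, etc. (the additive version does not even exist, cf.\ Appendix~\ref{sec:non-existent}). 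Hence the defect has weight $\sum_{i=1}^n(\alpha_{1,i}+\alpha_{2,i})$, i.e.\ degree $2n$ in the Chevalley generators, not degree $2$; the correct correction terms are (up to the scalars $[k]_q(q^{n-k}-q^{k-n})$) the elements $Z_{k,n}$ defined inductively by $Z_{i,k}=Z_{i,k-1}E_{21,k}+Z_{i-1,k-1}E_{12,k}$, which arise from the expansion $Y_{1,n}Y_{2,n}=\sum_{k=0}^n q^{n-k}Z_{k,n}$, $Y_{2,n}Y_{1,n}=\sum_{k=0}^n q^k Z_{k,n}$. Your eigenvector mechanism fails even on its own terms (the span of $E_1^{(a)}E_2^{(b)}$ is not $\ad E_1^{(c)}$-stable, since Serre relations are cubic), and with the wrong $z_k$ none of the relations in part~(i), nor the $q$-commutation $u_1u_{21}=q^nu_{21}u_1$, can be verified ``by direct substitution''. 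Likewise, the $Z_{k,n}$ are not rational combinations of iterated $q$-commutators: the paper puts them in $\Frac(\iota(A))\cap U_q^+(\lie{sl}_3)^{\tensor n}$ only by solving the linear system coming from $Y_{1,n}^sY_{2,n}=\bigl(\sum_k q^{(s-1)(n-2k)+n-k}Z_{k,n}\bigr)Y_{1,n}^{s-1}$, $1\le s\le n$, whose matrix $(q^{ns+k(1-2s)})$ is nondegenerate (Lemma~\ref{lem:Z0 fract}); this is exactly what makes the folding \emph{enhanced} liftable, as the example with $n=3$ shows that the subalgebra generated by the image alone is not sub-PBW.

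The later parts of your plan are closer to the paper but need repair in light of the above. Proving PBW-ness of the abstract presentation via the Diamond Lemma is indeed the paper's route (after first deriving the quadratic relations of part~(ii) from the Serre-like ones), except that with the order $u_2<u_{21}<u_1<z_1<\cdots<z_{n-1}$ the only genuine overlap to resolve is $u_1u_{21}u_2$. Also, you do not need (and cannot cheaply get) an isomorphism of the abstract algebra with $\lr{\iota(X_A)\cup Z_0}$ by a Poincar\'e series comparison: the definition of a (enhanced) uberalgebra only requires a surjection onto that sub-PBW algebra together with the splitting $\tilde\iota$ making diagram~\eqref{eq:cone over iota} commute and the homomorphism $\mu$ killing the $z_k$; the paper verifies precisely these (Propositions~\ref{prop:A(z) uber} and~\ref{pr:diag comm sl3 n}), with uniqueness coming from tameness. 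Your symmetry argument for $\hat U(\iota_{\ii_1})\cong\hat U(\iota_{\ii_2})$ ($u_1\leftrightarrow u_2$, $z_k\leftrightarrow z_{n-k}$) and the specialization argument for part~(iii) are fine once the correct generators are in place.
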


We prove Theorem~\ref{th:second folding A_2 cube} in Section~\ref{subs:diag sl3}.

\begin{remark} It follows from Theorem~\ref{th:second folding A_2 cube} that the following defines a Poisson bracket on $S(\mathbf 1^{n-1}\rtimes (\lie{sl}_3)_+)$ 

$\bullet$ $\{\tilde u_1,\tilde u_{21}\}=n \tilde u_1 \tilde u_{21},~\{u_2,\tilde u_{21}\}=-n \tilde u_1 \tilde u_{21}$,

$\bullet$ $\{\tilde u_1,\tilde z_k\}=(n-2k) \tilde u_1 \tilde z_k,~\{\tilde u_2,\tilde z_k\}=(k-2n) \tilde u_2 \tilde z_k$ for $k=1,\ldots,n-1$,

$\bullet$ $\{\tilde u_1,\tilde u_2\}=n(2 \tilde u_{21}-\tilde u_1 \tilde u_2)+2\sum_{k=1}^{n-1}  \tilde z_k$,

\noindent
where $\tilde u_1$,$\tilde u_{21}$, $\tilde u_2$, and $\tilde z_k$, $k=1,\ldots,n-1$ are PBW generators of $S(\mathbf 1^{n-1}\rtimes (\lie{sl}_3)_+)$ obtained by certain specialization at $q=1$ from generators of $U(\iota_{\ii_1})$. Note that 
the quotient by the Poisson ideal generated by $\tilde z_1,\ldots,\tilde z_{n-1}$ is the Poisson algebra $S(\lie{sl}_3^+)$ with the standard Poisson bracket multiplied by~$n$.

\end{remark}

\begin{theorem} 
\label{th:second folding G2} 
Let $(\lieg,{\lieg^\sigma}^\vee)=(\lie{so}_8,G_2)$ where $\sigma$ is a cyclic permutation of $3$ vertices of Dynkin diagram of type $D_4$.  Then for both reduced decompositions $\ii_1=(121212)$ and  $\ii_2=(212121)$ of  $w_\circ\in W({\lieg^\sigma}^\vee)$  the quantum folding $\iota_{\ii_k}$, $k=1,2$ is enhanced liftable and the enhanced uberalgebras  $\hat U(\iota_{\ii_1})$ and  $\hat U(\iota_{\ii_2})$ are isomorphic to each other and to a  quantum deformation of  the universal enveloping algebra $U(\nn_{G_2}\rtimes (\lie{sl}_2)_+)$, where $\nn_{G_2}$ is a certain nonabelian nilpotent $13$-dimensional Lie algebra with the covariant $(\lie{sl}_2)_+$-action. More precisely, 

\begin{enumerate}[{\rm(i)}]
\item\label{th:G2.i} $\nn_{G_2}\rtimes (\lie{sl}_2)_+$ is generated by $u=e_1$, $w$, $z_1$, $z_2$ subject to the following relations 
\begin{itemize}
\item $[u,[u,[u,[u,w]]]]=[w,[w,u]]=0$; 
\item $[u,[u,z_i]]=[z_i,[z_i,[z_i,u]]]=[w,z_i]=0$, $[w,[z_i,u]]=[z_1,z_2]$ for $i=1,2$,
\item $[z_i,[u,z_i]]=[z_1,[u,z_2]]+[z_2,[u,z_1]]$ for $i=1,2$.
\end{itemize}

\item\label{th:G2.ii} $\nn_{G_2}$ is the Lie ideal in $\nn_{G_2}\rtimes (\lie{sl}_2)_+$ with the basis $w_i$, $1\le i\le 5$ and $z_i$, $1\le i\le 8$ and
the multiplication table (only non-zero Lie brackets are shown)

\begin{itemize}
\item $[w_1,w_4]=-3 w_5$,  $[w_2,w_3]=w_5$, 
\item $[w_1,z_3]=[w_1,z_4]=3z_5,~[w_2,z_1]=[w_2,z_2]=[z_2,z_1]=- z_5$,
\item $[w_2,z_3]=[w_2,z_4]=[z_1,w_3]=[z_2,w_3]=2z_6$,
\item $[w_3,z_3]= [w_3,z_4]=[z_3,z_4]=z_7$,~$[w_4,z_1]=[w_4,z_2]=-3 z_7$,
\item $[z_1,z_3]=[z_2,z_4]=2 z_8$,
\item $[z_1,z_4]=z_6+z_8$, $[z_2,z_3]=-z_6+z_8$.
\end{itemize}

\item\label{th:G2.iii} $\hat U(\iota_{\ii_1})$ is generated by Chevalley-like generators $u,w$, and $z_1,z_2$ and satisfies the following Serre-like relations (the list is incomplete):
\begin{itemize}
\item $[u,[u,[u,[u,w]_{q^{-3}}]_{q^{-1}}]_q]_{q^3}=0$
\item $[w,[w, u]_{q^{-3}}]_{q^3}=[w,z_1]_{q^3}=[z_2,w]_{q^3}=
q([z_1,w]_{q}+[w,z_2]_{q})$
\item $[z_1,z_2]=(q+q^{-1}) [z_1,[w,u]_{q^{-3}}]-[z_2,[w,u]_{q^{-3}}]$,
\item $[z_1,[u,w]_q]=[[w,u]_q,z_2]$,~$[z_1,[z_1,u]_{q^{-1}}]_q=[z_2,[z_2,u]_{q^{-1}}]_q$,
\item $[z_1,[z_1,u]_{q^{-1}}]_q=q ([z_1,[z_2,u]_{q^{-1}}]_q+[z_2,[z_1,u]_{q^{-1}}]_q)$

$+(q-q^{-1})(q [z_1,u]_{q^{-1}} z_2-z_1 [z_2,u]_{q^{-1}})$
\end{itemize}
\end{enumerate}

\end{theorem}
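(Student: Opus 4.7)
The plan is to construct the enhanced liftable quantum folding $\iota_{\ii_1}$ for $\ii_1=(121212)$ very explicitly and then read off both the algebra presentation and the Lie algebra presentation from this data. First I would fix the lifting $\hat\ii_1=(\mathcal{O}_1,\mathcal{O}_2,\mathcal{O}_1,\mathcal{O}_2,\mathcal{O}_1,\mathcal{O}_2)\in R(\hat w_\circ)$ and use Lusztig's braid operators $T_i$ to compute the twelve PBW root vectors generating $U_q^+(\lie{so}_8)$ corresponding to the twelve positive roots of $D_4$. The triality $\sigma$ permutes these root vectors along $\sigma$-orbits, and by Proposition~\ref{pr:general folding} the orbit sums furnish (up to $q$-scalars) the images $\iota_{\ii_1}(X_i)$ of the six PBW generators of $U_q^+(G_2)$.

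Next I would identify Chevalley-like generators. I would set $u:=\iota_{\ii_1}(E_1)$ and $w:=\iota_{\ii_1}(E_2)$, the images of the two standard generators of $U_q^+(G_2)$ corresponding to the short and long simple root respectively. A direct computation, using the quantum Serre relations in $U_q^+(\lie{so}_8)$ and the PBW-reordering rules, should yield the first two families of relations in~\eqref{th:G2.iii}, in particular the quartic $q$-Serre relation for $[u,\cdot]$-adjoint and the cubic one for $[w,\cdot]$-adjoint. The subalgebra generated by $u$ and $w$ alone is \emph{not} sub-PBW; to restore the sub-PBW property one has to adjoin two correction elements $z_1,z_2$ lying in $\Frac(\iota_{\ii_1}(U_q^+(G_2)))\cap U_q^+(\lie{so}_8)$. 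The most promising way to locate them is to analyze which PBW monomials $\iota_{\ii_1}(X^\bm)$ fail to be expressible in terms of ordered monomials in $\{u,w\}$: the ``defects'' in these expansions should be precisely of the form $z_1,z_2$ after a rational renormalization, and the mixed relations in~\eqref{th:G2.iii} follow from the defining PBW-commutators in $U_q^+(\lie{so}_8)$.

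Having the Chevalley presentation in hand, part~\eqref{th:G2.i} is obtained by specialization $q\to 1$: each $[a,b]_{q^k}$ collapses to $[a,b]$, and the quartic, cubic and mixed $q$-relations become the stated Lie relations for $\nn_{G_2}\rtimes (\lie{sl}_2)_+$ with $u\mapsto e_1$. Part~\eqref{th:G2.ii} is then immediate: since $w,z_1,z_2$ generate $\nn_{G_2}$ as an $(\lie{sl}_2)_+$-module, iterated $\ad u$-action produces the thirteen basis vectors $w_1,\ldots,w_5,z_1,\ldots,z_8$ (with $w_i=\ad^{i-1}_{u}(w)$ etc.), and the multiplication table is extracted by classical-limit bracket computations from the enhanced PBW-commutation relations. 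The isomorphism $\hat U(\iota_{\ii_1})\cong \hat U(\iota_{\ii_2})$ would be established by running the entire procedure in parallel for $\ii_2=(212121)$ and exhibiting the matching via the canonical $\sigma$-equivariant involution of $U_q^+(\lie{so}_8)$ swapping long and short generators; compatibility of the PBW-orderings induced by $\ii_1$ and $\ii_2$ should produce the isomorphism essentially on generators.

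The hard part will unquestionably be the computational bulk: as noted in the introduction, a full PBW presentation contains well over $700$ terms, so verifying the Serre-like relations and locating $z_1,z_2$ in $\Frac$ requires symbolic calculation. The main conceptual obstacle is the \emph{choice} of the enhancement $\{z_1,z_2\}\subset \Frac(\iota_{\ii_1}(U_q^+(G_2)))\cap U_q^+(\lie{so}_8)$: it is this step that has no analogue in the untwisted or $\sigma^2=1$ cases, and one must ensure the resulting sub-PBW algebra is tame so that $\hat U(\iota_{\ii_1})$ is well-defined. I would organize the computation along $\sigma$-orbits from the outset, using the $\sigma$-equivariance built into $\hat\ii_1$ to reduce the twelve PBW root vectors to six orbit sums, which should cut the bulk of the symbolic computation by a factor close to three.
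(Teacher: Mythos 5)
Your overall strategy (compute the folding explicitly, adjoin correction elements $z_1,z_2$ from $\Frac(\iota(U_q^+(G_2)))\cap U_q^+(\lie{so}_8)$, read off Serre-like relations, then specialize at $q=1$ to get parts (i)--(ii)) is in the right spirit, but there are concrete errors and missing steps. First, the quantum folding is \emph{multiplicative} along $\sigma$-orbits, not additive: by Lemma~\ref{le:iota PBW} and Definition~\ref{defn:quantum folding} the image of a PBW generator is, up to a scalar, a \emph{product} $\hat T_{r_1}\cdots\hat T_{r_{k-1}}(\hat E_{r_k})$ with $\hat E_r=\prod_{i\in\mathcal O_r}E_i$; in particular $u\mapsto E_0$ and $w\mapsto E_1E_2E_3$, and $z_1,z_2$ are the explicit elements of Theorem~\ref{th:G_2}\eqref{th:G_2.v}. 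Your ``orbit sums'' would be the naive additive folding, which is shown in Appendix~\ref{sec:non-existent} not to deform even for $\lie{sl}_4$, so as written the starting point of your computation fails. Second, your mechanism for $\hat U(\iota_{\ii_1})\cong\hat U(\iota_{\ii_2})$ does not exist: $D_4$ is simply laced, so there are no ``long and short generators'' of $U_q^+(\lie{so}_8)$ to swap, and $G_2$ has no diagram automorphism either. Theorem~\ref{th:folding ii} only covers involutive $\sigma$, so for order $3$ this isomorphism must be verified directly (e.g.\ via the anti-automorphism $*$ relating the reversed reduced words, or by expressing each family of generators through the other, as in the rank-two computations in the proof of Theorem~\ref{th:folding ii}); you cannot get it for free.

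Third, even granting the correct images, deriving a handful of Serre-like relations among $u,w,z_1,z_2$ and specializing them does not establish the theorem: the list in part~\eqref{th:G2.iii} is explicitly incomplete, and what must be proved is that there \emph{exists} an enhanced uberalgebra, i.e.\ a PBW algebra on the $14$ generators $\iota(X_{\ii_1})\cup\{z_1,z_2\}$ mapping onto $\langle U_q^+(G_2)\rangle_{\iota}$ and splitting over $U_q^+(G_2)$, and that it is a flat deformation of $U(\nn_{G_2}\rtimes(\lie{sl}_2)_+)$. The paper does this the other way around: it \emph{defines} $\mathcal U_{q,G_2}$ by a full explicit presentation, proves it is PBW (a cross product $A_q\rtimes U_q(\lie{sl}_2)$ with $A_q$ a flat deformation of $S(\nn_{G_2})$) via the Diamond Lemma (Proposition~\ref{prop:diamond lemma}) together with the specialization comparison of Lemma~\ref{lem:spec of ideals} -- a computer verification reported to take about $22$ hours -- and only then exhibits the structural homomorphisms $\mu$ and $\hat\iota$ of Theorem~\ref{th:G_2} by the explicit formulas for $w,z_1,z_2$. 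Your proposal contains no substitute for this flatness/PBW verification. Finally, note that by the Remark following Theorem~\ref{th:second folding G2} this folding is \emph{not} tame, so tameness is not something you can ``ensure''; uniqueness of the uberalgebra is not available by abstract arguments here, which is precisely why the explicit construction is needed.
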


We prove Theorem \ref{th:second folding G2} in Section \ref{sect:G2}. 

\begin{remark}
The non-tameness of the quantum folding assigned to $(\lie{so}_4,G_2)$ causes serious computational problems for the corresponding uberalgebra and the Poisson bracket on $S(\nn_{G_2})$. At the moment the Poisson bracket involves around $700$ terms and the PBW presentation of $U(\iota_{\ii_1})$ is even more complicated 
(they can be found at \href{http://ishare.ucr.edu/jacobg/G2.pdf}{http://ishare.ucr.edu/jacobg/G2.pdf}).
 
This is one of the reasons why Theorem \ref{th:second folding G2}(iii) contains  only a partial Serre-like presentation of $U(\iota_{\ii_1})$ in Chevalley-like generators $u,w,z_1,z_2$. We dropped here the most notorious relations involving more than 30 terms each (see the above mentioned webpage).

\end{remark}

Taking into account  Theorems \ref{th:first folding},  \ref{th:folding ii}, \ref{th:second folding A_2 cube},  and \ref{th:second folding G2}, we we propose the following conjecture.

\begin{conjecture} 
\label{conj:n_w >2}
Let $\sigma$ be any admissible diagram automorphism  of $\lieg$  such that ${\lieg^\sigma}^\vee$ has no Lie ideals of type $G_2$. Then there exists a (unique)  $\lieg_+$-module $V_\lieg$ such that:

\begin{enumerate}[{\rm(i)}]
 \item\label{conj:n_w >2.i} 
for any   $\ii\in R(w_\circ)$, the folding $\iota_\ii$ is tame enhanced liftable, 

\item\label{conj:n_w >2.ii} the corresponding enhanced uberalgebra $\hat U(\iota_\ii)$ is a flat deformation of both the universal enveloping algebra $U(\nn\rtimes \lieg_+)$ and the symmetric algebra $S(V_\lieg\rtimes \lieg_+)$,

\item\label{conj:n_w >2.iii} The skew field of fractions $\Frac(U(\iota_\ii))$ is generated by $\tilde \iota_\ii(E_r)$, $r\in I/\sigma$, where $E_r$ are Chevalley generators of $U_q^+({\lieg^\sigma}^\vee)$  (and $\tilde \iota_\ii:U_q^+({\lieg^\sigma}^\vee)\hookrightarrow U(\iota_\ii)$ is the lifting of $\iota_\ii$ given by \eqref{eq:cone over iota}).
\end{enumerate}
\end{conjecture}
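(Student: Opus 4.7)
Any admissible automorphism of a simply laced $\lieg$ decomposes along irreducible factors, so one may assume $\lieg$ is $\sigma$-simple, and the hypothesis excluding type $G_2$ leaves only admissible involutions of types $A_{2n-1}$, $D_n$ ($n\ge 4$), $E_6$, together with the cyclic automorphism of $\lie{sl}_3^{\times n}$. In each case I would fix a $\sigma$-adapted reduced decomposition $\hat\ii\in R(\hat w_\circ)$ as constructed in Section~\ref{subs:PBW bases}, so that $\sigma$ acts on the Lusztig PBW basis $M(X_{\hat\ii})$ by permuting positive root vectors within each orbit $\mathcal O_{i_k}$, and work with the corresponding $\ii\in R(w_\circ)$ whose image under $\iota_\ii$ spans the $\sigma$-fixed subspace.

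For part~(i), the plan is to expand each product $\iota_\ii(X_a)\iota_\ii(X_b)$ with $X_a>X_b$ inside $U_q^+(\lieg)$ via the Levendorskii-Soibelman formula and then average over $\sigma$. The terms falling outside $\iota_\ii(\operatorname{Span} M(X_\ii))$ but lying in $\Frac(\iota_\ii(U_q^+({\lieg^\sigma}^\vee)))\cap U_q^+(\lieg)$ collectively determine $Z_0$; tameness then amounts to checking that monomials in $\iota_\ii(X_\ii)\cup Z_0$ of filtration degree at most $d_0$ have distinct leading Lusztig terms, which one reads off directly from the Levendorskii-Soibelman expansion. The principal obstacle is proving a priori finiteness of $Z_0$: the excluded $G_2$ case illustrates unbounded denominator growth, so finiteness in the remaining cases must come from a bound on the length of chains of iterated $\sigma$-bracketing required to close the multiplication table, presumably tied to the absence of $G_2$ root subsystems inside ${\lieg^\sigma}^\vee$.

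For part~(ii), I would specialize $q\to 1$: the quasi-commutators collapse to Lie brackets, and tameness together with the uniqueness lemma following Definition~\ref{def:liftable} upgrades the diamond argument of Theorem~\ref{thm:flat quadratic algebra} (suitably extended from purely quadratic to the mixed situation produced by the generators $Z_0$) to establish flatness of $\hat U(\iota_\ii)$ over $\CC[q,q^{-1}]$. The module $V_\lieg$ is then intrinsically defined as the $\lieg_+$-span of the classical images of $Z_0$, and $\nn$ inherits its Lie algebra structure from the associated graded bracket on $V_\lieg$. Independence of the reduced decomposition $\ii$, and hence the intrinsic definition of $V_\lieg$ and $\nn$, should extend Theorem~\ref{th:folding ii} by comparing enhanced uberalgebras through successive braid moves between adjacent elements of $R(w_\circ)$.

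Part~(iii) I would prove by induction on a convex order on positive roots of ${\lieg^\sigma}^\vee$: the base case states that each $\tilde\iota_\ii(E_r)$ lies in the target skew-subfield by definition, and the inductive step rewrites each $z\in Z_0$ as a rational expression in the $\tilde\iota_\ii(E_r)$ using the quasi-commutation relations from (i) and the Ore property of $\hat U(\iota_\ii)$. I expect this to be the hardest step, because the case studies of Theorems~\ref{th:first folding}, \ref{th:first folding 2}, and~\ref{th:second folding A_2 cube} produce strikingly dissimilar explicit rational formulas for the $Z_0$-generators, which suggests that no uniform construction exists across all admissible pairs and that a genuinely new idea — perhaps a cluster-theoretic or quantum-shuffle description of the relevant Ore localization — will be required for the general case.
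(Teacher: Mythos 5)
The statement you are proving is Conjecture~\ref{conj:n_w >2}: the paper offers no proof of it at all, only the supporting evidence of the worked examples (Theorems~\ref{th:first folding}, \ref{th:first folding 2}, \ref{th:second folding A_2 cube}, \ref{th:second folding G2}), and it even records the first genuinely open instance as a Problem (the folding $(\lie{sl}_n^{\times k},\lie{sl}_n)$ for general $n$ and $k\ge 2$). So there is no paper proof to compare against, and your text should be judged as a proposed proof of an open conjecture. As such it is not a proof but a research plan, and you say so yourself at the two decisive points: you concede that finiteness of $Z_0$ in part~(i) is "the principal obstacle" and that part~(iii) will likely require "a genuinely new idea." Tameness, the extension of the flatness argument of Theorem~\ref{thm:flat quadratic algebra} beyond the quadratic setting, the existence (not just a candidate) of the enhanced uberalgebra, and the independence of $\ii$ for automorphisms of order $\ge 3$ (Theorem~\ref{th:folding ii} is only proved for involutions) are all asserted or hoped for rather than established; each of these is exactly where the paper's case-by-case computations carry the weight.

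There is also a concrete error in your reduction. The admissible automorphisms covered by the conjecture are not exhausted by the involutions of $A_{2n-1}$, $D_n$, $E_6$ plus the cyclic automorphism of $\lie{sl}_3^{\times n}$: for non-simple $\lieg$ one has $\sigma$ permuting the factors of $\lieg^{\times k}$ for \emph{every} simply laced $\lieg$ and every $k\ge 2$ (possibly combined with diagram automorphisms of the factors), and the paper stresses that precisely these diagonal foldings produce the nontrivial modules $V_\lieg^{(k)}$ — already $(\lie{sl}_4\times\lie{sl}_4,\lie{sl}_4)$ in Theorem~\ref{th:first folding 2} lies outside your list, and the general $(\lie{sl}_n^{\times k},\lie{sl}_n)$ case is the one the paper leaves open. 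Since your whole strategy begins by restricting to that short list, the plan as written does not even address the main body of cases the conjecture is about; the subsequent steps would have to be redone for the diagonal foldings, where the structure of $Z_0$ (cf.\ Lemma~\ref{lem:Z0 fract} and Example~\ref{subs:inf generated A(z)}) is exactly the delicate point.
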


If $\sigma$ is an involution, we drop ``enhanced'' in Conjecture \ref{conj:n_w >2} because we expect that $\hat U(\iota_\ii)=U(\iota_\ii)$.

In particular, the conjecture implies that one can canonically assign to each simply laced Lie algebra $\lieg$ a finite-dimensional 
$\lieg_+$-module $V_\lieg^{(k)}$ for each $k\ge 2$ (by taking $\lieg^{\times k}$ and its natural diagram automorphism $\sigma$, the cyclic permutation of factors so that  ${(\lieg^{\times k})^\sigma}^\vee=\lieg)$). 
Theorem~\ref{th:first folding 2} implies that such a $V_\lieg^{(k)}$ will be rather non-trivial even for $\lieg=\lie{sl}_n$. It would be also 
interesting to explicitly compute the Poisson bracket on $S(V_\lieg^{(k)}\rtimes \lieg_+)$ predicted by Conjecture \ref{conj:n_w >2}.
It should be noted that if $\lieg$ has a diagram automorphism $\sigma'$, then the corresponding uberalgebra also admits an automorphism extending~$\sigma'$.
For example, in the notation of Theorems~\ref{th:second folding A_2 cube} and~\ref{th:first folding 2},
the uberalgebra for the folding $(\lieg,{\lieg^\sigma}^\vee)=(\lie{sl}_3^{\times k},\lie{sl}_3)$ has an automorphism $\sigma'$ defined by
$u_1\mapsto u_2$, $u_2\mapsto u_1$, $z_i\mapsto z_{k-i}$, $1\le i\le k-1$, while the uberalgebra for the 
folding $(\lie{sl}_4\times\lie{sl}_4,\lie{sl}_4)$ has an automorphism defined by $e_1\mapsto e_3$, 
$z_{12}\mapsto z_{32}$, $z_{32}\mapsto z_{12}$ and $e_2$, $z_{13}$ are fixed.

Note also that the part~\eqref{conj:n_w >2.iii} of Conjecture \ref{conj:n_w >2} holds for all cases we considered so far,  in particular, for the folding $(\lieg,{\lieg^\sigma}^\vee)=(\lie{sl}_3^{\times n},\lie{sl}_3)$, the skew-field $\Frac(U(\iota))$ is generated by $u_1$ and $u_2$ (one can show that each $z_k$, $1\le k\le n-1$ in Theorem \ref{th:second folding A_2 cube}\eqref{th:second folding A_2 cube.i} is a rational ``function" of $u_1$ and $u_2$; see Lemma~\ref{lem:Z0 fract}) and for $(\lieg,{\lieg^\sigma}^\vee)=(\lie{so}_8,G_2)$  the skew-field $\Frac(U(\iota))$ is generated by $u$ and $w$ (both $z_1$ and $z_2$ in Theorem \ref{th:second folding G2}\eqref{th:G2.iii} are rational ``functions" of $u$ and $w$). 

{\bfseries Acknowledgments.}
An important part of this work was done while both authors were visiting the University of Geneva and it is our pleasure to
thank Anton Alekseev for his hospitality. The authors thank Bernhard Keller, Bernard Leclerc, Nicolai Reshetikhin, Olivier Schiffmann 
and Milen Yakimov for stimulating discussions.

\section{General properties of quantum foldings and PBW algebras}\label{sect:prelim}

\subsection{Folding of semisimple Lie algebras}
\label{subsect:classical folding}

Recall that each semisimple Lie algebra $\lieg=\langle e_i,f_i\,:\,i\in I\rangle$ is determined by its Cartan matrix $A=(a_{ij})_{i,j\in I}$ (see e.g.~\cite{Serre}) via:
$$
(\ad e_i)^{1-a_{ij}}e_j=0=(\ad f_i)^{1-a_{ij}}f_j,\qquad [e_i,f_j]=0,\qquad i\not=j
$$
and
$$
[[e_i,f_i],e_j]=a_{ij}e_j,\qquad [[e_i,f_i],f_j]=-a_{ij} f_j,\qquad i,j\in I
$$
Denote by $\lieg_+$ the Lie subalgebra of~$\lieg$ generated 
by the $e_i$, $i\in I$.

We say that a bijection 
$\sigma:I\to I$ is a diagram automorphism of~$\lieg$ if $a_{\sigma(i),\sigma(j)}=a_{ij}$ for all $i,j\in I$. It is well-known that such $\sigma$ defines a unique  automorphism, which we also denote by $\sigma$, of the Lie algebra $\lieg$ via 
$$\sigma(e_i)=e_{\sigma(i)},~\sigma(f_i)=f_{\sigma(i)},\qquad i\in I.$$
After~\citem{Lus}*{\S12.1.1}, a diagram automorphism $\sigma$ is said to be {\em admissible} if for all $i\in I$, $k\in\ZZ$, $a_{i,\sigma^k(i)}=0$, whenever $\sigma^k(i)\ne i$. 

In what follows we denote by $I/\sigma$ the quotient set of $I$ by the equivalence relation which consists of all pairs $(i,\sigma^k(i))$. In other words, we use $I/\sigma$ as the indexing set for orbits of the cyclic group $\langle \sigma \rangle=\{1,\sigma,\sigma^2,\ldots\}$ action on $I$.

The following result is well-known (cf. for example
\citem{Kac}*{Proposition~7.9})
\begin{theorem}\label{th:classical folding}
Let $\sigma$ be an admissible diagram automorphism of~$\lie g$. Then the fixed Lie subalgebra $\lieg^\sigma=\{x\in\lieg\,:\, \sigma(x)=x\}$ 
of $\lieg$ is semi-simple, with:
\begin{itemize}
\item the Chevalley generators
 $e_r',f_r'$, $r\in I/\sigma$ 
given by
$$e_r'= \sum_{i\in {\mathcal O}_r} e_i, \quad f_r'= \sum_{i\in  {\mathcal O}_r} f_i\ ,$$
where $ {\mathcal O}_r$ is the $r$-th orbit of the $\langle \sigma \rangle$-action on $I$.

\item the Cartan matrix $A'=(a'_{r,s})$, $r,s\in I/\sigma$ given by
\begin{equation}
\label{eq:folded Cartan matrix}
a_{r,s}'=\sum_{i\in  {\mathcal O}_r} a_{i,j}
\end{equation} 
for all  $j\in  {\mathcal O}_s$, $r,s\in I/\sigma$. 

\end{itemize}
\end{theorem}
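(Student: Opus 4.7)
The plan is to verify that the proposed Chevalley generators $e_r', f_r'$ satisfy the Chevalley--Serre relations associated with $A'$ and then identify the subalgebra they generate with $\lie g^\sigma$. Both $e_r'$ and $f_r'$ are $\sigma$-invariant because $\sigma$ permutes each orbit $\mathcal O_r$, and admissibility forces $a_{i,j}=0$ whenever $i\ne j$ lie in the same orbit; hence $[e_i,e_j]=[f_i,f_j]=0$ for such pairs. Combined with $[e_i,f_j]=0$ for $i\ne j$, this yields $[e_r',f_r']=\sum_{i\in\mathcal O_r}h_i=:h_r'$ (where $h_i=[e_i,f_i]$) and $[e_r',f_s']=0$ for $r\ne s$. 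The $\sigma$-invariance $a_{\sigma(i),\sigma(j)}=a_{i,j}$ together with the fact that $\sigma$ permutes $\mathcal O_r$ implies that $\sum_{i\in\mathcal O_r}a_{i,j}$ is independent of the choice of $j\in\mathcal O_s$, making the formula for $a_{r,s}'$ unambiguous. A direct computation then gives $[h_r',e_s']=a_{r,s}'e_s'$ and $[h_r',f_s']=-a_{r,s}'f_s'$; admissibility forces $a_{r,r}'=2$, and the remaining generalised Cartan-matrix axioms for $A'$ are inherited from $A$.

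The technical heart of the argument is the Serre relations $(\ad e_r')^{1-a_{r,s}'}e_s'=0$ for $r\ne s$, and their $f$-analogs. The cleanest route is via finite-dimensional $\mathfrak{sl}_2$-theory: the triple $(e_r',f_r',h_r')$ spans a copy of $\mathfrak{sl}_2$ acting on $\lie g$ via $\ad$, and this action is integrable because each $\ad e_i$ is nilpotent on $\lie g$ and the $\ad e_{i'}$, $i'\in\mathcal O_r$, commute pairwise. For $r\ne s$ one has $[f_r',e_s']=0$, so $e_s'$ is a lowest-weight vector of $\mathfrak{sl}_2$-weight $a_{r,s}'\le 0$; the submodule it generates is therefore a sum of irreducibles of highest weight $-a_{r,s}'$, so $(\ad e_r')^{1-a_{r,s}'}e_s'=0$.

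The universal Chevalley--Serre presentation now produces a surjection $\lie g(A')\twoheadrightarrow \langle e_r',f_r':r\in I/\sigma\rangle\subseteq \lie g^\sigma$; since the image is finite-dimensional, $A'$ must be of finite type and this surjection must be an isomorphism, so the generated subalgebra is semisimple with Cartan matrix $A'$. To conclude $\lie g^\sigma=\langle e_r',f_r'\rangle$, I would pick a $\sigma$-stable Cartan $\hh\subseteq\lie g$ (spanned by the $h_i$), observe that $\hh^\sigma=\bigoplus_r \CC h_r'$ is a Cartan subalgebra of $\lie g^\sigma$, and show that every $\hh^\sigma$-root space of $\lie g^\sigma$ arises as the $\sigma$-average of an orbit of $\hh$-root spaces of $\lie g$ and already lies in the subalgebra generated by the $e_r',f_r'$. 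I expect the main obstacle to be precisely this last identification: controlling the averaged root spaces requires admissibility in an essential way to ensure that they are one-dimensional and reachable by iterated brackets of our chosen generators, whereas the earlier steps are essentially Cartan-matrix bookkeeping together with an appeal to integrable $\mathfrak{sl}_2$-representation theory.
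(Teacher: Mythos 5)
The paper itself offers no proof of this statement---it is quoted as well known, with a pointer to Kac, Proposition~7.9---so your attempt can only be measured against the standard argument. The first two thirds of what you write are correct and are essentially that standard bookkeeping: admissibility gives $[e_i,e_j]=[f_i,f_j]=0$ inside an orbit, hence $[e_r',f_s']=\delta_{rs}h_r'$, $[h_r',e_s']=a'_{r,s}e_s'$ with $a'_{r,r}=2$, and the Serre relations for $A'$ follow from finite-dimensional $\lie{sl}_2$-theory exactly as you say. One caveat: the assertion that finite-dimensionality of the image forces $A'$ to be of finite type and the map from the Serre presentation to be injective is true but not free; it rests on the Gabber--Kac theorem together with the fact that an ideal of an indecomposable Kac--Moody algebra is either central or contains the derived subalgebra. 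This should be cited, or replaced by the pedestrian observation that folding a finite-type simply-laced Cartan matrix along an admissible automorphism produces one of the finite-type matrices of type $B$, $C$, $F_4$ or $G_2$.

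The genuine gap is the final step, which you yourself flag as a plan rather than a proof: showing that $\langle e_r',f_r'\rangle$ exhausts $\lie g^\sigma$. That is where the actual content of the theorem sits, since the semisimplicity of $\lie g^\sigma$ and the identification of its Cartan matrix are exactly what must be extracted there; as written you have only produced an embedding $\lie g(A')\hookrightarrow\lie g^\sigma$. To close it one decomposes $\lie g$ into root spaces for the $\sigma$-stable Cartan subalgebra $\hh$, notes that $\sigma$ permutes the $\lie g_\alpha$, and analyzes fixed vectors orbit by orbit. Concretely one must show: (a) no root of $\lie g$ restricts to $0$ on $\hh^\sigma$, so the zero-weight part of $\lie g^\sigma$ is just $\hh^\sigma$; (b) when $\sigma$ fixes a root $\alpha$, it acts on the line $\lie g_\alpha$ by $+1$ and not by a nontrivial root of unity---this is precisely where admissibility is indispensable, and it fails without it (for the non-admissible swap of the two nodes of $\lie{sl}_3$, $\sigma$ acts by $-1$ on the highest root space, so that orbit contributes nothing to the fixed subalgebra); and (c) a count matching the $\sigma$-orbits of roots of $\lie g$ having nonzero fixed part with the roots of $A'$, or equivalently a verification that each fixed average is generated by iterated brackets of the $e_r'$, $f_r'$. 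None of (a)--(c) is carried out in your proposal, so the theorem as stated---in particular the semisimplicity of $\lie g^\sigma$ itself---is not yet established.
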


\subsection{Quantized enveloping algebras and Langlands dual folding}
\label{subsect:quantum enveloping}
For any indeterminate $v$  and for any $m\le n\in\ZZ_{\ge 0}$, set
$$
[n]_v:=\frac{v^n-v^{-n}}{v-v^{-1}},\qquad [n]_v!:=\prod_{j=1}^n [j]_v,\qquad \qbinom[v]{n}{m}:=\frac{[n]_v!}{[m]_v![n-m]_v!}.
$$

For each semisimple Lie algebra $\lieg$ we fix symmetrizers $d_i\in \NN$ such that $d_ia_{ij}=a_{ij}d_j$ for all $i,j\in I$. 
Then denote by $C=(d_ia_{ij})$ the {\it symmetrized Cartan matrix} of $\lieg$ (it depends on the choice of symmetrizers) and let $q_i:=q^{d_i}$.

Let  $U_q(\lieg)$ be the quantized universal enveloping algebra of $\lieg$ which is a  $\CC(q)$-algebra generated by the elements 
$E_i,F_i,K_i^{\pm 1}$, $i\in I$ subject to the relations
$$
[E_i,F_j]=\delta_{ij}\frac{K_i-K_{i}^{-1}}{q_i-q_i^{-1}},\qquad 
K_i E_jK_i^{-1}=q_i^{a_{ij}} E_j,\qquad K_i F_jK_i^{-1}=q_i^{-a_{ij}} F_j,
$$
as well as {\em quantum Serre relations}
\begin{equation}\label{q.Serre}
\sum_{b=0}^{1-a_{ij}} (-1)^r \qbinom[q_i]{1-a_{ij}}{b} E_i^r E_j^{} E_i^{1-a_{ij}-b}=0=\sum_{b=0}^{1-a_{ij}} (-1)^r \qbinom[q_i]{1-a_{ij}}{b} F_i^r F_j^{} F_i^{1-a_{ij}-b}
\end{equation}
for all $i\not=j$.

We denote by $U_q^+(\lieg)$ (resp. by $U_q^{\le 0}(\lie g)$) the  subalgebra of $U_q(\lieg)$ generated by the $E_i$, $i\in I$. 
(resp. by the $F_i,K_i^{\pm 1}$, $i\in I$).
Note that $U_q(\lieg)$ and $U_q^+(\lieg)$ are completely determined by the symmetrized Cartan matrix $C$.

We now define the folding of symmetrized Cartan matrices for a given admissible diagram automorphism $\sigma$.
For each $I\times I$ symmetric matrix $C$ and a bijection $\sigma:I\to I$
denote by $C^\sigma=(c_{r,s}^\sigma)$ the $I/\sigma\times I/\sigma$ symmetric matrix with the entries:
\begin{equation}
\label{eq:folded symmetrized Cartan matrix} c_{r,s}^\sigma=\sum_{i\in  {\mathcal O}_r,j\in {\mathcal O}_s} c_{i,j}
\end{equation} 
for all  $j\in  {\mathcal O}_s$, $r,s\in I/\sigma$. 

\begin{lemma} 
\label{le:Langlands dual}
Let $C=A$ be the Cartan matrix of a simply-laced semisimple Lie algebra~$\lieg$ with an admissible diagram automorphism~$\sigma$. Then $C^\sigma$ is a symmetrized 
Cartan matrix of ${\lieg^\sigma}^\vee$, where ${\lieg^\sigma}^\vee$ is the Langlands dual Lie algebra of the semisimple Lie algebra $\lieg^\sigma$. More precisely, $C^\sigma=D^\sigma(A')^T$ where $A'$ is the Cartan matrix of $\lieg^\sigma$ (given by \eqref{eq:folded Cartan matrix}) and $D^\sigma$ is the diagonal matrix $\operatorname{diag}(|{\mathcal O_r}|,r\in I/\sigma)$.
\end{lemma}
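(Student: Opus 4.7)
The plan is to unwind both sides of the claimed matrix identity $C^\sigma = D^\sigma (A')^T$ using the admissibility of $\sigma$, and then use this identity to identify $C^\sigma$ as a symmetrization of the Cartan matrix of $(\lieg^\sigma)^\vee$.

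First I would normalize the symmetrizers: since $\lieg$ is simply laced, I may take $d_i=1$ for all $i\in I$, so $C=A$ is itself symmetric and
$$c^\sigma_{r,s}=\sum_{i\in\mathcal{O}_r,\,j\in\mathcal{O}_s} a_{ij}$$
by the definition \eqref{eq:folded symmetrized Cartan matrix}. The crucial input from Theorem~\ref{th:classical folding} is that the folded entry $a'_{r,s}=\sum_{i\in\mathcal{O}_r} a_{i,j}$ does not depend on the choice of $j\in\mathcal{O}_s$; this is precisely where admissibility of $\sigma$ is used. Invoking this and the symmetry $a_{ij}=a_{ji}$, I can reorganize the double sum as
$$c^\sigma_{r,s}=\sum_{j\in\mathcal{O}_r}\sum_{i\in\mathcal{O}_s} a_{ji}=|\mathcal{O}_r|\cdot a'_{s,r},$$
which is exactly $|\mathcal{O}_r|$ times the $(r,s)$-entry of $(A')^T$, proving the factorization $C^\sigma = D^\sigma (A')^T$.

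Next I would verify that this factorization realizes $C^\sigma$ as a symmetrization of a Cartan matrix, not merely as a matrix product. Symmetry of $C^\sigma$ is immediate from the symmetry of $A$, and combining it with the factorization yields the consistency identity $|\mathcal{O}_r|\,a'_{s,r}=|\mathcal{O}_s|\,a'_{r,s}$, so the $|\mathcal{O}_r|$ are valid positive-integer symmetrizers for $(A')^T$. To finish, I would identify $(A')^T$ with the Cartan matrix of $(\lieg^\sigma)^\vee$: Theorem~\ref{th:classical folding} provides $A'$ as the Cartan matrix of $\lieg^\sigma$, and Langlands duality on semisimple Lie algebras is precisely transposition of the Cartan matrix, so $(A')^T$ is the Cartan matrix of $(\lieg^\sigma)^\vee$ by definition.

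The main (and essentially only) obstacle is the well-definedness assertion --- that the inner sum $\sum_{i\in\mathcal{O}_r} a_{i,j}$ is genuinely independent of $j\in\mathcal{O}_s$. This is exactly what admissibility buys us (without it, different representatives $j,j'\in\mathcal{O}_s$ could connect to $\mathcal{O}_r$ through incompatible edges), but it is already packaged into the statement of Theorem~\ref{th:classical folding}, so I would simply cite it rather than re-prove it. Once that independence is granted, everything reduces to the one-line reindexing of the double sum displayed above.
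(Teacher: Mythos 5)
Your proof is correct and follows essentially the same route as the paper's: the identity $c^\sigma_{r,s}=|\mathcal O_r|\,a'_{s,r}$ is obtained by the same reindexing of the double sum using the symmetry of $A$ and the representative-independence of $a'_{s,r}$ from Theorem~\ref{th:classical folding}. The extra remarks on the symmetrizers $|\mathcal O_r|$ and on Langlands duality as transposition are standard and consistent with the paper's conventions.
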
  
\begin{proof}
By~\eqref{eq:folded Cartan matrix}, \eqref{eq:folded symmetrized Cartan matrix} and the symmetry of~$A$ we have for all $r,s\in I/\sigma$
\begin{equation*}
c^\sigma_{r,s}=\sum_{i\in\mathcal O_r,j\in\mathcal O_s} a_{i,j}=\sum_{i\in \mathcal O_r}\sum_{j\in\mathcal O_s} a_{j,i}=
\sum_{i\in\mathcal O_r} a'_{s,r}=
|\mathcal O_r| a'_{s,r}.\qedhere
\end{equation*}
\end{proof}

This motivates the following notation. For each $\lieg$ and $\sigma$ as above denote by $U_q({\lieg^\sigma}^\vee)$ the quantized enveloping algebra determined by the matrix $C^\sigma$ from Lemma~\ref{le:Langlands dual}.

\subsection{Braid groups and their folding}
\label{subs:braid group folding}
Given a semisimple Lie algebra $\lieg$ with the Cartan matrix $A=(a_{ij})_{i,j\in I}$, let $Q=\bigoplus_{i\in I} \ZZ\alpha_i$ be the root lattice of $\lieg$. Recall that the {\it Weyl group} $W(\lieg)$ is generated by the simple reflections $s_i:Q\to Q$ given by:
$$s_i(\alpha_j)=\alpha_j-a_{ij}\alpha_i$$
for $i,j\in I$. It is well-known that $W(\lieg)$ is a Coxeter group with the presentation 

\begin{equation}
\label{eq:coxeter relation}
(s_is_j)^{m_{ij}}=1,  ~\text{where}~ m_{ij}=\begin{cases} 
1 & \text{if $i=j$}\\ 
2 & \text{if $a_{ij}=0$}\\ 
3 & \text{if $a_{ij}=a_{ji}=-1$}\\
4 & \text{if $a_{ij}a_{ji}=2$}\\
6 & \text{if $a_{ij}a_{ji}=3$}
\end{cases}.
\end{equation}
For each $w\in W(\lie g)$ denote by~$R(w)$ the set of all reduced decompositions $\ii=(i_1,\ldots,i_m)\in I^m$ such that
$$w=s_{i_1}\cdots s_{i_m}$$
and $m$ is minimal (this $m$ is the Coxeter length $\ell(w)$). We denote by $w_\circ$ the longest element of $W(\lie g)$. 

The Artin braid group $Br_\lieg$ is generated by the $T_i$, $i\in I$ subject to the relations (for all $i,j\in I$):
\begin{equation}
\label{eq:braid relation}
\underbrace{T_iT_j\cdots}_{m_{ij}} =\underbrace{T_jT_i\cdots}_{m_{ij}} \ .
\end{equation}

To each $w\in W(\lieg)$ one associates the element $T_w\in Br_\lieg$ such that 
\begin{equation}
\label{eq:Tw}T_w=T_{i_1}\cdots T_{i_m}
\end{equation}
for each $\ii=(i_1,\ldots,i_m)\in R(w)$ (it follows from relations \eqref{eq:braid relation} that $T_w$ is well-defined).

\begin{lemma}[{\citem{Del}*{Theorem~4.21} and~\citem{BS}*{Lemma~5.2}}]
\label{le:center of braid group} 

For each $w\in W$ we have 
$$T_{w_\circ} T_w T_{w_\circ}^{-1}=T_{w_\circ w w_\circ}\ .$$
In particular, the element 
$C_\lieg=\begin{cases} 
T_{w_\circ} & \text{if $w_\circ$ is in the center of $W$}\\
T_{w_\circ}^2 & \text{if $w_\circ$ is not in the center of $W$}\\
\end{cases}
$ is in the center of $Br_\lieg$. Moreover, the center of $Br_\lieg$ is generated by all $C_{\lieg'}$, where $\lieg'$ runs over the simple Lie ideals of $\lieg$.
\end{lemma}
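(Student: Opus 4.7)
The plan is to establish the conjugation formula $T_{w_\circ} T_w T_{w_\circ}^{-1}=T_{w_\circ w w_\circ}$ by reducing to the case $w = s_i$ and then extending via reduced decompositions. The starting point is the standard fact that $-w_\circ$ permutes the simple roots of~$\lieg$, inducing an involution $i \mapsto i^*$ of~$I$ with $w_\circ s_i w_\circ = s_{i^*}$ in $W(\lieg)$; equivalently $w_\circ s_i = s_{i^*} w_\circ$. In particular, conjugation by $w_\circ$ is a length-preserving automorphism of $W(\lieg)$.

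First I would establish the base case $T_{w_\circ} T_i T_{w_\circ}^{-1} = T_{i^*}$. Since $\ell(w_\circ s_i) = \ell(w_\circ)-1 = \ell(s_{i^*} w_\circ)$, any reduced decomposition of $w_\circ s_i = s_{i^*} w_\circ$ may be extended on the right by $s_i$ or on the left by $s_{i^*}$ to a reduced decomposition of $w_\circ$. Applying~\eqref{eq:Tw} to both extensions yields $T_{w_\circ s_i} T_i = T_{w_\circ} = T_{i^*} T_{w_\circ s_i}$, and isolating $T_{w_\circ s_i}$ from each equation yields the base case. For general $w$, given $\ii = (i_1, \ldots, i_m) \in R(w)$, the sequence $(i_1^*, \ldots, i_m^*)$ is reduced for $w_\circ w w_\circ$; inserting $T_{w_\circ}^{-1}T_{w_\circ}$ between consecutive factors of $T_w = T_{i_1}\cdots T_{i_m}$ and applying the base case to each factor then gives $T_{w_\circ} T_w T_{w_\circ}^{-1} = T_{i_1^*}\cdots T_{i_m^*} = T_{w_\circ w w_\circ}$.

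Centrality of $C_\lieg$ follows quickly from this formula. If $w_\circ$ is central in $W(\lieg)$ then $w_\circ w w_\circ = w$ for all $w$, so $T_{w_\circ} T_w = T_w T_{w_\circ}$ and $T_{w_\circ}$ itself is central in $Br_\lieg$. Otherwise, applying the formula twice together with $w_\circ^2 = 1$ gives $T_{w_\circ}^2 T_w T_{w_\circ}^{-2} = T_w$, so $T_{w_\circ}^2$ is central. For general semisimple $\lieg$, the defining braid relations~\eqref{eq:braid relation} decouple according to the connected components of the Dynkin diagram, yielding a direct product decomposition $Br_\lieg \cong \prod_{\lieg'} Br_{\lieg'}$ over simple ideals; applying the above componentwise places every $C_{\lieg'}$ in the center.

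The main obstacle is the final assertion that the $C_{\lieg'}$ \emph{generate} the entire center. Here I would invoke the classical theorem of Brieskorn--Saito~\cite{BS} and Deligne~\cite{Del}: the center of the spherical Artin group attached to an irreducible finite Coxeter system is infinite cyclic, generated precisely by $T_{w_\circ}$ or $T_{w_\circ}^2$ according to whether $w_\circ$ lies in the center of the corresponding Weyl group. Combined with the product decomposition of $Br_\lieg$ (whose center is the product of the centers of the factors), this completes the argument. This last step rests on substantial external input that we do not attempt to reprove.
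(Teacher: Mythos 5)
Your proposal is correct. Note that the paper itself gives no argument for this lemma: it is stated with citations to Deligne (Theorem~4.21) and Brieskorn--Saito and used as external input, so there is no internal proof to compare against. What you do differently is to reprove the elementary half from scratch: the derivation of $T_{w_\circ}T_iT_{w_\circ}^{-1}=T_{i^*}$ from the two factorizations $T_{w_\circ}=T_{w_\circ s_i}T_i=T_{i^*}T_{w_\circ s_i}$ (valid because $\ell(w_\circ s_i)=\ell(w_\circ)-1=\ell(s_{i^*}w_\circ)$ and $T_w$ is independent of the reduced decomposition), the extension to general $w$ via a reduced word and the fact that $w\mapsto w_\circ w w_\circ$ is a length-preserving automorphism sending $s_i\mapsto s_{i^*}$, and the deduction of centrality of $T_{w_\circ}$ (resp.\ $T_{w_\circ}^2$) using $w_\circ^2=1$ -- all of this is sound and buys a self-contained treatment of everything except the hard statement. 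For the final assertion, that the $C_{\lieg'}$ actually \emph{generate} the center, you correctly reduce via the direct-product decomposition of $Br_\lieg$ along the connected components of the Dynkin diagram (generators in distinct components commute since $m_{ij}=2$, and the center of a direct product is the product of the centers) and then invoke the Brieskorn--Saito/Deligne theorem that the center of an irreducible spherical Artin group is infinite cyclic generated by $T_{w_\circ}$ or $T_{w_\circ}^2$; this is precisely the content of the references the paper cites, so your proof and the paper ultimately rest on the same external input, with your write-up supplying the routine parts explicitly.
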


Now we return to the folding situation. Let $\lieg$ be a semisimple Lie algebra and let $\sigma$ be its admissible diagram automorphism.

Note that $\sigma$ defines an automorphism of  $W(\lieg)$ (respectively of $Br_\lieg$) via $\sigma(s_i)= s_{\sigma(i)}$ (respectively 
$\sigma(T_i)=T_{\sigma(i)}$) for $i\in I$. Denote by $\hat w_\circ$ (respectively, $w_\circ$) the longest element of $W(\lieg)$ 
(respectively, of $W({\lieg^\sigma}^\vee)$). Since $\sigma$ preserves the Coxeter length, it follows that $\sigma(\hat w_\circ)=\hat w_\circ$.
The following result provides a ``folding''  isomorphism of the corresponding Weyl and braid groups.

\begin{proposition}\label{prop:WeylArtin folding} 
For each semisimple simply laced Lie algebra $\lieg$ and its admissible diagram automorphism $\sigma$ we have:
\begin{enumerate}[{\rm(i)}]
\item\label{prop:WeylArtin folding.i} The assignment
\begin{equation}
\label{eq:Weyl folding}
s_r\mapsto \hat s_r=\prod_{i\in {\mathcal O}_r} s_i,\qquad r\in I/\sigma
\end{equation}
extends to an isomorphism of groups $\hat{\cdot}:W({\lieg^\sigma}^\vee)\widetilde \to W(\lieg)^\sigma\subset W(\lieg)$.

\item\label{prop:WeylArtin folding.ii} The assignment 
\begin{equation}
\label{eq:braid folding}
T_r\mapsto \hat T_r=\prod_{i\in {\mathcal O}_r} T_i,\qquad r\in I/\sigma
\end{equation}
extends to an isomorphism of groups $Br_{{\lieg^\sigma}^\vee}\widetilde \to (Br_\lieg)^\sigma\subset Br_\lieg$.
Under this isomorphism the element $T_w$ of $Br_{{\lieg^\sigma}^\vee}$ is mapped to the element $T_{\hat w}$ of $Br_{\lieg}$.
\end{enumerate}
\end{proposition}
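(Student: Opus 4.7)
The plan is to treat (i) and (ii) in parallel, reducing each to a small finite case check followed by standard length arguments. Since $\sigma$ is admissible, for any two distinct $i,j$ in the same orbit ${\mathcal O}_r$ one has $a_{ij}=0$, so the corresponding simple reflections $s_i,s_j$ commute in $W(\lieg)$ and the generators $T_i,T_j$ commute in $Br_\lieg$ by \eqref{eq:braid relation} with $m_{ij}=2$. Hence both $\hat s_r$ and $\hat T_r$ are unambiguously defined, independent of the chosen ordering of ${\mathcal O}_r$, and manifestly $\sigma$-invariant.

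The core step is to show that the $\hat s_r$ satisfy the Coxeter relations \eqref{eq:coxeter relation} with Coxeter integers $m_{rs}$ read off from the Cartan matrix $A'$ of $\lieg^\sigma$ via Lemma \ref{le:Langlands dual}, and that the $\hat T_r$ satisfy the corresponding braid identities \eqref{eq:braid relation}. The proof reduces, for each pair $\{r,s\}$, to a finite computation inside the parabolic subgroup of $W(\lieg)$ (and correspondingly of $Br_\lieg$) generated by $\{s_i:i\in{\mathcal O}_r\cup{\mathcal O}_s\}$. Admissibility, together with $\lieg$ being simply laced, forces this parabolic to be a product of small-rank factors of type $A_1\times A_1$, $A_2$, $A_3$ or $D_4$, corresponding respectively to $m_{rs}=2,3,4,6$. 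A direct verification in each case establishes both the Coxeter and the lifted braid identity, the latter lifting automatically because \eqref{eq:braid relation} is obtained from \eqref{eq:coxeter relation} by dropping the idempotency relations $s_i^2=1$.

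With the relations in hand, the assignments extend to group homomorphisms $\varphi:W({\lieg^\sigma}^\vee)\to W(\lieg)^\sigma$ and $\Phi:Br_{{\lieg^\sigma}^\vee}\to (Br_\lieg)^\sigma$. To prove these are isomorphisms and that $T_w\mapsto T_{\hat w}$, I would verify, by induction on $\ell(w)$ and the exchange condition, that any $\ii=(r_1,\dots,r_m)\in R(w)$ expands to a word $\hat\ii$ of length $\sum_k|{\mathcal O}_{r_k}|$ in the $s_i$ which is itself reduced in $W(\lieg)$ and thus lies in $R(\hat w)$. This yields $\ell_\lieg(\varphi(w))=\sum_k|{\mathcal O}_{r_k}|$, so $\varphi(w)=1$ forces $w=1$, and simultaneously $\Phi(T_w)=\hat T_{r_1}\cdots\hat T_{r_m}=T_{\hat w}$. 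Surjectivity of $\varphi$ follows from the dual statement that every $\sigma$-invariant element of $W(\lieg)$ admits a reduced decomposition respecting the orbit partition (again by induction on length); this lifts through the canonical set-theoretic section $w\mapsto T_w$ to yield surjectivity of $\Phi$. Injectivity of $\Phi$ then follows from injectivity of $\varphi$ together with the compatibility $\Phi(T_w)=T_{\varphi(w)}$ just established.

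The main obstacle is the case $m_{rs}=6$ arising from the triality folding $D_4\to G_2$: one must verify a length-$6$ braid identity whose two sides, after expansion, are words of length $18$ in the generators $T_i$ of $Br_{D_4}$, and must further show that both sides are reduced expressions of a common element. This is not formally automatic from \eqref{eq:braid relation}, but can be handled either by an explicit computation inside the $\sigma$-invariant dihedral subgroup of order $12$ of $W(D_4)$ followed by Matsumoto's theorem lifting the identity to $Br_{D_4}$, or by invoking the positive Garside structure on spherical Artin groups.
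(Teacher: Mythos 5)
Your treatment of part (i) and of the homomorphism property is sound and, if anything, more detailed than the paper's (the paper disposes of the relation check in one line, while you reduce it to rank-two $\sigma$-stable subdiagrams and handle the $D_4\to G_2$ case via Matsumoto's theorem; your observation that a reduced word $\ii\in R(w)$ expands to a reduced word for $\hat w$, giving $\Phi(T_w)=T_{\hat w}$ and injectivity of $\varphi$, is also fine, and your surjectivity argument for $\varphi$ is essentially the paper's induction with the commuting-descents cancellation lemma). The genuine gap is in part (ii), precisely at the point that carries all the difficulty: surjectivity of $\Phi$ onto $(Br_\lieg)^\sigma$. You claim this ``lifts through the canonical set-theoretic section $w\mapsto T_w$'' from the Weyl-group statement, but that section is not a homomorphism, and a general $\sigma$-fixed element of $Br_\lieg$ is an arbitrary word in the $T_i^{\pm1}$ fixed by $\sigma$; it is neither of the form $T_{\hat w}$ nor visibly a product of such elements. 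What must be shown is that the fixed subgroup $(Br_\lieg)^\sigma$ of the (infinite, torsion-free) braid group is generated by the $\hat T_r^{\pm1}$, and nothing in your argument addresses this: knowing that the image contains every $T_{\hat w}$ with $\hat w\in W(\lieg)^\sigma$ only shows it contains the subgroup those elements generate, which is exactly what one cannot assume equals $(Br_\lieg)^\sigma$.

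The paper closes this gap with the Garside-type machinery you mention only as an afterthought: by Brieskorn--Saito and Deligne, the positive monoid $Br^+_\lieg$ embeds in $Br_\lieg$, every $g\in Br_\lieg$ can be written as $C_\lieg^{-N}g^+$ with $g^+\in Br^+_\lieg$ and $C_\lieg=T_{\hat w_\circ}$ or $T_{\hat w_\circ}^2$ central and $\sigma$-fixed, so it suffices to factor $\sigma$-fixed \emph{positive} braids; one then applies the left-cancellation lemma in $Br^+_\lieg$ (if $g^+$ is left-divisible by $T_i$ for all $i$ in an orbit ${\mathcal O}_r$, whose generators pairwise commute by admissibility, then $g^+=\hat T_r h^+$) and inducts on the length of $g^+$, using $\sigma$-invariance exactly as in the Weyl-group case. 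Relatedly, your deduction of injectivity of $\Phi$ from injectivity of $\varphi$ together with $\Phi(T_w)=T_{\varphi(w)}$ is not valid as stated: a hypothetical kernel element need not be of the form $T_w$, so injectivity of $\varphi$ says nothing about it. (The paper is admittedly terse on injectivity as well, but it does not rest on this non sequitur.) To repair your proof you would need to import the positive-monoid decomposition and cancellation arguments, i.e.\ the content of Lemmas~\ref{le:center of braid group}, \ref{le:embedding braid} and~\ref{le:cancelation braid}, rather than invoke the Garside structure only for the rank-two relation check.
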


\begin{proof} It is easy to see that \eqref{eq:Weyl folding} defines a group homomorphism because it respects the Coxeter relations~\eqref{eq:coxeter relation}. The injectivity also follows. Let us prove surjectivity, i.e. that each element $w\in W(\lieg)^\sigma$ factors into a product of the $\hat s_r$,
$r\in I/\sigma$. We proceed by induction on
the Coxeter length of $w$, the induction base being trivial. We need the following well-known result.
\begin{lemma} 
\label{le:cancelation Weyl}
Let $w\in W(\lieg)$ and $i\ne j$ be such that $\ell(s_iw)=\ell(s_jw)=\ell(w)-1$. Then there exists $w'$ such that $w=\underbrace{s_is_j\cdots}_{m_{ij}} \cdot w'$ and $\ell(w')=\ell(w)-m_{ij}$. In particular, if $I_0\subset I$ satisfies 
\begin{itemize}
\item $\ell(s_iw)=\ell(w)-1$ for each $i\in I_0$,

\item $s_is_{i'}=s_{i'}s_i$ for all $i,i'\in I_0$,
\end{itemize}
then there exists $w''$ such that $w=(\prod_{i\in I_0} s_i)\cdot w''$ and $\ell(w'')=\ell(w)-|I_0|$.
\end{lemma} 

Indeed, let $w\in  W(\lieg)^\sigma$. Then there exits $i\in I$ such that  $w=s_iw'$ for some $w'$ with $\ell(w')=\ell(w)-1$. Applying $\sigma^k$, we obtain: $s_iw'=s_{\sigma^k(i)}\sigma^k(w')$, hence $\ell(s_{\sigma^k(i)}w)=\ell(w)-1$. Thus the set $I_0=\{i,\sigma(i),\sigma^2(i)\ldots\}={\mathcal O}_r$ satisfies the conditions of Lemma~\ref{le:cancelation Weyl}. Therefore, $w=\hat s_rw''$ with $\ell(w'')=\ell(w)-|{\mathcal O}_r|$. In particular,  $w''\in W(\lieg)^\sigma$ and 
$\ell(w'')<\ell(w)$ so we finish the proof by induction. This proves~\eqref{prop:WeylArtin folding.i}.

To prove~\eqref{prop:WeylArtin folding.ii} note that \eqref{eq:braid folding} defines a group homomorphism because it respects the Coxeter relations \eqref{eq:braid relation}. The injectivity also follows. Let us prove surjectivity, i.e., that each element $g\in (Br_\lieg)^\sigma$ factors into a product of 
the $\hat T_r^{\pm 1}$, $r\in I/\sigma$. 

 Following~\cites{BS,Del}, denote by $Br^+_\lieg$ the {\it positive braid monoid}, i.e, the monoid generated by the $T_i$, $i\in I$ subject to \eqref{eq:braid relation}.

\begin{lemma}[{\citem{BS}*{Proposition 5.5},\citem{Del}*{Proposition~4.17}}] 
\label{le:embedding braid}
\noindent
\begin{enumerate}[{\rm(i)}]
\item\label{le:embedding braid.i}The assignment $T_i\mapsto T_i$ defines an injective homomorphism of monoids $Br^+_\lieg\hookrightarrow Br_\lieg$. In other words, $Br^+_\lieg$ is naturally a submonoid of $Br_\lieg$.

\item\label{le:embedding braid.ii} For each $g\in Br_\lieg$ there exists an element $g^+\in Br^+_\lieg$ such that $g=Cg^+$ for some central element $C$ of $Br_\lieg$.
\end{enumerate}
\end{lemma}

Note that the central element $C_\lieg=T_{\hat w_\circ}^2$ from Lemma \ref{le:center of braid group}  is the product of all generators $C_{\lieg'}$ of the center of $Br_\lieg^+$, where $\lieg'$ runs over simple Lie ideals of $\lieg$. This and Lemma \ref{le:embedding braid} imply that for each $g\in Br_\lieg$ there exists $N\ge 0$ such that $C_\lieg^N \cdot g\in Br_\lieg^+$. Taking into account that $T_{\hat w_\circ}$ and hence $C_\lieg$ 
is fixed under $\sigma$, it suffices to prove that any element $g^+\in (Br_\lieg^+)^\sigma$ factors into a product of the $\hat T_r$, $r\in I/\sigma$. 

We need the following result which is parallel to Lemma \ref{le:cancelation Weyl}.

\begin{lemma} [\citem{BS}*{Lemma 2.1}]
\label{le:cancelation braid}
Let $g^+\in Br_\lieg^+$ and $i\ne j$ be such that $g^+=T_i\cdot g_i^+=T_j\cdot g_j^+$ for some $g_i^+,g_j^+\in Br_\lieg^+$. Then there exists $h^+\in Br_\lieg^+$ such that $g^+=\underbrace{T_iT_j\cdots}_{m_{ij}} \cdot h^+$. In particular, if $I_0\subset I$ satisfies 

$\bullet$ $g^+\in T_i\cdot Br_\lieg^+$ for each $i\in I_0$,

$\bullet$ $T_iT_{i'}=T_{i'}T_i$ for all $i,i'\in I_0$,

\noindent
then there exists $h^+\in Br_\lieg^+$ such that $g^+=(\prod_{i\in I_0} T_i)\cdot h^+$.

\end{lemma}

We proceed by induction on length of elements in $Br_\lieg^+$.
Indeed, let $g^+\in  (Br_\lieg^+)^\sigma$. Then there exits $i\in I$ such that  $g^+=T_i\cdot g_i^+$ for some $g_i^+\in Br_\lieg^+$. Applying $\sigma^k$, we obtain: $T_i\cdot g^+_i=T_{\sigma^k(i)}\cdot \sigma^k(g^+)$, where $\sigma^k(g^+)\in Br_\lieg^+$. Thus the set $I_0=\{i,\sigma(i),\sigma^2(i)\ldots\}={\mathcal O}_r$ satisfies the conditions of Lemma \ref{le:cancelation braid}. Therefore, $g^+=\hat T_r \cdot h^+$ for some $h^+\in Br_\lieg^+$. In particular,  $h^+\in (Br_\lieg^+)^\sigma$ and is shorter than $g^+$ so we finish the proof by  induction. This proves~\eqref{prop:WeylArtin folding.ii}.
\end{proof}

\subsection{PBW bases and quantum folding}\label{subs:PBW bases}

G.~Lusztig proved in~\cite{Lus} that  $Br_\lieg$ acts on $U_q(\lieg)$ by algebra automorphisms via:  
\begin{equation}\label{eq:Lustig braid action}
\begin{gathered}
T_i(K_j^{\pm 1})=K_j^{\pm 1}K_i^{\mp a_{ij}},\quad T_i(E_i) = - K_i^{-1}F_i,\quad T_i(F_i)=-E_i K_i,\quad i,j\in I\\
T_i(E_j)= \mskip-5mu\sum_{s+r=-a_{ij}}\mskip-8mu (-1)^r q_i^{-r} E_i^{(r)}E_j E_i^{(s)},\, T_i(F_j)=\mskip-5mu\sum_{s+r=-a_{ij}}\mskip-8mu (-1)^r q_i^{r} F_i^{(s)}F_j F_i^{(r)},\,
i\not=j
\end{gathered}
\end{equation}
where $Y_i^{(k)}=\frac{1}{[k]_{q_i}!}\,Y_i^k$ (these automorphisms $T_i$ are denoted~$T'_{i,-1}$ in~\cite{Lus}).
G. Lusztig also proved in \cite{Lus} that $T_w(E_i)\in U^+_q(\lieg)$ if and only if $\ell(ws_i)=\ell(w)+1$. Using this,  
for each $\ii=(i_1,\ldots,i_m)\in R(w_\circ)$, define the ordered set $X_\ii=\{X_1,X_2,\ldots,X_m\}\subset U^+_q(\lieg)$ by:
$$
X_k=X_{\bi,k}=c_k^{-1} T_{i_1}\cdots T_{i_{k-1}}(E_{i_k}),
$$
where $c_k=\gamma(s_{i_1}\cdots s_{i_{k-1}}(\alpha_{i_k})-\alpha_{i_k})$ and $\gamma:Q\to \CC(q)^\times$ is the unique group homomorphism 
defined by $\gamma(\alpha_i)=q_i-q_i^{-1}$. It should be noted that for any $w,w'\in W(\lie g)$, $i,i'\in I$ such that $w\alpha_i=w'\alpha_i'$,
$\gamma(w\alpha_i-\alpha_i)=\gamma(w'\alpha_{i'}-\alpha_{i'})$.

We will need the following useful Lemma which is, most likely, well known. 
\begin{lemma}\label{lem:lus.br.id}
Suppose that $i,j\in I$ and $w\in W(\lie g)$ satisfy $w\alpha_i=\alpha_j$. Then ${T_w(E_i)=E_j}$.
\end{lemma}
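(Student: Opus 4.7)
The plan is induction on $\ell(w)$, combined with a weight-space argument. The base case $\ell(w)=0$ forces $i=j$ and $T_e=\operatorname{id}$, so the statement is trivial. For the inductive step, I first invoke Lusztig's positivity result: since $w\alpha_i=\alpha_j>0$, we have $\ell(ws_i)=\ell(w)+1$, and hence $T_w(E_i)\in U_q^+(\lieg)$. Because $T_w$ sends a vector of weight $\mu$ to one of weight $w\mu$, the element $T_w(E_i)$ lies in the one-dimensional weight space $U_q^+(\lieg)_{\alpha_j}=\CC(q)E_j$. Thus $T_w(E_i)=cE_j$ for some $c\in\CC(q)^\times$, and the task reduces to showing that $c=1$.

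To pin down $c$, fix any reduced decomposition $w=s_{i_1}\cdots s_{i_m}$. The inequality $\ell(ws_i)>\ell(w)$ forces the last letter $i_m\ne i$, so in the simply-laced setting $a_{i,i_m}\in\{0,-1\}$. In the easy case $a_{i,i_m}=0$, the reflection $s_{i_m}$ fixes $\alpha_i$, and $w':=s_{i_1}\cdots s_{i_{m-1}}$ still sends $\alpha_i$ to $\alpha_j$ with strictly smaller length. The induction hypothesis applied to $w'$ gives $T_{w'}(E_i)=E_j$, while~\eqref{eq:Lustig braid action} with $-a_{i_m,i}=0$ collapses to the single term $T_{i_m}(E_i)=E_i$. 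Therefore $T_w(E_i)=T_{w'}T_{i_m}(E_i)=T_{w'}(E_i)=E_j$.

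The main obstacle is the remaining case $a_{i,i_m}=-1$, which cannot be systematically eliminated by changing the reduced decomposition (e.g.\ in $\lieg=\lie{sl}_3$ the unique reduced expression of $w=s_1s_2$ with $w\alpha_1=\alpha_2$ is of this type). To handle it, I would first establish the rank-two identity $T_iT_{i_m}(E_i)=E_{i_m}$ inside the sub-Hopf-algebra of $U_q(\lieg)$ generated by $E_i,E_{i_m},F_i,F_{i_m},K_i^{\pm1},K_{i_m}^{\pm1}$ (isomorphic to $U_q(\lie{sl}_3)$) by an explicit computation using the quantum Serre relation for $\{i,i_m\}$ together with the $\lie{sl}_2$-commutator $[E_i,F_i]=(K_i-K_i^{-1})/(q-q^{-1})$: upon expansion, all $F$-containing and Cartan-shifted terms cancel and one obtains exactly $E_{i_m}$. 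Given this rank-two base, the general case with $a_{i,i_m}=-1$ is completed by applying the induction hypothesis to a shortened Weyl group element obtained by stripping the last two letters $s_{i_{m-1}}s_{i_m}$ of the reduced word (after a case analysis on the value of $a_{i,i_{m-1}}$), the delicate bookkeeping of the $q$-scalars produced by the Lusztig formulas being the principal technical difficulty.
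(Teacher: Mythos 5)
Your reduction to a scalar (using $\ell(ws_i)=\ell(w)+1$, weight considerations and $\dim U_q^+(\lieg)_{\alpha_j}=1$) is fine, the commuting case $a_{i,i_m}=0$ is fine, and the rank-two identity $T_iT_{i_m}(E_i)=E_{i_m}$ for $a_{i,i_m}=-1$ is a correct (known) computation. The genuine gap is the inductive step in the hard case: your plan is to strip the last two letters $s_{i_{m-1}}s_{i_m}$ and apply the rank-two identity, but that identity is about $T_iT_{i_m}$, i.e.\ it only applies when the penultimate letter of the reduced word is $i$ itself, and you give no reason why a reduced word with $i_{m-1}=i$ (or with $a_{i,i_m}=0$) should exist. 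For an arbitrary reduced word it need not: take $\lieg=\lie{sl}_4$, $w=s_2s_1s_3s_2$ with the reduced word $(2,1,3,2)$ and $i=1$, so $w\alpha_1=\alpha_3$; here $i_m=2$ with $a_{1,2}=-1$ and $i_{m-1}=3\ne 1$, and $T_3T_2(E_1)$ is a root vector of weight $\alpha_1+\alpha_2+\alpha_3$, not a Chevalley generator, so after stripping two letters the induction hypothesis (which concerns simple roots only) cannot be invoked, no matter how carefully one tracks $q$-powers. "Delicate bookkeeping of scalars" does not repair this; what is needed is a structural statement about reduced words.

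That statement is exactly what the paper's proof supplies and your proposal lacks: since $w\alpha_i=\alpha_j$ forces $ws_iw^{-1}=s_j$, one can invoke \citem{BZ}*{Lemma~9.9} to produce $k\in I$ and a reduced word for $w$ terminating in a reduced word for $w_\circ(i,k)s_i$ (in the simply-laced case this means ending either in the single letter $k$ with $a_{ik}=0$, or in the block $(i,k)$ with $a_{ik}=-1$ — in the example above one must switch to the word $(2,3,1,2)$). Then Lusztig's rank-two formulas \citem{Lus}*{\S\S 39.2.2--4} (your rank-two identity) apply to that terminal block, yielding $T_w(E_i)=T_{w'}(E_i)$ or $T_{w'}(E_k)$ with $w'$ shorter and $w'\alpha_i=\alpha_j$, resp.\ $w'\alpha_k=\alpha_j$, and the induction closes. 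So your overall strategy (induction plus rank-two computation) is the same as the paper's, but the key existence result for the terminal segment is missing, and without it the argument does not go through. A secondary point: the lemma is stated for arbitrary $W(\lieg)$ and is also used in non-simply-laced situations (e.g.\ $\lie{sp}_{2n}$), whereas your case analysis assumes $a_{i,i_m}\in\{0,-1\}$; the cited Lusztig formulas cover the remaining rank-two cases uniformly.
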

\begin{proof}
We use induction on~$\ell(w)$, the induction base being trivial.
Since $w\alpha_i=\alpha_j$ we have $\ell(ws_i)=\ell(w)+1$ and $w s_i w^{-1}=s_j$. Then by~\citem{BZ}*{Lemma~9.9}, there exist
$k\in I$ and
a $\bi\in R(w)$ such that $\bi$ terminates with $(\dots,i,k)\in R(w_\circ(i,k)s_i)$ where
$w_\circ(i,k)$ denotes the 
longest element of the subgroup of~$W(\lieg)$ generated by $s_i,s_k$. Since
by~\citem{Lus}*{\S\S 39.2.2--4} 
\begin{equation}\label{eq:lus.br.id}
T_{w_\circ(i,k)s_i}(E_i)=\begin{cases}E_k,&a_{ik}=a_{ki}=-1\\
                          E_i,& \text{otherwise,}
                         \end{cases}
\end{equation}
we conclude that either $T_w(E_i)=T_{w'}(E_i)$ with~$w'\alpha_i=\alpha_j$ or~$T_w(E_i)=T_{w'}(E_k)$ with
$w'\alpha_k=\alpha_j$ and in both cases $\ell(w')<\ell(w)$.
\end{proof}

For each $a=(a_1,\ldots,a_m)\in \ZZ_{\ge 0}^m$ define the monomial $X_\ii^{a}\in U_q^+(\lieg)$ by:
$$X_\ii^{a}=X_1^{a_1}\cdots X_{m}^{a_m}.$$ 
The following result is well-known.
\begin{proposition} [\citem{Lus}*{Corollary~40.2.2}] The set $M(X_\ii)$ of all monomials $X_\ii^{a}$ is a PBW-basis of $U_q^+(\lieg)$.

\end{proposition}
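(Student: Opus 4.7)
The plan is to establish three separate facts and then combine them: (a) each $X_k$ is a well-defined weight vector lying in $U_q^+(\lieg)$ whose weight is a positive root, (b) the set $M(X_\ii)$ is linearly independent via a dimension count, and (c) $M(X_\ii)$ spans $U_q^+(\lieg)$ by Levendorskii--Soibelman straightening. Together these give the PBW-basis property.

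For (a), I would argue by induction on~$k$. Since $\ii=(i_1,\ldots,i_m)\in R(w_\circ)$, we have $\ell(s_{i_1}\cdots s_{i_{k-1}}s_{i_k})=\ell(s_{i_1}\cdots s_{i_{k-1}})+1$, so Lusztig's criterion ($T_w(E_i)\in U_q^+(\lieg)$ whenever $\ell(ws_i)=\ell(w)+1$) ensures $X_k\in U_q^+(\lieg)$; the normalization by $c_k$ just fixes a convenient scalar. Tracking weights through the braid action \eqref{eq:Lustig braid action} shows $X_k$ has weight $\beta_k:=s_{i_1}\cdots s_{i_{k-1}}(\alpha_{i_k})$, and a standard fact about reduced decompositions is that the sequence $\beta_1,\ldots,\beta_m$ enumerates the positive roots of $\lieg$ without repetition.

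For (b), I would use that $U_q^+(\lieg)$ is a flat $\CC(q)$-deformation of the classical enveloping algebra $U(\lieg_+)$, so each root-weight space $U_q^+(\lieg)_\alpha$, $\alpha\in Q^+$, has dimension equal to the Kostant partition function~$\mathcal{K}(\alpha)$ counting the ways of writing $\alpha$ as a $\ZZ_{\ge 0}$-combination of positive roots. Because each positive root appears exactly once among $\beta_1,\ldots,\beta_m$, the ordered monomials $X_\ii^{a}$ of weight $\alpha$ are in natural bijection with Kostant partitions of~$\alpha$, and hence their cardinality matches $\dim U_q^+(\lieg)_\alpha$. It therefore suffices to prove spanning.

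For (c), the key input is the Levendorskii--Soibelman straightening identity: for $1\le j<k\le m$,
$$X_k X_j - q^{(\beta_j,\beta_k)}X_j X_k = \sum_{\mathbf{b}} c_{\mathbf{b}}\, X_\ii^{\mathbf{b}},$$
where the sum runs over exponent vectors $\mathbf{b}$ supported on $\{j+1,\ldots,k-1\}$. This reduces, via the braid automorphism $T_{i_1}\cdots T_{i_{j-1}}$, to an explicit computation inside the rank-two parabolic generated by $E_{i_j}$ and $E_{i_k}$, i.e.\ to the root systems $A_1\times A_1$, $A_2$, $B_2$, or $G_2$. Iterating these straightenings on any non-ordered monomial produces a $\CC(q)$-linear combination of strictly ``more ordered'' monomials and terminates by an obvious induction on a lexicographic statistic, which gives spanning. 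Combined with (b), the PBW property follows. The main technical obstacle is the verification of the rank-two straightening identities, but these are finite, self-contained quantum Serre computations carried out in \cite{Lus}.
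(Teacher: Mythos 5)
Note first that the paper does not prove this proposition at all: it is imported verbatim from \citem{Lus}*{Corollary~40.2.2}, where the argument runs through Lusztig's nondegenerate bilinear form on $\mathbf f$ and an induction on reduced expressions, not through straightening relations. Your route is the Levendorski\u{\i}--So\u{\i}belman / De~Concini--Kac one (root vectors from the braid action, convex straightening for spanning, character count for independence), which is a perfectly legitimate alternative, and your step (a) is fine --- the normalization $c_k^{-1}$ is a nonzero element of $\CC(q)$, so it is irrelevant to the basis property over $\CC(q)$.

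However, two of your steps have genuine soft spots. In (b) you take as input that $U_q^+(\lieg)$ is a flat deformation of $U(\lieg_+)$, i.e.\ that $\dim_{\CC(q)}U_q^+(\lieg)_\alpha$ equals the Kostant partition number $\mathcal K(\alpha)$; but in most treatments this flatness is itself \emph{deduced} from the PBW theorem, so as written the argument risks circularity. It can be repaired by citing an independent source for the character of $U_q^+(\lieg)$ (Lusztig's realization of $U_q^+$ via the bilinear form, the shuffle-algebra embedding, or a Verma-module argument), but that input must be named explicitly. More seriously, in (c) your justification of the straightening identity is incorrect as stated: applying $(T_{i_1}\cdots T_{i_{j-1}})^{-1}$ sends $X_j$ to $E_{i_j}$, but it sends $X_k$ to a root vector whose root $s_{i_j}\cdots s_{i_{k-1}}(\alpha_{i_k})$ generally involves simple roots other than $\alpha_{i_j},\alpha_{i_k}$, and the intermediate monomials $X_\ii^{\mathbf b}$ occurring in the identity involve root vectors $X_{j+1},\dots,X_{k-1}$ whose roots need not lie in any rank-two subsystem spanned by $\beta_j,\beta_k$; so the claim that everything reduces to a computation in the parabolic generated by $E_{i_j}$ and $E_{i_k}$ fails already in type $A_3$. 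The standard proofs of the Levendorski\u{\i}--So\u{\i}belman lemma instead reduce to $j=1$ and characterize the span of monomials with $a_1=0$ inside $U_q^+(w)$ via the skew-derivations $r_i$, ${}_ir$ (as in Lemma~\ref{lem:lusztig}), or argue by induction over braid moves using the invariance of the subalgebra $U_q^+(w)$ under change of reduced word; rank-two computations enter only as the base of that induction. With the straightening lemma either cited or proved along those lines, and the character input sourced independently, your outline does yield the proposition.
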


\begin{remark} The basis $M(X_\ii)$ differs from Lusztig's PBW basis from \cite{Lus} in that we do not divide the monomials by $q$-factorials
but rather by some factors which vanish at $q=1$.
\end{remark}

Let  $\sigma$ be an admissible diagram automorphism of  a semisimple simply laced Lie algebra $\lieg$.
For each $r\in I/\sigma$ define the element $\hat E_r:=\prod\limits_{i\in {\mathcal O}_r} E_i\in U_q^+(\lieg)$ and the set $\hat E_r^\bullet\subset U_q^+(\lieg)$ of all monomials $\prod\limits_{i\in {\mathcal O}_r} E_i^{a_i}$, $a_i\in \ZZ_{\ge 0}$. Note that $\hat E_r$ is fixed under the action of $\sigma$ on $U_q^+(\lieg)$ and the set $\hat E_r^\bullet$ is $\sigma$-invariant. Moreover, $(\hat E_r^\bullet)^\sigma=\{\hat E_r^k\,|\,k\in \ZZ_{\ge 0}\}$. The following result is obvious. 
\begin{lemma} 
\label{le:iota PBW}
Assume that $\sigma$ is an admissible diagram automorphism of $\lieg$ and let  $\ii=(r_1,\ldots,r_m)\in R(w_\circ)$. Let
$\hat\ii\in R(\hat w_\circ)$ be any lifting of~$\ii$ (as defined in the Introduction).
Then 

\begin{enumerate}[{\rm(i)}]
\item\label{le:iota PBW.i} $M(X_{\hat\bi})=\hat X_1^\bullet\cdots \hat X_m^\bullet$ 
up to multiplication by non-zero scalars, where $\hat X_k^\bullet=\hat T_{r_1} \cdots \hat T_{r_{k-1}}(\hat E_{r_k}^\bullet)$, $1\le k\le m$.

\item\label{le:iota PBW.ii} The basis $M(X_{\hat\ii})$ is invariant under the action of $\sigma$ on $U_q^+(\lieg)$ and the fixed point set 
$M(X_{\hat\ii})^\sigma$ coincides, up to scalars, with the set
$$\hat X_\ii^a=\hat X_1^{a_1}\cdots \hat X_m^{a_m}\ ,$$
where 
$$
\hat X_k=\hat c_k^{-1}\hat T_{r_1}\hat T_{r_2}\cdots \hat T_{r_{k-1}}(\hat E_{r_k})
$$
and $\hat c_k=\prod_{i\in \mathcal O_{r_k}}\gamma(\hat s_{r_1}\cdots \hat s_{r_{k-1}}(\alpha_i)-\alpha_i)$.
\end{enumerate}
\end{lemma}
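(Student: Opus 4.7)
\bigskip

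\noindent\textbf{Proof plan for Lemma~\ref{le:iota PBW}.}
The plan is to decompose the PBW basis $M(X_{\hat\ii})$ along the orbit structure of $\sigma$ and then identify the $\sigma$-fixed monomials. Fix a lifting $\hat\ii=(i_1,\ldots,i_{\hat m})\in R(\hat w_\circ)$ obtained by listing the elements of $\mathcal O_{r_1}$ in some order, then the elements of $\mathcal O_{r_2}$, and so on. This partitions $\{1,\ldots,\hat m\}$ into consecutive blocks $B_1,\ldots,B_m$ with $\{i_s:s\in B_k\}=\mathcal O_{r_k}$.

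For part~\eqref{le:iota PBW.i}, the goal is to rewrite $X_s=c_s^{-1}T_{i_1}\cdots T_{i_{s-1}}(E_{i_s})$ in terms of the operators $\hat T_{r_1},\ldots,\hat T_{r_{k-1}}$ whenever $s\in B_k$. Admissibility of $\sigma$ gives $a_{ij}=0$ for any two distinct elements of the same orbit, so by~\eqref{eq:coxeter relation}--\eqref{eq:braid relation} the braid generators $T_j$, $T_{j'}$ commute for $j,j'$ in the same orbit; thus the product of all $T_{i_t}$ with $i_t\in\mathcal O_{r_\ell}$, $\ell<k$, collapses to $\hat T_{r_1}\cdots\hat T_{r_{k-1}}$. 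The remaining factors are the $T_{i'}$ with $i'\in\mathcal O_{r_k}$ preceding $i_s$ in $B_k$; since $a_{i',i_s}=0$, formula~\eqref{eq:Lustig braid action} gives $T_{i'}(E_{i_s})=E_{i_s}$, so these factors drop out. Hence $X_s=c_s^{-1}\hat T_{r_1}\cdots\hat T_{r_{k-1}}(E_{i_s})$. Next observe that for $i\neq j$ in the same orbit the quantum Serre relation~\eqref{q.Serre} (with $a_{ij}=0$) reduces to $E_iE_j=E_jE_i$; applying the algebra automorphism $\hat T_{r_1}\cdots\hat T_{r_{k-1}}$ shows that the elements $\{X_s:s\in B_k\}$ pairwise commute. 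Consequently any monomial $X_1^{a_1}\cdots X_{\hat m}^{a_{\hat m}}$ equals, up to a nonzero scalar (coming from the $c_s$), a product $M_1\cdots M_m$ with $M_k\in \hat X_k^\bullet$, which yields the claimed identity of sets.

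For part~\eqref{le:iota PBW.ii}, I would first check compatibility of $\sigma$ with the braid action: admissibility gives $\sigma\circ T_j=T_{\sigma(j)}\circ\sigma$ (direct from~\eqref{eq:Lustig braid action}), hence $\sigma$ fixes each $\hat T_{r_k}$ and acts on $\mathcal O_{r_k}$ cyclically, so $\sigma$ preserves each set $\hat X_k^\bullet$ and consequently preserves $M(X_{\hat\ii})$. Because the blocks $B_k$ are mutually disjoint in the monomial decomposition obtained in~\eqref{le:iota PBW.i}, a monomial is fixed by $\sigma$ if and only if each block-factor is fixed. Within $\hat X_k^\bullet$, using that the generators $\hat T_{r_1}\cdots\hat T_{r_{k-1}}(E_i)$, $i\in\mathcal O_{r_k}$, pairwise commute and are cyclically permuted by $\sigma$, a monomial $\prod_{i\in\mathcal O_{r_k}}(\hat T_{r_1}\cdots\hat T_{r_{k-1}}(E_i))^{a_i}$ is $\sigma$-invariant precisely when all $a_i$ are equal to a common value $a_k$, in which case the monomial is a scalar multiple of $\hat X_k^{a_k}$ with the scalar absorbed into $\hat c_k$. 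Multiplying across $k$ yields the claimed description of $M(X_{\hat\ii})^\sigma$.

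The only delicate step is the block-rearrangement in part~\eqref{le:iota PBW.i}: it hinges on two admissibility consequences, namely that $T_j,T_{j'}$ commute for $j,j'$ in one orbit and that $T_{i'}(E_i)=E_i$ for distinct $i,i'$ in one orbit. Both are immediate once admissibility is invoked, so no serious obstruction arises; the remaining bookkeeping is routine scalar tracking that can be hidden inside ``up to nonzero scalars''.
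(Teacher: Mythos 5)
Your argument is correct and is exactly the intended one: the paper gives no proof of Lemma~\ref{le:iota PBW} (it is stated as obvious), and your block decomposition based on the two admissibility consequences ($T_iT_j=T_jT_i$ and $T_i(E_j)=E_j$ for distinct $i,j$ in one orbit), together with the $\sigma$-equivariance $\sigma\circ T_j=T_{\sigma(j)}\circ\sigma$, supplies precisely the details the paper omits. The one point you defer to ``routine scalar tracking'' --- that the constants $c_s$ are constant along each $\sigma$-orbit inside a block, which is what makes $M(X_{\hat\ii})$ literally $\sigma$-invariant and makes each fixed block factor equal $\hat X_k^{a_k}$ with $\hat c_k=\prod_{s\in B_k}c_s$ --- follows immediately from $\gamma\circ\sigma=\gamma$ and $\sigma\hat s_r=\hat s_r\sigma$, so it is indeed routine.
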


This motivates the following definition. 

\begin{definition}[$\ii$-th quantum folding]\label{defn:quantum folding} For each $\ii\in R(w_\circ)$ define an injective linear map $\iota_\ii: U_q^+({\lieg^\sigma}^\vee) \hookrightarrow U_q^+(\lieg)^\sigma\subset U_q^+(\lieg)$ by the formula
$$\iota_\ii(X_\ii^a)=\hat X_\ii^a$$
for $a=(a_1,\ldots,a_m)\in \ZZ_{\ge 0}^m$.
\end{definition}

\begin{remark}\label{rem:lusztig bijection} Combinatorially, $\iota_\ii$ is a bijection $M(X_\ii)\widetilde \to M(X_{\hat\ii})^\sigma$, which can be interpreted as a certain bijection of Kashiwara crystals. More precisely, we can view  $\iota_\ii$ as the composition of the canonical isomorphism $B_\infty({\lieg^\sigma}^\vee)\cong B_\infty(\lieg^\vee)$ with the Lusztig's $\sigma$-equivariant identification 
$\hat L_\ii:B_\infty(\lieg)\widetilde\to M(X_{\hat\ii})$ so that  the following diagram commutes, in view of Proposition \ref{pr:crystal folding} and Lemma \ref{le:iota PBW}:
$$\begin{CD}
B_\infty(\lieg) @>\hat L_\ii>>M(X_{\hat\ii})\\
@AAA@AAA\\
B_\infty({\lieg^\sigma}^\vee)@>\hat L_\ii^\sigma>>M(X_{\hat\ii})
\end{CD}
$$
where the vertical arrows are natural inclusions of $\sigma$-fixed point subsets.
\end{remark}
\begin{remark}
The Definition~\ref{defn:quantum folding} makes sense for any $\bi\in R(w)$ where~$w$ is any element of the Weyl group. In that case
we replace $U_q^+(\lie g)$ by the algebra $U_q(w)$ introduced in~\cite{DCKP,Lus} and extensively studied in~\cite{Y,Y1}.
One can also define quantum foldings for some pairs $(\lieg,\lieg')$ not related by a diagram automorphism (e.g., for $(\lie{sp}_6,G_2)$). Namely, following \cite{Kas95}, one can embed $B_\infty(\lieg')$ into $B_\infty(\lieg)$ and then extend the embedding linearly to $\iota:U_q(\lieg')\hookrightarrow U_q(\lieg)$  using Lusztig's identifications  $B_\infty(\lieg')\cong M(X_{\bi})$ and  $B_\infty(\lieg)\cong M(X_{\hat \bi})$. 
\end{remark}

We conclude this section with the proof of Theorem~\ref{th:folding ii}.
\begin{proof}[Proof of Theorem~\ref{th:folding ii}]
It is sufficient to prove the statement for two reduced decompositions $\ii,\ii'\in R(w_\circ)$ which differ by one braid relation involving 
$r,s\in I/\sigma$
and
thus it suffices to consider the rank~2 case. We have the following two possibilities:

$1^\circ$. $|\mathcal O_r|=2$, $|\mathcal O_s|=1$ and $\ii=(r,s,r,s)$, $\ii'=(s,r,s,r)$.
Let $\mathcal O_r=\{i,j\}$, $\mathcal O_s=\{k\}$, with $a_{i,k}=a_{j,k}=-1$. 
Then $\hat\ii=(i,j,k,i,j,k)$, $\hat\ii'=(k,i,j,k,i,j)$ and the 
elements $\hat X_s$ for these two decompositions are, respectively,
$$\hat X_1=E_i E_j,\,\hat X_2=\frac{[E_i,[E_j,E_k]_{q^{-1}}]_{q^{-1}}}{(q-q^{-1})^2},\,\hat X_3= \frac{[E_i,E_k]_{q^{-1}} [E_j,E_k]_{q^{-1}}}{(q-q^{-1})^2},\,\hat X_4=E_k,$$
while $\hat X_1'=\hat X_4$, $\hat X_2'=\hat X_3^*$, $\hat X_3'=\hat X_2^*$ and~$\hat X_4'=\hat X_1^*$, where $*$ is the unique
anti-automorphism of~$U_q^+(\lie g)$ such that  $E_i^*=E_i$ for $i\in I$. 
A straightforward computation shows that 
$$
\hat X_2'=q^{-2} (\hat X_3+(q-q^{-1})^{-1}[\hat X_4,\hat X_1]\hat X_4),\qquad \hat X_3'=\hat X_2+q^{-1}(q-q^{-1})^{-1}[\hat X_4,\hat X_1].
$$
Therefore, the subalgebra of $U_q^+(\lie g)$ generated by $\hat X_r$ contains all elements $\hat X_r'$. Applying 
$*$ we obtain the opposite inclusion.

$2^\circ.$ $|\mathcal O_r|=|\mathcal O_s|=2$ and $\ii=(r,s,r)$, $\ii'=(s,r,s)$. Let $\mathcal O_r=\{i,j\}$,
$\mathcal O_s=\{k,l\}$ with $a_{i,k}=-1=a_{j,l}=-1$ and $a_{i,l}=a_{j,k}=0$. Then we have $\hat X_1=E_i E_j=\hat X_3'$, $\hat X_3=E_k E_l=\hat X_1'$ and 
$$
\hat X_2=\frac{[E_i,E_k]_{q^{-1}} [E_j,E_l]_{q^{-1}}}{(q-q^{-1})^2},\quad \hat X_2'=\frac{[E_k,E_i]_{q^{-1}} [E_l,E_j]_{q^{-1}}}{(q-q^{-1})^2}.
$$
It is easy to check that 
$$
\hat X_2'=\hat X_2+q^{-1}(q-q^{-1})^{-1}[\hat X_3, \hat X_1],
$$
hence $\hat X_2'$ is contained in the subalgebra of~$U_q^+(\lie g)$ generated by $\hat X_k$, $1\le k\le 3$. Interchanging the 
role of~$r$ and~$s$ completes the proof.
\end{proof}

\def\AA{\mathbb A}
\subsection{Diamond Lemma and specializations of PBW algebras} 
\label{subsect:specialization}
We will use the following version of Bergman's Diamond Lemma (\cite{Berg}).
Let $A$ be an associative $\kk$-algebra and suppose that $A$ is generated by a totally ordered set~$X_A$. Let $M(X_A)$
be the set of ordered monomials on the~$X_A$.
\begin{proposition}\label{prop:diamond lemma}
Assume that the defining relations for $(A,X_A)$ are
$$
X'X=\sum_{M\in M(X_A)} c_{X,X'}^M M,\qquad X<X',
$$
where $c_{X,X'}^{XX'}\in\kk^\times$ and for any $M\in M(X_A)$, $c_{X,X'}^M\not=0$ implies that $M<X'X$ in the lexicographic order. 
If for all $X<X'<X''$ there exist a unique $S\subset M(X_A)$ and a unique $\{a_{M}\,:\, M\in S\}\subset \kk^\times$ such that 
$$
X''X'X=\sum_{M\in S} a_M M, 
$$
then $(A,X_A)$ is a PBW algebra 
\end{proposition}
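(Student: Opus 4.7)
I plan to deduce this from Bergman's Diamond Lemma \cite{Berg} by treating each defining relation as a reduction rule. First I would view each relation $X'X=\sum_{M}c_{X,X'}^{M}M$ (for $X<X'$) as a rewriting rule that replaces a factor $X'X$ inside a word over the alphabet $X_{A}$ by its right-hand side. Since $c_{X,X'}^{XX'}\in\kk^{\times}$ and every $M$ with $c_{X,X'}^{M}\ne 0$ satisfies $M<X'X$ lexicographically, the induced rewriting strictly decreases a well-founded order on words (for instance, the multiset extension of the lexicographic order on letters), so the system terminates. Consequently any word in $X_{A}$ reduces after finitely many steps to a $\kk$-linear combination of ordered monomials, which yields a surjection $\operatorname{Span}_{\kk}M(X_{A})\twoheadrightarrow A$, i.e.\ $A$ is sub-PBW.

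To upgrade spanning to a basis, I would invoke the Diamond Lemma, which guarantees uniqueness of the normal form of every word in a terminating rewriting system provided every overlap ambiguity is resolvable. In the quadratic setting at hand, the overlap ambiguities are exactly the length-three monomials $X''X'X$ with $X<X'<X''$: one can reduce the factor $X''X'$ first, or reduce $X'X$ first, obtaining a priori two different intermediate expressions which must then be normalised further. Both continuations manifestly compute the class of $X''X'X$ in $A$, and the hypothesis of the proposition says exactly that this class has a \emph{unique} expression $\sum_{M\in S}a_{M}M$ as a linear combination of ordered monomials with $a_{M}\in\kk^{\times}$. Uniqueness forces the two normal forms to coincide, so every overlap ambiguity is resolvable. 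Newman's Lemma then promotes this local confluence to global confluence: every word has a unique normal form.

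Linear independence of $M(X_{A})$ in $A$ is now immediate, for a nontrivial $\kk$-linear dependence would give two distinct normal forms for the same element. Combined with spanning this shows that $M(X_{A})$ is a $\kk$-basis of $A$, so $(A,X_{A})$ is PBW.

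The step that requires the most care, and that I regard as the main obstacle, is the matching between the proposition's ``unique expression $\sum a_{M}M$'' and Bergman's confluence condition. The subtlety is that reducing $X''X'$ first produces an intermediate word whose further reduction requires rewriting factors propagating to the right of the original $X$, and a priori one needs to track that these propagated reductions terminate in the \emph{same} ordered-monomial combination as the other reduction chain. This is handled by observing that both reduction chains yield well-defined elements of $\operatorname{Span}_{\kk}M(X_{A})$ whose images in $A$ both equal $\sum_{M\in S}a_{M}M$; the uniqueness clause of the hypothesis then forces equality already in $\operatorname{Span}_{\kk}M(X_{A})$. Once this equivalence is articulated, the Diamond Lemma machinery closes the proof.
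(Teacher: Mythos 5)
The paper offers no proof of this proposition: it is stated as a version of Bergman's Diamond Lemma and justified by the citation to \cite{Berg}, so your route --- treating the relations as a rewriting system whose only ambiguities are the overlaps $X''X'X$ with $X<X'<X''$, translating the uniqueness hypothesis into resolvability of these ambiguities, and then invoking Bergman/Newman --- is exactly the intended argument. Your key step is also phrased correctly: both reduction chains of $X''X'X$ terminate in elements of $\operatorname{Span}_{\kk}M(X_A)$ having the same image in $A$, and the uniqueness clause of the hypothesis forces them to coincide already in the span, which is Bergman's confluence condition.

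Two points need repair or more care. First, your proposed termination order fails: the multiset extension of the order on letters is \emph{not} strictly decreased by a reduction, because the leading monomial $XX'$ (present with coefficient $c_{X,X'}^{XX'}\in\kk^{\times}$) is a permutation of $X'X$ and has the same multiset of letters. The order to use is the one the hypothesis is phrased in, read as degree-then-lexicographic: plain lexicographic comparison is not compatible with concatenation once a relation has lower-degree terms (as in the paper's applications, e.g. $u_1u_2=q^{-n}u_2u_1+u_{21}+\cdots$), whereas with deg-lex each reduction replaces a word by a combination of strictly smaller words and the order is a well-founded semigroup order, so termination holds. Second, the claim that linear independence of $M(X_A)$ is ``immediate'' from uniqueness of normal forms of words is too quick: a nontrivial dependence is a nonzero irreducible element of the defining ideal, not two normal forms of a single word, and excluding it is precisely Bergman's conclusion that the ideal meets $\operatorname{Span}_{\kk}M(X_A)$ trivially (equivalently, that reduction induces a well-defined projection killing the ideal). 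Since you defer to the Diamond Lemma machinery at that point anyway, this second item is a matter of wording; the first one, however, should be fixed as indicated.
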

Note that, unlike~\cite{P-P}, we do not require $(A,X_A)$ to be quadratic. In fact, in most cases where we will need to apply the Diamond 
Lemma, this will not be the case.

We will now list some elementary properties of specializations which will be needed later. 
The simplest instance of specialization is given by the following definition. Throughout this subsection, let $\kk=\CC(t)$ (later on, we set $t=q-1$) and denote by $\kk_0$ the set of all $f=f(t)\in \kk$ such that $f(0)$ is defined. Clearly, $\kk_0$ is a (local) subalgebra of $\kk$ and for each non-zero $f\in \kk$ either $f\in \kk_0$ or $f^{-1}\in \kk_0$.

\begin{definition}
Let $U$ and $V$ be $\kk$-vector spaces with bases $B_U$ and $B_V$ respectively. Let  $F:U\to V$ be a $\kk$-linear map  that all matrix coefficients $c_{b,b'}=c_{b,b'}(t)\in \kk$ defined by:
$$F(b)=\sum_{b'\in B_V} c_{b',b} b'$$
for $b\in B_U$, belong to $\kk_0$. Then the {\it specialization} $F_0$ of $F$ is a $\kk$-linear map $U\to V$ given by:
$$F_0(b)=\sum_{b'\in B_V} c_{b',b}(0) b'$$
for $b\in B_U$ (here, unlike in the literature on deformation theory, we preserve the ground field $\kk=\CC(t)$ after the specialization because it is more convenient to view both $F$ and $F_0$ as $\kk$-linear maps $U\to V$).
\end{definition}

Similarly, let $(A,X_A)$ be a PBW algebra. Then, in the notation of Definition~\ref{def:liftable}, it has a unique presentation:
$$X'X=\sum_{M\in M(X_A)} c_{X,X'}^M M$$
for all $X,X'\in X_A$ such that $X< X'$, where all $c_{X,X'}^M\in \kk$. 
\begin{definition}

\label{def:specialization}
We say that the PBW algebra $(A,X_A)$ is {\it specializable} if all the $c_{X,X'}^M$ belong to $\kk_0$ 
and in that case define the specialization $(A_0,X_A)$ to be the associative $\kk$-algebra with the unique presentation:
$$X'X=\sum_{M\in M(X_A)} c_{X,X'}^M(0) M$$
for all $X,X'\in X_A$ with  $X< X'$, where all $c_{X,X'}^M\in \kk$. 

We say that a specializable PBW algebra $(A,X_A)$ is {\it optimal} if  $(A_0,X_A)$ is just the polynomial algebra~$\kk[X_A]$, that
is, if~$c_{X,X'}^M(0)=\delta_{M,XX'}$ for all relevant $X,X',M$ (i.e., the defining relations in~$(A_0,X_A)$ are $X'X=XX'$).
In that case we define a bi-differential bracket $\{\cdot,\cdot\}$ on $\kk[X_A]$ by:
\begin{itemize}
\item (Leibniz rule) $\{xy,z\}=x\{y,z\}+y\{x,z\}$, $\{x,yz\}=\{x,y\}z+y\{x,z\}$ for all $x,y,z\in \kk[X_A]$.

\item (skew symmetry) $\{x,y\}=-\{y,x\}$ for all  $x,y\in \kk[X_A]$.

\item for all $X,X'\in X_A$  with $X< X'$
\begin{equation}
\label{eq:explicit poisson}
\{X',X\}=\sum\limits_{M\in M(X_A)} \frac{\partial c_{X,X'}^M}{\partial t}\Big|_{t=0} M.
\end{equation}   
\end{itemize}
\end{definition}

\begin{proposition} \label{prop:Specialization}
Let $(A,X_A)$ be a specializable PBW-algebra. 
Then:

\begin{enumerate}[{\rm(i)}]
\item\label{prop:Spec.i} Its specialization $(A_0,X_A)$ is also PBW.

\item\label{prop:Spec.ii} If, additionally, $(A,X_A)$ is  optimal, then  the bracket $\{\cdot,\cdot\}$ on $A_0=\kk[X_A]$ is Poisson. 
\end{enumerate}
\end{proposition}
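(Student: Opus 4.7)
The plan is to realize both the specialization $A_0$ and its Poisson structure intrinsically, by passing through the $\kk_0$-form $A_{\kk_0}\subset A$ generated by~$X_A$.

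For~\eqref{prop:Spec.i}, since the defining relations of~$(A,X_A)$ have coefficients in~$\kk_0$, the subalgebra $A_{\kk_0}$ is presented over $\kk_0$ by those same relations. Every monomial in the generators can be reduced to a $\kk_0$-linear combination of ordered monomials by applying the same rewriting procedure that terminates in~$A$, so $M(X_A)$ spans $A_{\kk_0}$ over~$\kk_0$; on the other hand $A=A_{\kk_0}\otimes_{\kk_0}\kk$ and $M(X_A)$ is $\kk$-linearly independent in~$A$ by the PBW hypothesis, forcing $\kk_0$-linear independence in $A_{\kk_0}$. Thus $A_{\kk_0}$ is free over~$\kk_0$ with basis $M(X_A)$, and $(A_{\kk_0}/tA_{\kk_0})\otimes_{\CC}\kk$ is a $\kk$-algebra with basis $M(X_A)$ presented by the specialized relations $X'X=\sum_M c_{X,X'}^M(0)\,M$; this is exactly $A_0$, which establishes~\eqref{prop:Spec.i}.

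For~\eqref{prop:Spec.ii}, optimality says that $A_{\kk_0}/tA_{\kk_0}$ is commutative, so $[a,b]\in tA_{\kk_0}$ for all $a,b\in A_{\kk_0}$. I therefore define a bilinear operation $\{a,b\}^\flat:=t^{-1}[a,b]\bmod t$ on $A_0$. Skew-symmetry comes from $[a,b]+[b,a]=0$, the Leibniz rule from $[a,bc]=[a,b]c+b[a,c]$, and the Jacobi identity from the algebraic identity $[a,[b,c]]+[b,[c,a]]+[c,[a,b]]=0$ in~$A_{\kk_0}$: since each inner commutator already lies in~$tA_{\kk_0}$, each triple commutator lies in~$t^2 A_{\kk_0}$, and dividing this identity by~$t^2$ and reducing modulo~$t$ yields Jacobi for~$\{\cdot,\cdot\}^\flat$.

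It remains to identify $\{\cdot,\cdot\}^\flat$ with the bracket $\{\cdot,\cdot\}$ of~\eqref{eq:explicit poisson}. On generators $X<X'$, optimality gives the Taylor expansion $c_{X,X'}^M=\delta_{M,XX'}+t\cdot\partial_t c_{X,X'}^M|_{t=0}+O(t^2)$, whence
$$
\{X',X\}^\flat \;=\; t^{-1}\!\sum_{M}(c_{X,X'}^M-\delta_{M,XX'})\,M\ \bmod t\ \;=\; \sum_{M} \left.\tfrac{\partial c_{X,X'}^M}{\partial t}\right|_{t=0}\! M,
$$
matching~\eqref{eq:explicit poisson}. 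Since both brackets are biderivations on the polynomial algebra $\kk[X_A]$ agreeing on generators, they coincide, and the Poisson axioms verified for~$\{\cdot,\cdot\}^\flat$ transfer to~$\{\cdot,\cdot\}$. The main obstacle lies in~\eqref{prop:Spec.i} — ensuring that the normal-form rewriting in~$A_{\kk_0}$ keeps coefficients in~$\kk_0$ without introducing denominators — and this is bypassed cleanly by borrowing termination and uniqueness from~$A$; after that,~\eqref{prop:Spec.ii} is formal from the classical ``commutator-divided-by-deformation-parameter'' construction of Poisson brackets.
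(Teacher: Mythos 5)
Your proof is correct and follows essentially the same route as the paper: you work with the $\kk_0$-form spanned by the ordered monomials (the paper's $A_0'=\sum_{M\in M(X_A)}\kk_0 M$), deduce PBW-ness of $A_0$ from its freeness over $\kk_0$ after reducing modulo $t$, and then obtain the Poisson bracket as the commutator divided by $t$, matching it with \eqref{eq:explicit poisson} on generators exactly as in the paper's argument.
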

\begin{proof}
Denote $A'_0=\sum_{M\in M(X_A)} \kk_0 M$. Clearly, this is a $\kk_0$-subalgebra of $A$ and a free $\kk_0$-module. 
Taking into the account that $(t)$ is a (unique) maximal ideal in $\kk_0$, we see that the specialization $A_0$ of $A$, 
is canonically isomorphic to $\kk\otimes_{\kk_0} A_0'/t A_0'$. 

To prove~\eqref{prop:Spec.i}, suppose that in~$A_0$ we have 
$\sum\limits _{M\in M(X_A)} c_M M=0$, where all $c_M\in \CC$. This implies that $\sum\limits_{M\in M(A_X)} c_M M\in t A_0'$, hence
$\sum\limits_{M\in M(X_A)} c_M M=\sum\limits_{M\in M(X_A)} t c'_{M} M$ for some $c'_M\in\kk_0$. Since~$A_0'$ is a free $\kk_0$-module,
this implies that $c_M=c'_M=0$ for all $M$ and completes the proof of~\eqref{prop:Spec.i}.

Now we prove~\eqref{prop:Spec.ii}. Optimality of $(A,X_A)$ implies that the commutator $[a,b]$ of any $a,b\in A'_0$ belongs to the ideal $tA_0$. For any $a_0,b_0\in A'_0/tA'_0$ 
denote:
$$\{a_0,b_0\}=\pi\Big( \frac{[a,b]}{t}\Big)$$
where $\pi:A'_0\twoheadrightarrow A'_0/tA'_0$ is the canonical projection and $a,b\in A'_0$ are any elements such that $\pi(a)=a_0$, $\pi(b)=b_0$. Clearly, the bracket $\{a_0,b_0\}$ is well-defined (i.e., it does not depend on the choice of representatives $a$ and $b$).  This bracket is Poisson because the original commutator bracket was skew-symmetric, and satisfied both the Liebniz rule and Jacobi identities.

It remains to verify \eqref{eq:explicit poisson}. Indeed, let $X,X'\in X_A$ with $X<X'$. We have 
$$\{X',X\}=\pi\Big(\frac{X'X-XX'}{t}\Big)=\pi\Big(\frac{c_{X,X'}^{XX'}-1}{t}\Big) XX' +\sum_{M\not=XX'} \pi\Big(\frac{c_{X,X'}^M}{t}\Big) M\ .$$
This gives \eqref{eq:explicit poisson} because $\pi(f)=f(0)$ and $\pi(\frac{f-f(0)}{t})=\frac{\partial f}{\partial t}\Big|_{t=0}$ for any $f\in \kk_0$.
The proposition is proved.
\end{proof}

\subsection{Nichols algebras and proof of Theorem \ref{thm:flat quadratic algebra}}\label{subsect:diamond}
We will now prove Theorem \ref{thm:flat quadratic algebra}, which allows to establish the 
PBW property when an algebra is quadratic and is defined in terms of a braiding. Retain the notation of Section~\ref{subsect:specialization}.

\begin{proof}
Let $Y$ be a $\kk$-vector space and 
 $\Psi:Y\otimes Y\to Y\otimes Y$ be linear map. For each $k\ge 2$ we define the linear maps $\Psi_i:Y^{\otimes k}\to Y^{\otimes k}$, $i=1,\ldots,k-1$ by the formula: 
$$\Psi_i=1^{\otimes i-1}\otimes \Psi\otimes 1^{\otimes k-i-1} \ .$$

If $\Psi$ is invertible and satisfies the braid equation \eqref{eq:braid equation}, then for each $k\ge 2$ one obtains the representation of the braid group $Br_{\lie{sl}_k}$ on $Y^{\otimes k}$ (in the notation of Section \ref{subs:PBW bases}) via $T_i\mapsto \Psi_i$
for $i=1,\ldots,k-1$. 

Therefore, one can define the braided factorial  $[k]!_\Psi:Y^{\otimes k}\to Y^{\otimes k}$ by the formula:
$$[k]!_\Psi=\sum_{w\in S_k} \Psi_w$$ 
where $\Psi_w$ is the image of $T_w$ (given by \eqref{eq:Tw}) in $\End_\kk(Y^{\otimes k})$. 

It is well-known (see e.g.~\cite{W}) that $I_\Psi:=\bigoplus\limits_{k\ge 2}\ker[k]!_\Psi$ is a two-sided ideal in the tensor algebra $T(Y)$. The quotient algebra ${\mathcal B}_\Psi(Y):=T(Y)/I_\Psi$ is called the {\em Nichols-Woronowicz 
algebra} of the braided vector space $(Y,\Psi)$.

For each linear map $\Psi:Y\otimes Y\to Y\otimes Y$ denote $A_\Psi(Y):=T(Y)/\langle \ker  (\Psi+1)\rangle$. 

We need the following result. 

\begin{proposition} 
\label{pr:flat Nichols}
Let $Y$ be a $\kk$-vector space and let $\Psi:Y\otimes Y\to Y\otimes Y$ be an invertible $\kk$-linear map satisfying the braid equation. 
Assume that:
\begin{enumerate}[{\rm(i)}]
\item\label{pr:flat Nichols.i} The specialization $\Psi_0=\Psi|_{t=0}$ of $\Psi$ is a well-defined (with respect to a basis of $Y$) invertible linear map $Y\otimes Y\to Y\otimes Y$ satisfying the braid equation.

\item\label{pr:flat Nichols.ii} The Nichols-Woronowicz 
algebra ${\mathcal B}_{\Psi_0}(Y)$ is isomorphic to $A_{\Psi_0}(Y)$ as a graded vector space. 

\item\label{pr:flat Nichols.iii} $\dim \ker(\Psi+1)=\dim \ker(\Psi_0+1)$.
\end{enumerate}

\noindent Then ${\mathcal B}_\Psi(Y)\cong {\mathcal B}_{\Psi_0}(Y)$ as a graded vector space and 
one has an isomorphism of algebras 
$${\mathcal B}_\Psi(Y)\widetilde \to A_{\Psi}(Y) \ .$$ 

\end{proposition}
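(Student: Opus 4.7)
The plan is to force the equality $A_\Psi(Y)=\mathcal{B}_\Psi(Y)$ by a two-sided dimension squeeze against the ``classical'' graded dimensions $\dim \mathcal{B}_{\Psi_0}(Y)_k$. Throughout, $\kk=\CC(t)$ and $\kk_0\subset \kk$ denotes the DVR of rational functions regular at $t=0$. The first step is immediate: since $[2]!_\Psi=1+\Psi$, the quadratic relations $\ker(\Psi+1)$ defining $A_\Psi(Y)$ already lie inside $\bigoplus_k\ker[k]!_\Psi=I_\Psi$, so there is an obvious surjective homomorphism of graded algebras
\begin{equation*}
\pi\colon A_\Psi(Y)\twoheadrightarrow \mathcal{B}_\Psi(Y),
\end{equation*}
and it suffices to check $\dim_\kk A_\Psi(Y)_k=\dim_\kk \mathcal{B}_\Psi(Y)_k$ for each $k$.

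For the Nichols side I would establish the lower bound $\dim \mathcal{B}_\Psi(Y)_k\ge \dim \mathcal{B}_{\Psi_0}(Y)_k$ by direct semicontinuity: by assumption~\eqref{pr:flat Nichols.i} the matrix of $[k]!_\Psi$ has entries in $\kk_0$, so the rank of its specialization $[k]!_{\Psi_0}$ is bounded above by its generic rank; equivalently $\dim\ker[k]!_{\Psi_0}\ge \dim\ker[k]!_\Psi$, which descends to the desired inequality on the quotients $Y^{\otimes k}/\ker[k]!_\Psi$.

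For the quadratic side I would set up a $\kk_0$-lattice. Fix a basis of $Y$ in which $\Psi$ has entries in $\kk_0$, let $Y_0\subset Y$ be the corresponding $\kk_0$-form, and set $R_0=\ker(\Psi+1)\cap Y_0^{\otimes 2}$. Because kernels are saturated, $R_0$ is a pure submodule of the free $\kk_0$-module $Y_0^{\otimes 2}$, hence free, and clearing denominators identifies $\kk\otimes_{\kk_0}R_0$ with $\ker(\Psi+1)$, so $\mathrm{rank}_{\kk_0}R_0=\dim_\kk\ker(\Psi+1)$. The reduction $R_0/tR_0$ then embeds into the $\CC$-form of $\ker(\Psi_0+1)$, and hypothesis~\eqref{pr:flat Nichols.iii} matches the two $\CC$-dimensions, forcing this embedding to be an equality. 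Writing $J^0\subset T(Y_0)$ for the two-sided $\kk_0$-ideal generated by $R_0$ and $M_k=T(Y_0)_k/J^0_k$, base change yields $\kk\otimes_{\kk_0}M_k=A_\Psi(Y)_k$ and $M_k/tM_k=A_{\Psi_0}(Y)_k$, so the standard inequality
\begin{equation*}
\dim_\kk(M_k\otimes_{\kk_0}\kk)\le \dim_\CC(M_k/tM_k)
\end{equation*}
valid for any finitely generated module over the DVR $\kk_0$ delivers the upper bound $\dim A_\Psi(Y)_k\le \dim A_{\Psi_0}(Y)_k$.

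Combining the two bounds with hypothesis~\eqref{pr:flat Nichols.ii} produces the chain
\begin{equation*}
\dim \mathcal{B}_\Psi(Y)_k\le \dim A_\Psi(Y)_k\le \dim A_{\Psi_0}(Y)_k=\dim \mathcal{B}_{\Psi_0}(Y)_k\le \dim \mathcal{B}_\Psi(Y)_k,
\end{equation*}
forcing equality throughout and simultaneously proving that $\pi$ is an isomorphism of graded algebras and that $\mathcal{B}_\Psi(Y)\cong \mathcal{B}_{\Psi_0}(Y)$ as graded vector spaces. The hardest point is the lattice step: one must verify that assumption~\eqref{pr:flat Nichols.iii} genuinely prevents new quadratic relations from surfacing at $t=0$ (otherwise $J^0$ would reduce to a strict subideal of the ideal defining $A_{\Psi_0}(Y)$ and the upper bound would collapse), and that purity/freeness of $R_0$ over the DVR $\kk_0$ is enough to propagate this degree-two control to every higher graded piece without any flatness obstruction.
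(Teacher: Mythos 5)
Your argument is correct, and its skeleton is exactly the paper's: the tautological surjection $A_\Psi(Y)\twoheadrightarrow\mathcal B_\Psi(Y)$ coming from $\ker(\Psi+1)=\ker[2]!_\Psi$, the lower bound $\dim\mathcal B_\Psi(Y)_k\ge\dim\mathcal B_{\Psi_0}(Y)_k$ by rank semicontinuity of $[k]!_\Psi$, an upper bound $\dim A_\Psi(Y)_k\le\dim A_{\Psi_0}(Y)_k$ in which hypothesis (iii) is the crucial input, and then the squeeze through hypothesis (ii). Where you differ is in how the quadratic-side bound is implemented. The paper first proves (via a Smith-form type decomposition $F=g_tPh_t$) that when $\dim\ker F=\dim\ker F_0$ there is a \emph{specializable} map $G$ with $G(V)=\ker F$ and $G_0(V)=\ker F_0$, and then invokes a separate lemma comparing the graded dimensions of the ideal generated by a finite set of specializable homogeneous elements with that of the ideal generated by its specialization. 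You instead work with the saturated lattice $R_0=\ker(\Psi+1)\cap Y_0^{\otimes 2}$ over the DVR $\kk_0$: purity gives freeness and $\operatorname{rank} R_0=\dim\ker(\Psi+1)$, the injection $R_0/tR_0\hookrightarrow\ker(\Psi_0+1)$ becomes an equality by (iii), and the inequality $\dim_\kk(M\otimes_{\kk_0}\kk)\le\dim(M/tM)$ for finitely generated $\kk_0$-modules propagates this to every graded piece. The two devices are interchangeable proofs of the same semicontinuity statement; your lattice formulation is arguably more standard commutative algebra and avoids constructing $G$ explicitly, while the paper's version isolates reusable lemmas (specializable kernels and ideal specialization) that it needs elsewhere, e.g.\ in the proof that $\mathcal U^+_{q,n}\cong S_q(V\otimes V)\rtimes U_q^+(\lie{sl}_n)$. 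Both correctly locate the only delicate point, namely that (iii) rules out extra quadratic relations appearing at $t=0$, and your handling of it via saturation is sound.
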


\begin{proof} 
We will need two technical results.
\begin{lemma} 
\label{le:inequalities on dimension} 
Let $U$ and $V$ be a finite-dimensional $\CC(t)$ vector spaces  and $F:U\to V$ be a linear map such that its specialization $F_0$ at $t=0$ is a well-defined map $U\to V$ (with respect to some bases of $B_U$ and $B_V$).  
Then
\begin{enumerate}[{\rm(a)}]
\item\label{le:ineq.a} $\dim F_0(U)\le \dim F(U)$ and $\dim \ker  F\le \dim \ker  F_0$. 

\item\label{le:ineq.b} Assume that $\dim \ker  F= \dim \ker  F_0$. Then  there is a linear map $G:V\to V$ such that: 
\begin{enumerate}[{\rm(i)}]
\item\label{le:ineq.b.i} $G(V)=\ker F$, 

\item\label{le:ineq.b.ii} the specialization $G_0$ of $G$ at $t=0$ is well-defined,  

\item\label{le:ineq.b.iii} $G_0(V)=\ker F_0$.
\end{enumerate}
\end{enumerate}
\end{lemma}

\begin{proof} Fix bases $B_U$ and $B_V$ and identify $U$ with $\kk^n$, $V$ with $\kk^m$, and $F:\kk^n\to \kk^n$ with its $m\times n$ matrix.

It is well-known (and easy to show) that for each non-zero $F\in \operatorname{Mat}_{m\times n}(\kk)$ there exist
$g_t\in GL_m(\kk_0)$ and $h_t\in GL_n(\kk_0)$ such that 
\begin{equation}
\label{eq:affine grassmannian decomposition}
F=g_tPh_t\ ,
\end{equation}
where $P$ is an $m\times n$-matrix such that $P_{ij}= 0$ unless $(i,j)\in \{(1,1),\ldots,(r,r)\}$, and $P_{ii}=t^{\lambda_i}$ for $i=1,\ldots,r$, where $r=\operatorname{rank}(F)$ and $\lambda_i\in \ZZ$.  Therefore, $F_0$ is well-defined if and only if all $\lambda_i\ge 0$. 

In particular, $F_0=g_0P_0h_0$ and $\operatorname{rank}(F_0)\le k$, where $P_0$ is the specialization of $P$ at $t=0$. That is,  
$$\dim F_0(U)=\operatorname{rank}(M_0)\le \operatorname{rank}(M_t)=\dim F(U) \ .$$ 
This in conjunction with the equality $\dim \ker  F+\operatorname{rank}(F)=\dim V$ proves~\eqref{le:ineq.a}

Now we prove~\eqref{le:ineq.b}. Clearly, the condition $\dim \ker  F= \dim \ker  F_0$  is equivalent to  $\operatorname{rank}(P_0)=r$, i.e.,  in the decomposition \ref{eq:affine grassmannian decomposition} one has $\lambda_1=\cdots=\lambda_r=0$, i.e., $P=P_0$ is the matrix (not depending on $t$) of the standard projection $\kk^n\to \kk^r\subset \kk^m$.

Let $P^\perp\in \operatorname{Mat}_{n\times n}(\CC)$ be the standard projection $\kk^n\to \operatorname{Span}\{e_{r+1},\ldots,e_n\}\subset \kk^n$ (e.g., $P P^\perp=0$). Denote $G:=h_t^{-1}P^\perp$ so that the specialization  $G_0$ of $G$ at $t=0$ is well-defined and given by  $G_0=h_0^{-1}P^\perp$.

Clearly,  
$$G(\kk^m)=h_t^{-1}(\operatorname{Span}\{e_{r+1},\ldots,e_n\})=\ker  Ph_t=\ker  F \ .$$
Similarly, $G_0(\kk^m)=\ker  F$.
This proves~\eqref{le:ineq.b}. 
\end{proof}

\begin{lemma}\label{lem:spec of ideals}
Let $\mathcal F$ be a free $\kk$-algebra on $y_i$, $i\in I$ where~$I$ is a finite set. Fix
a grading on~$\mathcal F$ with $\deg y_i\in\ZZ_{>0}$. Fix any finite subset $B_t$ of 
specializable (with respect to the natural monomial basis of~$\mathcal F$) homogeneous elements in~$\mathcal F$.
Then $\dim \lr{B_t}_n\ge \dim\lr{B_{0}}_n$, where $\lr{B_t}_n$ (respectively, $\lr{B_{0}}_n$) is the $n$th homogeneous component 
of the ideal in~$\mathcal F$ generated by $B_t$ (respectively, by the specialization $B_{t=0}$ of~$B_t$).
\end{lemma}
\begin{proof}
Clearly $$\lr{B_t}_n=\bigoplus\limits_{i+j+k=n}\sum\limits_{b\in B_t\,:\, \deg b=j} \mathcal F_i b \mathcal F_k.
$$
Define $\widehat{\lr{B_t}_n}=\bigoplus\limits_{i+j+k=n}\bigoplus\limits_{b\in B_t\,:\, \deg b=j} \mathcal F_i b \mathcal F_k$ and
let $F:\widehat{\lr{B_t}_n}\to \mathcal F_n$ be the natural map which is the identity on each summand.
Clearly the specialization of $F_0$ at $t=0$ with respect to the natural monomial basis in both spaces is well-defined and
the image of~$F$ (respectively, of $F_0$) is $\lr{B_t}_n$ (respectively, $\lr{B_0}_n$). Then the assertion 
follows from Lemma~\ref{le:inequalities on dimension}\eqref{le:ineq.a}.
\end{proof}

The algebra $\mathcal B_\Psi(Y)$ is graded and $\mathcal B_\Psi(Y)_k$ is isomorphic to $[k]_\Psi!(Y^{\tensor k})$ as a vector space 
Therefore,
\begin{equation}\label{eq:dim ineq.1}
\dim(\mathcal B_{\Psi_0})_k\le \dim (\mathcal B_\Psi)_k
\end{equation}
by Lemma~\ref{le:inequalities on dimension}\eqref{le:ineq.a} with $V=Y^{\tensor k}$ and $F=[k]!_\Psi$.

On the other hand, note that  $\ker  (\Psi+1)= \ker  [2]!_\Psi$ and so 
we have a structural homomorphism of graded algebras $A_\Psi(Y)\twoheadrightarrow \mathcal B_\Psi(Y)$. In particular,
we obtain a surjective homomorphism of vector spaces $Y^{\tensor k}/\lr{B_t}_k\twoheadrightarrow \mathcal B_\Psi(Y)_k$ where
$B$ is the image of natural basis in~$Y^{\tensor k}$ under the map~$G$ from Lemma~\ref{le:inequalities on dimension}\eqref{le:ineq.b}. 
It follows from Lemma~\ref{lem:spec of ideals} that $\dim A_{\Psi_0}(Y)_k\ge \dim A_{\Psi}(Y)_k$.  Combining this with~\eqref{eq:dim ineq.1}
and the obvious inequality $\dim A_\Psi(Y)_k\ge \dim \mathcal B_\Psi(Y)_k$ we obtain $\dim A_\Psi(Y)_k\ge \dim A_{\Psi_0}(Y)_k$ which implies
$\dim A_\Psi(Y)_k= \dim \mathcal B_{\Psi}(Y)_k=\dim \mathcal B_{\Psi_0}(Y)_k$ for all~$k$. This completes the proof of Proposition~\ref{pr:flat Nichols}.
\end{proof}

Now we can complete the proof of Theorem~\ref{thm:flat quadratic algebra}. First, in the notation of Proposition we use Proposition \ref{pr:flat Nichols} with  $Y^*$  and  $-\Psi^*:Y^*\otimes Y^*\to Y^*\otimes Y^*$.

Taking $t=q-1$ and $\Psi_0^*=\tau$ in Proposition \ref{pr:flat Nichols}, we see that ${\mathcal B}_{-\Psi_0^*}(Y^*)=\Lambda(Y^*)=A_{-\Psi_0^*}(Y^*)$ hence  $A_{-\Psi^*}(Y^*)\cong {\mathcal B}_{-\Psi^*}(Y^*)$ is a flat deformation of the exterior algebra $\Lambda(Y^*)$. Taking into account that $(\ker  (-\Psi^*+1))^\perp=(\Psi-1)(Y\otimes Y)$ we see that the quadratic dual $A_{-\Psi^*}(Y^*)^!=S_\Psi(Y)$ is a flat deformation of $S(Y)$. 

Theorem~\ref{thm:flat quadratic algebra} is proved. 
\end{proof}

\subsection{Module algebras and semi-direct products}\label{subs:mod alg semidirect}
It is well-known that $U_q(\lieg)$ is a Hopf algebra with:

$\bullet$ The coproduct $\Delta:U_q(\lieg)\to U_q(\lieg)\otimes U_q(\lieg)$ given by:
$$\Delta(E_i)=E_i\otimes 1+K_i\otimes E_i,\qquad  \Delta(F_i)=F_i\otimes K_i^{-1}+1\otimes F_i.$$

$\bullet$ The counit $\varepsilon:U_q(\lieg)\to \CC(q)$ given by: 
$$\varepsilon(E_i)=\varepsilon(F_i)=0,\qquad \varepsilon(K_i^{\pm 1})=1.$$

$\bullet $ The antipode $S:U_q(\lieg)\to U_q(\lieg)$ given by
$$S(F_i)=-F_iK_i,\qquad S(E_i)=-K_i^{-1}E_i, \qquad S(K_i)=K_i^{-1}.$$

In particular, $U_q(\lieg)$ admits the (left) adjoint action on itself, which we denote by $(u,v)\mapsto (\ad u)(v)$. The action is given  by:
$$(\ad u)(v)=u_{(1)}vS(u_{(2)})\ ,$$
where $ \Delta(u)=u_{(1)}\otimes u_{(2)}$ in the Sweedler notation. By definition, 
$$(\ad K_i)(u)=K_iuK_i^{-1},\,(\ad E_i)(u)=E_iu-K_iuK_i^{-1}E_i,\, (\ad F_i)(u)=(F_iu-uF_i)K_i.$$
In particular, the quantum Serre relations can be written as
\begin{equation}\label{eq:qSerre-ad}
(\ad E_i)^{1-a_{ij}}(E_j)=0.
\end{equation}
We will also need the right action of~$U_q(\lie g)$ on itself.
Let $*$ be the unique anti-automorphism of $U_q(\lie g)$ defined by $E_i^*=E_i$, $F_i^*=F_i$ and $K_i^*=K_i^{-1}$. Then 
we define $\ad^* u=*\circ \ad u\circ *$.
In particular, we have 
$$
(\ad^* K_i)(u)=K_i u K_i^{-1},\, (\ad^* E_i)(u)=u E_i-E_i K_iu K_i^{-1},\, (\ad^* F_i)(u)=K_i^{-1}[u,F_i].
$$
and
\begin{equation}\label{eq:braid.ad}
T_i(E_j)=(\ad^* E_i)^{(-a_{ij})}(E_j).
\end{equation}
It is easy to see that $\ad^* u$ is in fact the right adjoint action for a different co-product on~$U_q(\lie g)$. 
Note that for all $i,j\in I$ 
\begin{equation}\label{eq:ad*-ad}
(\ad E_i)(E_j)=(\ad^* E_j)(E_i),
\end{equation}
while for all $i,j\in I$ and $w\in W$ such that $w\alpha_i=\alpha_j$ we have, by Lemma~\ref{lem:lus.br.id}
\begin{equation}\label{eq:Tw ad}
T_w((\ad E_i)(u))=(\ad E_j)(T_w(u)),\qquad T_w((\ad^* E_i)(u))=(\ad^*E_j)(T_w(u)).
\end{equation}

Given a bialgebra $U$, refer to an algebra in the category $U$-mod as a {\it module algebra} over $U$. The following Lemma is obvious.
\begin{lemma}\label{lem:modalgebra} Let ${\mathcal A}$ be a left module algebra over $U_q(\lieg)$. Then the action of Chevalley generators on ${\mathcal A}$ satisfies:
$$K_i(ab)=K_i(a)K_i(b),\, E_i(ab)=E_i(a)b+K_i(a)E_i(b),\,F_i(ab)=F_i(a)K_i^{-1}(b)+a F_i(b)$$  
for all $a,b\in {\mathcal A}$ and $i\in I$. 
\end{lemma}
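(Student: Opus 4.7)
The statement is essentially a direct unpacking of what it means to be a module algebra over a bialgebra, so the plan is very short. By definition, if $\mathcal A$ is a left module algebra over a bialgebra $U$, then the multiplication $m\colon \mathcal A\otimes \mathcal A\to \mathcal A$ is a morphism of $U$-modules, where the $U$-action on $\mathcal A\otimes\mathcal A$ is the one induced by the coproduct. In Sweedler notation this means
\[
u(ab)=u_{(1)}(a)\cdot u_{(2)}(b)\qquad\text{for all } u\in U_q(\lie g),\ a,b\in\mathcal A.
\]
So the plan is simply to substitute the three coproducts into this identity.

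Recall the coproducts recorded just above the statement, together with the standard $\Delta(K_i)=K_i\otimes K_i$ (which is part of the Hopf algebra structure on $U_q(\lie g)$). Applying the Sweedler identity to $u=K_i$ gives $K_i(ab)=K_i(a)K_i(b)$; applying it to $u=E_i$ with $\Delta(E_i)=E_i\otimes 1+K_i\otimes E_i$ gives $E_i(ab)=E_i(a)\cdot 1(b)+K_i(a)\cdot E_i(b)=E_i(a)b+K_i(a)E_i(b)$; and applying it to $u=F_i$ with $\Delta(F_i)=F_i\otimes K_i^{-1}+1\otimes F_i$ yields $F_i(ab)=F_i(a)K_i^{-1}(b)+aF_i(b)$. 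This is exactly the trio of formulas claimed.

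There is no genuine obstacle here; the only subtlety is a bookkeeping one, namely to remember to invoke $\Delta(K_i)=K_i\otimes K_i$, which was not displayed explicitly in the list of coproducts but is standard and can be cited from, e.g., Lusztig's book. Since the proof reduces to literally reading off $\Delta(E_i),\Delta(F_i),\Delta(K_i)$ and substituting into the module algebra axiom, a one-line proof that says exactly this should suffice in the paper.
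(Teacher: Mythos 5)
Your proof is correct and matches the paper's intent exactly: the paper labels this Lemma ``obvious'' and gives no proof, precisely because it is nothing more than the module-algebra axiom $u(ab)=u_{(1)}(a)u_{(2)}(b)$ applied to the displayed coproducts of $E_i$, $F_i$ and the standard $\Delta(K_i)=K_i\otimes K_i$, just as you do.
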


\begin{definition} For any bialgebra $B$ and its module algebra ${\mathcal A}$ define the cross product ${\mathcal A}\rtimes B$ to be the the vector space ${\mathcal A}\otimes B$ with the associative product given by:
$$(a\otimes b)(a'\otimes b')=a\cdot (b_{(1)}(a))\otimes  b_{(2)}\cdot b'$$
for all $a\in {\mathcal A}$, $b\in B$ (where $\Delta(b)=b_{(1)}\otimes b_{(2)}$ in Sweedler notation). In what follows, we suppress tensors and will write  $a\cdot b$ instead of $a\otimes b$ and $b\cdot a$ instead of $(1\otimes b)(a\otimes 1)$ in the algebra  ${\mathcal A}\rtimes B$.
\end{definition}

Similarly, one can replace $B$ by a {\it braided bialgebra}, i.e., a bialgebra in a braided category ${\mathscr C}$ and ${\mathcal A}$ by a module algebra over $B$ in ${\mathscr C}$. Our main example is when  ${\mathscr C}_Q$ is the category of $Q$-graded vector spaces with the braiding $\Psi_{U,V}:U\otimes V\to V\otimes U$ for  $U,V\in \operatorname{Ob} {\mathscr C}_Q$  given by 
$$\Psi_{U,V}(u\tensor v)=q^{(\mu,\nu)}v\otimes u,\qquad u\in U_\mu,\,v\in V_\nu$$
where $(\cdot,\cdot)$ is the inner product on $Q$ given by 
\begin{equation}
\label{eq:killing form}
(\alpha_i,\alpha_j)=d_ia_{ij}
\end{equation}
where $C=(d_ia_{ij})_{i,j\in I}$ is a symmetrized Cartan matrix of a semisimple lie algebra $\lieg$. 

Let $Y=\CC(q)\otimes_\ZZ Q$. As G. Lusztig proved in \cite{Lus}, the Nichols-Woronowicz algebra ${\mathcal B}_{\Psi_{Y,Y}}(Y)$ (see Section~\ref{subsect:diamond})
is naturally isomorphic to $U_q^+(\lieg)$. The following obvious fact is parallel to Lemma~\ref{lem:modalgebra}.
\begin{lemma}\label{pr:braided semidirect}
Let ${\mathcal A}=\bigoplus\limits_{\nu\in Q} \mathcal A_\nu$ be a module algebra over (the braided bialgebra) $U_q^+(\lieg)$. Then 

\begin{enumerate}[{\rm(i)}]
\item\label{pr:braided semidirect.i}For any $a\in {\mathcal A}_\nu$, $b\in {\mathcal A}$, $i\in I$ one has
$$E_i(ab)=E_i(a)b+q^{(\alpha_i,\nu)}aE_i(b).$$

\item\label{pr:braided semidirect.ii}
The braided cross product ${\mathcal A}\rtimes U^+_q(\lieg)$ is the algebra generated by ${\mathcal A}$ and $U_q^+(\lieg)$ (and isomorphic to ${\mathcal A}\otimes U_q^+(\lieg)$ 
as a vector space) subject to the relations
$$E_i\cdot a=E_i(a) +q^{(\alpha_i,\nu)}a \cdot E_i$$
for all $a\in {\mathcal A}_\nu$, $i\in I$. 
In particular, if $\mathcal A$ is a PBW algebra, then so is~$\mathcal A\rtimes U^+_q(\lieg)$. 
\end{enumerate}
\end{lemma}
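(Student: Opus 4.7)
For part \eqref{pr:braided semidirect.i}, the claim is just the braided Leibniz rule, which I would derive directly from the braided module algebra axiom. By hypothesis, $\mathcal A$ is an algebra in the braided category $\mathscr C_Q$ over the braided Hopf algebra $U_q^+(\lie g)$, meaning the multiplication $m:\mathcal A\otimes\mathcal A\to\mathcal A$ is a morphism of $U_q^+(\lie g)$-modules when $\mathcal A\otimes\mathcal A$ is given the action through the braided coproduct. Since $E_i$ is primitive in this braided Hopf structure, $\Delta(E_i)=E_i\otimes 1+1\otimes E_i$, and applying $1\otimes E_i$ to $a\otimes b$ with $a\in\mathcal A_\nu$ requires first commuting $E_i$ past $a$ via the braiding $\Psi(E_i\otimes a)=q^{(\alpha_i,\nu)}a\otimes E_i$. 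Composing with $m$ yields the stated formula.

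For part \eqref{pr:braided semidirect.ii}, I would first construct $\mathcal A\rtimes U_q^+(\lie g)$ as the vector space $\mathcal A\otimes U_q^+(\lie g)$ endowed with a product for which both inclusions $\mathcal A\hookrightarrow\mathcal A\otimes 1$ and $U_q^+(\lie g)\hookrightarrow 1\otimes U_q^+(\lie g)$ are algebra maps and the mixed relation $E_i\cdot a=E_i(a)+q^{(\alpha_i,\nu)}a\cdot E_i$ holds for all $a\in\mathcal A_\nu$. Any expression can then be rewritten by iteratively pushing the $E_i$'s to the right; this defines a candidate multiplication, and one must verify it is associative and respects the defining relations of $U_q^+(\lie g)$, i.e.\ the quantum Serre relations~\eqref{q.Serre}. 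The latter amounts to checking that the operator $(\ad E_i)^{1-a_{ij}}(E_j)$ acts by zero on every $a\in\mathcal A$, which is built into the module algebra structure via \eqref{eq:qSerre-ad}, while associativity on triples from $\mathcal A$ and on triples $E_i,E_j,a$ follows from iterated application of part~\eqref{pr:braided semidirect.i}. (Alternatively, one may simply invoke Majid's general construction of braided cross products in \cite{Maj}.)

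For the PBW statement, fix a PBW-ordered set $X_{\mathcal A}$ generating $\mathcal A$ and a PBW-ordered set $X_\bi$ generating $U_q^+(\lie g)$ (for instance Lusztig's root-vector basis for some $\bi\in R(\hat w_\circ)$). Order $X_{\mathcal A}\sqcup X_\bi$ by placing every element of $X_{\mathcal A}$ below every element of $X_\bi$. The relation $E_i\cdot a=E_i(a)+q^{(\alpha_i,\nu)}a\cdot E_i$ (and its iterates for higher root vectors, obtained by applying~\eqref{eq:Tw ad} and~\eqref{eq:braid.ad}) expresses any out-of-order mixed product as a linear combination of lower ordered monomials with respect to the length filtration. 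Since $\mathcal A\rtimes U_q^+(\lie g)\cong\mathcal A\otimes U_q^+(\lie g)$ as a vector space, the ordered monomials in $X_{\mathcal A}\cup X_\bi$ form a basis, which is exactly the PBW property.

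The main obstacle is the associativity/Serre-compatibility check in part \eqref{pr:braided semidirect.ii}: one must make sure that the ideal generated by the Serre relators in $U_q^+(\lie g)$ remains zero after conjugating across $\mathcal A$. Everything else is bookkeeping once part \eqref{pr:braided semidirect.i} is in hand.
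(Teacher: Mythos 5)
Your argument is correct, and it is precisely the standard reasoning the paper leaves implicit: the lemma is stated there as an obvious fact (parallel to Lemma~\ref{lem:modalgebra}) with no written proof, the intended justification being exactly braided primitivity of the $E_i$ in the braided Hopf algebra $U_q^+(\lieg)$ for part (i), and Majid's braided cross product construction together with the linear isomorphism $\mathcal A\rtimes U_q^+(\lieg)\cong\mathcal A\otimes U_q^+(\lieg)$ for part (ii), from which the PBW claim follows by concatenating ordered monomial bases. Nothing in your proposal deviates from this, so there is nothing further to reconcile.
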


\begin{remark} In fact if ${\mathcal A}=\bigoplus_{\nu} {\mathcal A}_\nu$ is a $U_q(\lieg)$-module algebra with $K_i|_{{\mathcal A}_\nu}=q^{(\alpha_i,\nu)}$, then the braided cross product $
{\mathcal A}\rtimes U_q^+(\lieg)$ is simply the subalgebra of the ordinary cross product ${\mathcal A}\rtimes U_q(\lieg)$ generated by  ${\mathcal A}$ and $U_q^+(\lieg)$. Moreover, $\mathcal A\rtimes U_q(\lie g)\cong \mathcal A\rtimes U_q^+(\lie g)\tensor U_q^{\le 0}(\lie g)$ as a vector space.
\end{remark}

Using Lemma~\ref{lem:modalgebra} and~\citem{Lus}*{\S 3.1.5} we obtain for any $U_q(\lie g)$-module algebra $\mathcal A$ and $a,b\in \mathcal A$
\begin{equation}\label{eq:ad nilp.1}
\begin{split}
&E_i^{(r)}(a b)=\sum_{p=0}^r q_i^{p(r-p)} E_i^{(r-p)}(K_i^p(a))E_i^{(p)}(b) \\
&F_i^{(r)}(a b)=\sum_{p=0}^r q_i^{-p(r-p)} F_i^{(p)}(a) K_i^{-p}(F_i^{(r-p)}(b)),
\end{split}
\end{equation}
where $Y_i^{(r)}:=([r]_{q_i}!)^{-1} Y_i^r$. In particular, if $K_i(a)=v_i a$, then
\begin{equation}\label{eq:ad nilp.2}
E_i^{(r)}(a b)=\sum_{p=0}^r q_i^{p(r-p)}v_i^p E_i^{(r-p)}(a)E_i^{(p)}(b)
\end{equation}
Given $\bi=(i_1,\dots,i_k)\in I^k$ and $m=(m_1,\dots,m_k)\in\ZZ_{\ge 0}^k$, let $E_\bi^{(m)}=E_{i_1}^{(m_1)}\cdots E_{i_k}^{(m_k)}$. 
Using an obvious induction, we immediately obtain from~\eqref{eq:ad nilp.2} 
\begin{equation}\label{eq:ad nilp.3}
E_\bi^{(m)}(ab)=\sum_{m',m''\in\ZZ_{\ge 0}^k\,:\, m'+m''=m} q^{\frac12 \sum_{1\le r,s\le k} m'_r m''_s(\alpha_{i_r},\alpha_{i_s})} v^{m''}
E_\bi^{(m')}(a) E_\bi^{(m'')}(b), 
\end{equation}
where
$v^{m''}=\prod_{r=1}^k v_{i_r}^{m''_r}$.

Following M.~Kashiwara and
G.~Lusztig (\cites{Lus,Kas}), define for all $i\in I$ and for all~$u\in U_q^+(\lieg)$ the elements 
$r_i(u), {}_i r(u)\in U_q^+(\lie g)$ by
\begin{equation}\label{eq:defn-quasi-der}
 [u,F_i]=\frac{K_i \cdot {}_i r(u)- r_i(u)\cdot K_i^{-1}}{q_i-q_i^{-1}}.
\end{equation}
\begin{lemma}[\citem{Lus}*{Lemma~1.2.15, Proposition~3.1.6}]\label{lem:lusztig}
For all $i,j\in I$, $u,v\in U_q^+(\lie g)$
\begin{enumerate}[{\rm(i)}]
 \item\label{lem:lusztig.iii} ${}_i r(u)=r_i(u^*)^*$ and $r_i({}_j r(u))={}_j r(r_i(u))$;
 \item\label{lem:lusztig.ii'} $r_i(E_j)=\delta_{ij}$ and $r_i(u v)=r_i(u)K_i^{-1}vK_i+u r_i(v)$;
 \item\label{lem:lusztig.ii} If $u\in\ker {}_i r$ then $(\ad F_i)(u)=(q_i-q_i^{-1})^{-1} r_i(u)$;
 \item\label{lem:lusztig.iv} $
 {}_i r(\ad E_j(u))=q_i^{-a_{ij}} E_j\cdot {}_i r(u)- q_i^{a_{ij}} K_j\cdot {}_i r(u)\cdot K_j^{-1}\cdot E_j$;
\item\label{lem:lusztig.i} $\bigcap_{i\in I} \ker {}_i r=\bigcap_{i\in I} \ker r_i=\CC(q)$.
\end{enumerate}
\end{lemma}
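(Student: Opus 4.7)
The plan is to derive all five parts from the single defining equation~\eqref{eq:defn-quasi-der} via the observation that $(a,b)\mapsto K_ia-bK_i^{-1}$ is an injection $U_q^+(\lie g)\oplus U_q^+(\lie g)\hookrightarrow U_q(\lie g)$; hence any expansion of $[u,F_i]$ into this ``normal form'' identifies ${}_ir(u)$ and $r_i(u)$ by uniqueness.

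I would first handle~\eqref{lem:lusztig.ii'}. The identity $r_i(E_j)=\delta_{ij}$ is read off from $[E_j,F_i]=\delta_{ij}(K_i-K_i^{-1})/(q_i-q_i^{-1})$. For the twisted Leibniz rule, expand $[uv,F_i]=u[v,F_i]+[u,F_i]v$, substitute~\eqref{eq:defn-quasi-der}, and rewrite $uK_i=K_i(K_i^{-1}uK_i)$ and $K_i^{-1}v=(K_i^{-1}vK_i)K_i^{-1}$ to collect normal-form terms; the coefficients of $K_i$ and $K_i^{-1}$ then produce the stated formula for $r_i(uv)$ together with the companion
\[
{}_ir(uv)={}_ir(u)\,v+(K_i^{-1}uK_i)\,{}_ir(v),
\]
which will be reused in part~\eqref{lem:lusztig.iv}. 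Part~\eqref{lem:lusztig.iii} is then obtained by applying the anti-automorphism~$*$ to~\eqref{eq:defn-quasi-der}: using $F_i^*=F_i$ and $K_i^*=K_i^{-1}$ one gets an equation with $r_i$ and ${}_ir$ interchanged, and comparison with~\eqref{eq:defn-quasi-der} applied to~$u^*$ forces $r_i(u^*)=({}_ir(u))^*$, whence ${}_ir(u)=r_i(u^*)^*$. The commutation $r_i\circ{}_jr={}_jr\circ r_i$ is a ``mixed partials commute'' argument: both sides are $q$-skew bi-derivations taking the same values on Chevalley generators, and the Leibniz rules just established force them to agree on all monomials by induction on the $Q$-grading.

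Parts~\eqref{lem:lusztig.ii} and~\eqref{lem:lusztig.iv} follow quickly. Part~\eqref{lem:lusztig.ii} is obtained by multiplying~\eqref{eq:defn-quasi-der} on the right by $K_i$ and rearranging into $(\ad F_i)(u)=-[u,F_i]K_i=(q_i-q_i^{-1})^{-1}(r_i(u)-K_i\cdot{}_ir(u)\cdot K_i)$, so the hypothesis ${}_ir(u)=0$ kills the second term. For part~\eqref{lem:lusztig.iv}, apply the ${}_ir$-Leibniz formula to $\ad E_j(u)=E_ju-K_juK_j^{-1}E_j$, and use ${}_ir(E_j)=\delta_{ij}$, $K_i^{-1}E_jK_i=q_i^{-a_{ij}}E_j$, together with the weight-bookkeeping identity ${}_ir(K_juK_j^{-1})=q_i^{a_{ij}}K_j\cdot{}_ir(u)\cdot K_j^{-1}$ (valid because ${}_ir$ lowers weight by~$\alpha_i$ and $K_j$ acts by a scalar on weight vectors); the two $\delta_{ij}$ contributions cancel identically and the remaining terms reassemble into the claimed expression.

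The main obstacle is~\eqref{lem:lusztig.i}. The cleanest route is through Lusztig's nondegenerate pairing $(\cdot,\cdot)\colon U_q^+(\lie g)\times U_q^+(\lie g)\to\CC(q)$ characterized by $(1,1)=1$, $(E_i,E_j)=\delta_{ij}(q_i-q_i^{-1})^{-1}$, and the adjunction $(E_iu,v)=(u,r_i(v))$. If $v\in\bigcap_i\ker r_i$ has a nonzero weight component, iterating the adjunction yields $(E_{i_1}\cdots E_{i_k},v)=0$ for every monomial in the~$E_i$, and nondegeneracy then forces $v\in\CC(q)$. The corresponding claim for ${}_ir$ follows by applying~$*$, which fixes~$\CC(q)$ and exchanges the two intersections by part~\eqref{lem:lusztig.iii}. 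The genuinely technical step is the construction of the pairing and the verification of the adjunction with~$r_i$; one can either invoke the Drinfeld double construction or verify the adjunction inductively on the $Q$-grading using the Leibniz rule established in~\eqref{lem:lusztig.ii'}, and this is where the substantive work lies.
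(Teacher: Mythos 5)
This lemma is not proved in the paper at all: it is quoted verbatim from Lusztig (\citem{Lus}*{Lemma~1.2.15, Proposition~3.1.6}), so there is no in-text argument to compare against, and your proposal should be judged as a reconstruction of the standard proof. As such it is correct. Your normal-form observation (injectivity of $(a,b)\mapsto K_ia-bK_i^{-1}$ on $U_q^+(\lie g)\oplus U_q^+(\lie g)$, by the triangular decomposition) legitimately turns~\eqref{eq:defn-quasi-der} into a definition and yields parts~\eqref{lem:lusztig.iii}--\eqref{lem:lusztig.iv} exactly as you describe: the expansion of $[uv,F_i]$ gives both twisted Leibniz rules, $*$ swaps $r_i$ and ${}_ir$ with the signs working out, the computation for~\eqref{lem:lusztig.ii} matches the paper's convention $(\ad F_i)(u)=(F_iu-uF_i)K_i$, and in~\eqref{lem:lusztig.iv} the two $\delta_{ij}$ terms do cancel with the stated weight bookkeeping (one should just say explicitly, for the ``mixed partials'' claim, that both compositions $r_i\circ{}_jr$ and ${}_jr\circ r_i$ obey the \emph{same} product rule, verified by a short weight computation, before concluding by induction on degree --- agreement on generators alone is not a priori enough for maps that are not derivations). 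For part~\eqref{lem:lusztig.i} your route through the nondegenerate pairing with the adjunction against $r_i$ is precisely Lusztig's own proof of Lemma~1.2.15; the construction of that form is the one ingredient you defer, and citing it (or the Drinfeld double) is the honest thing to do --- it is genuinely the substantive input, not something recoverable from~\eqref{eq:defn-quasi-der} alone.
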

Recall that for any $J\subset I$ the parabolic subalgebra $\lie p_J$ of~$\lie g$ is the Lie subalgebra generated by 
$\lie g_+$ and $f_j$, $j\in J$. Let $U_q(\lie p_J)$ be the subalgebra of $U_q(\lie g)$ generated by $U_q^+(\lie g)$
and by the $F_j,K_j^{\pm1}$, $j\in J$. Let 
$U_q(\lie g_J)=\lr{E_j,F_j,K_j^{\pm1}\,:\, j\in J}$ and define
$$
U_q(\lie r_J)=\{ u\in U_q^+(\lie g)\,:\, {}_j r(u)=0,\,\forall\, j\in J\}.
$$
Clearly $U_q(\lie p_J)$ is a Hopf subalgebra of~$U_q(\lie g)$, $U_q(\lie g_J)$ is a Hopf subalgebra of $U_q(\lie p_J)$, 
and $U_q(\lie r_J)$ is a subalgebra of $U_q(\lie p_J)$.
The following corollary is an immediate consequence of Lemma~\ref{lem:lusztig}.
\begin{corollary}[Quantum Levi factorization]\label{lem:quantum levi fact} 
\noindent
\begin{enumerate}[{\rm(i)}]
 \item 
$U_q(\lie r_J)$ is preserved by the adjoint action of $U_q(\lie g_J)$ on~$U_q(\lie p_J)$. In particular,
$U_q(\lie r_J)$ is a $U_q(\lie g_J)$-module algebra. 
\item $U_q(\lie p_J)= U_q(\lie r_J)\rtimes U_q(\lie g_J)$. 
\end{enumerate}
\end{corollary}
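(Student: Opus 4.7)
The plan is to verify~(i) directly by checking that each generator of~$U_q(\lie g_J)$ preserves~$U_q(\lie r_J)$ under the adjoint action, using the identities in Lemma~\ref{lem:lusztig}, and then to deduce~(ii) from~(i) combined with a PBW decomposition of~$U_q(\lie p_J)$ adapted to~$J$. For~(i), the case of~$K_j^{\pm1}$ is immediate because $\ad K_j$ acts by a scalar on each $Q$-graded component of the $Q$-graded subalgebra $U_q(\lie r_J)\subset U_q^+(\lie g)$. For~$E_j$ with~$j\in J$ and $u\in U_q(\lie r_J)$, part~\eqref{lem:lusztig.iv} gives
$$
{}_i r(\ad E_j(u))=q_i^{-a_{ij}}E_j\cdot {}_i r(u)-q_i^{a_{ij}}K_j\cdot {}_i r(u)\cdot K_j^{-1}E_j=0\quad\text{for all }i\in J,
$$
whence $\ad E_j(u)\in U_q(\lie r_J)$. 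For~$F_j$ with~$j\in J$ and $u\in U_q(\lie r_J)\subset\ker{}_j r$, part~\eqref{lem:lusztig.ii} gives $\ad F_j(u)=(q_j-q_j^{-1})^{-1}r_j(u)$, and the commutativity $r_j\circ{}_i r={}_i r\circ r_j$ from part~\eqref{lem:lusztig.iii} forces ${}_i r(r_j(u))=r_j({}_i r(u))=0$ for all~$i\in J$, so $r_j(u)\in U_q(\lie r_J)$ as well. This proves~(i) and endows~$U_q(\lie r_J)$ with the structure of a $U_q(\lie g_J)$-module algebra.

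For~(ii), since $U_q(\lie g_J)$ is a Hopf subalgebra of~$U_q(\lie g)$, part~(i) ensures the cross-product relation $Y\cdot u=\sum (\ad Y_{(1)})(u)\cdot Y_{(2)}$ holds inside~$U_q(\lie p_J)$ for every $Y\in U_q(\lie g_J)$ and $u\in U_q(\lie r_J)$ (a standard Hopf-algebra calculation using $\sum S(Y_{(2)})Y_{(3)}=\varepsilon(Y_{(2)})$ identifies the right-hand side with~$Y u$), yielding a well-defined algebra homomorphism $\varphi:U_q(\lie r_J)\rtimes U_q(\lie g_J)\to U_q(\lie p_J)$ that restricts to the two inclusions. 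Surjectivity of~$\varphi$ follows after checking that every Chevalley-like generator of~$U_q(\lie p_J)$ lies in $U_q(\lie r_J)\cup U_q(\lie g_J)$: for $i\notin J$ the defining relation~\eqref{eq:defn-quasi-der} applied to $u=E_i$ gives ${}_j r(E_i)=0$ for all $j\in J$, placing $E_i$ in~$U_q(\lie r_J)$, while $E_j,F_j,K_j^{\pm1}$ with $j\in J$ all belong to~$U_q(\lie g_J)$. Injectivity reduces, via the triangular decomposition $U_q(\lie p_J)\cong U_q^+(\lie g)\otimes\langle K_j^{\pm1}\,:\,j\in J\rangle\otimes\langle F_j\,:\,j\in J\rangle$ and its analogue for~$U_q(\lie g_J)$, to establishing the vector-space isomorphism $U_q^+(\lie g)\cong U_q(\lie r_J)\otimes U_q^+(\lie g_J)$. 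For this we choose a reduced decomposition of~$\hat w_\circ$ whose initial block is a reduced decomposition of the longest element of~$W(\lie g_J)$; the corresponding initial PBW root vectors then generate~$U_q^+(\lie g_J)$, and the remaining root vectors have weights lying precisely in the set of positive roots of~$\lie g$ outside those of~$\lie g_J$.

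The main obstacle is the identification of this complementary PBW subalgebra with~$U_q(\lie r_J)$. One must show both that each root vector~$X_k$ from the second block lies in $\bigcap_{j\in J}\ker{}_j r$, and that these elements span all of~$U_q(\lie r_J)$. The first claim can be extracted by induction on~$k$ from part~\eqref{lem:lusztig.iv} together with the $\ad$-compatibility~\eqref{eq:Tw ad} of the braid operators~$T_j$, $j\in J$; the second then follows by a graded-character comparison using part~\eqref{lem:lusztig.i}, since the PBW subalgebra in question has as its graded dimension the Kostant partition function on the positive roots of~$\lie g$ outside those of~$\lie g_J$, which matches the Hilbert series of~$U_q(\lie r_J)$. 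Assembling these ingredients makes $\varphi$ an isomorphism and completes the proof of~(ii).
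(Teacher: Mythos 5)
Your part (i) is exactly the ``immediate consequence'' of Lemma~\ref{lem:lusztig} that the paper has in mind (the paper records no proof): invariance under $\ad K_j$ by grading, under $\ad E_j$ by Lemma~\ref{lem:lusztig}\eqref{lem:lusztig.iv}, and under $\ad F_j$ by Lemma~\ref{lem:lusztig}\eqref{lem:lusztig.ii} together with the commutation of the operators $r_i,{}_j r$ from Lemma~\ref{lem:lusztig}\eqref{lem:lusztig.iii}; this part of your argument is complete and correct. Likewise the reduction of (ii) to the statement that multiplication gives a linear isomorphism $U_q(\lie r_J)\otimes U_q^+(\lie g_J)\to U_q^+(\lie g)$ (via the cross-product relation $Yu=(\ad Y_{(1)})(u)\,Y_{(2)}$, surjectivity on generators using ${}_j r(E_i)=\delta_{ij}$, and the triangular decompositions) is sound.

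The genuine soft spot is the last step, where you identify the span $S$ of the complementary PBW monomials with $U_q(\lie r_J)$. The containment $S\subseteq U_q(\lie r_J)$ is only sketched (and note that whether the complementary root vectors are annihilated by the ${}_j r$ or by the $r_j$, $j\in J$, depends on whether the $J$-block is placed at the beginning or at the end of the reduced word, so the order of the two factors must be chosen consistently with your claimed factorization $U_q(\lie r_J)\cdot U_q^+(\lie g_J)$). More seriously, the reverse inclusion as you state it is circular: you justify spanning by saying that the graded dimension of $S$ ``matches the Hilbert series of $U_q(\lie r_J)$,'' but $U_q(\lie r_J)$ is defined only by the kernel conditions, and its Hilbert series is precisely what is not known in advance --- it is an output of the decomposition, not an input, and Lemma~\ref{lem:lusztig}\eqref{lem:lusztig.i} alone does not supply it. The standard repair avoids Hilbert series altogether: given $u\in U_q(\lie r_J)$, write $u=\sum_M s_M\,p_M$ with $s_M\in S$ and $p_M$ running over a PBW basis of $U_q^+(\lie g_J)$; since each ${}_j r$, $j\in J$, is a twisted derivation (Lemma~\ref{lem:lusztig}(\ref{lem:lusztig.iii},\ref{lem:lusztig.ii'})) vanishing on $S$, applying ${}_j r$ and using the linear independence coming from the PBW factorization forces, by induction on the $U_q^+(\lie g_J)$-degree and by Lemma~\ref{lem:lusztig}\eqref{lem:lusztig.i} applied inside $U_q(\lie g_J)$ (the restrictions of ${}_j r$, $j\in J$, to $U_q^+(\lie g_J)$ are the corresponding operators for that algebra), that all $p_M$ are scalars, i.e.\ $u\in S$. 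With that substitution, and the inductive verification that the complementary root vectors indeed lie in $\bigcap_{j\in J}\ker{}_j r$ carried out carefully, your proof of (ii) closes.
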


\section{Folding \texorpdfstring{$(\lie{so}_{2n+2},\lie{sp}_{2n})$}{(so\_2n+2,sp\_2n)} and
proof of Theorems~\ref{th:first folding} and~\ref{th:Dn Chevalley}}\label{sect:D_{n+1}}

\subsection{The algebras \texorpdfstring{$\mathcal U_{q,n}^+$}{U\_q+} and \texorpdfstring{$\mathcal U_{q,n}$}{U\_q,n}}\label{subs:alg Uqn}
In what follows we take $I=\{-1,0,\ldots,n-1\}$ for $\lieg=\lie{so}_{2n+2}$ so that $\sigma$ interchanges $-1$ and $0$ and fixes each $i=1,\ldots,n-1$. Accordingly, we set $I/\sigma:=\{0,1,\ldots,n-1\}$ for ${\lieg^\sigma}^\vee=\lie{sp}_{2n}$.

Let $\mathcal U_{q,n}$ be the associative $\CC(q)$-algebra generated by $U_q(\lie{sl}_n)$ and $w,z$ subject to the following relations
\begin{subequations}
\begin{gather}
[F_i,w]=0=[F_i,z],\qquad K_i w K_i^{-1}=q^{-2\delta_{i,1}} w,\qquad K_i z K_i^{-1}=q^{-\delta_{i,2}} z\label{eq:D_n rls.2}\\
[E_i, w]=0, \quad i\ne 1,\quad 
[E_i, z]=0,\quad i\ne 2,\qquad  [z,w]=0\label{eq:D_n rls.1}\\
[E_{1},[E_{1},[E_{1},w]_{q^{-2}}]]_{q^2}=0=[E_{2},[E_{2},z]_q]_{q^{-1}},\label{eq:D_n rls.3}\\
[w,[w,E_{1}]_{q^2}]_{q^{-2}}=-h wz,\quad [z,[E_{2},[E_{1},w]_{q^2}]_q]_q=[w,[E_{1},[E_{2},z]_q]_q]_{q^2},\label{eq:D_n rls.4}\\
2[z,[z,E_{2}]_q]_{q^{-1}}=h
(z [E_{1},E_{2}]_q w+w [E_{2},E_{1}]_{q^{-1}} z+ w E_{1}[z,E_{2}]_q
+[E_{2},z]_{q^{-1}}E_{1}w)\label{eq:D_n rls.5}
\end{gather}
\end{subequations}
where $h=q-q^{-1}$ and we abbreviate $[a,b]_v=ab-v ba$ and $[a,b]=[a,b]_1=ab-ba$ (with the convention that $E_2=0$ if $n=2$) and
the $E_i,F_i,K_i^{\pm1}$, $1\le i\le n-1$, are Chevalley generators of~$U_q(\lie{sl}_n)$.
Let $\mathcal U^+_{q,n}$ be the 
associative $\CC(q)$-algebra generated by the $E_i$, $1\le i\le n-1$ and by $w,z$ subject to the relations \eqref{eq:D_n rls.1}--\eqref{eq:D_n rls.5}.
Let $V$ be the standard $U_q(\lie{sl}_n)$-module. We denote $S_q(V\otimes V):=S_\Psi(V\otimes V)$ in the notation of Theorem \ref{thm:flat quadratic algebra}, where $\Psi:V^{\otimes 4}\to V^{\otimes 4}$ is the $U_q(\lie{sl}_n)$-equivariant map given by~\eqref{eq:psi braiding}.

The  following theorem is the main result of the section.
\begin{theorem} \label{th:D_n} 
\noindent
\begin{enumerate}[{\rm(i)}]
 \item\label{th:D_n.i} $S_q(V\tensor V)$ is a PBW algebra on any ordered basis of~$V\tensor V$.
 \item\label{th:D_n.ii}
 The algebra ${\mathcal U}_{q,n}^+$ is isomorphic to the braided cross product $S_q(V\tensor V)\rtimes U_q^+(\lie{sl}_n)$ and
 in particular is PBW.
 
 \item\label{th:D_n.ii'} The algebra $\mathcal U_{q,n}$ is isomorphic to the cross product $S_q(V\tensor V)\rtimes U_q(\lie{sl}_n)$
 and also to the tensor product of $\mathcal U_{q,n}^+$ and the subalgebra of $U_q(\lie{sl}_n)$ generated by the $F_i,K_i^{\pm 1}$, $i\in I$.
 In particular, $\mathcal U_{q,n}^+$ is a subalgebra of~$\mathcal U_{q,n}$.

\item\label{th:D_n.iv} The assignment 
$w\mapsto E_0$, $z\mapsto 0$ defines a homomorphism $\mu:{\mathcal U}_{q,n}\to U_q(\lie{sp}_{2n})$. Its image is 
the (parabolic) subalgebra of $U_q(\lie{sp}_{2n})$ generated by $U_q^+(\lie{sp}_{2n})$ and $K_i^{\pm 1}, F_i$, $1\le i\le n-1$.

\item\label{th:D_n.v}
The assignment 
$w\mapsto E_0 E_{-1}$ and
$$
z\mapsto 
\frac{1}{q h}([E_{-1},[E_{1},E_0]_q]_q+[E_{0},[E_{1},E_{-1}]_q]_q)
$$
defines an algebra homomorphism 
$\hat\iota:{\mathcal U}_{q,n}\to U_q(\lie{so}_{2n+2}).$ Its image is contained in
the (parabolic) subalgebra of $U_q(\lie{so}_{2n+2})$ generated by $U_q^+(\lie{so}_{2n+2})$ and $K_i^{\pm 1}, F_i$, $1\le i\le n-1$.
\item\label{th:D_n.iii} The assignments 
\begin{equation}\label{eq:D_n braid formulae}
T_{i}(w)=\begin{cases}(q+q^{-1})^{-1} [[w,E_1]_{q^{-2}},E_1],&i=1\\
          w,&i\not=1
         \end{cases}
\qquad 
T_{i}(z)=\begin{cases}[z,E_2]_{q^{-1}},&i=2\\
z,&i\not=2
         \end{cases}
\end{equation}
extend Lusztig's action~\eqref{eq:Lustig braid action} of the braid group $Br_{\lie{sl}_n}$ on $U_q(\lie{sl}_n)$ to an action on~$\mathcal U_{q,n}$ by algebra automorphisms. Moreover, $\mu$ and $\hat\iota$ are
$Br_{\lie{sl}_n}$-equvivariant.
\end{enumerate}
\end{theorem}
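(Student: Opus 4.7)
The plan is to handle the parts in a slightly rearranged order. Part~\eqref{th:D_n.i} is essentially a direct application of Theorem~\ref{thm:flat quadratic algebra} with $Y=V\otimes V$: the braid equation and invertibility of $\Psi$ are provided by Proposition~\ref{prop:PsiProp}\eqref{prop:PsiProp.i},\eqref{prop:PsiProp.ii}, the dimension condition by~\eqref{prop:PsiProp.iii}, and the specialization $\Psi|_{q=1}=\tau_{V\otimes V,V\otimes V}$ is immediate from~\eqref{eq:psi braiding} once one notes that each $\Psi_i|_{q=1}$ is the simple transposition of the $i$-th and $(i+1)$-st factors (the correction terms involving $q-q^{-1}$ vanish). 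The conclusion of Theorem~\ref{thm:flat quadratic algebra} gives the PBW property on any ordered basis of~$V\otimes V$.

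For~\eqref{th:D_n.ii} and~\eqref{th:D_n.ii'}, I would first verify that the ideal $\langle (\Psi-1)(V^{\otimes 4})\rangle$ is $U_q(\lie{sl}_n)$-stable, which is automatic because $\Psi$ is a composition of $U_q(\lie{sl}_n)$-equivariant braidings in~\eqref{eq:psi braiding}; this makes $S_q(V\otimes V)$ a $U_q(\lie{sl}_n)$-module algebra, with $w$ (resp.\ $z$) identified as the lowest weight generator of $S^2_q V$ (resp.\ $\Lambda^2_q V$) as in Remark~\ref{rem:various S_q}. One then forms $A:=S_q(V\otimes V)\rtimes U_q^+(\lie{sl}_n)$ via Lemma~\ref{pr:braided semidirect} and checks that the relations \eqref{eq:D_n rls.1}--\eqref{eq:D_n rls.5} hold in~$A$, producing a surjection $\mathcal{U}_{q,n}^+\twoheadrightarrow A$; here \eqref{eq:D_n rls.1}--\eqref{eq:D_n rls.3} simply encode the $U_q^+(\lie{sl}_n)$-module structure on $V\otimes V$ (i.e.\ quantum Serre relations between the $E_i$ and $w,z$ translating the action of Chevalley generators on lowest weight vectors), while \eqref{eq:D_n rls.4}--\eqref{eq:D_n rls.5} are the quadratic relations in $S_q(V\otimes V)$ expressed in terms of the Chevalley-like generators $w,z$ via the formulas that recover the other weight vectors of $S^2_q V\oplus \Lambda^2_q V$. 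In the opposite direction, comparing the PBW basis of~$A$ (given by Lemma~\ref{pr:braided semidirect}) with the spanning set in $\mathcal{U}_{q,n}^+$ obtained by ordering the $E_i,w,z$-monomials shows that the surjection is a bijection. Part~\eqref{th:D_n.ii'} then follows at once by extending scalars by the Hopf subalgebra $\langle F_i,K_i^{\pm 1}\rangle\subset U_q(\lie{sl}_n)$ and using the triangular decomposition, which also yields the tensor product decomposition of $\mathcal{U}_{q,n}$ and the embedding $\mathcal{U}_{q,n}^+\hookrightarrow \mathcal{U}_{q,n}$.

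Parts~\eqref{th:D_n.iv} and~\eqref{th:D_n.v} are direct verifications. For~\eqref{th:D_n.iv}, setting $z=0,w=E_0$, the relations \eqref{eq:D_n rls.2}--\eqref{eq:D_n rls.3} become the quantum Serre relations between $E_0$ and the $E_i, F_i, K_i^{\pm1}$ of the $A_{n-1}$-part of $\lie{sp}_{2n}$ (noting that in $\lie{sp}_{2n}$ the root $\alpha_0$ is long, giving $q_0=q^2$ which is reflected in the $q^{\pm 2}$ brackets in \eqref{eq:D_n rls.3}), and \eqref{eq:D_n rls.4}--\eqref{eq:D_n rls.5} reduce either to trivialities or to the same Serre relation. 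Part~\eqref{th:D_n.v} is the main computational obstacle: one must verify that $\hat\iota(w)=E_0 E_{-1}$ and the specific combination $\hat\iota(z)=(qh)^{-1}([E_{-1},[E_1,E_0]_q]_q+[E_0,[E_1,E_{-1}]_q]_q)$ satisfy \eqref{eq:D_n rls.1}--\eqref{eq:D_n rls.5} in $U_q(\lie{so}_{2n+2})$. The key computational tool is Lemma~\ref{lem:lusztig}: one checks that ${}_i r(\hat\iota(w))={}_i r(\hat\iota(z))=0$ for $i\notin\{-1,0,1,2\}$ so that $F_i$ commutes with $\hat\iota(w),\hat\iota(z)$ for such $i$, and computes ${}_{-1}r,{}_0 r,{}_1 r,{}_2 r$ applied to $\hat\iota(w),\hat\iota(z)$ using Lemma~\ref{lem:lusztig}\eqref{lem:lusztig.ii'},\eqref{lem:lusztig.iv} together with the quantum Serre relations~\eqref{eq:qSerre-ad} in $U_q(\lie{so}_{2n+2})$. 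The $E_i$-relations are then reduced, using $\sigma$-symmetry, to relations among $E_{-1},E_0,E_1,E_2$ alone, which can be checked by direct manipulation inside $U_q^+(\lie{so}_{2n+2})$; relation~\eqref{eq:D_n rls.5} is the most involved. Finally, for~\eqref{th:D_n.iii}, the formulas \eqref{eq:D_n braid formulae} are forced by demanding compatibility with the folding: under $\hat\iota$ they must coincide with $T_{\hat s_1}(E_0 E_{-1})$ and $T_{\hat s_2}$ applied to $\hat\iota(z)$, and a direct calculation using \eqref{eq:Lustig braid action} and~\eqref{eq:braid.ad} confirms this. One then checks the braid relations $T_iT_j\cdots = T_jT_i\cdots$ on $w$ and $z$ using the relations of~$\mathcal{U}_{q,n}$; the nontrivial braid relations are $T_1T_2T_1(w)=T_2T_1T_2(w)$ and $T_2T_1T_2(z)=T_1T_2T_1(z)$ plus commutations for $|i-j|>1$ or when $i$ does not affect the generator, all of which reduce to identities provable from \eqref{eq:D_n rls.3}--\eqref{eq:D_n rls.5}. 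The $Br_{\lie{sl}_n}$-equivariance of $\mu$ and $\hat\iota$ then follows because both are compatible with the $T_i$ on generators, and the braid group relations already hold on the target.
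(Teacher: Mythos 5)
Your outline follows the paper's route for parts \eqref{th:D_n.i}, \eqref{th:D_n.iv}, \eqref{th:D_n.v} and \eqref{th:D_n.iii} (part \eqref{th:D_n.i} is exactly the application of Proposition~\ref{prop:PsiProp} plus Theorem~\ref{thm:flat quadratic algebra}, and for \eqref{th:D_n.v}, \eqref{th:D_n.iii} your use of Lemma~\ref{lem:lusztig} and of direct checks of the braid relations is the same strategy the paper carries out, largely by computer). However, there is a genuine gap in part \eqref{th:D_n.ii}: after constructing the surjection $\psi_+:\mathcal U_{q,n}^+\twoheadrightarrow A=S_q(V\otimes V)\rtimes U_q^+(\lie{sl}_n)$ by verifying the relations on $w'=X_{n,n}$, $z'=q(X_{n-1,n}-qX_{n,n-1})$, you claim injectivity follows by ``comparing the PBW basis of $A$ with the spanning set in $\mathcal U_{q,n}^+$ obtained by ordering the $E_i,w,z$-monomials.'' This step fails: the ordered monomials in the Chevalley-like generators $E_1,\dots,E_{n-1},w,z$ do not span $\mathcal U_{q,n}^+$ (already the ordered monomials in the $E_i$ alone do not span $U_q^+(\lie{sl}_n)$ for $n\ge 3$), and the defining relations \eqref{eq:D_n rls.1}--\eqref{eq:D_n rls.5} are Serre-like, not straightening relations, so no such spanning statement is available; moreover $A$ has PBW generators indexed by a basis of $V\otimes V$ together with the root vectors of $U_q^+(\lie{sl}_n)$, so there is no degreewise matching of monomials in $n+1$ generators against the PBW basis of $A$.

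What is actually needed — and what the paper supplies — is an upper bound on $\dim(\mathcal U_{q,n}^+)_k$. The paper grades the free algebra $\mathcal F$ on $E_i,w,z$ with $\deg z=2$, observes that the defining ideal $\mathcal I_q$ is homogeneous, and applies the semicontinuity Lemma~\ref{lem:spec of ideals} to get $\dim(\mathcal U_{q,n}^+)_k\le\dim(\mathcal F/\mathcal I_1)_k$, where the specialization $\mathcal F/\mathcal I_1$ is identified with $U\bigl((V\otimes V)\rtimes(\lie{sl}_n)_+\bigr)$; since this enveloping algebra and $A$ are PBW on sets of the same cardinality, their graded dimensions agree, and combined with the surjectivity of $\psi_+$ this forces $\psi_+$ to be an isomorphism. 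Without this specialization/dimension-count argument (or some substitute, e.g.\ a Diamond Lemma analysis for an enlarged generating set), the isomorphism in \eqref{th:D_n.ii}, and hence the embedding and tensor decomposition in \eqref{th:D_n.ii'}, remain unproved in your proposal; the rest of \eqref{th:D_n.ii'} (the lemma on $A^\pm$, $B^\pm$ and triangular decomposition) is fine as you describe it.
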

This theorem is proved in the rest of Section~\ref{sect:D_{n+1}}. 

\begin{remark}\label{rem:another quotient} It is interesting to observe a complement to Theorem \ref{th:D_n}\eqref{th:D_n.iv}: the quotient algebra ${\mathcal U}_{q,n}/\langle w \rangle$ is isomorphic to the (parabolic) subalgebra of $U_q(\lie{so}_{2n})$ generated by $U_q^+(\lie{s
0}_{2n})$ and $K_i,K_i^{-1}, F_i$, $i=1,\ldots,n-1$. 

\end{remark} 
\begin{remark}
Note that the subalgebra $S_q(V\tensor V)$ of~$\mathcal U_{q,n}$ is not preserved by the action of~$Br_{\lie{sl}_n}$. For example,
$T_3^2(w),T_2^2(z)\notin S_q(V\tensor V)$.
\end{remark}
\begin{remark}
The image of~$S_q(V\tensor V)$ under $\mu$ is isomorphic to $U_q(\lie r_J)\subset U_q(\lie{sp}_{2n})$, as defined in Section~\ref{subs:mod alg semidirect}, where 
$J=\{1,\dots,n-1\}$. Furthermore, $\hat\iota(S_q(V\tensor V))$ is a quantum deformation of the coordinate ring of $\mathcal M_{\le 2}$,
where $\mathcal M_{\le 2}$ is the variety of all matrices with the symmetric part of rank at most~$2$. 
Moreover, both homomorphisms are compatible with the cross-product structure, e.g. $\hat\iota(\mathcal U_{q,n})=\hat\iota(S_q(V\tensor V))\rtimes 
U_q(\lie{sl}_n)$.
\end{remark}

\subsection{Structure of algebra~\texorpdfstring{$S_q(V\tensor V)$}{S\_q(V(x)V)}.}\label{subs:struct Sq}
Let $\{v_i\}$, $1\le i\le n$ be the standard basis of the $n$-dimensional $U_q(\lie{sl}_n)$ module $V$. Let
$X_{i,j}=v_i\tensor v_j$ be the standard basis of~$V\tensor V$. In particular, we have
\begin{equation}\label{eq:sln Vtens V.1}
\begin{aligned}
&E_i(v_j)=\delta_{i,j-1} v_{j-1},&\qquad& E_i(X_{j,k})=\delta_{i,j-1} X_{j-1,k}+\delta_{i,k-1} q^{\delta_{i,j}-\delta_{i,j-1}} X_{j,k-1}
\\
&F_i(v_j)=\delta_{i,j}v_{j+1},&\qquad& F_i(X_{j,k})=\delta_{i,j} q^{\delta_{i,k-1}-\delta_{i,k}} X_{j+1,k}+\delta_{i,k} X_{j,k+1}
\end{aligned}
\end{equation}
for all $1\le i<n$ and for all $1\le j,k\le n$. 
Let $T:V\tensor V\to V\tensor V$ be the $\CC(q)$-linear map defined by 
$$
T(X_{ij})=q^{-\delta_{ij}}X_{ji},\qquad T(X_{ji})=q^{\delta_{ij}}X_{ij}-(q-q^{-1})X_{ji}
$$
for all $1\le i\le j\le n$.
It is well-known that $T$ is an isomorphism of $U_q(\lie{sl}_n)$-modules, satisfies $(T-q^{-1})(T+q)=0$ and the braid equation on
$V^{\tensor 3}$. Define 
$\Psi_i:V^{\tensor k}\to V^{\tensor k}$ by $\Psi_i=1^{\tensor i-1}\tensor T\tensor 1^{\tensor k-i-1}$. Then
$\Psi_i$ are isomorphisms of $U_q(\lie{gl}_n)$-modules.
Let
$$
\Psi=\Psi_2\Psi_1\Psi_3\Psi_2+(q-q^{-1})(
    \Psi_1\Psi_2\Psi_1+ \Psi_1\Psi_3\Psi_2)+(q-q^{-1})^2 \Psi_1\Psi_2.
$$
It will be convenient for us to regard~$\Psi$ as an element of the Hecke algebra~$H(S_n)$. Recall that
$H(S_n)$ is the quotient of the group algebra over~$\CC(q)$ of the braid group~$Br_{\lie{sl}_n}$ by the ideal generated by 
$(T_i-q^{-1})(T_i+q)$, $1\le i\le n-1$. In particular, $V^{\tensor n}$ is an $H(S_n)$-module.
A well-known result of Jimbo (\citem{Jimbo}*{Proposition~3}) provides
a quantum analogue of Schur-Weyl duality, namely the image of~$U_q(\lie{gl}_n)$ in~$\End V^{\tensor n}$ is the centralizer of 
the image of~$H(S_n)$ and vice versa. It is also well-known that the Hecke algebra $H(S_n)$ is semi-simple.

\begin{proof}[Proof of Proposition~\ref{prop:PsiProp}]
Since $H(S_n)$ is semi-simple, to prove part~\eqref{prop:PsiProp.i} (respectively, part~\eqref{prop:PsiProp.ii}) of
Proposition~\ref{prop:PsiProp}, 
it is sufficient to show that these identities hold in any simple finite dimensional representation 
of the Hecke algebra $H(S_6)$ (respectively, $H(S_4)$). For, we use a realization of the multiplicity 
free direct sum of all simple finite dimensional $H(S_n)$-modules, known as the Gelfand model, constructed
in~\citem{APR}*{Theorem~1.2.2}, which we briefly review for the reader's convenience. 

Let $\mathscr I_n$ be the set of involutions 
in the symmetric group~$S_n$ and let $\mathscr I_{n,k}\subset \mathscr I_n$ be the set 
of all involutions containing~$k$ cycles of length~$2$ so each $\mathscr I_{n,k}$ 
is an orbit for the action of~$S_n$ on~$\mathscr I_n$.
Given $w\in\mathscr I_{n,k}$,
one defines $\hat\ell(w)=\min\{ \ell(v)\,:\, v w v^{-1}=\prod_{i=1}^k s_{2i-1}\}$. Let
$\mathscr{V}_{n}^{(k)}=\operatorname{Span}\{C_w\,:\, w\in \mathscr I_{n,k}\}$ and set~$\mathscr V_n=\bigoplus_{0\le k\le n/2} \mathscr V_n^{(k)}$.
Then 
$$
T_i(C_w)=\begin{cases}
          -q C_{w},& s_i w s_i=w,\, \ell(ws_i)<\ell(w)\\
          q^{-1} C_w, & s_i w s_i=w,\, \ell(w)<\ell(ws_i)\\
          q C_{s_i w s_i}-(q-q^{-1})C_{w},&s_i w s_i\not=w,\, \hat\ell(w)<\hat\ell(s_i w s_i)\\
          q^{-1} C_{s_i w s_i},& s_i w s_i\not=w,\, \hat \ell(s_i w s_i)<\hat\ell(w)
         \end{cases}
$$
defines a representation of the Hecke algebra~$H(S_n)$ on $\mathscr{V}_{n}$ which 
realizes the Gelfand model for~$H(S_n)$.
Clearly, $\mathscr V_n^{(k)}$ is an $H(S_n)$-submodule of~$\mathscr V_n$ and
$\mathscr{V}_{n}^{(0)}$ is 
the trivial $H(S_n)$-module.
A straightforward computation then shows that the matrix of~$\Psi$ on $\mathscr{V}_{4}^{(1)}$ with respect to the 
basis $C_{(i,j)}$, $1\le i<j\le 4$, is
$$
\begin{pmatrix}
    0 & q^3 h & -q^3 h & 0 &
      0 & q^4 \\
    0 & -q^2 & 0 & 0 & 0 & 0 \\
    -q^{-3}h & -q^{-2} h^2 &
      -h^2 & -q^{-2} & 0 & q
      h \\
    0 & 0 & -q^2 & 0 & 0 & 0 \\
    -q^{-3}h & -q^{-2}h^2 &
      -h^2 & 0 & -q^{-2} &
      -q^{-1}h \\
    q^{-4} & q^{-3}h & q^{-1}h & 0 &
      0 & 0
\end{pmatrix}
$$
while $\Psi|_{\mathscr{V}_{4}^{(2)}}=\id$. Here we abbreviate $h=q-q^{-1}$. Part~\eqref{prop:PsiProp.ii} is now straightforward. Part~\eqref{prop:PsiProp.i}
is checked similarly and we omit the details.

It remains to prove~\eqref{prop:PsiProp.iii}. Let $\tau=\tau_{V\tensor V,V\tensor V}$ be the permutation of factors.
Note that by the quantum Schur-Weyl duality, the vector subspace $(V^{\tensor m})^+$ of $U_q(\lie{sl}_n)$-highest weight vectors in~$V^{\tensor m}$ is isomorphic to the direct sum 
of simple $H(S_m)$-modules $\mathscr S^\lambda$, where $\lambda$ runs over the set of all partitions of 
$m$ with at most $\dim V$ non-zero parts. In particular,
if $\dim V\ge m$ then $(V^{\tensor m})^+\cong \mathscr V_m\cong \bigoplus_{\lambda} \mathscr S^\lambda$.
To complete the argument, we need the following result, which is an immediate consequence of Schur-Weyl duality.
\begin{lemma}
Let $\Psi'\in H(S_m)$ be such that $\Psi'$ is specializable at~$q=1$ on~$\mathscr V_m$ with respect to 
the basis $C_w$, $w\in \mathscr I_m$ and suppose that $\dim \Psi'(\mathscr S^\lambda)=\dim \Psi'|_{q=1}(\mathscr S^\lambda)$ for all
partitions $\lambda$ of~$m$. Then for any $V$, $\dim\Psi'(V^{\tensor m})=\dim\Psi'|_{q=1}(V^{\tensor m})$.
\end{lemma}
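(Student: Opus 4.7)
The plan is to exploit quantum Schur--Weyl duality, which reduces the dimension count on $V^{\tensor m}$ to a term-by-term count on each simple $H(S_m)$-summand, where the hypothesis can be applied directly.

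Setting $n=\dim V$, Jimbo's quantum Schur--Weyl duality provides the $U_q(\lie{gl}_n)\times H(S_m)$-bimodule decomposition
\begin{equation*}
V^{\tensor m}\cong\bigoplus_{\lambda\vdash m,\,\ell(\lambda)\le n} L_\lambda^V\tensor\mathscr S^\lambda,
\end{equation*}
where $L_\lambda^V$ is the simple $U_q(\lie{gl}_n)$-module of highest weight $\lambda$. Since $\Psi'\in H(S_m)$ acts only on the second tensor factor, its restriction to $L_\lambda^V\tensor \mathscr S^\lambda$ is $\id_{L_\lambda^V}\tensor\rho_\lambda(\Psi')$, where $\rho_\lambda\colon H(S_m)\to \End\mathscr S^\lambda$ denotes the irreducible representation. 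Consequently
\begin{equation*}
\dim\Psi'(V^{\tensor m})=\sum_{\lambda}(\dim L_\lambda^V)\cdot\dim\Psi'(\mathscr S^\lambda),
\end{equation*}
and the same argument applied to the classical Schur--Weyl decomposition at $q=1$ (which has the same indexing set and the same multiplicities) yields the analogous identity for $\Psi'|_{q=1}$. The crucial observation is that $\dim L_\lambda^V$ equals the value $s_\lambda(1,\ldots,1)$ of the Schur polynomial at $n$ ones, an integer depending only on $\lambda$ and $n$; in particular it is unchanged by specialization. Applying the hypothesis $\dim\Psi'(\mathscr S^\lambda)=\dim\Psi'|_{q=1}(\mathscr S^\lambda)$ summand by summand then gives the claim.

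The principal technical subtlety is to verify that specializability of $\Psi'$ on $(\mathscr V_m,\{C_w\})$ induces a well-defined specialization of $\rho_\lambda(\Psi')$ for every $\lambda$ (so that the hypothesis is meaningful), together with a compatible specialization of $\Psi'$ on $V^{\tensor m}$ in the standard monomial basis (so that the conclusion is meaningful). This follows because $H(S_m)$ is semisimple at generic $q$ and at $q=1$ alike, so its central idempotents $e_\lambda$ admit an integral form over $\ZZ[q,q^{-1}]$ localised at $q-1$; combined with an integral form of the Schur--Weyl isomorphism, this produces mutually compatible specializations on $\mathscr V_m$, on each $\mathscr S^\lambda$, and on $V^{\tensor m}$. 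Once this bookkeeping is in place, the dimension argument above applies verbatim.
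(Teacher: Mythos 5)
Your argument is exactly the intended one: the paper states this lemma without proof, calling it ``an immediate consequence of Schur--Weyl duality,'' and your decomposition $V^{\tensor m}\cong\bigoplus_\lambda L^V_\lambda\tensor\mathscr S^\lambda$ with $\Psi'$ acting only on the second factor, plus the observation that the multiplicities $\dim L^V_\lambda$ are the same integers at generic $q$ and at $q=1$, is precisely that argument. Your extra paragraph on compatibility of the specializations (via the faithfulness of the Gelfand model and integral forms of $H(S_m)$ at $q=1$) is a reasonable way to make the statement of the hypothesis and conclusion precise, and does not change the substance.
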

Recall that $\mathscr V_4^{(0)}=\mathscr S^{(4)}$ and it is easy to see that $\mathscr V_4^{(1)}=\mathscr S^{(2,1^2)}\oplus\mathscr S^{(3,1)}$
while $\mathscr V_4^{(2)}=\mathscr S^{(2,2)}\oplus \mathscr S^{(1^4)}$.
Therefore, $(\Psi-1)(\mathscr S^\lambda)=(\tau-1)(\mathscr S^\lambda)=0$
for $\lambda\in\{(4),(2,2),(1^4)\}$. Finally, one can easily show that $\dim (\Psi-1)(\mathscr S^\lambda)=\dim(\tau-1)(\mathscr S^\lambda)=2$ for $\lambda\in\{(2,1^2),(3,1)\}$.
\end{proof}

We can now prove the first part of Theorem~\ref{th:D_n}.
\begin{proof}[Proof of~part~\eqref{th:D_n.i} of Theorem~\ref{th:D_n}]
By~Proposition~\ref{prop:PsiProp}, $\Psi$ satisfies the braid relation and condition~\eqref{thm:flat quadratic algebra.ii} of~Theorem~\ref{thm:flat quadratic algebra}. 
Since $\Psi_i$ specializes to the transposition of factors with respect to the standard basis of~$V^{\tensor 4}$, it follows that $\Psi$ specializes 
to the permutation of factors in~$(V^{\tensor 2})^{\tensor 2}$. It remains to apply Theorem~\ref{thm:flat quadratic algebra}. 
\end{proof}

\begin{proposition}\label{pr:SqPres}
The algebra $S_q(V\tensor V)$ is generated by the elements $X_{ij}$, $1\le i,j\le n$, subject to the following relations
for all $1\le i\le j\le k\le l\le n$
{\footnotesize
\begin{align*}
&X_{ij} X_{kl}=q^-_{ikj} q^-_{ilj}
    X_{kl} X_{ij}+h q^{-d_{ij}} (q^-_{ikj}
    X_{kj} X_{il}+q^-_{ikl} q^-_{ilj} X_{lj}
    X_{ki})\\&\qquad\qquad+h^2 (q^-_{ikl} q^-_{jil} X_{jl}
    X_{ki}+q^-_{ikj} q^-_{jil} X_{kj}
    X_{li})+h^3 q^{-d_{ik}} q^-_{jil} X_{jk}
    X_{li}\\
&X_{ij} X_{lk}=q^-_{ikj}
    q^-_{ilj} X_{lk} X_{ij}+h q^{-d_{ij}} (q^-_{ilj}
    X_{lj} X_{ik}+q^-_{ikj}q^\pm_{kli} X_{kj} X_{li}
    )+h^2 q^-_{jil}q^\pm_{lki} X_{jk} X_{li}
    \\
&X_{ji}X_{kl}=q^-_{ikj} q^-_{ilj} X_{kl} X_{ji}+h
    q^{-d_{il}} (q^-_{ikl} X_{jl} X_{ki}+q^-_{ikj}
    X_{kj} X_{li})+h^2 q^-_{kil} X_{jk}
    X_{li}\\
&X_{ji} X_{lk}=q^-_{ikj}
    q^-_{ilj} X_{lk} X_{ji}+h q^{-d_{ik}} (q^-_{ilj}
    X_{lj} X_{ki}+q^\pm_{kli} X_{jk} X_{li}
    )\\
&X_{ik}X_{jl}=q^-_{jil}q^\pm_{jkl} X_{jl} X_{ik}
    +h q^{-d_{ik}} (-q^-_{ilj} X_{kl}
    X_{ij}+q^-_{jil} q^-_{jlk} X_{lk}
    X_{ji}+q^\pm_{kji} X_{jk} X_{il} )\\&\qquad\qquad+h^2
    q^-_{jik} (q^{-d_{il}+d_{jk}} X_{jk}
    X_{li}-X_{kj} X_{il}+q^-_{ilj} X_{kl}
    X_{ji})\\
&X_{ik}X_{lj}=q^-_{jil}q^\pm_{jkl} X_{lj} X_{ik}
    +h q^{-d_{ij}} q^\pm_{jki} q^\pm_{jli} X_{jk} X_{li}
    \\
&X_{ki}
    X_{jl}=q^-_{jil}q^\pm_{jkl} X_{jl} X_{ki}
    +h q^{d_{jk}} q^-_{jil} X_{jk}
    X_{li}\\
&X_{ki} X_{lj}=q^-_{jil}q^\pm_{jkl}
    X_{lj} X_{ki} +h q^{-d_{ij}}q^\pm_{jli}
    X_{kj} X_{li}
    \\
&X_{il}
    X_{jk}=q^\pm_{lji}
    q^\pm_{lki}X_{jk} X_{il} +h q^{-d_{il}} q^-_{ikj} (q^{-d_{ij}+d_{kl}}
    X_{kl} X_{ji}-X_{lk} X_{ij})\\&\qquad\qquad+h^2 (q^-_{jil}
    q^{-d_{ik}+d_{jl}} X_{jl} X_{ki}-X_{lj}
    X_{ik})\\
&X_{il} X_{kj}= q^\pm_{lji} q^\pm_{lki}X_{kj}
    X_{il}+h q^{d_{jk}}
    q^-_{jil} (q^{-d_{ik}+d_{jl}} X_{jl}
    X_{ki}-X_{lj} X_{ik})\\
&X_{li}
    X_{jk}=q^\pm_{lji}
    q^\pm_{lki} X_{jk} X_{li} \\
&X_{li}
    X_{kj}=q^\pm_{lji}
    q^\pm_{lki} X_{kj} X_{li} 
\end{align*}
}
\noindent
where $h=q-q^{-1}$, $q_{abc}^+=q^{\delta_{ab}+\delta_{bc}}$, $q_{abc}^-=(q_{abc}^+)^{-1}$ and $q^\pm_{abc}=q^{\delta_{ab}-\delta_{bc}}$.
\end{proposition}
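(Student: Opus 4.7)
The plan is to derive the relations directly from the definition $S_q(V\otimes V)=T(V\otimes V)/\langle(\Psi-1)(V\otimes V)^{\otimes 2}\rangle$ established in Theorem~\ref{th:D_n}\eqref{th:D_n.i}. Every defining relation is of the form
$$
\Psi(X_{ij}\otimes X_{kl})=X_{ij}\otimes X_{kl},\qquad 1\le i,j,k,l\le n,
$$
and since $X_{ab}\otimes X_{cd}=v_a\otimes v_b\otimes v_c\otimes v_d$ under the identification $(V\otimes V)^{\otimes 2}=V^{\otimes 4}$, one expands $(\Psi-1)(v_i\otimes v_j\otimes v_k\otimes v_l)$ in the standard basis of $V^{\otimes 4}$, interprets each summand $v_a\otimes v_b\otimes v_c\otimes v_d$ as the product $X_{ab}X_{cd}$ in $S_q(V\otimes V)$, and solves for the lex-largest product. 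Restricting to $i\le j\le k\le l$ and noting that the multiset $\{i,j,k,l\}$ admits at most $4!$ distinct orderings grouped into the twelve patterns $(ij,kl)$, $(ij,lk)$, $(ji,kl)$, $(ji,lk)$, $(ik,jl)$, $(ik,lj)$, $(ki,jl)$, $(ki,lj)$, $(il,jk)$, $(il,kj)$, $(li,jk)$, $(li,kj)$ displayed in the proposition, yields exactly the twelve families of relations stated.

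As a preparation I would tabulate $T(v_a\otimes v_b)$ in the three sub-cases $a<b$, $a=b$, $a>b$; this is the source of the Kronecker-delta factors $q^{-d_{ij}}$, $q^{\pm}_{abc}$, $q^{-}_{abc}$ in the statement, each of which tracks whether a given pair among $i,j,k,l$ coincides (and hence whether the corresponding $T$-factor contributes a $q^{-1}$-scalar, a clean transposition, or a transposition together with a $(q-q^{-1})$-correction). Applying the four operators $\Psi_2\Psi_1\Psi_3\Psi_2$, $\Psi_1\Psi_2\Psi_1$, $\Psi_1\Psi_3\Psi_2$, $\Psi_1\Psi_2$ successively to $v_i\otimes v_j\otimes v_k\otimes v_l$, weighting them by the coefficients in~\eqref{eq:psi braiding}, and collecting terms recovers the listed relations. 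Completeness of the presentation follows from Proposition~\ref{prop:PsiProp}\eqref{prop:PsiProp.iii}: since $\dim(\Psi-1)(V\otimes V)^{\otimes 2}=\dim\Lambda^2(V\otimes V)=\binom{n^2}{2}$ matches the number of independent relations one extracts from these twelve families as $(i,j,k,l)$ ranges over $1\le i\le j\le k\le l\le n$, no further relations are needed.

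The main obstacle is purely combinatorial bookkeeping: with four consecutive $\Psi_s$-factors in each summand of~\eqref{eq:psi braiding} and three cases for each application of $T$, a naive expansion generates several hundred monomials. To make this tractable I would first treat the generic case $i<j<k<l$, where each $T$ acts either as a clean transposition or, when only the $q^{-\delta_{ab}}$-factor is triggered, as a scalar, and no $(q-q^{-1})$-corrections appear; this fixes the \textit{skeleton} of each relation. Then I would add diagonal corrections case by case as the equalities $i=j$, $j=k$, $k=l$ (and their combinations) are allowed, using the $U_q(\lie{sl}_n)$-equivariance of $\Psi$ (transparent from~\eqref{eq:sln Vtens V.1}) and the cubic Hecke identity of Proposition~\ref{prop:PsiProp}\eqref{prop:PsiProp.ii} as consistency checks across the different orderings. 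A symbolic computation for small~$n$ provides a natural sanity check against the final coefficients.
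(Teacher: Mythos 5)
Your proposal follows essentially the same route as the paper: one computes $\Psi(X_{ab}\tensor X_{cd})$ on the standard basis of $(V\tensor V)^{\tensor 2}$, and then uses the flatness established in Theorem~\ref{th:D_n}\eqref{th:D_n.i} (equivalently, the dimension count of $(\Psi-1)(V^{\tensor 4})$ from Proposition~\ref{prop:PsiProp}\eqref{prop:PsiProp.iii}) to see that the $\preccurlyeq$-ordered quadratic monomials form a basis of the degree-two component, so that solving the resulting linear system yields exactly the stated relations and no others are needed. Your completeness argument via the count $\dim(\Psi-1)(V^{\tensor 4})=\dim\Lambda^2(V\tensor V)$ is just the paper's flatness argument phrased on the space of relations rather than on the quotient, so the two proofs coincide in substance.
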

\begin{proof}
One can show that for all $1\le i\le j\le k\le l\le n$
{\footnotesize
\begin{align*}
&\Psi(X_{ij} X_{kl})=q^-_{ikj} q^-_{ilj}
    X_{kl} X_{ij}+h q^-_{ikj} (q^{-\delta_{jl}}
    X_{ki} X_{lj}+q^{-\delta_{ij}} X_{kj} X_{il})+h^2 q^-_{ikj} X_{ki} X_{jl}\\
&\Psi(X_{ij}
    X_{lk})=q^-_{ikj} q^-_{ilj} X_{lk} X_{ij}+h
    q^-_{ilj} (q^{-\delta_{jk}} X_{li} X_{kj}+q^{-\delta_{ij}}
    X_{lj} X_{ik})+h^2 q^-_{ilj} X_{li} X_{jk}\\
&\Psi(X_{ik}
    X_{jl})=q^-_{ilk} q^\pm_{kji} X_{jl}X_{ik}+h (q^{-\delta_{kl}} q^\pm_{kji} X_{ji}X_{lk}+q^{-\delta_{ij}} q^\pm_{jki} X_{jk}X_{il}
    -q^{-\delta_{ik}} q^-_{ilj} X_{kl}X_{ij})\\&\qquad+h^2 (q^\pm_{kji} X_{ji}X_{kl}-q^{-\delta_{ik}-\delta_{jl}} X_{ki}X_{lj}
    -q^-_{jik} X_{kj} X_{il})-h^3 q^{-\delta_{ik}} X_{ki} X_{jl}\\
&\Psi(X_{ik} X_{lj})=q^-_{ilk} q^\pm_{kji} X_{lj}X_{ik}+h q^{\delta_{jk}} q^-_{ilk} X_{li}X_{jk}\\
&\Psi(X_{il}X_{jk})=q^-_{jik} X_{jk} X_{il} q^+_{jlk}+h (-q^{-\delta_{il}} q^-_{ikj} X_{lk} X_{ij}+q^{-\delta_{ij}}q^+_{jlk}X_{ji} X_{kl})\\&\qquad-h^2(q^{-\delta_{il}-\delta_{jk}} X_{li} X_{kj}+q^-_{jil}
    X_{lj} X_{ik})-h^3 q^{-\delta_{il}} X_{li}
    X_{jk}\\
&\Psi(X_{il}X_{kj})=q^-_{jik} X_{kj} X_{il} q^+_{jlk}-h
    (q^{-\delta_{il}} q^\pm_{kji} X_{lj}
    X_{ik}-q^{-\delta_{ik}}q^+_{jlk} X_{ki} X_{jl}
    )-h^2 q^{-\delta_{il}+\delta_{jk}} X_{li}
    X_{jk}\\
&\Psi(X_{ji}
    X_{kl})=q^-_{ikj} q^-_{ilj} X_{kl} X_{ji}+h
    q^-_{ikj} (q^{\delta_{ij}} X_{ki} X_{jl}+q^{-\delta_{il}}
    X_{kj} X_{li})\\
&\Psi(X_{ji}
    X_{lk})=q^-_{ikj} q^-_{ilj} X_{lk} X_{ji}+h
    q^-_{ilj} (q^{\delta_{ij}} X_{li} X_{jk}+q^{-\delta_{ik}}
    X_{lj} X_{ki})\\
&\Psi(X_{jk}
    X_{il})=q^\pm_{ijl}q^\pm_{ikl} X_{il} X_{jk}\\&\qquad+h (q^{\delta_{ij}}(q^\pm_{ikj} X_{ik} X_{jl}
    +q^\pm_{ikl} X_{ij} X_{lk}
    )-q^{-\delta_{il}} (q^-_{kjl} X_{kl}
    X_{ji}+q^\pm_{ikl}X_{jl} X_{ik} ))
    \\&\qquad+h^2 (X_{jk}
    X_{il}+q^{-\delta_{il}-\delta_{jk}} X_{kj}
    X_{li}+q^\pm_{ijk}X_{ki} X_{jl}
    +q^\pm_{ikl}X_{ji} X_{lk}-q^+_{jik}X_{ij} X_{kl})-h^3
    q^{\delta_{ik}} X_{ji} X_{kl}\\
&\Psi(X_{jk}
    X_{li})=q^-_{jlk}q^+_{jik} X_{li} X_{jk}
    \\
&\Psi(X_{jl} X_{ik})=q^\pm_{ijk} q^+_{ilk}X_{ik}
    X_{jl} +h (q^{-\delta_{jl}}
    q^-_{ikj} X_{lk} X_{ji}-q^{\delta_{ij}} X_{ij}
    X_{kl} q^+_{ilk}+q^{-\delta_{ik}} X_{jk} X_{il}
    q^+_{ilk})\\&\qquad-h^2 (q^{-\delta_{ik}-\delta_{jl}} X_{lj}
    X_{ki}+q^\pm_{ijl}X_{li} X_{jk}
    +q^+_{ilk}X_{ji} X_{kl})\\
&\Psi(X_{jl}X_{ki})=q^\pm_{ijk}
    q^+_{ilk}X_{ki} X_{jl} -h q^{-\delta_{jl}}q^+_{jik} X_{li} X_{jk}\\
&\Psi(X_{ki} X_{jl})=q^-_{ilk}
    q^\pm_{kji} X_{jl} X_{ki}+h (q^{-\delta_{il}}
    q^\pm_{kji} X_{jk} X_{li}-q^{-\delta_{ij}} q^-_{ilj}
    X_{kl} X_{ji}+q^{-\delta_{ij}} X_{ji} X_{kl}
    q^+_{ikj})\\&\qquad-h^2 (X_{ki} X_{jl}+q^-_{jil}
    X_{kj} X_{li})\\
&\Psi(X_{ki}
    X_{lj})=q^-_{ilk} q^\pm_{kji} X_{lj}
    X_{ki}+h q^{\delta_{ik}} q^-_{ilk} X_{li}
    X_{kj}\\
&\Psi(X_{kj}
    X_{il})=q^-_{jlk} q^+_{jik} X_{il} X_{kj}
    \\&\qquad-h
    (q^{\delta_{jk}} q^-_{ilk} X_{jl} X_{ki}+q^{\delta_{ij}}
    (q^-_{ilj} X_{kl} X_{ij}-q^+_{ikj} X_{ij}
    X_{kl})-q^{-\delta_{jl}}q^+_{jik} X_{ik} X_{lj})
    \\&\qquad-h^2 (q^{-\delta_{il}+\delta_{jk}} X_{jk}
    X_{li}+X_{kj} X_{il}-q^-_{ilj} X_{kl}
    X_{ji}+q^\pm_{ijl}X_{ki} X_{lj}+q^+_{ikj}X_{ji} X_{kl})+h^3
    q^{-\delta_{il}} X_{kj} X_{li}\\
&\Psi(X_{kj} X_{li})=q^-_{jlk} q^+_{jik}X_{li} X_{kj}\\
&\Psi(X_{kl} X_{ij})=q^+_{ikj} q^+_{ilj}X_{ij}
    X_{kl} -h (q^{-\delta_{kl}}
    q^\pm_{kji} X_{lj} X_{ki}+q^{-\delta_{ij}}q^+_{ilj}
    X_{kj} X_{il} )\\&\qquad-h^2 (q^\pm_{ikl}X_{li}
    X_{kj} +q^+_{ilj}X_{ki} X_{jl})\\
&\Psi(X_{kl}
    X_{ji})=X_{ji} X_{kl} q^+_{ikj} q^+_{ilj}-h
    (q^{\delta_{ij}}q^+_{ilj} X_{ki} X_{jl}+q^{-\delta_{kl}}q^+_{jik}
    X_{li} X_{kj} )\\
&\Psi(X_{li} X_{jk})=q^-_{jik} X_{jk} X_{li}
    q^+_{jlk}-h (q^{-\delta_{ij}} q^-_{ikj} X_{lk}
    X_{ji}-q^{-\delta_{ij}}q^+_{ilj} X_{ji} X_{lk}
    )\\&\qquad-h^2 (X_{li} X_{jk}+q^-_{jik}
    X_{lj} X_{ki})\\
&\Psi(X_{li}
    X_{kj})=q^-_{jik} X_{kj} X_{li} q^+_{jlk}-h
    (q^{-\delta_{ij}} q^\pm_{jki} X_{lj}
    X_{ki}-q^{-\delta_{ik}}q^+_{ilk} X_{ki} X_{lj})-h^2 X_{li} X_{kj}\\
&\Psi(X_{lj} X_{ik})=X_{ik} X_{lj}
    q^\pm_{ijk} q^+_{ilk}-h (q^{\delta_{ij}} q^-_{ikj}
    X_{lk} X_{ij}-q^{\delta_{ij}}q^+_{ilj} X_{ij} X_{lk}
    +q^{-\delta_{ik}}q^+_{jlk} X_{jk} X_{li})\\&\qquad-h^2 (X_{lj} X_{ik}-q^-_{ikj}
    X_{lk} X_{ji}+q^\pm_{ijk}X_{li} X_{kj}-q^+_{ilj}X_{ji} X_{lk})+h^3
    q^{-\delta_{ik}} X_{lj} X_{ki}\\
&\Psi(X_{lj} X_{ki})=q^\pm_{ijk} q^+_{ilk}X_{ki} X_{lj}
    -h q^{\delta_{ij}}q^\pm_{ikj} X_{li}
    X_{kj} \\
&\Psi(X_{lk}
    X_{ij})=q^+_{ikj} q^+_{ilj}X_{ij} X_{lk} -h
    (q^{-\delta_{ij}}q^+_{ikj} X_{lj} X_{ik}+q^{-\delta_{ij}}q^+_{jlk}
    X_{kj} X_{li} )\\&\qquad+h^2
    (q^\pm_{kji} X_{lj} X_{ki}-X_{li}
    X_{jk} q^+_{ikj}-X_{ki} X_{lj}
    q^+_{ilk})+h^3 q^{\delta_{ik}} X_{li}
    X_{kj}\\
&\Psi(X_{lk}
    X_{ji})=X_{ji} X_{lk} q^+_{ikj} q^+_{ilj}-h
    (q^{\delta_{ij}}q^+_{ikj} X_{li} X_{jk}+q^{\delta_{ij}}q^+_{ilk}
    X_{ki} X_{lj})+h^2 q^+_{jik}X_{li}
    X_{kj} 
\end{align*}
}
Since the quotient~$\mathcal S=(V\tensor V)/(\Psi-1)(V\tensor V)$ is a flat deformation of~$S^2(V\tensor V)$, the canonical images of 
the $X_{ab}X_{cd}$ with $(a,b)\preccurlyeq (c,d)$  (where the order is defined by 
$(a,b)\prec (c,d)$ if $\min(c,d)<\min(a,b)$ or $\min(a,b)=\min(c,d)$ and
$\max(c,d)<\max(a,b)$, 
while $(i,j)\preccurlyeq (j,i)$, for all $i\le j$) 
form a basis of $\mathcal S$. Using this basis we obtain the formulae in the Proposition from the above formulae
for~$\Psi$.
\end{proof}
\begin{remark}
It is easy to check that the quotient of $S_q(V\tensor V)$ by the ideal 
generated by the elements $X_{ij}-q X_{ji}$, $1\le i<j\le n$
(respectively, by the elements $X_{ij}+q^{-1} X_{ji}$, $1\le i<j\le n$ and $X_{ii}$, $1\le i\le n$)
is isomorphic to the algebra $S_q(S^2 V)$ (respectively, $S_q(\Lambda^2 V)$;
cf. \citem{Kam}*{Teorem~0.2} and~\citem{Str}*{(1.1)}, respectively, and also~\cites{Nou,FRT,Zwi}).
\end{remark}

We can now prove Corollary~\ref{cor:deformed Yakimov's bracket}.
\begin{proof}
The algebra $S_q(V\tensor V)$ is clearly optimal specializable with respect to its PBW basis on the~$X_{ij}$, $1\le i,j\le n$ with the
total order defined as in~Proposition~\ref{pr:SqPres}. It remains to apply Propositions~\ref{prop:Specialization}\eqref{prop:Spec.ii} and~\ref{pr:SqPres}.
\end{proof}

\subsection{Cross product structure of~\texorpdfstring{$\mathcal U_{q,n}^+$}{U\_q,n+} and~\texorpdfstring{$\mathcal U_{q,n}$}{U\_q,n}}\label{subs:Pr semidirect}
In this section we will use the usual numbering of nodes in the Dynkin diagram of $\lie{so}_{2n+2}$, that is, the simple root
$\alpha_{n-1}$ corresponds to the triple node. Retain the notations of Section~\ref{subs:struct Sq}.
\begin{proposition}[Theorem~\ref{th:D_n}(\ref{th:D_n.ii},\ref{th:D_n.ii'})]\label{pr:crossprod isom}
\noindent
\begin{enumerate}[{\rm(i)}]
 \item\label{pr:crossprod isom.i} The natural homomorphism $\mathcal U_{q,n}^+\to \mathcal U_{q,n}$ is injective and as vector spaces
 $\mathcal U_{q,n}\cong \mathcal U_{q,n}^+\tensor U_q^{\le 0}(\lie{sl}_n)$, where 
 $U_q^{\le 0}(\lie{sl}_n)$ is defined as in Section~\ref{subsect:quantum enveloping}.
 \item\label{pr:crossprod isom.ii} The assignment
$w\mapsto w':=X_{n,n}$, $z\mapsto z':=q(X_{n-1,n}-q X_{n,n-1})$ defines isomorphisms of algebras 
$\psi:\mathcal U_{q,n}\to S_q(V\tensor V)\rtimes U_q(\lie{sl}_n)$
and $\psi_+:\mathcal U_{q,n}^+\to S_q(V\tensor V)\rtimes U_q^+(\lie{sl}_n)$.
\end{enumerate}
\end{proposition}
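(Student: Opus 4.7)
The plan is to prove part~\eqref{pr:crossprod isom.ii} first and then deduce~\eqref{pr:crossprod isom.i} from it. For~\eqref{pr:crossprod isom.ii} I would define $\psi$ and~$\psi_+$ on generators by the given assignment and show in three steps that they are isomorphisms.

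First, I would verify that $w'=X_{n,n}$ and $z'=q(X_{n-1,n}-qX_{n,n-1})$, together with the Chevalley generators of~$U_q(\lie{sl}_n)$, satisfy the defining relations~\eqref{eq:D_n rls.1}--\eqref{eq:D_n rls.5} inside $S_q(V\tensor V)\rtimes U_q(\lie{sl}_n)$, so that~$\psi$ and its restriction~$\psi_+$ extend to well-defined algebra homomorphisms. Relations~\eqref{eq:D_n rls.1}--\eqref{eq:D_n rls.2} follow from the fact that $w'$ is the lowest weight vector of the simple $U_q(\lie{sl}_n)$-submodule $S^2_qV$ and $z'$ of $\Lambda^2_qV$: being annihilated by every $F_i$ and by the appropriate $E_i$'s in the module action translates, via the cross-product formula of Lemma~\ref{pr:braided semidirect}\eqref{pr:braided semidirect.i}, into the stated commutation and $K_i$-weight identities. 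The essential content is the verification of the Serre-type relations~\eqref{eq:D_n rls.3}--\eqref{eq:D_n rls.5}: the twists in the nested $q$-commutators are chosen precisely so that, upon expansion using Lemma~\ref{pr:braided semidirect}\eqref{pr:braided semidirect.i}, the terms carrying right factors of~$E_i$ cancel and each bracket collapses to an iterated $E_i$-module action on~$w',z'$. Thus~\eqref{eq:D_n rls.3} becomes the vanishings of $E_i^{(3)}(w')$ and of $E_{i'}^{(3)}(z')$, which are immediate from~\eqref{eq:ad nilp.3} and~\eqref{eq:sln Vtens V.1}, while~\eqref{eq:D_n rls.4}--\eqref{eq:D_n rls.5} reduce to specific quadratic identities among products $X_{a,b}X_{c,d}$ which can be read off directly from the explicit presentation of $S_q(V\tensor V)$ given in Proposition~\ref{pr:SqPres}. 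This verification is the principal computational obstacle of the proof.

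Second, I would establish surjectivity. Since $w'$ and $z'$ are lowest weight vectors of the simple $U_q(\lie{sl}_n)$-modules $S^2_qV$ and~$\Lambda^2_qV$, iterated action of the $E_i$'s produces every weight vector of $V\tensor V=S^2_qV\oplus\Lambda^2_qV$; combined with the cross-product relations this shows that each $X_{i,j}$ is a $\CC(q)$-linear combination of products of $w',z'$ with Chevalley generators, hence lies in the image of~$\psi_+$. Because $S_q(V\tensor V)$ is algebra-generated by~$V\tensor V$, both~$\psi_+$ and $\psi$ are surjective onto their respective targets.

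Third, I would prove injectivity via a PBW comparison. The target $S_q(V\tensor V)\rtimes U_q^+(\lie{sl}_n)$ is a PBW algebra by Theorem~\ref{th:D_n}\eqref{th:D_n.i} and Lemma~\ref{pr:braided semidirect}\eqref{pr:braided semidirect.ii}. I would then apply the Diamond Lemma (Proposition~\ref{prop:diamond lemma}) to~$\mathcal U_{q,n}^+$: after adjoining the derived elements $X_{i,j}\in\mathcal U_{q,n}^+$ from step~2 and rewriting the defining relations~\eqref{eq:D_n rls.3}--\eqref{eq:D_n rls.5} together with the cross-product relations as a confluent reduction system on ordered monomials in $\{E_i\}\cup\{X_{i,j}\}$, verification of ambiguity resolution on triple products yields a PBW spanning set of $\mathcal U_{q,n}^+$ of the same weight-graded dimensions as the target. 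Surjectivity of~$\psi_+$ onto a PBW algebra of matching size then forces $\psi_+$ to be an isomorphism. The analogous statement for~$\psi$ follows at once, because by~\eqref{eq:D_n rls.2} the $F_i$'s commute with $w,z$ so that $U_q^{\le 0}(\lie{sl}_n)$ decouples cleanly. Finally, part~\eqref{pr:crossprod isom.i} is an immediate consequence: the chain of identifications $\mathcal U_{q,n}\cong S_q(V\tensor V)\rtimes U_q(\lie{sl}_n)\cong (S_q(V\tensor V)\rtimes U_q^+(\lie{sl}_n))\tensor U_q^{\le 0}(\lie{sl}_n)\cong \mathcal U_{q,n}^+\tensor U_q^{\le 0}(\lie{sl}_n)$ provides both the vector-space tensor decomposition and the injectivity of the natural homomorphism $\mathcal U_{q,n}^+\to\mathcal U_{q,n}$, which coincides with the restriction $\psi^{-1}\circ(\psi_+)$ followed by the inclusion of tensor factors.
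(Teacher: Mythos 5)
Your steps 1 and 2 (verifying that $w',z'$ satisfy \eqref{eq:D_n rls.1}--\eqref{eq:D_n rls.5} via the lowest-weight-vector observation and Lemma~\ref{pr:braided semidirect}, and deducing surjectivity of $\psi$ and $\psi_+$) follow the paper, and your deduction of part~(i) from part~(ii) is in the spirit of the paper's final triangular-decomposition lemma. The genuine gap is in step 3, the injectivity of $\psi_+$. To run the Diamond Lemma (or even just to produce a spanning set of ordered monomials) on the generating set $\{E_i\}\cup\{X_{i,j}\}$ \emph{inside} $\mathcal U_{q,n}^+$, you need straightening identities valid in $\mathcal U_{q,n}^+$ itself: all the quadratic relations of Proposition~\ref{pr:SqPres} among the $X_{i,j}$, together with the cross-product relations $E_iX_{j,k}-q^{(\alpha_i,\cdot)}X_{j,k}E_i=E_i(X_{j,k})$, must be shown to be consequences of the five families \eqref{eq:D_n rls.1}--\eqref{eq:D_n rls.5}. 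You only know these identities hold in the image $S_q(V\otimes V)\rtimes U_q^+(\lie{sl}_n)$; asserting them in $\mathcal U_{q,n}^+$ is essentially equivalent to the injectivity you are trying to prove, and deriving them by hand from the Serre-like presentation is precisely the work the paper avoids. Note also that confluence checking is beside the point here: once a spanning set is available, linear independence comes for free by mapping onto the PBW basis of the target, so the missing ingredient is the spanning property, not ambiguity resolution.

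The paper closes this gap by a different mechanism: it grades the free algebra on $E_i,w,z$ (with $\deg z=2$), observes that the defining ideal is homogeneous, and uses the specialization inequality of Lemma~\ref{lem:spec of ideals} to bound $\dim(\mathcal U^+_{q,n})_k$ from above by $\dim U(\lie n)_k$, where $\lie n=(V\otimes V)\rtimes(\lie{sl}_n)_+$ is the $q=1$ limit; since $U(\lie n)$ and $S_q(V\otimes V)\rtimes U_q^+(\lie{sl}_n)$ are PBW on sets of equal cardinality, their graded dimensions agree, and combined with the lower bound coming from surjectivity of $\psi_+$ this forces $\psi_+$ to be an isomorphism. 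If you want to keep your outline, replace your step 3 by such a dimension comparison (or else supply the derivation of the straightening relations from \eqref{eq:D_n rls.1}--\eqref{eq:D_n rls.5}, which is a substantial computation not sketched in your proposal).
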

\begin{proof}
First we prove that the elements $w',z'$ satisfy the relations~\eqref{eq:D_n rls.1}--\eqref{eq:D_n rls.3}.
It follows from~\eqref{eq:sln Vtens V.1} that
$w'$ (respectively, $z'$) is a lowest weight vector of the $U_q(\lie{sl}_n)$-submodule of~$V\tensor V$ isomorphic to $V_{2\varpi_1}$ 
(respectively, $V_{\varpi_2}$), where $\varpi_i$ is the $i$th fundamental weight of~$\lie{sl}_n$. In particular, we have 
\begin{equation}\label{eq:act}
E_{i}^{2\delta_{i,n-1}+1}(w')=0=E_i^{\delta_{i,n-2}+1}(z'),\qquad F_i(w')=F_i(z')=0.
\end{equation}
Using Lemma~\ref{pr:braided semidirect}\eqref{pr:braided semidirect.ii} we immediately conclude that 
$w'$ and~$z'$ satisfy~\eqref{eq:D_n rls.2}, \eqref{eq:D_n rls.3} and the first two relations in~\eqref{eq:D_n rls.1}.
To prove the last relation in~\eqref{eq:D_n rls.1} note that 
$$
[w',z']=[X_{n,n},qX_{n-1,n}-q^2 X_{n,n-1}]=0
$$
since $X_{n,n-1}X_{nn}=q^2 X_{nn}X_{n,n-1}$ and $[X_{n-1,n},X_{nn}]=q^2(q-q^{-1})X_{nn}X_{n,n-1}$ by Proposition~\ref{pr:SqPres}.
To prove the first relation in~\eqref{eq:D_n rls.4}, note that 
\begin{align*}
[w',[w',E_{n-1}]_{q^{-2}}]_{q^2}&=[[E_{n-1},w']_{q^{-2}}],w']_{q^2}=
[X_{n-1,n}+q^{-1}X_{n,n-1},X_{n,n}]_{q^2}\\&=(1-q^2)(X_{n,n}X_{n-1,n}-q X_{n,n}X_{n,n-1})=(q^{-1}-q)w'z'.
\end{align*}
The remaining identities are checked similarly. Using Lemma~\ref{pr:braided semidirect}\eqref{pr:braided semidirect.ii},
we rewrite them in the form $\sum Y_i m_i$, where $m_i\in\{1,E_{n-1},E_{n-2},E_{n-1}E_{n-2},E_{n-2}E_{n-1}\}$ 
and in particularly are linearly independent and
$Y_i\in S_q(V\tensor V)$. Then we check that $Y_i=0$ which can be done either using the presentation from Proposition~\ref{pr:SqPres} or
by observing that $\Im(\Psi-1)=\ker((\Psi+q^2)(\Psi+q^{-2}))$. This is a rather tedious, albeit simple, computation, which
was performed on a computer.

Thus, we proved that $\psi:\mathcal U_{q,n}\to S_q(V\tensor V)\rtimes U_q(\lie{sl}_n)$ is a surjective homomorphism of algebras. 
The same argument shows that we have a surjective homomorphism of algebras $\psi_+:\mathcal U_{q,n}^+\to S_q(V\tensor V)\rtimes U_q^+(\lie{sl}_n)$.

To complete the proof of the proposition, we prove first that $\psi_+$ is an isomorphism. Let $\mathcal F$ be 
the free algebra on the $E_i$, $1\le i\le n-1$, $w$ and~$z$ and define a grading on~$\mathcal F$ 
by $\deg E_i=\deg w=1$, $\deg z=2$.  Let~$\mathcal I_q$ be the kernel of the structural homomorphism $\mathcal F\twoheadrightarrow \mathcal U^+_{q,n}$.
It is easy to see that $\mathcal I_q$ is homogeneous with respect to this grading.
Regard $S_q(V\tensor V)\rtimes U_q^+(\lie{sl}_n)$ as a graded algebra with the 
grading induced by the homomorphism~$\psi_+$. By Lemma~\ref{lem:spec of ideals} we have 
$\dim( \mathcal U^+_{q,n})_k\le \dim(\mathcal F/\mathcal I_1)_k$ for all~$k$ where $\mathcal I_1$ is the specialization of~$\mathcal I_q$
at $q=1$. On the other hand, it is easy to see that $\mathcal F/\mathcal I_1$ is isomorphic to $U(\lie n)$ where 
$\lie n=(V\tensor V)\rtimes (\lie{sl}_n)_+$, which we can regard as a graded Lie algebra with the grading compatible 
with that on~$\mathcal U^+_{q,n}$. Since both $U(\lie n)$ and $S_q(V\tensor V)\rtimes U_q^+(\lie{sl}_n)$ are 
PBW algebras on the set of the same cardinality, it follows that $\dim U(\lie n)_k=\dim(S_q(V\tensor V)\rtimes U_q^+(\lie{sl}_n))_k$
for all~$k$. This and the obvious inequality $\dim (\mathcal U^+_{q,n})_k\ge \dim(S_q(V\tensor V)\rtimes U_q^+(\lie{sl}_n))_k$ proves the
second assertion in part~\eqref{pr:crossprod isom.ii}.

Let $\mathcal U_{q,n}'{}^+$ be the subalgebra of~$\mathcal U_{q,n}$ generated by the $E_i$, $i\in I$ and by $w$, $z$. Clearly,
we have a canonical surjective homomorphism $\pi:\mathcal U_{q,n}^+\to \mathcal U_{q,n}'{}^+$ and $\psi_+=\psi\circ\pi$. Since 
$\psi_+$ is an isomorphism and both $\psi$ and $\pi$ are surjective, it follows that $\pi$ is an isomorphism and proves 
the first assertion in part~\eqref{pr:crossprod isom.i}. To establish the 
remaining assertions, we need the following easy Lemma.
\begin{lemma}
Let $\psi:A\to B$ be a surjective homomorphism of algebras. Let $A^\pm$ (respectively, $B^\pm$) be subalgebras of 
$A$ (respectively, $B$) such that the multiplication map $A^+\tensor A^-\to A$ (respectively, $B^+\tensor B^-\to B$) is surjective (respectively, bijective).
Suppose that the restriction of 
$\psi$ to $A^\pm$ is an isomorphism onto~$B^\pm$. Then $\psi$ is an isomorphism of algebras and $A\cong A^+\tensor A^-$ as vector spaces.
\end{lemma}
Applying this Lemma with $A=\mathcal U_{q,n}$, $B=S_q(V\tensor V)\rtimes U_q(\lie{sl}_n)$,
$A^+=\mathcal U_{q,n}^+$, $A^-=B^-=U_q^{\le 0}(\lie{sl}_n)$ and $B^+=S_q(V\tensor V)\rtimes U_q^+(\lie{sl}_n)$ completes the proof of the Proposition.
\end{proof}

\subsection{Structural homomorphisms}
In this section we prove parts \eqref{th:D_n.iv} and~\eqref{th:D_n.v} of Theorem~\ref{th:D_n}. We use the numeration 
of nodes in the Dynkin diagram of $\lie{so}_{2n+2}$ and $\lie{sp}_{2n}$ introduced in Section~\ref{subs:alg Uqn}.
Note first that part~\eqref{th:D_n.iv} of Theorem~\ref{th:D_n} is trivial since modulo 
the ideal generated by~$z$ its defining relations are precisely the defining relations of~$U_q(\lie{sp}_{2n})$ where
$w$ corresponds to $E_0$. 

To prove part~\eqref{th:D_n.v} of~Theorem~\ref{th:D_n}, let $W=E_0E_{-1}$ and
\begin{align*}
 Z&=(q^2-1)^{-1}([E_0,[E_1,E_{-1}]_q]_q+[E_{-1},[E_1,E_0]_q]_q)\\
 &=(1-q^{-2})^{-1}((\ad^* E_0)(\ad^* E_1)(E_{-1})+
(\ad^*E_{-1})(\ad^* E_1)(E_0))\\
&=(1-q^{-2})^{-1}((\ad^* E_0)(\ad E_{-1})(E_1)+(\ad^*E_{-1})(\ad E_0)(E_1))
\end{align*}
be the images of~$w$ and~$z$ in~$U_q(\lie{so}_{2n+2})$.
Clearly
\begin{equation}\label{eq:quasi-derW}
{}_i r(W)=r_i(W)=0,\quad i>0,\quad {}_0 r(W)=E_{-1},\quad {}_{-1}r(W)=E_0.
\end{equation}
Using Lemma~\ref{lem:quantum levi fact} we obtain
\begin{equation}\label{eq:quasi-derZ}
{}_ir(Z)=0,\quad i>0,\quad {}_0 r(Z)=q (\ad^* E_1)(E_{-1}),\quad {}_{-1} r(Z)=q(\ad^* E_1)(E_0).
\end{equation}
It is easy to check that $Z^*=Z$, hence $r_i(Z)=0$ for all $i>0$. Finally, we have 
\begin{equation}\label{eq:old defn Z}
\begin{split}
Z&= q [W,E_1]_{q^{-2}}-q\, \frac{[2]_q}{q-q^{-1}}(\ad E_0)(\ad E_{-1})(E_1)\\
&=q [E_1,W]_{q^{-2}}-q\, \frac{[2]_q}{q-q^{-1}}(\ad^*E_0)(\ad^*E_{-1})(E_1).
\end{split}
\end{equation}

\begin{proof}[Proof of~Theorem~\ref{th:D_n}\eqref{th:D_n.v}]
We need to show that the elements $W$ and~$Z$ satisfy the relations~\eqref{eq:D_n rls.2}--\eqref{eq:D_n rls.5}. 
The last two identities in~\eqref{eq:D_n rls.2} are trivial, while the first follows from~\eqref{eq:defn-quasi-der} and
\eqref{eq:quasi-derW}, \eqref{eq:quasi-derZ}. Furthermore, observe that 
$$
[E_i,W]=(\ad E_i)(W),\qquad i>1,\qquad [E_1,[E_1,[E_1,W]_{q^{-2}}]]_{q^{2}}=(\ad E_1)^3(W),
$$
while
$$
[E_i,Z]=(\ad E_i)(Z),\qquad i>0,\, i\not=2,\qquad [E_2,[E_2,Z]_{q^{-1}}]_{q}=(\ad E_2)^2 (Z).
$$
The first two identities in~\eqref{eq:D_n rls.1} are now immediate from~\eqref{eq:ad nilp.1}.
The first identity in~\eqref{eq:D_n rls.3} follows from \eqref{eq:ad nilp.2} 
since $(\ad E_1)^2(E_i)=0$, $i\in\{-1,0\}$ by quantum Serre's relations. The second 
is also a consequence of quantum Serre relations since $\ad E_2$ commutes with $\ad^* E_i$, $\ad E_j$, $i,j\in\{0,-1\}$.
To prove the last relation in~\eqref{eq:D_n rls.1}, note that since 
$$
(\ad^* E_i)^2(\ad^* E_1)(E_j)=0,\qquad (\ad E_i)(\ad^* E_j)(\ad E_i)(E_1)=0,\qquad \{i,j\}=\{0,1\}
$$
by quantum Serre relations, it follows that
\begin{align*}
&E_i (\ad^* E_i)(\ad^* E_1)(E_j)=q (\ad^* E_i)(\ad^* E_1)(E_j)E_i,\\& E_i(\ad^* E_j)(\ad^* E_1)(E_i)=q^{-1} (\ad^* E_j)(\ad^*E_1)(E_i)E_i,
\end{align*}
hence 
$$
W Z=Z W.
$$
To prove the first identity in~\eqref{eq:D_n rls.4}, notice that since $[Z,W]=0$ we obtain 
from~\eqref{eq:old defn Z}
\begin{multline*}
[W,[W,E_1]_{q^{-2}}]_{q^2}-(q^{-1}-q)WZ=[W,[W,E_1]_{q^{-2}}-q^{-1}Z]_{q^2}\\=
[2]_q(q-q^{-1})^{-1} [W,(\ad E_0)(\ad E_{-1})(E_1)]_{q^2}.
\end{multline*}
Since for all $x\in U_q^+(\lie{so}_{2n+2})$
$$
[W,x]_{q^2}=E_0 [E_{-1},x]_q+q [E_0,x]_q E_{-1}
$$
and $[E_i,x]_q=(\ad E_i)(x)$ for $x=(\ad E_0)(\ad E_{-1})(E_1)$ and $i\in\{0,-1\}$,
it follows from the quantum Serre relations that
$$
[W,(\ad E_0)(\ad E_{-1})(E_1)]_{q^2}=0,
$$
which together with~\eqref{eq:old defn Z} implies the first relation in~\eqref{eq:D_n rls.4}.
To prove the remaining identities, we use Lemma~\ref{lem:lusztig}\eqref{lem:lusztig.i}. Note that
$$
{}_i r([a,b]_v)=[{}_i r(a),b]_{vq^{-(\delta,\alpha_i)}}+q^{-(\gamma,\alpha_i)} [a,{}_i r(b)]_{v q^{(\gamma,\alpha_i)}},
$$
for all $a\in U_q^+(\lieg)_\gamma$, $b\in U_q^+(\lieg)_\delta$, $\gamma,\delta\in Q$ and
$v\in\CC(q)^\times$. 

To prove~\eqref{eq:D_n rls.4}, note that $$[E_2,[E_1,W]_{q^2}]_q=q^3 (\ad^*E_2)(\ad^*E_1)(W),$$
while 
$$
[E_1,[E_2,Z]_q]_q=q^2(\ad^*E_1)(\ad^*E_2)(Z)
$$
Thus, we want to show that ${}_i r(x)=0$ for all $i\in I$ where 
$$
x= q [Z,(\ad^*E_2)(\ad^*E_1)(W)]_q-[W,(\ad^*E_1)(\ad^*E_2)(Z)]_{q^2}.
$$
This is trivial if~$i>2$. For~$i=2$ we obtain 
\begin{multline*}
{}_2 r(x)=(q-q^{-1}) (q^2[Z,(\ad^* E_1)(W)]-[W,(\ad^*E_1)(Z)]_{q^2})\\=
(q-q^{-1})q^2 (Z (\ad^*E_1)(W)+(\ad^* E_1)(Z) W)-((\ad^*E_1)(W)Z+q^{-2} W(\ad^*E_1)(Z))\\
=(q-q^{-1})q^2 ( (\ad^*E_1)(ZW)-(\ad^* E_1)(WZ))=(q-q^{-1})(\ad^*E_1)[Z,W]=0,
\end{multline*}
where we used already established~\eqref{eq:D_n rls.1} and~\eqref{eq:ad nilp.2}.
Similarly
\begin{multline*}
{}_1 r(x)=(q-q^{-1})(q[Z,(\ad^*E_2)(W)]_q-q^2[W,(\ad^* E_2)(Z)])\\=
(q-q^{-1})q^2(\ad^*E_2)([W,Z])=0.
\end{multline*}
The computation of ${}_i r(x)$ for $i\in\{-1,0\}$ and the ones for the last identity, 
are rather tedious and where performed on a computer.
\end{proof}

\begin{remark}
It can be shown that the kernel of the homomorphism~$\hat\iota:\mathcal U^+_{q,n}\to U_q(\lie{so}_{2n+2})$ is generated by 
an element of degree~3 in $S_q(V\tensor V)$ which is 
a lowest weight vector of a simple $U_q(\lie{sl}_n)$-submodule of~$(V\tensor V)^{\tensor 3}$ isomorphic to~$V_{2\varpi_3}$.
On the other hand, the image of~$\hat\iota$ equals to the subalgebra of $\sigma$-invariant elements in~$U_q^+(\lie{so}_{2n+2})$ graded
by~$Q^\sigma$.
\end{remark}

\subsection{Braid group action on~\texorpdfstring{$\mathcal U_{q,n}$}{U\_q,n}}
\begin{proof}[Proof of part~\eqref{th:D_n.iii} of Theorem~\ref{th:D_n}]
Let $\tilde{\mathcal U}_{q,n}$ be the algebra generated by $U_q(\lie{sl}_n)$ and $w$, $z$
subjects to the relations~\eqref{eq:D_n rls.2}, \eqref{eq:D_n rls.1}, \eqref{eq:D_n rls.3}, except 
the commutativity relation $[w,z]=0$.
Clearly that $\mathcal U_{q,n}$ is a quotient of $\tilde{\mathcal U}_{q,n}$.
First we prove the following
\begin{proposition}
The formulae~\eqref{eq:D_n braid formulae} extend the action of~$Br_{\lie{sl}_n}$ on~$U_q(\lie{sl}_n)$ to
an action on~$\tilde{\mathcal U}_{q,n}$ by algebra automorphisms.
\end{proposition}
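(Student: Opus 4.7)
The plan is to establish three facts in sequence: $(a)$ each $T_i$ preserves the defining relations of $\tilde{\mathcal U}_{q,n}$ and so extends to an algebra endomorphism, $(b)$ the $T_i$ satisfy the braid relations of $Br_{\lie{sl}_n}$ on the new generators~$w$ and~$z$ (hence on all of~$\tilde{\mathcal U}_{q,n}$), and $(c)$ each $T_i$ is invertible.

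For~$(a)$, the relations internal to~$U_q(\lie{sl}_n)$ are preserved by~$T_i$ via~\eqref{eq:Lustig braid action} as proved in~\cite{Lus}; it remains to verify that the defining relations~\eqref{eq:D_n rls.2}, \eqref{eq:D_n rls.1} and~\eqref{eq:D_n rls.3} involving $w$ and $z$ are preserved. For $i\notin\{1,2\}$ we have $T_i(w)=w$ and $T_i(z)=z$, while $T_i$ acts on the $E_j,F_j,K_j^{\pm 1}$ in the standard Lusztig way; since $w$ (respectively~$z$) commutes with $F_j$ for all~$j$ and with $E_j$ for $j\ne 1$ (respectively $j\ne 2$), these verifications reduce to standard Serre-type identities in~$U_q(\lie{sl}_n)$. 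For $i\in\{1,2\}$ the verification requires direct computation: we must check that $T_1(w)$ and $T_2(z)$ satisfy the transformed versions of each relation in \eqref{eq:D_n rls.2}, \eqref{eq:D_n rls.1} and~\eqref{eq:D_n rls.3} involving~$w$ or~$z$, where the $K_j,F_j,E_j$ occurring are replaced by their $T_1$- (respectively $T_2$-)transforms. These are finite computations within the rank-two subalgebras spanned by the relevant generators.

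For~$(b)$, the observation that $T_i(w)=w$ for $i\ne 1$ and $T_i(z)=z$ for $i\ne 2$ renders most braid relations trivial: when $|i-j|>1$ and neither index equals~$1$ (respectively~$2$), both sides fix~$w$ (respectively~$z$); when $\{i,j\}=\{1,j\}$ with $j\ge 3$, then $T_j$ fixes $E_1$ (by~\eqref{eq:Lustig braid action}) and~$w$, so $T_1T_j(w)=T_j T_1(w)$ is immediate, and similarly for~$z$. The non-trivial cases are thus $T_1T_2T_1=T_2T_1T_2$ applied to both~$w$ and~$z$, and, when $n\ge 4$, $T_2T_3T_2=T_3T_2T_3$ applied to~$z$. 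As a representative example, the identity $T_1T_2T_1=T_2T_1T_2$ applied to~$z$ reduces, using $T_1(z)=z$ and Lusztig's identity $T_2T_1(E_2)=E_1$, to the polynomial identity
\begin{equation*}
[z,T_1(E_2)]_{q^{-1}} = [[z,E_2]_{q^{-1}},E_1]_{q^{-1}},
\end{equation*}
which follows by direct expansion using $[E_1,z]=0$. The remaining identities are treated similarly, in some cases also invoking the Serre-type relations~\eqref{eq:D_n rls.3} and the explicit cubic form of $T_1(w)$.

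Finally, for~$(c)$, we give explicit inverse formulas, for instance $T_1^{-1}(w)=(q+q^{-1})^{-1}[E_1,[E_1,w]_{q^{-2}}]$, obtained as Lusztig's inverse of the same formal expression used for~$T_1$, and verify $T_iT_i^{-1}=T_i^{-1}T_i=\id$ on~$w,z$ using the identities already established in~$(b)$. The principal obstacle is the verification of the braid identity $T_1T_2T_1(w)=T_2T_1T_2(w)$: unlike its counterpart for~$z$, both sides here involve the cubic element $T_1(w)$, and the identity unfolds into a long polynomial identity modulo the Serre-type relation~\eqref{eq:D_n rls.3}. This computation, like those appearing in the proof of Proposition~\ref{pr:crossprod isom}, is tractable by hand but naturally lends itself to computer algebra verification.
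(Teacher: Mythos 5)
Your proposal follows essentially the same route as the paper: check that each $T_i$ preserves the defining relations of $\tilde{\mathcal U}_{q,n}$ (reducing to finite rank-two computations, as in the paper's auxiliary Lemma~\ref{lem:braid-tmp}), exhibit the same explicit inverses ($T_1^{-1}(w)=[2]_q^{-1}[E_1,[E_1,w]_{q^{-2}}]$, etc.), and reduce the braid relations to the only nontrivial cases $T_1T_2T_1=T_2T_1T_2$ on $w,z$ and $T_2T_3T_2=T_3T_2T_3$ on $z$, verified by direct computation. Your sample reduction for $z$, using $T_2T_1(E_2)=E_1$ and $[E_1,z]=0$, is correct and matches the identities the paper records in its Lemma, so no gaps.
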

\begin{proof}
We note the following useful Lemma
\begin{lemma}\label{lem:braid-tmp}
In $\tilde{\mathcal U}_{q,n}$ we have
\begin{equation*}
\begin{aligned}
&[F_i,T_1(w)]=-\delta_{i,1}[w,E_1]_{q^{-2}}K_1,&& [F_i,T_2(z)]=-\delta_{i,2}q^{-1}z K_2\\
&[T_1(E_1),T_1(w)]_{q^{-2}}=[w,E_1]_{q^{-2}},&& [T_2(E_2),T_2(z)]_{q^{-1}}=z\\
&[T_2(E_1),w]_{q^{-2}}=[[E_1,w]_{q^{-2}},E_2]_{q^{-1}},&& [T_1(E_2),z]_{q^{-1}}=[[E_2,z]_{q^{-1}},E_1]_{q^{-1}}.
\end{aligned}
\end{equation*}
\end{lemma}
Clearly, $[T_i(F_j),T_i(w)]=0$ (respectively, $[T_i(F_j),T_i(z)]=0$) for all $j$ and
for all~$i\not=1$ and (respectively, for all~$i\not=2$). 
Since
\begin{align*}
&[T_1(F_1),T_1(w)]=-[2]_q^{-1} [E_1,[E_1,[E_1,W]_{q^{-2}}]]_{q^2}K_1=0\\
&[T_2(F_2),T_2(z)]=[E_2,[E_2,z]_{q^{-1}}]_qK_2=0.
\end{align*}
we conclude that $[T_1(F_i),T_1(w)]=0$ unless~$i=2$, while by Lemma~\ref{lem:braid-tmp}
\begin{align*}
&[T_1(F_2),T_1(w)]=[[F_1,F_2]_q,T_1(w)]=[[F_1,T_1(w)],F_2]_q=-[w,E_1]_{q^{-2}}[K_1,F_2]_q=0\\
&[T_2(F_j),T_2(z)]=[[F_2,F_j]_q,T_2(z)]=[[F_2,T_2(z)],F_j]_q=-q^{-1} z[K_2,F_j]_q=0,\,\, j=1,3
\end{align*}
The remaining identity in~\eqref{eq:D_n rls.2} is clearly preseved. 
Similarly, for all
$i$ and for all $j\not=1$ (respectively $j\not=2$) we obtain $[T_i(E_j),T_i(w)]=0$ (respectively $[T_i(E_j),T_i(z)]=0$).
The remaining identities follow from Lemma~\ref{lem:braid-tmp} and direct computations. For example, for $i=1,3$ 
\begin{align*}
[T_2(E_i),T_2(z)]&=q^{-2} E_2 E_i E_2 z- q^{-2} E_2 z E_2 E_i-q^{-1} E_i E_2
    E_2 z\\&+E_i E_2 z E_2+q^{-1} z E_2 E_2 E_i-z E_2 E_i
    E_2\\&=[z,(\ad E_2)^{(2)}(E_i)]_{q^{-2}}=0,
\end{align*}
where we used~\eqref{eq:D_n rls.3} and quantum Serre relations.
It is not hard to check, using the above Lemma,
that the maps $T_i$ are invertible with their inverses given on $w$ and~$z$ by
$$
T_1^{-1}(w)=[2]_q^{-1}[E_1,[E_1,w]_{q^{-2}}],\qquad T_2^{-1}(z)=[E_2,z]_{q^{-1}}
$$
while $T_i^{-1}(w)=w$ if~$i\not=1$ and $T_i^{-1}(z)=z$ if~$i\not=2$.
Thus, we conclude that the $T_i$ are automorphisms 
of~$\tilde{\mathcal U}_{q,n}$. Finally, the only braid relations that need to be checked are $T_1 T_2 T_1(w)=T_2 T_1 T_2(w)=T_2 T_1(w)$ and 
$T_2T_iT_2(z)=T_i T_2 T_i(z)=T_i T_2(z)$, where $i=1,3$. This is done by a direct computation. 
\end{proof} 
To complete the proof of part~\eqref{th:D_n.iii} of Theorem~\ref{th:D_n} is suffices to show that 
the kernel of the canonical map $\tilde{\mathcal U}_{q,n}\to \mathcal U_{q,n}$ is preserved 
by the~$T_i$, $1\le i\le n-1$. For example, consider~\eqref{eq:D_n rls.4}. Note that $[w,[w,E_1]_{q^{-2}}]_{q^2}=[[E_1,w]_{q^{-2}},w]_{q^2}$.
Using Lemma~\ref{lem:braid-tmp} we obtain
$$
[[T_i(E_1),T_i(w)]_{q^{-2}},T_i(w)]_{q^2}
=\begin{cases}[[E_1,w]_{q^{-2}},w]_{q^2},& i>2\\
                                           [[w,[w,E_{1}]_{q^{-2}}]_{q^2}],E_2]_{q^{-1}},&i=2\\
                                           [2]_q^{-1} [[[w,[w,E_{1}]_{q^{-2}}]_{q^2},E_1]_{q^{-2}},E_1],&i=1,
                                          \end{cases}
$$
while $T_i(w)T_i(z)=w z$, $i>2$,
$$
T_{1}(w)T_{1}(z)=[2]_q^{-1}[[wz,E_1]_{q^{-2}},E_1]
$$
and
$$
T_{2}(w)T_{2}(z)=[wz,E_2]_{q^{-1}}.
$$
Thus, the first relation in~\eqref{eq:D_n rls.4} is preserved. The computations for the remaining relations 
are rather tedious and where performed on a computer. The relations can be checked in many different ways; perhaps,
the simplest is to use the isomorphism $\mathcal U_{q,n}\cong S_q(V\tensor V)\rtimes U_q(\lie{sl}_n)$, which allows 
us to write any element of~$\mathcal U_{q,n}$ as $\sum_i Y_i m_i$, where $Y_i\in S_q(V\tensor V)$ and
the $m_i$ are linearly independent elements of~$U_q(\lie{sl}_n)$. Writing a relation in this form, we then
check that $(\Psi+q^2)(\Psi+q^{-2})(Y_i)=0$ hence~$Y_i\in\Im(\Psi-1)$.
\end{proof}

\subsection{Liftable quantum foldings 
and \texorpdfstring{$\mathcal U^+_{q,n}$}{U\_q,n+} as a uberalgebra}\label{sub:explicit q.f}
In this section we use the standard numbering of the nodes 
of all Dynkin diagrams.
\begin{theorem}\label{th:images coincide}
In the notation of Theorem~\ref{th:D_n}, $\iota_{\bi}$ for any $\bi\in R(w_\circ)$
is a tame liftable folding with $U(\iota_\bi)=\mathcal U_{q,n}^+$ and $\mu_{\iota_\bi}=\mu$. In particular,
$\tilde\iota_\bi$ splits~$\mu$ and
we have a commutative diagram
\begin{equation}\label{eq:images.commute}
\begin{diagram}
 \node{\mathcal U_{q,n}^+}\arrow{e,t}{\hat\iota} \node{U_q^+(\lie{so}_{2n+2})}\\
 \node{U_q^+(\lie{sp}_{2n})}\arrow{n,l}{\tilde\iota_\bi}\arrow{ne,r}{\iota_\bi}
\end{diagram}
\end{equation}
where all maps commute with the right multiplication with
$U_q^+(\lie{sl}_n)$.
\end{theorem}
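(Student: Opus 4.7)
The plan is to reduce to a single convenient reduced decomposition $\bi \in R(w_\circ)$ via Theorem~\ref{th:folding ii}, and then use the cross-product decomposition $\mathcal U_{q,n}^+ \cong S_q(V\tensor V)\rtimes U_q^+(\lie{sl}_n)$ together with the $Br_{\lie{sl}_n}$-equivariance of $\hat\iota$ (Theorem~\ref{th:D_n}\eqref{th:D_n.ii},~\eqref{th:D_n.iii}) to identify $\langle A\rangle_{\iota_\bi}$ with $\hat\iota(\mathcal U_{q,n}^+)$ and assemble the liftable-folding data. Since $\sigma$ is an involution in the $D_{n+1}$ case, Theorem~\ref{th:folding ii} guarantees that the subalgebra $\langle A\rangle_{\iota_\bi}$ of $U_q^+(\lie{so}_{2n+2})$ is the same for every $\bi$, and the uniqueness of tame uberalgebras then transports the whole structure between reduced decompositions; hence it suffices to verify the theorem for one convenient choice.

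Fix such a $\bi$. Using $\hat\iota(w) = E_0 E_{-1} = \hat E_0$, $\hat\iota(E_i)=E_i$ for $i>0$, and the braid-equivariance of $\hat\iota$, Lemma~\ref{le:iota PBW} implies that every PBW generator $\hat X_k = \hat c_k^{-1}\hat T_{r_1}\cdots\hat T_{r_{k-1}}(\hat E_{r_k})$ is the image under $\hat\iota$ of the corresponding braid transform inside $\mathcal U_{q,n}^+$ (with $w$ playing the role of $E_0$). This proves $\iota_\bi(X_\bi)\subset \hat\iota(\mathcal U_{q,n}^+)$ and yields a natural preimage $\tilde\iota_\bi:U_q^+(\lie{sp}_{2n})\hookrightarrow \mathcal U_{q,n}^+$, defined on each $X_k$ by pulling back the formula and extended multiplicatively on ordered monomials. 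The PBW basis of $\mathcal U_{q,n}^+$ on $n^2+\binom{n}{2}$ generators (the standard basis of $V\tensor V$ together with positive root vectors of $\lie{sl}_n$) then pushes forward under $\hat\iota$ to an ordered generating set of $\langle A\rangle_{\iota_\bi}$; the generators matching $\iota_\bi(X_\bi)$ come from braid transforms of $w$ and the $E_i$, while the remaining $\binom{n}{2}$ images constitute the finite set $Z_0$ and correspond to braid transforms of~$z$.

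With this data in hand, condition~(i) of Definition~\ref{def:liftable iota} is the very definition of $\iota_\bi$, and condition~(ii) follows from surjectivity of $\hat\iota$ onto $\langle A\rangle_{\iota_\bi}$ combined with the PBW property of $\mathcal U_{q,n}^+$. Condition~(iii) is supplied by the map $\mu$ of Theorem~\ref{th:D_n}\eqref{th:D_n.iv}: since $\mu(z)=0$, $\mu(w)=E_0$, and $\mu$ is $Br_{\lie{sl}_n}$-equivariant, we obtain $\mu(Z_0)=0$ and $\mu\circ\tilde\iota_\bi=\id$, so $\mu_{\iota_\bi}=\mu$ and $U(\iota_\bi)=\mathcal U_{q,n}^+$. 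Tameness follows from the explicit PBW presentation in Proposition~\ref{pr:SqPres}, whose defining relations are all of degree~$d_0$, making $M(X_A)\cap A_{d_0}$ linearly independent by the PBW property of $\mathcal U_{q,n}^+$. The commutative triangle~\eqref{eq:images.commute} then holds by construction, and right $U_q^+(\lie{sl}_n)$-equivariance of the three maps reflects the braided cross-product structure throughout. The main obstacle will be the combinatorial bookkeeping in the second step: matching the normalization scalars $c_k$ against $\hat c_k$, pinning down precisely which braid transforms of~$z$ constitute $Z_0$, and confirming that the ordered PBW basis of the cross product aligns with the $\sigma$-fixed PBW basis $M(X_{\hat\bi})^\sigma$ in the correct order.
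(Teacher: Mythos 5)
Your overall architecture (fix one convenient $\bi$, use $\mathcal U_{q,n}^+\cong S_q(V\tensor V)\rtimes U_q^+(\lie{sl}_n)$ and the maps $\mu$, $\hat\iota$ of Theorem~\ref{th:D_n}, then transfer to other reduced words) matches the paper, but the central step is built on a mechanism that is not available. You claim that, by braid-equivariance of $\hat\iota$, every PBW generator $\hat X_k=\hat c_k^{-1}\hat T_{r_1}\cdots\hat T_{r_{k-1}}(\hat E_{r_k})$ is the image of ``the corresponding braid transform inside $\mathcal U_{q,n}^+$.'' However, $\mathcal U_{q,n}$ only carries an action of $Br_{\lie{sl}_n}$ (Theorem~\ref{th:D_n}\eqref{th:D_n.iii}); there is no operator on $\mathcal U_{q,n}^+$ corresponding to the folded node (the orbit $\{-1,0\}$, i.e.\ to $\hat T_0$), and any $\bi\in R(w_\circ)$ for $\lie{sp}_{2n}$ uses that index repeatedly. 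Moreover, even the existing $Br_{\lie{sl}_n}$-action does not preserve $S_q(V\tensor V)$ (see the remark after Theorem~\ref{th:D_n}), so ``braid transforms of $z$'' do not produce the set $Z_0$. Consequently $\tilde\iota_\bi$ is never actually defined and the identity $\hat\iota\circ\tilde\iota_\bi=\iota_\bi$ is not established. What the paper does instead — and what your proposal dismisses as bookkeeping — is the real content: it fixes the specific word $\bi_\circ=\bi_1\bi'$, derives closed formulas for all PBW generators $x^+_{ij}\in U_q^+(\lie{so}_{2n+2})$ and $y^+_{ij}\in U_q^+(\lie{sp}_{2n})$ as iterated adjoint actions (Proposition~\ref{pr:PBW-def}), defines $\tilde\iota_{\bi}$ explicitly by~\eqref{eq:tildiota}, and matches everything using the $U_q^+(\lie{sl}_n)$-module action on $S_q(V\tensor V)$ (e.g.\ $X_{ii}=E_i^{(2)}\cdots E_{n-1}^{(2)}(X_{nn})$, $X_{n,n-1}=[2]_q^{-1}(E_{n-1}(w)-q^{-1}z)$) together with the intertwining $\hat\iota(E_i(u))=\tilde E_i(\hat\iota(u))$, $\mu(E_i(u))=\tilde E_i(\mu(u))$ and~\eqref{eq:old defn Z}; braid equivariance of $\hat\iota$ and $\mu$ plays no role in this verification.

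Two further gaps: your tameness argument checks linear independence of low-degree ordered monomials in $\mathcal U_{q,n}^+$, but tameness is a property of the image subalgebra $\langle A\rangle_{\iota_\bi}\subset U_q^+(\lie{so}_{2n+2})$, and $\hat\iota$ is not injective, so one must know that its kernel does not meet the span of monomials of degree at most $d_0$. And your opening reduction to a single $\bi$ via ``uniqueness of tame uberalgebras'' is circular: uniqueness only applies once $\iota_{\bi'}$ is known to be tame liftable, which is exactly what has to be shown; the paper instead transfers the result between reduced words using Lemma~\ref{lem:change of basis} (which requires the already-established optimal PBW property of $U(\iota)$) combined with the explicit change-of-generators computations from the proof of Theorem~\ref{th:folding ii}.
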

\begin{proof}
Let $w_\circ'$ be the longest element in~$W(\lie{sl}_n)=W((\lie{sp}_{2n})_J)$,
where $J=\{1,\dots,n-1\}$ and let $\bi'=(n-1,n-2,n-1,\dots,1,\dots,n-1)\in R(w_\circ')$.
Set
$$
\bi_r=(n,n-1,n,n-2,n-1,n,\dots,r,\dots,n),\qquad 1\le r\le n.
$$
First, we prove the Theorem for $\bi=\bi_\circ$ where $\bi_\circ$ is the concatenation $\bi_1\bi'$.

Given $\bj=(r_1,\dots,r_k)\in (I/\sigma)^k$, write $w_\bj=s_{r_1}\cdots s_{r_k}$
and $T_\bj=T_{w_\bj}$ (respectively, $\hat T_\bj=T_{\hat w_{\bj}}$).
It is easy to check that 
\begin{equation}\label{eq:w simpl}
w_{\bi_{r}}(\alpha_s)=\alpha_{r+n-s-1},\qquad r< s\le n-1,\, 1\le r\le n.
\end{equation}
In particular, $T_{\bi_1}(E_i)=E_{n-i}$, $1\le i\le n-1$ by Lemma~\ref{lem:lus.br.id}, hence $T_{\bi_1}$ acts
as the diagram automorphism~$\tau$ of~$U_q^+(\lie{sl}_n)=U_q^+((\lie{sp}_{2n})_J)$. 
Define the elements $y_{ij}^+\in U_q^+(\lie{sp}_{2n})$, $x_{ij}^+\in U_q(\lie{so}_{2n+2})$, $1\le i\le j\le n$ and $x_{ij}^-\in U_q^+(\lie{sl}_n)=
U_q^+((\lie{so}_{2n+2})_J)=U_q^+((\lie{sp}_{2n})_J)$
\begin{equation}\label{eq:xij yij} 
\begin{aligned}
&X_{\bi}=\{y_{nn}^+,y_{n-1,n}^+,y_{n-1,n-1}^+,\dots,y_{1n}^+,\dots,y_{11}^+,x_{12}^-,\dots,x_{1,n}^-,\dots x_{n-1,n}^-\}
\\
&\hat X_{\bi}=\{x_{nn}^+,x_{n-1,n}^+,x_{n-1,n-1}^+,\dots,x_{1n}^+,\dots,x_{11}^+,x_{12}^-,\dots,x_{1,n}^-,\dots x_{n-1,n}^-\}
\end{aligned}
\end{equation}
as ordered sets, in the notation of Section~\ref{subs:PBW bases}.
In particular for all $a_{ij}^+,a_{ij}^-\in \ZZ_{\ge 0}$, we have
$$
\iota_\bi\Big(\prod^{\to}_{1\le i\le j\le n} (y_{ij}^+)^{a_{ij}^+}\prod^\to_{1\le i<j\le n} (x_{ij}^-)^{a_{ij}^-}\Big)=
\prod^\to_{1\le i\le j\le n} (x_{ij}^+)^{a_{ij}^+}
\prod^\to_{1\le i<j\le n} (x_{ij}^-)^{a_{ij}^-}
$$
where both products are taken in the same order as in~\eqref{eq:xij yij}.

Identify $\mathcal U_{q,n}^+$ with $S_q(V\tensor V)\rtimes U_q^+(\lie{sl}_n)$ using the isomorphism~$\psi_+$
from Proposition~\ref{pr:crossprod isom}. Define $\tilde\iota_{\bi}:U_q^+(\lie{sp}_{2n})\to\mathcal U_{q,n}^+$ 
by 
\begin{equation}\label{eq:tildiota}
\tilde\iota_\bi\Big(\prod^{\to}_{1\le i\le j\le n} (y_{ij}^+)^{a_{ij}^+}\prod^\to_{1\le i<j\le n} (x_{ij}^-)^{a_{ij}^-}\Big)=
\prod^\to_{1\le i\le j\le n} \tilde X_{ji}^{a_{ij}^+}
\prod^\to_{1\le i<j\le n} (x_{ij}^-)^{a_{ij}^-}
\end{equation}
where $\tilde X_{ji}=(q-q^{-1})^{i+j-2n} X_{ji}$ and the product in the right hand side is taken in the total
order defined in the proof of Proposition~\ref{pr:SqPres}.

We will need the following result, where we abbreviate $\tilde E_i:=\ad E_i$, $\tilde E_i^*:=\ad^* E_i$.
\begin{proposition}\label{pr:PBW-def}
The elements $x_{ij}^+\in U_q^+(\lie{so}_{2n+2})$ and $y_{ij}^+\in U_q^+(\lie{sp}_{2n})$ defined in~\eqref{eq:xij yij} are given 
by the following formulae
\begin{alignat}{2}\label{eq:PBW-Dn-def}
&x_{ij}^+=(\hat c_{ij}^+)^{-1} \tilde E_j\cdots \tilde E_{n-1}\tilde E_i\cdots \tilde E_{n-2} 
\tilde E^*_{n}\tilde E^*_{n+1}(E_{n-1}),&\quad& 1\le i<j\le n
\\
&x_{ii}^+=(\hat c_{ii}^+)^{-1}\tilde E_i^{(2)}\cdots \tilde E_{n-1}^{(2)}(E_n E_{n+1}),&&1\le i\le n
\label{eq:PBW-Dn-def.1}\\
&y_{ij}^+=(c_{ij}^+)^{-1}\tilde E_i\cdots \tilde E_{j-1}\tilde E_j^{(2)}\cdots \tilde E_{n-1}^{(2)}(E_n),&& 1\le i\le j\le n
\label{eq:PBWeltsCn-defn}
\end{alignat}
where $c_{ij}^+=(q+q^{-1})^{1-\delta_{ij}} (q-q^{-1})^{2n-i-j}$, $\hat c_{ij}^+=(q-q^{-1})^{2n-i-j+1-\delta_{ij}}$.
\end{proposition}
\begin{proof}
We only prove~\eqref{eq:PBW-Dn-def} and~\eqref{eq:PBW-Dn-def.1}. The argument for~\eqref{eq:PBWeltsCn-defn} is nearly identical and is omitted.
By the definition of the set $\hat X_\bi$ given in Section~\ref{subs:PBW bases}  we have for all $1\le i\le j\le n$
$$
x_{ij}^+= \gamma(\beta_{ij})^{-1}(q-q^{-1})^{1+\delta_{ij}} \hat T_{\bi_{i+1}}T_i\cdots T_{i+n-j-1}(E_{i+n-j}E_{n+1}^{\delta_{ij}}),
$$
where $\beta_{ij}=\deg x_{ij}^+=\hat w_{\bi_{i+1}}s_i\cdots s_{i+n-j-1}(\alpha_{i+n-j}+\delta_{ij}\alpha_{n+1})$. It is easy to see,
using~\eqref{eq:w simpl},  
that~$\beta_{ij}=\alpha_i+\cdots+\alpha_{j-1}+
2(\alpha_j+\cdots+\alpha_{n-1})+\alpha_n+\alpha_{n+1}$, hence $\gamma(\beta_{ij})(q-q^{-1})^{-1-\delta_{ij}}=
\hat c_{ij}^+$.

Since 
\begin{equation}\label{eq:tmp-a}
\hat w_{\bi_i}=s_{n+1}s_n\cdots s_i \hat w_{\bi_{i+1}}, \qquad 1\le i\le n-1 
\end{equation}
and $\ell(w_{\bi_i})=\ell(w_{\bi_{i+1}})+n-i+2$, we have for all $1\le i\le n$
$$
\hat T_{\bi_{i+1}}(E_i)=T_{n+1}\cdots T_{i+1}T_{\bi_{i+2}}(E_i)=T_{n+1}\cdots T_{i+1}(E_i).
$$
Then it is easy to see, using \eqref{eq:braid.ad}, \eqref{eq:ad*-ad} and the obvious observation that 
$\ad E_r$, $\ad^* E_s$ commute if $a_{rs}=0$, that
\begin{equation}\label{eq:tmp-0}
\hat c_{in}^+ x_{in}^+=
\tilde E^*_{n+1}\cdots\tilde E^*_{i+1}(E_i)=\tilde E_i\cdots\tilde E_{n-2}\tilde E_n^*\tilde E_{n+1}^*(E_{n-1}).
\end{equation}
To establish~\eqref{eq:PBW-Dn-def} for $1\le i<j<n$, note that by~\eqref{eq:w simpl} and Lemma~\ref{lem:lus.br.id}
we have $\hat T_{\bi_{i+1}}(E_{i+n-j})=E_{j}$, $i<j<n$. Therefore, 
\begin{multline*}
\hat c_{ij}^+ x_{ij}^+=\hat T_{\bi_{i+1}}T_i\cdots T_{i+n-j-1}(E_{i+n-j})=
\hat T_{\bi_{i+1}}\tilde E_i^*\cdots \tilde E^*_{i+n-j-1}(E_{i+n-j})\\=
\hat T_{\bi_{i+1}}\tilde E_{i+n-j}\cdots \tilde E_{i+1}(E_i)
=\tilde E_j\cdots \tilde E_{n-1}(\hat T_{\bi_{i+1}}(E_i))=\hat c_{in}^+
\tilde E_j\cdots \tilde E_{n-1}x_{in}^+,
\end{multline*}
where we used \citem{BZ}*{Lemma~3.5}, \eqref{eq:tmp-0} and~\eqref{eq:Tw ad}.

Finally, we prove by a downward induction on~$i$ that
\begin{equation}\label{eq:tmp-1}
\hat T_{\bi_{i+1}}T_i\cdots T_{n-1}(E_n)=T_{\sigma^{n-i}(n)}T_{n-1}\cdots T_{i+1}(E_i).
\end{equation}
If $i=n-1$, it follows from Lemma~\ref{lem:lus.br.id} that  $T_n T_{n+1}T_{n-1}(E_n)=T_{n+1}(E_{n-1})$ 
so the induction begins.
For the inductive step, suppose that $n-i$ is odd, the case of $n-i$ even being similar. Using~\eqref{eq:tmp-a}, 
we can write  $$
\hat w_{\bi_{i+1}}s_i\cdots s_{n-1}=s_{n+1}s_n\cdots s_i\hat w_{\bi_{i+2}}s_{i+1}\cdots s_{n-1}$$
and since both expressions are reduced the induction hypothesis yields
$$
\hat T_{\bi_{i+1}}T_i\cdots T_{n-1}(E_n)=T_{n+1}\cdots T_i T_{n}T_{n-1}\cdots T_{i+2}(E_{i+1}).
$$
The inductive step now follows from the braid relations and Lemma~\ref{lem:lus.br.id}. 
Using~\citem{BZ}*{Lemma~3.5} we obtain from~\eqref{eq:tmp-1} that 
\begin{multline*}
\hat c_{ii}^+ x_{ii}^+=(\tilde E_{n+1}^*\tilde E_{n-1}^*\cdots\tilde E_{i+1}^*(E_i))(\tilde E_{n}^*\tilde E_{n-1}^*\cdots\tilde E_{i+1}^*(E_i))
\\=(\tilde E_i\cdots \tilde E_{n-1}(E_n))(\tilde E_i\cdots \tilde E_{n-1}(E_{n+1})).
\end{multline*}
Since by the quantum Serre relations, $\tilde E_i^{(2)}\tilde E_{i+1}\cdots\tilde E_{n-1}(E_r)=0$, $r\in\{n,n+1\}$,
\eqref{eq:PBW-Dn-def.1} follows immediately from~\eqref{eq:ad nilp.3}.
\end{proof}

We can now complete the proof of the Theorem. First, observe that~\eqref{eq:sln Vtens V.1} implies that 
$X_{ii}=E_i^{(2)}\cdots E_{n-1}^{(2)}(X_{nn})$, $1\le i\le n-1$ while
$$
X_{ji}=E_j\cdots E_{n-1} E_{i}\cdots E_{n-2}(X_{n,n-1}),\quad 1\le i<j\le n.
$$
Since $X_{n,n-1}=[2]_q^{-1} (E_{n-1}(w)-q^{-1}z)$, we obtain 
\begin{equation*}
\begin{aligned}
&\tilde\iota_\bi(y_{ij}^+)=\frac{E_j\cdots E_{n-1}E_i\cdots E_{n-2}}{(q-q^{-1})^{2n-i-j-1}}\Big(\frac{E_{n-1}(w)-q^{-1}z}{q^2-q^{-2}}\Big),
&\quad & 1\le i<j\le n\\
&\tilde\iota_\bi(y_{ii}^+)= (c_{ii}^+)^{-1} E_i^{(2)}\cdots E_{n-1}^{(2)}(w),& & 1\le i\le n.
\end{aligned}
\end{equation*}
Note that for all $u\in S_q(V\tensor V)$, $1\le i\le n-1$ we have $\mu(E_i(u))=\tilde E_i(\mu(u))$ and
similarly $\hat\iota(E_i(u))=\tilde E_i(u)$. 
This, together with~\eqref{eq:old defn Z}, Proposition~\ref{pr:PBW-def}
and the multiplicativity of~$\tilde\iota_\bi$ and $\iota_\bi$ immediately implies the assertion for $\bi=\bi_\circ$.

To complete the proof, it remains to apply Lemma~\ref{lem:change of basis} and the argument from the proof of Theorem~\ref{th:folding ii}. 
\end{proof}
This completes the proof of Theorems~\ref{th:first folding} and~\ref{th:Dn Chevalley}. \qed

\section{Diagonal foldings}

\subsection{Folding~\texorpdfstring{$(\lie{sl}_3^{\times n},\lie{sl}_3)$}{(sl\_3\ x\ n,sl\_3)}}\label{subs:diag sl3}
Consider the algebra $\lie s_n:=\lie{sl}_3^{\times n}$ with the diagram automorphism $\sigma$
which is a cyclic permutation of the components. 

Let $\mathcal A_{q,3}^{(n)}$ be the associative $\CC(q)$-algebra generated by
$u_1,u_2$ and $z_k$, $1\le k\le n-1$ subjects to relations given in Theorem~\ref{th:second folding A_2 cube}\eqref{th:second folding A_2 cube.i}.
\begin{theorem}\label{th:second folding A_2 cube refine}
\begin{enumerate}[{\rm(i)}]
 \item\label{th:second folding A_2 cube refine.i} The algebra $\mathcal A_{q,3}^{(n)}$ is PBW on the totally ordered set 
 $$\{u_2,u_{21},u_1\}\cup\{z_k\,:\, 1\le k\le n-1\}, 
 $$
 where
 $$
 u_{21}=u_1 u_2-q^{-n} u_2 u_1-\sum_{k=1}^{n-1} \frac{q-q^{-1}}{q^k-q^{-k}}\, z_k.
 $$
\item\label{th:second folding A_2 cube refine.ii} The assignment $u_i\mapsto E_i$, $z_k\mapsto 0$ defines 
a surjective algebra homomorphism $\mu:\mathcal A_{q,3}^{(n)}\to U_q(\lie{s}_n^\sigma{}^\vee)$.
\end{enumerate}

\end{theorem}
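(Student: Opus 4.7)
The plan is to first dispatch part~(ii) by direct substitution, and then to establish part~(i) by showing that the given generators $\{u_2,u_{21},u_1,z_1,\ldots,z_{n-1}\}$ with their commutation relations form an equivalent presentation of $\mathcal A_{q,3}^{(n)}$, and applying Bergman's Diamond Lemma (Proposition~\ref{prop:diamond lemma}).

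For part~(ii), Lemma~\ref{le:Langlands dual} identifies the symmetrized Cartan matrix of~$\lie s_n^\sigma{}^\vee$ with $nA(\lie{sl}_3)$, so $U_q(\lie s_n^\sigma{}^\vee)$ is the quantized enveloping algebra of~$\lie{sl}_3$ with $q_i=q^n$; in particular its quantum Serre relation reads $E_i^2E_j-(q^n+q^{-n})E_iE_jE_i+E_jE_i^2=0$. Setting $u_i\mapsto E_i$ and $z_k\mapsto 0$ trivially kills every relation of~$\mathcal A_{q,3}^{(n)}$ from Theorem~\ref{th:second folding A_2 cube}\eqref{th:second folding A_2 cube.i} except the Serre-like one, which collapses to exactly this identity. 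Surjectivity onto the positive subalgebra (the target~$A$ of~$\mu_\iota$ in diagram~\eqref{eq:cone over iota}) is immediate since $E_1,E_2$ generate $U_q^+(\lie s_n^\sigma{}^\vee)$.

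For part~(i), I would first work inside $\mathcal A_{q,3}^{(n)}$ and verify that the element $u_{21}:=u_1u_2-q^{-n}u_2u_1-\sum_{k=1}^{n-1}\frac{q-q^{-1}}{q^k-q^{-k}}z_k$ satisfies the claimed commutations $u_1u_{21}=q^nu_{21}u_1$, $u_{21}u_2=q^nu_2u_{21}$ and $[u_{21},z_k]=0$. The first two are obtained by expanding, using the Serre-like relation to substitute $u_i^2u_j$, and then pushing the $z_k$ past the $u_i$ via $u_i z_{k,i}=q^{n-2k}z_{k,i}u_i$; the $z$-tails collapse through the telescoping identity
\begin{equation*}
(q^{-1}-q)q^{n-k}(q^k-q^{-k})+(q-q^{-1})q^n=(q-q^{-1})q^{n-2k},
\end{equation*}
while $[u_{21},z_k]=0$ is immediate from the opposite powers of $q$ appearing in $u_1z_k$ and $u_2z_k$. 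Substituting the definition of~$u_{21}$ back into the new relations recovers the Serre-like relation, so the two presentations define the same algebra.

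It then remains to invoke the Diamond Lemma for the rewriting system on $\{u_2<u_{21}<u_1<z_1<\cdots<z_{n-1}\}$ induced by the commutations above together with the defining rewrite $u_1u_2\to q^{-n}u_2u_1+u_{21}+\sum_k\frac{q-q^{-1}}{q^k-q^{-k}}z_k$. All overlap ambiguities involving only $u_{21}$ and the $z_k$ are resolved by scalar $q$-commutations, so the sole genuinely non-trivial triple is $z_ku_1u_2$: reducing $u_1u_2$ first and then commuting $z_k$ through the resulting monomials, versus first commuting $z_k$ past $u_1$ and then $u_2$ before rewriting, yields the same ordered expression since the exponents $q^{2k-n}$ and $q^{n-2k}$ cancel as $z_k$ is pushed across $u_1u_2$. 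The main obstacle is the clean verification of the two $q$-identities above for $u_1u_{21}=q^nu_{21}u_1$ and $u_{21}u_2=q^nu_2u_{21}$: the fractions $\frac{q-q^{-1}}{q^k-q^{-k}}$ must conspire exactly with the Serre-like right-hand side, and once this is in place the remaining Diamond-Lemma checks are routine, giving the PBW basis and completing the proof.
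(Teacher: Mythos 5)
Your overall route coincides with the paper's: you derive the $q$-commutation relations $[u_1,u_{21}]_{q^n}=0$, $[u_2,u_{21}]_{q^{-n}}=0$, $[u_{21},z_k]=0$ from the Serre-like relations (the telescoping cancellation with the coefficients $\tfrac{q-q^{-1}}{q^k-q^{-k}}=[k]_q^{-1}$ is exactly the computation in the paper), observe that conversely the PBW-type relations recover the Serre-like ones, and then invoke the Diamond Lemma; part (ii) is treated the same way (immediate once one notes $q_i=q^n$ for $U_q(\lie{s}_n^\sigma{}^\vee)$, so the Serre-like relation specializes to the quantum Serre relation).

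There is, however, a gap in your overlap analysis. You dismiss the ambiguity coming from the triple $u_2<u_{21}<u_1$, i.e.\ the monomial $u_1u_{21}u_2$, as ``resolved by scalar $q$-commutations.'' It is not: although the two overlapping rewrites $u_1u_{21}\to q^nu_{21}u_1$ and $u_{21}u_2\to q^nu_2u_{21}$ are scalar, each reduction path then passes through the multi-term rewrite of $u_1u_2$, and one must check that the resulting tails involving $u_{21}^2$ and $u_{21}z_k$ agree after reordering. This is precisely the single overlap the paper verifies: $(u_1u_{21})u_2=q^nu_{21}u_1u_2=q^n\bigl(u_2u_{21}u_1+u_{21}^2+\sum_{k=1}^{n-1}[k]_q^{-1}u_{21}z_k\bigr)=u_1(u_{21}u_2)$, using $[u_{21},z_k]=0$. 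By contrast, the triple $z_ku_1u_2$ that you single out as the ``sole genuinely non-trivial'' one is the easy case: $z_k$ commutes with $u_1u_2$ and with every term of its rewrite with total $q$-factor one, which is exactly the computation you carry out. So the conclusion is correct and the omission is repairable in a few lines, but as written the essential confluence check is missing and the stated reason for skipping it is wrong.
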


\begin{proof}
Since $u_{21}=[u_1,u_2]_{q^{-n}}-\sum_{r=1}^{n-1} [r]_q^{-1} z_r$, we have 
$$
[u_1, u_{21}]_{q^{n}}=[u_1,[u_1,u_2]_{q^{-n}}]_{q^n}-\sum_{r=1}^{n-1}  \frac{[u_1,z_r]_{q^n}}{[r]_{q}}=
\sum_{r=1}^{n-1} \Big(q^r(q^{-1}-q)+ \frac{q^{2r}-1}{[r]_q}\Big)  u_1 z_r=0.
$$
Similarly, we can write 
$$
u_{21}=-q^{-n}[u_2,u_1]_{q^n}-\sum_{r=1}^{n-1} \tilde z_{n-r}=
-q^{-n} [u_2,u_1]_{q^n}-\sum_{r=1}^{n-1} [n-r]_q^{-1} z_{n-r}
$$
hence 
\begin{multline*}
[u_2,u_{21}]_{q^{-n}}=-q^{-n} [u_2,[u_2,u_1]_{q^n}]_{q^{-n}}-\sum_{r=1}^{n-1} [n-r]_q^{-1} [u_2,z_{n-r}]_{q^{-n}}\\
=-q^{-n}(q^{-1}-q)\sum_{r=1}^{n-1} q^r u_2 z_{n-r}-\sum_{r=1}^{n-1} \frac{1-q^{2(r-n)}}{[n-r]_q}\, u_2 z_{n-r}\\
=(q-q^{-1}) \sum_{r=1}^{n-1} \Big(q^{r-n}+\frac{q^{2(r-n)}-1}{q^{n-r}-q^{r-n}}\Big) u_2 z_{n-r}=0.
\end{multline*}
Since clearly $u_{21}$ commutes with the $z_r$, we obtain the PBW relations from Theorem~\ref{th:second folding A_2 cube}\eqref{th:second folding A_2 cube.ii}
The above computations also show that PBW relations imply Serre-like relations. 
To prove that $\mathcal A_{q,3}^{(n)}$ is PBW, we use Diamond Lemma (Proposition~\ref{prop:diamond lemma}). It is easy to see that the only 
situation which needs to be checked is the monomial $u_1 u_{21} u_2$. We have
$$
(u_1 u_{21})u_2=q^n u_{21}u_1u_2=q^n (u_2 u_{21} u_1+u_{21}^2+\sum_{k=1}^{n-1} [k]_q^{-1} u_{21} z_k)=u_1(u_{21}u_2).
$$
The second assertion is obvious.
\end{proof}

We now proceed to prove that $\mathcal A_{q,3}^{(n)}$ is the desired enhanced uberalgebra for this folding.

Given $x_\alpha\in U_q(\lie{sl}_3)$ we denote its copy in the $i$th component of~$U_q(\lie{sl}_3)^{\tensor n}$ by $x_{\alpha,i}$.
Let $\bi_1=(121)$ and $\bi_2=(212)$. Define the elements $y_i\in U_q(\lie s_n^\sigma)$ and 
$Y_{i,n}\in U_q(\lie{s}_n)$, $i\in\{1,2,12,21\}$ by 
$$
X_{\bi_1}=\{y_1,y_{21},y_2\},\qquad \hat X_{\bi_1}=\{ Y_{1,n},Y_{21,n},Y_{2,n}\}
$$
as ordered sets, in the notation of Section~\ref{subs:PBW bases}, and similarly for~$\bi_2$. It is immediate that 
$y_{1}=E_1$, $y_2=E_2$ and 
$$
y_{12}=(q^n-q^{-n})^{-1} (E_1E_2-q^{-n}E_2 E_1),\quad y_{21}=(q^n-q^{-n})^{-1} (E_2E_1-q^{-n}E_1 E_2)
$$
while $Y_{1,n}=\prod_{i=1}^n E_{1,i}$, $Y_{2,n}=\prod_{i=1}^n E_{2,i}$ and 
$$
Y_{21,n}=\prod_{i=1}^n E_{21,i},\qquad Y_{12,n}=\prod_{i=1}^n E_{12,i},
$$
where 
$$
E_{12,i}=\frac{E_{2,i} E_{1,i}-q^{-1} E_{1,i}E_{2,i}}{q-q^{-1}},\qquad E_{21,i}=\frac{E_{1,i} E_{2,i}-q^{-1} E_{2,i}E_{1,i}}{q-q^{-1}}.
$$
In particular, $\iota_{\bi_1}$ (respectively, $\iota_{\bi_2}$) is given by
$$
\iota_{\bi_1}(y_1^a y_{21}^b y_2^c)=Y_{1,n}^a Y_{21,n}^b Y_{2,n}^c,\qquad 
\iota_{\bi_2}(y_2^a y_{12}^b y_1^c)=Y_{2,n}^a Y_{12,n}^b Y_{1,n}^c,\qquad 
$$

We will need some identities for the elements~$E_{\alpha,i}$.
Clearly 
\begin{equation}\label{eq:A2n rel.1}
q E_{21,i}+E_{12,i}=E_{1,i}E_{2,i},\qquad 
q E_{12,i}+E_{21,i}=E_{2,i}E_{1,i}
\end{equation}
It follows from quantum Serre relations that 
\begin{equation}\label{eq:A2n rel.2}
E_{i,r}E_{ij,s}=q^{-\delta_{r,s}} E_{ij,s}E_{i,r},\qquad E_{j,r}E_{ij,s}=q^{\delta_{r,s}}E_{ij,s}E_{j,r},\qquad i\not=j\in\{1,2\}.
\end{equation}
In particular, this implies that
$$
E_{12,i}E_{21,i}=E_{21,i}E_{12,i}.
$$

Let $Z_{0,1}=E_{21,1}$, $Z_{1,1}=E_{12,1}$ and define inductively
\begin{equation}\label{eq:A2n Zdef}
Z_{i,k}=Z_{i,k-1}E_{21,k}+Z_{i-1,k-1}E_{12,k},\qquad 0\le i\le k,
\end{equation}
where we use the convention that $Z_{i,k}=0$ if~$i<0$ or~$i>k$. In particular, $Z_{0,n}=Y_{21,n}$
and $Z_{n,n}=Y_{12,n}$.

\begin{lemma}\label{lem:comm rels Z}
We have for all $0\le k,l\le n$
\begin{align}\label{eq:comm rels Z.1}
&Y_{1,n} Z_{k,n}=q^{n-2k} Z_{k,n}Y_{1,n},\quad Y_{2,n}Z_{k,n}=q^{-n+2k} Z_{k,n}Y_{2,n},\quad Z_{k,n}Z_{l,n}=Z_{l,n}Z_{k,n}\\
 &Y_{1,n}Y_{2,n}=\sum_{k=0}^n q^{n-k} Z_{k,n},\qquad Y_{2,n}Y_{1,n}=\sum_{k=0}^n q^k Z_{k,n}.\label{eq:comm rels Z.3'}
\end{align}
\end{lemma}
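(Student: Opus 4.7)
The plan is to prove all identities simultaneously by induction on $n$, exploiting the recursive definition~\eqref{eq:A2n Zdef} of $Z_{k,n}$ together with the crucial observation that elements supported in different tensor factors of $U_q(\lie s_n) = U_q(\lie{sl}_3)^{\otimes n}$ commute. In particular $E_{1,n}$, $E_{2,n}$, $E_{12,n}$, $E_{21,n}$ commute with each of $Y_{1,n-1}$, $Y_{2,n-1}$, $Z_{l,n-1}$, which will reduce every computation to a single-copy computation in the $n$th factor controlled by~\eqref{eq:A2n rel.1} and~\eqref{eq:A2n rel.2}.

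For the base case $n=1$, the identities \eqref{eq:comm rels Z.3'} reduce directly to~\eqref{eq:A2n rel.1}, and the $q$-commutation identities \eqref{eq:comm rels Z.1} for $k=0,1$ follow from~\eqref{eq:A2n rel.2} applied with $ij\in\{12,21\}$ and $r=s=1$; the commutation $Z_{0,1}Z_{1,1}=Z_{1,1}Z_{0,1}$ is the identity $E_{12,1}E_{21,1}=E_{21,1}E_{12,1}$ already noted in the excerpt.

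For the inductive step, writing $Y_{j,n}=Y_{j,n-1}E_{j,n}$ and expanding $Z_{k,n}$ via~\eqref{eq:A2n Zdef} gives
\begin{equation*}
Y_{1,n}Z_{k,n}=Y_{1,n-1}Z_{k,n-1}\,E_{1,n}E_{21,n}+Y_{1,n-1}Z_{k-1,n-1}\,E_{1,n}E_{12,n},
\end{equation*}
and applying the induction hypothesis to move $Y_{1,n-1}$ past $Z_{k,n-1}$ and $Z_{k-1,n-1}$, and~\eqref{eq:A2n rel.2} to move $E_{1,n}$ past $E_{21,n}$ and $E_{12,n}$, the two scalars combine to $q^{(n-1-2k)+1}=q^{n-2k}$ and $q^{(n-1-2(k-1))-1}=q^{n-2k}$ respectively, yielding $q^{n-2k}Z_{k,n}Y_{1,n}$. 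The identity for $Y_{2,n}$ is analogous. The commutativity $Z_{k,n}Z_{l,n}=Z_{l,n}Z_{k,n}$ is obtained by expanding both sides via~\eqref{eq:A2n Zdef}, using $E_{12,n}E_{21,n}=E_{21,n}E_{12,n}$ on the $n$th factor, and the inductive commutativity of the $Z_{\bullet,n-1}$ on the first $n-1$ factors. Finally, for~\eqref{eq:comm rels Z.3'}, since $E_{1,n}$ commutes with $Y_{2,n-1}$ we have $Y_{1,n}Y_{2,n}=Y_{1,n-1}Y_{2,n-1}\cdot E_{1,n}E_{2,n}$; applying the inductive formula and $E_{1,n}E_{2,n}=qE_{21,n}+E_{12,n}$ from~\eqref{eq:A2n rel.1}, then reindexing the $E_{12,n}$-sum by $k\mapsto k-1$ and regrouping, precisely reproduces the recursion~\eqref{eq:A2n Zdef} inside $\sum_{k=0}^n q^{n-k}Z_{k,n}$.

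There is no serious obstacle; the proof is a clean double-bookkeeping of scalars. The only point requiring a little care is matching the exponents $n-2k$ (resp.\ $-n+2k$) after combining the inductive exponent $n-1-2k$ (resp.\ $-(n-1)+2k$) with the single factor of $q^{\pm1}$ coming from~\eqref{eq:A2n rel.2}, and keeping track of the shift $k\mapsto k-1$ in the two summands produced by~\eqref{eq:A2n Zdef}.
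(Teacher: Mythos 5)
Your proof is correct and follows essentially the same route as the paper: induction on $n$ via the recursion~\eqref{eq:A2n Zdef}, using that different tensor factors commute together with~\eqref{eq:A2n rel.1} and~\eqref{eq:A2n rel.2}, with the scalar bookkeeping matching the paper's computation. The only cosmetic difference is that the paper dispenses with $Z_{k,n}Z_{l,n}=Z_{l,n}Z_{k,n}$ directly (all $E_{21,r}$, $E_{12,s}$ pairwise commute, and every $Z_{k,n}$ is a polynomial in them) rather than by induction, but your inductive version is equally valid.
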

\begin{proof}
The first relation is just~\eqref{eq:A2n rel.2} for~$n=1$. Then, using induction on~$n$, we obtain
\begin{align*}
Y_{1,n}Z_{k,n}&=Y_{1,n-1}E_{1,n}(Z_{k,n-1}E_{21,n}+Z_{k-1,n-1}E_{12,n})\\
&=q^{n-1-2k} Z_{k,n-1} E_{1,n} E_{21,n} Y_{1,n-1}+q^{n+1-2k}Z_{k-1,n-1}E_{1,n}E_{12,n}Y_{1,n-1}\\
&=q^{n-2k}Z_{k,n}Y_{k,n}.
\end{align*}
The second identity in~\eqref{eq:comm rels Z.1} is proved similarly while the last is obvious
since $E_{21,r}$, $E_{12,s}$ commute for all $1\le r,s\le n$. To prove~\eqref{eq:comm rels Z.3'}, we
again use the inductive definition of the~$Z_{k,n}$. For~$n=1$ this relation coincides with~\eqref{eq:A2n rel.1}, while
\begin{align*}
Y_{1,n}Y_{2,n}&=Y_{1,n-1}Y_{2,n-1}E_{1,n}E_{2,n}=\sum_{k=0}^{n-1}q^{n-k-1} Z_{k,n-1}(E_{12,n}+q E_{21,n})\\
&=\sum_{k=0}^n q^{n-k-1} Z_{k,n-1}E_{12,n}+\sum_{k=0}^n q^{n-k} Z_{k,n-1}E_{21,n}=\sum_{k=0}^n q^{n-k} Z_{k,n}.
\end{align*}
The remaining identity is proved similarly.
\end{proof}
As an immediate corollary, we obtain 
\begin{equation}\label{eq:comm rels Z.3}
Y_{1,n}Y_{2,n}=q^{-n}Y_{2,n}Y_{1,n}+(q^n-q^{-n})Y_{21,n}+\sum_{k=1}^{n-1} (q^{n-k}-q^{k-n})Z_{k,n}
\end{equation}
\begin{example}\label{subs:inf generated A(z)}
Let $n=3$ and take~$\iota=\iota_{\bi_1}$. Then in $\lr{U_q^+(\lie{sl}_3)}_{\iota}$ we have 
$$
Y_{1,3}Y_{2,3}=q^{-3}Y_{2,3}Y_{1,3}+(q^3-q^{-3})Y_{21,3}+Y_{21,3}'.
$$
Since the terms in~$Y_{21,3}'$ quasi-commute with $Y_{1,3}$ with different powers of~$q$ and are linearly independent,
we obtain an infinite family of generators by taking $q$-commutators of~$Y_{1,3}$ with $Y_{21,3}'$. Thus, this algebra 
cannot be sub-PBW, since it clearly has polynomial growths.
\end{example}
\begin{lemma}\label{lem:Z0 fract}
The elements $Z_{k,n}$, $1\le k\le n-1$, are contained in $\lr{U_q^+(\lie{sl}_3)}_{\iota_{\bi}}\cap
\Frac\, U_q^+(\lie{sl}_3)^{\tensor n}$ for both reduced expressions~$\bi$ of the longest element 
in the Weyl group of~$\lie{sl}_3$.
\end{lemma}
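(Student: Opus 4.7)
The plan is to exhibit each $Z_{k,n}$, for $1 \le k \le n-1$, as a $\CC(q)$-linear combination of iterated conjugates of $Y_{1,n}Y_{2,n}$ by powers of $Y_{1,n}$. These conjugates manifestly lie in the skew-subfield of $\Frac\, U_q^+(\lie{sl}_3)^{\tensor n}$ generated by $\iota_{\bi_1}(U_q^+(\lie{sl}_3))$, while the $Z_{k,n}$ themselves lie in $U_q^+(\lie{sl}_3)^{\tensor n}$ by their explicit definition~\eqref{eq:A2n Zdef}, so the intersection condition follows once membership in the skew-subfield is established. The case $\bi=\bi_2$ will be handled symmetrically, interchanging the roles of $Y_{1,n}$ and $Y_{2,n}$.

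First, since $Y_{1,n}=\iota_{\bi_1}(y_1)$ and $Y_{1,n}Y_{2,n}=\iota_{\bi_1}(y_1y_2)$ (as $y_1y_2$ is an ordered monomial on $X_{\bi_1}$ with trivial middle factor) both lie in $\lr{U_q^+(\lie{sl}_3)}_{\iota_{\bi_1}}$, and since $U_q^+(\lie{sl}_3)^{\tensor n}$ is an Ore domain in which $Y_{1,n}$ is invertible in $\Frac$, the elements
\begin{equation*}
A_j := Y_{1,n}^{\,j}\,(Y_{1,n}Y_{2,n})\,Y_{1,n}^{-j},\qquad j = 0, 1, \ldots, n,
\end{equation*}
all lie in the skew-subfield $\Frac(\iota_{\bi_1}(U_q^+(\lie{sl}_3)))\subset \Frac\, U_q^+(\lie{sl}_3)^{\tensor n}$. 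By Lemma~\ref{lem:comm rels Z} we have $Y_{1,n}^{\,j}\,Z_{k,n}\,Y_{1,n}^{-j}=q^{j(n-2k)}Z_{k,n}$, so combining with the first identity of~\eqref{eq:comm rels Z.3'} we obtain
\begin{equation*}
A_j = \sum_{k=0}^n q^{(n-k)+j(n-2k)}\,Z_{k,n},\qquad j=0,1,\ldots,n.
\end{equation*}

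The $(n+1)\times(n+1)$ coefficient matrix $M_{jk}=q^{(n-k)+j(n-2k)}$ factors, after multiplying the $j$-th row by $q^{-n(j+1)}$ and the $k$-th column by $q^{k}$, into the Vandermonde matrix $(q^{-2k})^{j}$ with pairwise distinct nodes $q^{-2k}$, $0\le k\le n$. Hence $M$ is non-singular over $\CC(q)$, and inverting the system expresses each $Z_{k,n}$ (including the interior values $1\le k\le n-1$) as an explicit $\CC(q)$-linear combination of the $A_j$. This places every $Z_{k,n}$ in $\Frac(\iota_{\bi_1}(U_q^+(\lie{sl}_3)))\cap U_q^+(\lie{sl}_3)^{\tensor n}$, proving the statement for $\bi=\bi_1$. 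The argument for $\bi_2$ is identical upon replacing $Y_{1,n}Y_{2,n}$ by $Y_{2,n}Y_{1,n}=\sum_k q^k Z_{k,n}$ and conjugating by powers of $Y_{2,n}$, using the relation $Y_{2,n}Z_{k,n}=q^{-n+2k}Z_{k,n}Y_{2,n}$ from Lemma~\ref{lem:comm rels Z}. The only nontrivial input is the Vandermonde non-degeneracy, which is immediate from the distinctness of the nodes $q^{-2k}$.
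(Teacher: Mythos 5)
Your argument is correct and is essentially the paper's own proof: your conjugates $A_j=Y_{1,n}^{\,j}(Y_{1,n}Y_{2,n})Y_{1,n}^{-j}$ are exactly the elements $Y_{1,n}^{s}Y_{2,n}Y_{1,n}^{1-s}$ (with $s=j+1$) that the paper obtains from Lemma~\ref{lem:comm rels Z}, leading to the same linear system for the $Z_{k,n}$ with the same coefficient matrix. The only difference is cosmetic: you verify the non-degeneracy explicitly via a Vandermonde factorization (which the paper merely asserts) and solve for all $Z_{k,n}$ at once rather than noting that one of $Z_{0,n},Z_{n,n}$ already lies in $\lr{U_q^+(\lie{sl}_3)}_{\iota_{\bi}}$.
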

\begin{proof}
Using Lemma~\ref{lem:comm rels Z}, we obtain for all $s>0$
$$
Y_{1,n}^s Y_{2,n}=\Big(\sum_{k=0}^n q^{(s-1)(n-2k)+n-k} Z_{k,n}\Big)Y_{1,n}^{s-1}.
$$
Note that one of the $Z_{0,n}$, $Z_{n,n}$ is contained in $\lr{U_q^+(\lie{sl}_3)}_{\iota_{\bi}}$.
Taking $1\le s\le n$ yields a system of linear equations for the $Z_{k,n}$ in~$\lr{U_q^+(\lie{sl}_3)}_{\iota_{\bi}}\cap
\Frac\, U_q^+(\lie{sl}_3)^{\tensor n}$ with the matrix $(q^{ns+k(1-2s)})$ where $1\le s\le n+1$, $0\le k\le n$. This matrix is easily
seen to be non-degenerate.
\end{proof}
\begin{proposition}\label{prop:A(z) uber}
The assignment $$u_i\mapsto Y_{i,n}, \quad i=1,2,\qquad z_k\mapsto [k]_q (q^{n-k}-q^{k-n})Z_{k,n}
$$
defines an algebra homomorphism $\hat\iota:\mathcal A_{q,3}^{(n)}\to U_q(\lie{sl}_n)^{\tensor n}$. In particular,
the subalgebra of $U_q^+(\lie{sl}_3)^{\tensor n}$ generated by $\iota_{\bi}(U_q^+((\lie{sl}_3^{\times n})^\sigma{}^\vee)$
and $Z_0=\{Z_{k,n}\,:\, 1\le k\le n-1\}$ is independent of~$\bi$ and is sub-PBW.
\end{proposition}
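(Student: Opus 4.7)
The plan is to proceed in three stages: verify that $\hat\iota$ respects the defining relations of $\mathcal A_{q,3}^{(n)}$ listed in Theorem~\ref{th:second folding A_2 cube}\eqref{th:second folding A_2 cube.i}, identify $\hat\iota(\mathcal A_{q,3}^{(n)})$ with the two candidate subalgebras, and finally deduce the sub-PBW property.

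First, I will check the relations by reducing everything to Lemma~\ref{lem:comm rels Z}. The commutativity $z_kz_l=z_lz_k$ follows from $Z_{k,n}Z_{l,n}=Z_{l,n}Z_{k,n}$. The quasi-commutation $u_1z_{k,1}=q^{n-2k}z_{k,1}u_1$ is the first identity in~\eqref{eq:comm rels Z.1}, while $u_2z_{k,2}=q^{n-2k}z_{k,2}u_2$ becomes $Y_{2,n}Z_{n-k,n}=q^{n-2k}Z_{n-k,n}Y_{2,n}$ once we substitute $z_{k,2}=z_{n-k}$, and this is the second identity in~\eqref{eq:comm rels Z.1} after the index change $k\mapsto n-k$. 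For the Serre-like identity with $i=1,j=2$, I will use the formula $Y_{1,n}^sY_{2,n}=\bigl(\sum_{k=0}^n q^{(s-1)(n-2k)+n-k}Z_{k,n}\bigr)Y_{1,n}^{s-1}$ established in the proof of Lemma~\ref{lem:Z0 fract} for $s=1,2$, together with $Y_{2,n}Y_{1,n}=\sum_k q^kZ_{k,n}$ from~\eqref{eq:comm rels Z.3'}. A direct rearrangement shows that the coefficient of $Z_{k,n}Y_{1,n}$ in $Y_{1,n}^2Y_{2,n}-(q^n+q^{-n})Y_{1,n}Y_{2,n}Y_{1,n}+Y_{2,n}Y_{1,n}^2$ equals $-q^{n-k}(q^k-q^{-k})(q^{n-k}-q^{k-n})$, vanishing at $k=0,n$; combining this with $(q^{-1}-q)[k]_q=-(q^k-q^{-k})$ and $Y_{1,n}Z_{k,n}=q^{n-2k}Z_{k,n}Y_{1,n}$ matches the right-hand side. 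The case $i=2,j=1$ is handled identically, using the second identities in~\eqref{eq:comm rels Z.1} and~\eqref{eq:comm rels Z.3'} and the substitution $k\leftrightarrow n-k$ that interchanges $z_{k,1}$ with $z_{k,2}$.

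For the second assertion, I will compute $\hat\iota(u_{21})$ using the explicit expression for $u_{21}$ from Theorem~\ref{th:second folding A_2 cube refine}\eqref{th:second folding A_2 cube refine.i} and the identity $\tfrac{q-q^{-1}}{q^k-q^{-k}}[k]_q=1$. This simplifies to $\hat\iota(u_{21})=Y_{1,n}Y_{2,n}-q^{-n}Y_{2,n}Y_{1,n}-\sum_{k=1}^{n-1}(q^{n-k}-q^{k-n})Z_{k,n}$, which by~\eqref{eq:comm rels Z.3} equals $(q^n-q^{-n})Z_{0,n}=(q^n-q^{-n})Y_{21,n}$. Therefore $Y_{21,n}$ lies in the image of $\hat\iota$, so $\hat\iota(\mathcal A_{q,3}^{(n)})$ coincides with the subalgebra generated by $\iota_{\bi_1}(U_q^+(\lie s_n^\sigma{}^\vee))\cup Z_0$. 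An analogous computation, or alternatively a direct substitution into~\eqref{eq:comm rels Z.3'} to solve for $Z_{n,n}=Y_{12,n}$ in terms of $Y_{1,n}Y_{2,n}$, $Z_{0,n}$ and the $Z_{k,n}$ with $1\le k\le n-1$, shows that $Y_{12,n}$ also belongs to $\hat\iota(\mathcal A_{q,3}^{(n)})$. Hence this image coincides with the subalgebra generated by $\iota_{\bi_2}(U_q^+(\lie s_n^\sigma{}^\vee))\cup Z_0$ as well, establishing independence of $\bi$.

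Finally, since $\mathcal A_{q,3}^{(n)}$ is PBW on the ordered set $\{u_2,u_{21},u_1,z_1,\ldots,z_{n-1}\}$ by Theorem~\ref{th:second folding A_2 cube refine}\eqref{th:second folding A_2 cube refine.i} and $\hat\iota$ sends these generators to nonzero scalar multiples of the ordered set $\{Y_{2,n},Z_{0,n},Y_{1,n},Z_{1,n},\ldots,Z_{n-1,n}\}=\iota_{\bi_1}(X_{\bi_1})\cup Z_0$, surjectivity of $\hat\iota$ onto the subalgebra forces the ordered monomials in this set to span it as a $\CC(q)$-vector space, which is precisely the sub-PBW property. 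I expect the main obstacle to be purely computational bookkeeping in verifying the Serre-like identity, although once the master formula for $Y_{1,n}^sY_{2,n}$ from the proof of Lemma~\ref{lem:Z0 fract} is in hand, the identity reduces to a two-line algebraic manipulation of $q$-powers.
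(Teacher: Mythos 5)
Your proof is correct and follows essentially the same route as the paper: the relations are verified by reduction to Lemma~\ref{lem:comm rels Z} (your use of the $Y_{1,n}^sY_{2,n}$ formula from the proof of Lemma~\ref{lem:Z0 fract} is just a repackaging of the paper's direct $q$-commutator computation), after which the remaining claims follow. You additionally spell out the identification of $\hat\iota(u_{21})$ with $(q^n-q^{-n})Y_{21,n}$ via~\eqref{eq:comm rels Z.3} and the surjectivity argument for the sub-PBW property, which the paper dismisses as trivial; these details are accurate.
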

\begin{proof}
It is sufficient to prove that $Y_{1,n}$, $Y_{2,n}$ and~$Z_{k,n}$, $1\le k\le n-1$ satisfy the relations given in
Theorem~\ref{th:second folding A_2 cube}\eqref{th:second folding A_2 cube.i}.
The first two relations are already obtained in Lemma~\ref{lem:comm rels Z}. Furthermore~\eqref{eq:comm rels Z.1} implies
\begin{align*}
[Y_{1,n},[Y_{1,n},Y_{2,n}]_{q^{-n}}]_{q^n}&=\sum_{k=0}^{n-1} h_{n-k}[Y_{1,n},Z_{k,n}]_{q^n}
=-\sum_{k=1}^{n-1} q^k h_{n-k}h_{k}Y_{1,n}Z_{k,n}.
\\ \intertext{where $h_k=q^k-q^{-k}$. Similarly,}
[Y_{2,n},[Y_{2,n},Y_{1,n}]_{q^{-n}}]_{q^{n}}&=\sum_{k=0}^{n-1} h_{k}[Y_{2,n},Z_{k,n}]_{q^n}=-\sum_{k=0}^{n-1} q^{n-k}h_{n-k}h_{k}Y_{2,n}Z_{k,n}
\end{align*}
The remaining assertions are trivial.
\end{proof}

Define $\tilde\iota_{\bi_1}:U_q(\lie{s}_n^\sigma)\to \mathcal A_{q,3}^{(n)}$ by extending multiplicatively the assignments
$$
y_1\mapsto u_1,\quad y_2\mapsto u_2,\quad y_{21}\mapsto (q^n-q^{-n})^{-1}[u_1,u_2]_{q^{-n}}-\sum_{k=1}^{n-1} [k]_q^{-1} z_k.
$$
The map $\tilde\iota_{\bi_2}$ is defined similarly.
\begin{proposition}\label{pr:diag comm sl3 n}
The maps $\tilde\iota_{\bi_r}$, $r=1,2$ split~$\mu$ and for each of them the diagram~\eqref{eq:cone over iota} commutes with
$\iota=\iota_\bi$. In particular, $\mathcal A_{q,3}^{(n)}$ is the unique uberalgebra for this quantum folding.
\end{proposition}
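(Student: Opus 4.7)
The plan is to verify the commutativity of diagram~\eqref{eq:cone over iota} on PBW generators and then deduce uniqueness of the uberalgebra from tameness of $\mathcal A_{q,3}^{(n)}$. Since both $\tilde\iota_{\bi_r}$ and $\iota_{\bi_r}$ are defined on PBW monomials by multiplying images of the PBW generators in the prescribed order (Definition~\ref{def:liftable iota}(i)), it suffices to check $\mu\circ\tilde\iota_{\bi_r}=\id$ and $\hat\iota\circ\tilde\iota_{\bi_r}=\iota_{\bi_r}$ on $y_1$, $y_2$ and the middle element $y_{21}$ (or $y_{12}$ for $\bi_2$). I would write out the argument in detail only for $\bi_1$, the case of $\bi_2$ being completely symmetric under the interchange $k\leftrightarrow n-k$ of the central generators and the swap $Y_{21,n}\leftrightarrow Y_{12,n}$.

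For $\mu\circ\tilde\iota_{\bi_1}=\id$, the identities $\mu(u_i)=E_i=y_i$ are immediate, and for $y_{21}$ one substitutes the definition of $u_{21}$ from Theorem~\ref{th:second folding A_2 cube refine}(\ref{th:second folding A_2 cube refine.i}) into $\tilde\iota_{\bi_1}(y_{21})$ and observes that, since $\mu$ annihilates every $z_k$, what remains collapses under $\mu$ to the PBW element $(q^n-q^{-n})^{-1}T_1(E_2)=y_{21}$.

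The heart of the proof is the verification $\hat\iota(\tilde\iota_{\bi_1}(y_{21}))=Y_{21,n}$. The key ingredient here is identity~\eqref{eq:comm rels Z.3}, coming from Lemma~\ref{lem:comm rels Z}, which expresses the $q$-commutator $[Y_{1,n},Y_{2,n}]_{q^{-n}}$ as $(q^n-q^{-n})Y_{21,n}$ plus an explicit linear combination of the auxiliary elements $Z_{k,n}$. Under the rescaling $z_k\mapsto[k]_q(q^{n-k}-q^{k-n})Z_{k,n}$ prescribed by Proposition~\ref{prop:A(z) uber}, the coefficients of the $z_k$ in the formula defining $\tilde\iota_{\bi_1}(y_{21})$ are calibrated precisely so that the unwanted $Z_{k,n}$-contributions arising from the $q$-commutator cancel those coming from the $z_k$-correction, leaving only $Y_{21,n}=\iota_{\bi_1}(y_{21})$. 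I expect this cancellation, together with its attendant computational bookkeeping, to be the main obstacle; the defining formula for $\tilde\iota_{\bi_1}(y_{21})$ has been rigged exactly to produce it.

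Uniqueness of $\hat U(\iota_{\bi_r})\cong\mathcal A_{q,3}^{(n)}$ then follows from the uniqueness lemma for tame liftable quantum foldings, once I verify that the pair $(\mathcal A_{q,3}^{(n)},\{u_2,u_{21},u_1,z_1,\dots,z_{n-1}\})$ is tame. This is straightforward: the only nontrivial straightening rule in the PBW presentation of Theorem~\ref{th:second folding A_2 cube refine}(\ref{th:second folding A_2 cube refine.i}) is the quadratic rewrite $u_1u_2=q^{-n}u_2u_1+u_{21}+\sum_k\frac{q-q^{-1}}{q^k-q^{-k}}z_k$, so $d_0=2$, and linear independence of ordered monomials of degree at most~$2$ is immediate from the PBW property itself. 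Since $\bi_1$ and $\bi_2$ produce the same enhanced uberalgebra, the final assertion of the proposition follows.
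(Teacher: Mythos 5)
Your verification of the splitting and of the commutativity of \eqref{eq:cone over iota} is essentially the paper's own argument: multiplicativity of $\iota_{\bi_r}$ and $\tilde\iota_{\bi_r}$ on ordered monomials reduces everything to the generators $y_1,y_2$ and the middle PBW element, the splitting of~$\mu$ is immediate because $\mu(z_k)=0$, and the identity $\hat\iota(\tilde\iota_{\bi_1}(y_{21}))=Y_{21,n}$ is exactly \eqref{eq:comm rels Z.3} of Lemma~\ref{lem:comm rels Z} combined with the normalization $z_k\mapsto [k]_q(q^{n-k}-q^{k-n})Z_{k,n}$ of Proposition~\ref{prop:A(z) uber}; you merely spell out the cancellation that the paper declares straightforward.

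The genuine gap is in your uniqueness step. Tameness, as defined in the paper, is a condition on the sub-PBW pair $(\lr{A}_{\iota},\iota(X_A)\cup Z_0)$ sitting inside $B=U_q^+(\lie{sl}_3)^{\tensor n}$, that is, on the subalgebra generated by $Y_{2,n},Y_{21,n},Y_{1,n}$ and the $Z_{k,n}$; the uniqueness lemma applies only once the ordered monomials of filtration degree at most $d_0$ in \emph{these} elements are known to be linearly independent in $B$. You instead check ``tameness'' of $(\mathcal A_{q,3}^{(n)},\{u_2,u_{21},u_1,z_1,\dots,z_{n-1}\})$ and justify the required linear independence by the PBW property of $\mathcal A_{q,3}^{(n)}$ itself. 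That is the wrong object and the justification is circular: an uberalgebra is PBW by definition, so your argument would ``establish tameness'' for every liftable folding and gives no information about the image algebra, which is only sub-PBW and whose monomials may very well be dependent --- that is precisely why uberalgebras are introduced. What actually has to be verified is that the ordered monomials of degree at most $2$ in $Y_{2,n},Y_{21,n},Y_{1,n},Z_{1,n},\dots,Z_{n-1,n}$ are linearly independent in $U_q^+(\lie{sl}_3)^{\tensor n}$; this is true and not hard (argue weight by weight, using the explicit monomial expansions of the $Z_{k,n}$ and the fact that, by \eqref{eq:comm rels Z.3'}, $Y_{2,n}Y_{1,n}=\sum_{k=0}^{n}q^k Z_{k,n}$ involves $Z_{n,n}=Y_{12,n}$, which lies outside the span of $Y_{21,n},Z_{1,n},\dots,Z_{n-1,n}$), but it is a computation in the target algebra, not in $\mathcal A_{q,3}^{(n)}$. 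Note also that the paper's own proof only checks the commutativity of the diagram and asserts the uniqueness clause, so if you want the ``unique uberalgebra'' statement you must supply this independence check in~$B$ yourself.
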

\begin{proof}
We only show this for $\bi=\bi_1$, the argument for $\bi_2$ being similar. Since $\iota_\bi$ is 
multiplicative on the modified PBW basis, it is enough to check that the diagram~\eqref{eq:cone over iota} commutes 
on $y_{1}$, $y_{2}$ and~$y_{21}$, 
which is 
straightforward from Lemma~\ref{lem:comm rels Z} and from the definition of~$\mu$, $\hat\iota$ and~$\tilde\iota_\bi$.
\end{proof}

\subsection{Folding \texorpdfstring{$(\lie{sl}_4\times \lie{sl}_4,\lie{sl}_4)$}{(sl\_4\ x\ sl\_4,sl\_4) }} Now we turn our attention to the folding $(\lie{sl}_4^{\times 2},\lie{sl}_4)$
with $\sigma$ being the permutation of the components.
Let $\mathcal A_{q,4}=\mathcal A_{q,4}^{(2)}$ be the algebra with generators $u_i$, $1\le i\le 3$, $z_{12}=z_{21}$, $z_{13}=z_{31}$ and
$z_{23}=z_{32}$ and relations given in~Theorem~\ref{th:first folding 2}\eqref{th:first folding 2.ii}. 
\begin{theorem}\label{thm:sl4 diag}
The algebra $\mathcal A_{q,4}$ is PBW on the totally ordered set
$$
\{Y_{2}, Y_{21},Y_{23},
Z_{21},Z_{23},Y_{13}, Z_{123},Z_{321},Z_{1232},
Z_{13},Y_3,Y_1\},
$$
where $Y_i=u_i$, $1\le i\le 3$, $Z_{ij}=(1-q^2)^{-|i-j|} z_{ij}$, $1\le i\not=j\le 3$, and
\begin{alignat*}{2}
&Y_{2i}=\frac{u_iu_2-q^{-2}u_2 u_i+q^{-1} z_{i2}}{q^2-q^{-2}},&\quad& Z_{i2j}=\frac{[u_j,z_{2i}]_{q^{-2}}+q^{-1}z_{13}}{(1-q^4)(1-q^{-2})}\\
&Y_{213}=\frac{[u_j,Y_{2i}]_{q^{-2}}}{q^2-q^{-2}}-\frac{Z_{j2i}}{[2]_q},&
&Z_{2132}=\frac{q Z_{2j}Y_{2i}-q^{-1}Y_{2i} Z_{2j}}{q-q^{-1}},
\end{alignat*}
where $\{i,j\}=\{1,3\}$.
The PBW-type relations are given by the following formulae (where $i\in\{1,3\}$ and $\{i,j\}=\{1,3\}$)
{\footnotesize
\begin{align*}
&Y_{2i}Y_2=q^2 Y_2 Y_{2i},\qquad 
Z_{2i}Y_2=Y_2 Z_{2i},\qquad Z_{1232}Y_2=q^{2}Y_2 Z_{1232},\\
&Y_{13}Y_2=Y_2 Y_{13}+h_2Y_{21}Y_{23}+h_1Z_{1232},\\
&Y_2 Z_{13}=q^2 Z_{13}Y_2+h_1(Y_2 Z_{123}+q^2 Y_2 Z_{321}-q^{-2} Y_{23} Z_{21}-q
    Z_{21} Z_{23}-Y_{21} Z_{23})-h_1h_2Z_{1232}
,\\
&Y_i Y_2=q^{-2}Y_2 Y_i+h_2Y_{2i}+h_1Z_{2i},\quad Y_{2i}Z_{13}=Z_{13}Y_{2i}+h_1(Y_{2i} Z_{i2j}-Z_{2i} Y_{13}),\\
&Y_{23}Y_{21}=Y_{21} Y_{23},\qquad
Z_{2i}Y_{2i}=Y_{2i} Z_{2i},\qquad Y_{13}Y_{2i}=q^{2}Y_{2i} Y_{13},\\
&Z_{j2i}Y_{2i}=Y_{2i} Z_{j2i},\quad Z_{i2j}Y_{2i}=q^2 Y_{2i}Z_{i2j},\quad
Z_{1232}Y_{i2}=Y_{i2} Z_{1232},\quad 
Y_iY_{2i}=q^{2}Y_{2i} Y_i,\\
&Y_{i}Y_{2j}=q^{-2} Y_{2i}Y_j+h_2Y_{13}+h_1Z_{j2i},\quad Z_{2j}Y_{2i}=q^{-2}Y_{2i} Z_{2j}+q^{-1}h_1Z_{1232},\\
&Z_{23}Z_{21}=Z_{21}Z_{23}+h_1(Y_2 Z_{123}-Y_2 Z_{321}+q^{-2} Y_{21} Z_{23}-q^{-2} Y_{23} Z_{21}),\\
&Y_{13}Z_{2i}=Z_{2i}Y_{13}+qh_1Y_{2i} Z_{i2j},\\
&Z_{2i}Z_{j2i}=q^2 Z_{j2i}Z_{2i}+h_1(Z_{2i}Y_{13}-Y_{2i}Z_{i2j}-q Y_{2i}Z_{13}),\\
&Z_{i2j}Z_{2i}=q^2 Z_{2i}Z_{i2j}+h_1(Z_{2i}Y_{13}- Y_{2i} Z_{i2j}-Z_{1232} Y_i),\\
&Z_{1232}Z_{2i}=Z_{2i} Z_{1232}+h_1(q^2 Y_2 Y_{2i} Z_{i2j}+q^{-2} Y_{2i}
    Z_{1232}-q^{-2} Y_{21} Y_{23} Z_{2i}),\\
& Z_{13}Z_{2i}=Z_{2i}Z_{13}+h_1q^{-2}(q Y_2 Z_{j2i} Y_i+q^{-1} Y_{2i} Z_{2j} Y_i-Y_{2i}
    Z_{13}+Z_{2i} Z_{j2i})\\&\qquad\qquad+h_1^2q^{-2}(Z_{1232} Y_i+Y_{2i} Z_{i2j}-Z_{2i} Y_{13}),\\
&Y_iZ_{2j}=q^{-2}Z_{j2}Y_i+h_1Z_{13}+h_2Z_{j2i},\quad Z_{13}Y_{13}=q^{2}Y_{13} Z_{13}-q h_1Z_{123} Z_{321},\\
&Y_iZ_{2i}=Z_{2i} Y_i,\qquad
Z_{i2j}Y_{13}=Y_{13} Z_{i2j},\qquad 
Z_{1232}Y_{13}=q^{-2}Y_{13} Z_{1232},\\
&Y_iY_{13}=q^{2}Y_{13} Y_i,\qquad Z_{321}Z_{123}=Z_{123} Z_{321},\\
&Z_{1232}Z_{i2j}=Z_{i2j} Z_{1232}+h_1(Y_{21} Y_{23} Z_{i2j}-Y_{2j} Z_{2i} Y_{213}+q^{-2} Y_{213} Z_{1232}),\\
&Z_{i2j}Z_{13}=Z_{13}Z_{i2j}+h_1(Y_{213} Z_{213}- Z_{123} Z_{321}+q^{-1} Y_{2j} Z_{j2i}
    Y_i-q^{-1} Z_{2j} Y_{213} Y_i),\\
&Z_{1232}Z_{13}=Z_{13}Z_{1232}+h_1(q^{-1} Y_2 Y_{213} Z_{123}+q Y_2 Y_{213} Z_{321}+Y_2
    Z_{123} Z_{321}-Y_{21} Y_{23} Z_{123}\\&\qquad\qquad-q^{-1}Y_{21} Y_{23}
    Z_{321}-Z_{21} Z_{23} Y_{213})
    +h_1^2(q^{-2} Y_{23} Z_{21} Y_{213}-q^{-4} Y_{213}
    Z_{2132}),\\
&Y_iZ_{1232}=Z_{1232} Y_i-h_1q(Y_{2i} Z_{i2j}-
Z_{2i} Y_{13}),\\
&Y_iZ_{i2j}=Z_{i2j} Y_i,\quad Y_iZ_{j2i}=q^2 Z_{j2i}Y_i,\quad
Z_{13}Y_i=Y_i Z_{13},\quad
Y_3Y_1=Y_1 Y_3.
\end{align*}
}
where we abbreviate $h_k=q^k-q^{-k}$.
\end{theorem}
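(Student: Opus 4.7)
My strategy mirrors the proof of Theorem~\ref{th:second folding A_2 cube refine}, but with substantially heavier bookkeeping on account of the twelve PBW generators. The plan is, first, to derive the listed PBW-type relations from the Serre-like defining relations of Theorem~\ref{th:first folding 2}\eqref{th:first folding 2.ii}; second, to recover those defining relations from the PBW-type ones so that both presentations describe the same algebra; and third, to verify the PBW property via the Diamond Lemma (Proposition~\ref{prop:diamond lemma}).

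For the first step I would introduce the new generators $Y_{2i}$, $Z_{i2j}$, $Y_{213}$, $Z_{2132}$ ($i\in\{1,3\}$, $\{i,j\}=\{1,3\}$) by the explicit formulas in the statement. These formulas are essentially solved forms of the defining Serre-like relations, chosen so that each $Y_\bullet$ (respectively $Z_\bullet$) represents a normally ordered combination of the Chevalley generators together with a correction in the $z_{ij}$. Each of the displayed identities $X'X=\sum_M c^M_{X,X'} M$ then follows by rewriting using the defining relations: the short $q$-commutation identities such as $Y_{2i}Y_2=q^2 Y_2 Y_{2i}$ reduce directly to $[u_i,[u_i,u_2]_{q^2}]_{q^{-2}}=h u_i z_{i2}$, while the delicate multi-term identities involving $Z_{1232}$ and $Z_{13}$ require iterated application of the defining relations governing $[z_{12},z_{23}]$ and $[u_2,z_{13}]+[z_{i2},z_{j2}]$.

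Conversely, substituting the definitions of $Y_{2i}$, $Z_{i2j}$, $Y_{213}$, $Z_{2132}$ into suitable PBW relations recovers each of the original Serre-like relations, so the two presentations define isomorphic algebras. Since each listed relation is already in the normal form $X'X=\sum_{M\le X'X} c^M_{X,X'} M$ required by Proposition~\ref{prop:diamond lemma} (one checks that the right-hand side is strictly smaller than $X'X$ in the induced lexicographic order), it remains only to establish confluence: every ambiguity $X''X'X$ with $X<X'<X''$ must reduce to a well-defined ordered normal form.

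The main obstacle, as in the $G_2$ case discussed in the introduction, is precisely this confluence check: a priori one has $\binom{12}{3}=220$ triples to verify. The overwhelming majority reduce trivially, since most pairs among the twelve generators are linked by genuine $q$-commutation relations that cannot produce cascades. The delicate cases are the triples containing at least two of $\{Y_{13},Z_{13},Z_{1232},Z_{i2j}\}$, whose commutations produce multi-term expansions whose resolutions depend on subtle cancellations among the scalar coefficients. These I would verify on a computer, following the model of the verifications already performed in Section~\ref{sect:D_{n+1}} and for the $(\lie{so}_8,G_2)$ folding. Once confluence is established, Proposition~\ref{prop:diamond lemma} yields that ordered monomials form a basis of $\mathcal{A}_{q,4}$, completing the proof.
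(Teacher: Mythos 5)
Your overall plan is sound, but it takes a genuinely different route from the paper on the crucial identification step. The paper never derives the PBW-type relations from the Serre-like ones: it defines an auxiliary algebra $\mathcal A'$ \emph{presented} by the listed relations, checks confluence via the Diamond Lemma (Proposition~\ref{prop:diamond lemma}) to see that $\mathcal A'$ is PBW, verifies only the one direction you call the converse (that $Y_i$ and $(1-q^2)^{|i-j|}Z_{ij}$ satisfy the Serre-like relations of Theorem~\ref{th:first folding 2}\eqref{th:first folding 2.ii}, giving a surjection $\mathcal A_{q,4}\twoheadrightarrow\mathcal A'$), and then proves injectivity not by producing an inverse map but by a specialization/dimension argument: with the grading $\deg u_i=1$, $\deg z_{12}=\deg z_{23}=2$, $\deg z_{13}=3$, Lemma~\ref{lem:spec of ideals} bounds $\dim(\mathcal A_{q,4})_k$ above by $\dim U(\lie n)_k$, where $\lie n$ is the $12$-dimensional nilpotent Lie algebra presented by the $q=1$ specializations of the Serre-like relations, and $\dim U(\lie n)_k=\dim\mathcal A'_k$ because both are PBW on twelve generators of matching degrees. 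Your alternative — proving both implications between the two presentations — is logically valid and yields the same conclusion, and it has the merit of explicitly exhibiting the isomorphism and the inverse of the rewriting; but be aware that your first step (showing each of the several dozen listed relations holds among the elements $Y_\bullet,Z_\bullet$ defined inside $\mathcal A_{q,4}$) is an ideal-membership verification in an algebra for which no normal form is yet available, so ``rewriting using the defining relations'' really means a truncated noncommutative Gr\"obner-basis computation of comparable weight to the entire confluence check; the paper's flatness trick is designed precisely to avoid this. Both approaches still require the Diamond Lemma confluence verification on the listed relations and the check that the Serre-like relations hold in $\mathcal A'$, which is where the computer work is unavoidable.
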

\begin{proof}
Since the proof is rather computational, we only provide a sketch. First, we define an algebra $\mathcal A'$ with 
generators $Y_\alpha$, $\alpha\in\{1,2,3,21,23,13\}$ and $Z_{\beta}$, $\beta\in\{21,23,13,123,321,1232\}$ and relations 
as above. Using the Diamond Lemma (see Proposition \ref{prop:diamond lemma}) we show that $\mathcal A'$ is PBW on these generators 
with the total order as defined in the theorem. Next, we introduce generators $u_i=Y_i$, $1\le i\le 3$, $z_{ij}=(1-q^2)^{-|i-j|}Z_{ij}$
$1\le i,j\le 3$ and show that they satisfy the relations in Theorem~\ref{th:first folding 2}\eqref{th:first folding 2.ii}. In particular,
this yields a surjective homomorphism of algebras $\mathcal A_{q,4}\to\mathcal A'$. To prove that it is an isomorphism, we use Lemma~\ref{lem:spec of ideals}.
We define a grading on~$\mathcal A_{q,4}$ by $\deg u_i=1$, $\deg z_{12}=\deg z_{23}=2$ and $\deg z_{13}=3$. It is easy to see that
specializations of defining relations of~$\mathcal A_{q,4}$ are defining relations in the universal enveloping algebra of a nilpotent Lie algebra 
$\lie n$ of dimension~$12$ 
generated by $u_i$, $1\le i\le 3$ and $z_{ij}=z_{ji}$, $1\le i<j\le 3$ subject to the relations
\begin{alignat*}{2}
&[u_i,[u_i, u_j]]=0,\qquad |i-j|=1,&\qquad& [z_\alpha,z_\beta]=0, \alpha,\beta\in\{12,13,23\}\\
&[u_i,z_{i2}]=[u_2,z_{i2}]=[u_i,z_{13}]=0,&&i\not=2\\
&[u_2,[u_2,z_{13}]]=[z_{13},[z_{13},u_2]=0,&&[u_1,u_3]=0\\
&[z_{i2},[z_{i2},u_j]]=[u_j,[u_j,z_{i2}]]=0,&&
[z_{i2},[u_j,u_2]]=[u_2,z_{13}],\quad \{i,j\}=\{1,3\}
\end{alignat*}
In particular, $\dim U(\lie n)_k=\dim \mathcal A'_k$ for all~$k$, where we endow $\mathcal A'_k$ with the induced grading.
It remains to apply Lemma~\ref{lem:spec of ideals}.
\end{proof}

Using the above and Proposition~\ref{prop:Specialization}, we immediately obtain
\begin{theorem}\label{thm:sl4 diag Poisson}
The algebra $\mathcal A_{q,4}$ is optimal specializable. In particular, the following formulae define a 
Poisson structure on its specialization (only non-zero brackets are shown), where $i\in\{1,3\}$ and $\{i,j\}=\{1,3\}$.
{\footnotesize
\begin{align*}
&\{Y_2,Y_i\}=2 Y_2 Y_i-4 Y_{2i}-2 Z_{i2}\\
&\{Y_2,Y_{2i}\}=-2 Y_{i2} Y_2\\
&\{Y_2,Y_{13}\}=-4 Y_{12} Y_{32}-2 Z_{2132}\\
&\{Y_2,Z_{i2j}\}=-2 Z_{2i} Y_{2j}+2 Z_{2132}\\
&\{Y_2,Z_{13}\}=2 Y_2 Z_{123}+2 Y_2 Z_{13}+2 Y_2
    Z_{321}-2 Y_{21} Z_{23}-2 Y_{23} Z_{21}-2
    Z_{21} Z_{23}\\
&\{Y_2,Z_{2132}\}=-2 Y_2 Z_{2132}\\
&\{Y_i,Y_{2i}\}=2 Y_i Y_{2i}\\
&\{Y_j,Y_{2i}\}=-2 Y_{2i} Y_j+4 Y_{13}+2 Z_{j2i}\\
&\{Y_{2i},Y_{13}\}=-2 Y_{2i} Y_{13}\\
&\{Y_{2i},Z_{2j}\}=2 Y_{i2} Z_{j2}-2 Z_{2132}\\
&\{Y_{2i},Z_{i2j}\}=-2 Y_{2i} Z_{i2j}\\
&\{Y_{2i},Z_{13}\}=2 Y_{2i} Z_{13}-2 Z_{2i} Y_{13}\\
&\{Y_i,Y_{13}\}=2 Y_{13} Y_i\\
&\{Y_{13},Z_{13}\}=-2 Y_{13} Z_{13}+2 Z_{123} Z_{321}\\
&\{Y_{13},Z_{1232}\}=2 Y_{13} Z_{2132}\\
&\{Y_i,Z_{2j}\}=-2 Z_{2j} Y_i+4 Z_{j2i}+2 Z_{13}\\
&\{Y_{13},Z_{i2}\}=2 Y_{i2} Z_{i2j}\\
&\{Z_{21},Z_{23}\}=-2 Y_2 Z_{123}+2 Y_2 Z_{321}-2 Y_{21}
    Z_{23}+2 Y_{23} Z_{21}\\
&\{Z_{i2},Z_{i2j}\}=2 Z_{1232} Y_i+2 Y_{2i} Z_{i2j}-2 Z_{2i}
    Z_{i2j}-2 Z_{2i} Y_{13}\\
&\{Z_{2i},Z_{13}\}=2 (Y_2 Z_{j2i} Y_i-Y_{2i} Z_{2j} Y_i+
    Y_{2i} Z_{13}-Z_{2i} Z_{321})\\
&\{Z_{2i},Z_{j2i}\}=-2 Y_{2i} Z_{i2j}-2 Y_{2i} Z_{13}+2 Z_{2i}
    Y_{13}+2 Z_{2i} Z_{j2i}\\
&\{Z_{2i},Z_{1232}\}=-2 Y_2 Y_{2i} Z_{i2j}-2 Y_{2i} Z_{1232}+2
    Y_{21} Y_{23} Z_{2i}\\
&\{Z_{13},Z_{i2j}\}=2 Z_{23} Y_{13} Y_1-2 Y_{23} Z_{321} Y_1+2
    Z_{123} Z_{321}-2 Y_{13} Z_{13}\\
&\{Z_{13},Z_{2132}\}= 2(Y_{21} Y_{23} Z_{123}-Y_2 Y_{13} Z_{123}-Y_2 Z_{123} Z_{321}-Y_2 Y_{13} Z_{321}+
    Z_{21} Z_{23} Y_{13}+Y_{21} Y_{23} Z_{321})\\
&\{Y_i,Z_{j2i}\}=2 Y_i Z_{j2i}\\
&\{Z_{1232},Y_i\}=2 Y_{2i} Z_{i2j}-2 Y_{13} Z_{2i}\\
&\{Z_{1232},Z_{i2j}\}= 2 Y_{21} Y_{23} Z_{i2j}-2 Y_{2i} Z_{2j} Y_{13}+2
    Y_{13} Z_{1232}
\end{align*}
}
\end{theorem}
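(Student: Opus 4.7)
The proof will be an essentially mechanical application of Proposition~\ref{prop:Specialization} to the PBW presentation of $\mathcal A_{q,4}$ obtained in Theorem~\ref{thm:sl4 diag}. First I would verify that this presentation is specializable at $q=1$ (in the convention $t=q-1$). An inspection of the list of PBW-type relations shows that every structure constant $c^{M}_{X,X'}$ appearing in them is a polynomial in $q^{\pm 1}$ with coefficients that are $\ZZ$-linear combinations of products of the form $q^{2a}h_k=q^{2a}(q^k-q^{-k})$; in particular no denominators occur, so every $c^M_{X,X'}$ lies in $\kk_0$. Thus $(\mathcal A_{q,4},X)$ is specializable in the sense of Definition~\ref{def:specialization}.

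Next I would check optimality. Since $h_k|_{q=1}=0$ and $q^{\pm 2}|_{q=1}=1$, each defining relation in Theorem~\ref{thm:sl4 diag} specializes to $X'X=XX'$, so the specialization $(\mathcal A_{q,4})_0$ is exactly the polynomial ring $\kk[X]$. By Proposition~\ref{prop:Specialization}\eqref{prop:Spec.ii}, this immediately produces a biderivation on $\kk[X]$ satisfying the Leibniz rule and skew-symmetry, and the Jacobi identity is automatic because the Poisson structure arises as the usual semiclassical limit of an associative deformation; the only non-trivial content is the explicit formula~\eqref{eq:explicit poisson}.

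The remainder of the proof consists of evaluating~\eqref{eq:explicit poisson} on each generator pair. The only derivatives needed are
\[
 \frac{\partial}{\partial q}\Big|_{q=1} q^{\pm 2} = \pm 2,\qquad \frac{\partial}{\partial q}\Big|_{q=1}h_k=2k,
\]
so, for instance, the relation $Y_iY_2=q^{-2}Y_2Y_i+h_2Y_{2i}+h_1Z_{2i}$ (with $Y_2<Y_i$) gives
\[
 \{Y_i,Y_2\}=-2 Y_2Y_i+4 Y_{2i}+2 Z_{2i},
\]
matching (up to the skew-symmetry $\{Y_2,Y_i\}=-\{Y_i,Y_2\}$) the first bracket listed in the statement. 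The remaining brackets are obtained identically by reading off the coefficients of $h_1,h_2$ and of the $q^{\pm 2}$ that appear in each PBW relation of Theorem~\ref{thm:sl4 diag} and multiplying by $2,4$ respectively (or by $\pm 2$).

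There is no real obstacle beyond bookkeeping; the one pitfall to watch is the orientation $X<X'$ in~\eqref{eq:explicit poisson}, since a bracket $\{X',X\}$ read off the relation $X'X=\cdots$ must be negated when the statement of the theorem records $\{X,X'\}$. Once the sign conventions are aligned with the total order
$\{Y_2,Y_{21},Y_{23},Z_{21},Z_{23},Y_{13},Z_{123},Z_{321},Z_{1232},Z_{13},Y_3,Y_1\}$,
a term-by-term comparison with the list in the theorem completes the proof. \qed
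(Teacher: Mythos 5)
Your proposal is correct and follows exactly the paper's route: the paper derives this theorem as an immediate consequence of the PBW presentation in Theorem~\ref{thm:sl4 diag} together with Proposition~\ref{prop:Specialization}, computing the brackets via~\eqref{eq:explicit poisson} just as you do (with the same derivative values $\partial_q q^{\pm2}|_{q=1}=\pm2$, $\partial_q h_k|_{q=1}=2k$ and the same sign/ordering caveat). No gaps to report.
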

It remains to prove that $\mathcal A_{q,4}$ is the uberalgebra for our quantum folding. 
For, let $\bi=\{2,1,3,2,1,3\}\in R(w_\circ)$ and define the elements $x_\alpha\in U_q^+(\lie{sl}_4)$
by
$$
X_{\bi}=\{ x_2,x_{21},x_{23},x_{13},x_1,x_3\}
$$
as ordered sets, in the notation of Section~\ref{subs:PBW bases}. We identify the $x_\alpha$
with the elements of the first copy of $U_q^+(\lie{sl}_4)$ in $U_q^+(\lie{sl}_4)^{\tensor 2}$ and
denote by $x'_\alpha$ the corresponding elements of the second copy. Then the quantum 
folding $\iota_\bi$ is given by extending multiplicatively
$$
\iota_\bi(x_\alpha)=x_\alpha x_\alpha'.
$$
The following Lemma is checked by direct computations.
\begin{lemma}
Let $\tilde y_{\alpha}=x_\alpha x_\alpha'$ and let 
\begin{align*}
&\tilde z_{2i}=q^{-1}(x_{2i}x_2'x_i'+x_2 x_i x_{2i}'-2 \tilde y_{2i}),\\
&\tilde z_{13}=q^{-2} (4 \tilde y_{13}-2 x_{21} x_3
    x_{13}'-2 x_{23} x_1 x'_{13}-
    2 x_{13} x_{21}' x_3'-2 x_{13} x'_{23}
    x'_1\\&\qquad\qquad+ x_{21} x_3 x'_{23}
    x'_1+x_{23} x_1 x'_{21} x_3'+ x_2 x_1 x_3 x'_{13}+ x_{13} x'_2 x'_1
    x'_3)\\
&\tilde z_{i2j}=q^{-1}(x_{13}x'_{2j}x'_i+x_{2j}x_ix'_{13}-2\tilde y_{13})\\
&\tilde z_{1232}=x_2 x_{13}x'_{21}x'_{23}+x_{21}x_{23}x'_2x'_{13}-2q^{-1} \tilde y_{21}\tilde y_{23}.
\end{align*}
Then the assignment $Y_\alpha\mapsto \tilde y_\alpha$, $Z_\alpha\mapsto\tilde z_\alpha$ defines a 
surjective algebra homomorphism $\mathcal A_{q,4}\to \lr{U_q^+(\lie{sl}_4)}_{\iota_{\bi}}$. Moreover,
the folding $\iota_\bi$ is tame liftable.
\end{lemma}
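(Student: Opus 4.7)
The plan is to verify the lemma in three stages, all taking place inside the braided tensor product $U_q^+(\lie{sl}_4)\tensor U_q^+(\lie{sl}_4)$. First, I would check that the elements $\tilde y_\alpha$ and $\tilde z_\alpha$ satisfy every defining relation of $\mathcal A_{q,4}$ listed in Theorem~\ref{th:first folding 2}\eqref{th:first folding 2.ii}. Since $\iota_\bi(x_\alpha)=x_\alpha x'_\alpha$ and the primed copy quasi-commutes with the unprimed copy according to Lemma~\ref{pr:braided semidirect} (with $q$-powers read off from the weight grading, using that $x_\alpha$ has degree $\alpha$ in the root lattice of $\lie{sl}_4$), every relation reduces, after bilinear expansion, to an identity among ordered monomials in Lusztig's PBW basis $X_\bi=\{x_2,x_{21},x_{23},x_{13},x_1,x_3\}$ of a single copy of $U_q^+(\lie{sl}_4)$. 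The lower-order corrections that appear when reordering these $x_\alpha$'s (e.g.\ in writing $x_1x_2$ in terms of $x_{21}$) are exactly what produces the right-hand sides in the relations of~$\mathcal A_{q,4}$; the normalization constants $q^{-1}$ and $q^{-2}$ in the definitions of the $\tilde z$'s are what ensures compatibility with these reorderings.

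Next, for surjectivity onto $\lr{U_q^+(\lie{sl}_4)}_{\iota_\bi}$, note that by construction each $\tilde z_\alpha$ is a polynomial in products $\iota_\bi(x_\beta)\iota_\bi(x_\gamma)=x_\beta x'_\beta x_\gamma x'_\gamma$, hence $\tilde z_\alpha\in\lr{U_q^+(\lie{sl}_4)}_{\iota_\bi}$; conversely, the subalgebra generated by the $\iota_\bi(x_\alpha)=\tilde y_\alpha$ of course contains the $\tilde y_\alpha$. Thus the image of the proposed homomorphism is exactly $\lr{U_q^+(\lie{sl}_4)}_{\iota_\bi}$.

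To establish tame liftability in the sense of Definition~\ref{def:liftable iota}, I would verify the three axioms as follows. Axiom~(i) holds because $\iota_\bi$ is multiplicative on the modified PBW basis $M(X_\bi)$ by Definition~\ref{defn:quantum folding}. For axiom~(ii), take $Z_0=\{\tilde z_{21},\tilde z_{23},\tilde z_{13},\tilde z_{123},\tilde z_{321},\tilde z_{1232}\}$; the surjection from the PBW algebra $\mathcal A_{q,4}$ (whose PBW basis is exhibited in Theorem~\ref{thm:sl4 diag}) transports its basis to a spanning set of $\lr{U_q^+(\lie{sl}_4)}_{\iota_\bi}$ in the ordered generating set $\iota_\bi(X_\bi)\cup Z_0$, so this algebra is sub-PBW. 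For axiom~(iii), the uberalgebra is $U(\iota_\bi)=\mathcal A_{q,4}$ itself, and the map $\mu_{\iota_\bi}$ sending $Y_\alpha\mapsto E_\alpha$, $Z_\alpha\mapsto 0$ is the surjection $\mathcal A_{q,4}\to U_q^+(\lie{sl}_4)$ obtained by specializing the $\tilde z$'s to zero; the identity $\mu_{\iota_\bi}(\iota_\bi(x_\alpha))=E_\alpha$ follows because after killing the $Z$'s the algebra $\mathcal A_{q,4}$ collapses to $U_q^+(\lie{sl}_4)$ via its quantum Serre presentation. Finally, tameness amounts to linear independence of the monomials in $M(X_A)\cap A_{d_0}$, which is immediate from the PBW property of Theorem~\ref{thm:sl4 diag}.

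The main obstacle is the first step: the presentation of $\mathcal A_{q,4}$ in Theorem~\ref{th:first folding 2}\eqref{th:first folding 2.ii} contains several lengthy quartic relations (in particular the $2[z_{i2},[z_{i2},u_j]_{q^{-2}}]_{q^2}+\dots$ identity and the cubic-in-$h$ identity on the last line) whose verification on the braided tensor square produces hundreds of monomial terms that must cancel. This is essentially a computer-algebra exercise, analogous to the verifications performed elsewhere in the paper (e.g.\ for the braid action in Theorem~\ref{th:D_n}\eqref{th:D_n.iii} and for the Serre relations in Theorem~\ref{th:D_n}\eqref{th:D_n.v}). A more conceptual alternative, which I would explore in parallel, is to identify $\lr{U_q^+(\lie{sl}_4)}_{\iota_\bi}$ with $U_q^+(\lie{sl}_4)^{\sigma}$ graded by the $\sigma$-invariant part of the root lattice, and to match PBW-basis cardinalities via Lemma~\ref{le:iota PBW}\eqref{le:iota PBW.ii}; once the homomorphism $\mathcal A_{q,4}\to\lr{U_q^+(\lie{sl}_4)}_{\iota_\bi}$ is produced, a dimension count along each graded component (using Lemma~\ref{lem:spec of ideals} as in the proof of Theorem~\ref{thm:sl4 diag}) would reduce the bulk of the relation-checking to a single graded component of small dimension.
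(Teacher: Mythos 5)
Your plan is essentially the paper's own proof: the paper disposes of this lemma with the single remark that it ``is checked by direct computations'', and your proposal is precisely that verification (relation checking in $U_q^+(\lie{sl}_4)^{\tensor 2}$, then the liftability axioms), organized in the same way as the analogous computer-assisted checks elsewhere in the paper.

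Two points in your write-up need correcting, though. First, the primed and unprimed copies do \emph{not} quasi-commute with nontrivial powers of $q$: here $\lie g=\lie{sl}_4\times\lie{sl}_4$, the two components are orthogonal in the root lattice of $\lie g$, so the braiding factors $q^{(\mu,\nu)}$ between the copies are all equal to $1$ and $x_\alpha x'_\beta=x'_\beta x_\alpha$ for all $\alpha,\beta$; Lemma~\ref{pr:braided semidirect} is not the relevant statement, and if you actually inserted $q$-powers read off from the $\lie{sl}_4$-pairing when moving primed factors past unprimed ones, the relation check would fail. (Correspondingly, the identities you must verify live in the PBW basis of the tensor square, i.e.\ among products $M\,M'$ of ordered monomials from each copy, not ``in a single copy''.) Second, two of your justifications are too quick: membership of the $\tilde z_\alpha$ in $\lr{U_q^+(\lie{sl}_4)}_{\iota_\bi}$ is not ``by construction'' --- the formulas are given in terms of the $x_\alpha,x'_\alpha$, and one must exhibit each $\tilde z_\alpha$ as a polynomial in the $\tilde y$'s (e.g.\ $\tilde z_{21}$ as a combination of $\tilde y_1\tilde y_2$, $\tilde y_2\tilde y_1$, $\tilde y_{21}$), which is exactly what makes $Z_0\subset\lr{U_q^+(\lie{sl}_4)}_{\iota_\bi}$ as required for (non-enhanced) liftability in Definition~\ref{def:liftable iota}; and tameness is a property of the pair $(\lr{U_q^+(\lie{sl}_4)}_{\iota_\bi},\iota_\bi(X_\bi)\cup Z_0)$ inside the tensor square, namely linear independence of the ordered monomials of degree at most $d_0$ there, so it does not follow ``immediately'' from the PBW property of $\mathcal A_{q,4}$ in Theorem~\ref{thm:sl4 diag} (that gives spanning, not independence, of the image); it requires its own finite check by expanding those monomials in the PBW basis of $U_q^+(\lie{sl}_4)^{\tensor 2}$. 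With these repairs your outline matches the intended argument, and your closing suggestion of a graded dimension count via Lemma~\ref{lem:spec of ideals} is a reasonable way to shorten the brute-force relation checking.
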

In this case as well it can be shown that the diagram~\eqref{eq:cone over iota} commutes for 
a suitable choice of~$\tilde\iota_\bi$.
We conclude this section with the following problem.
\begin{problem}
Construct the uberalgebra for the quantum folding $(\lie{sl}_n^{\times k},\lie{sl}_n)$ for all~$n$ and~$k\ge 2$.
\end{problem}

\section{Folding \texorpdfstring{$(\lie{so}_8,G_2)$}{Folding (so\_8,G\_2)}}\label{sect:G2}

In this section we let~$\lie g=\lie{so}_8$ with~$I=\{0,1,2,3\}$ so that $\sigma$ is a cyclic permutation of~$\{1,2,3\}$
and $I/\sigma=\{0,1\}$. In this numbering we have $(\alpha_0,\alpha_0)=2$, $(\alpha_1,\alpha_1)=6$ in~$\lie g^\sigma{}^\vee$
which we abbreviate as $\lie g^\sigma$ since its Langlands dual is obtained by renumbering the simple roots.
Let $\mathcal U_{q,G_2}$ be the associative $\CC(q)$-algebra generated by $U_q(\lie{sl}_2)$ with 
Chevalley generators $E_0,F_0,K^{\pm1}_0$ and $w,z_1,z_2$ satisfying the relations given in Theorem~\ref{th:second folding G2}\eqref{th:G2.iii}
(with $u$ replaced by~$E_0$) as well as
$$
[F_0,w]=0=[F_0,z_j],\qquad K_0 w K_0^{-1}=q^{-3} w,\quad K_0 z_j K_0^{-1}=q^{-1} z_j,\quad j=1,2.
$$
\begin{theorem} \label{th:G_2} 
\noindent
\begin{enumerate}[{\rm(i)}]

\item The algebra ${\mathcal U}_{q,G_2}$ is isomorphic to the cross product $A_q \rtimes U_q(\lie{sl}_2)$,
 where $A_q$ is a flat deformation of the symmetric algebra of the nilpotent Lie algebra $\lie n_{G_2}$ defined 
 in Theorem~\ref{th:second folding G2}\eqref{th:G2.ii}.

\item\label{th:G_2.iv} The assignment 
$w\mapsto E_1$, $z_j\mapsto 0$, $j=1,2$ defines a homomorphism $\hat\iota:{\mathcal U}_{q,n}\to U_q(\lie g^\sigma)$. Its image is 
the (parabolic) subalgebra of $U_q(\lie g^\sigma)$ generated by $U_q^+(\lie g^\sigma)$ and $K_0^{\pm 1}, F_0$.

\item\label{th:G_2.v}
The assignments 
$w\mapsto E_{1}E_2E_3$ and
\begin{align*}
&z_1\mapsto [E_{1}E_2E_3,E_0]_{q^{-3}}-\frac{q^2+1+q^{-2}}{(q-q^{-1})^2} [E_1,[E_2,[E_3,E_0]_{q^{-1}}]_{q^{-1}}]_{q^{-1}}\\
&z_2\mapsto [E_0,E_1E_2E_3]_{q^{-3}}-\frac{q^2+1+q^{-2}}{(q-q^{-1})^2}[[[E_0,E_1]_{q^{-1}},E_2]_{q^{-1}},E_3]_{q^{-1}}
\end{align*}
define an algebra homomorphism 
$\mu:{\mathcal U}_{q,G_2}\to U_q(\lie{g}).$ Its image is contained in
the (parabolic) subalgebra of $U_q(\lie{g})$ generated by $U_q^+(\lie g)$ and $K_0^{\pm 1}, F_0$.
\item\label{th:G_2.iii} The assignments 
$$
T_{0}(w)=((q^2+1+q^{-2})(q+q^{-1}))^{-1} [[w,E_0]_{q^{-3}}]_{q^{-1}}]_{q},
\quad T_0(z_j)=[z_j,E_0]_{q^{-1}}
$$
extend Lusztig's action~\eqref{eq:Lustig braid action} of the braid group $Br_{\lie{sl}_2}$ on $U_q(\lie{sl}_2)$ to an action on~$\mathcal U_{q,G_2}$ by algebra automorphisms. Moreover, $\mu$ and $\hat\iota$ are
$Br_{\lie{sl}_2}$-equvivariant.
\end{enumerate}
\end{theorem}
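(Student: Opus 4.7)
The plan is to follow the strategy developed for Theorem~\ref{th:D_n} in the $(\lie{so}_{2n+2},\lie{sp}_{2n})$ case, adapting it to accommodate the non-abelian structure of $\lie n_{G_2}$. For part (i), I would first introduce an explicit candidate algebra $A_q$ as a $\CC(q)$-algebra on 13 generators indexed by the basis $\{w_i, z_j\}$ of $\lie n_{G_2}$ from Theorem~\ref{th:second folding G2}\eqref{th:G2.ii}, with $q$-commutation relations deduced by computing products of elements of $\hat X_\bi^\bullet \cap $ (fixed points) using Lemma~\ref{le:iota PBW}. Verification that $A_q$ is PBW on this totally ordered set proceeds by the Diamond Lemma (Proposition~\ref{prop:diamond lemma}). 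Then the cross-product $A_q \rtimes U_q(\lie{sl}_2)$ is PBW by Lemma~\ref{pr:braided semidirect}\eqref{pr:braided semidirect.ii}, and I would construct a surjective homomorphism $\mathcal U_{q,G_2} \to A_q \rtimes U_q(\lie{sl}_2)$ by sending $w$, $z_1$, $z_2$ to appropriate lowest-weight-like vectors in $A_q$. Injectivity is obtained via Lemma~\ref{lem:spec of ideals}: endow $\mathcal U_{q,G_2}$ with the natural grading, specialize the defining relations at $q=1$, check they reduce to those of $U(\lie n_{G_2} \rtimes (\lie{sl}_2)_+)$ given in Theorem~\ref{th:second folding G2}\eqref{th:G2.i}, and compare graded dimensions with the PBW basis of $A_q \rtimes U_q(\lie{sl}_2)$.

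For part (ii), the map $\hat\iota$ is essentially constructed by killing~$z_1,z_2$: modulo $\langle z_1,z_2\rangle$, the relations~\eqref{th:G2.iii} of Theorem~\ref{th:second folding G2} become exactly the quantum Serre relations for $U_q^+(G_2)$ in Chevalley generators $E_0=u$, $E_1=w$, together with the parabolic relations with $F_0, K_0^{\pm1}$. For part (iii), the strategy mirrors the proof of Theorem~\ref{th:D_n}\eqref{th:D_n.v}: let $W=E_1E_2E_3$ and $Z_1, Z_2$ denote the proposed images. Using Corollary~\ref{lem:quantum levi fact} and formula~\eqref{eq:defn-quasi-der}, one computes that ${}_i r(W)=0$ and ${}_i r(Z_j)=0$ for $i\in\{1,2,3\}$ whenever expected, while the non-zero quasi-derivations at $i=0$ can be read off from the explicit formulas. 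Then each defining relation $R$ of $\mathcal U_{q,G_2}$ is verified by showing $\mu(R)$ satisfies ${}_i r(\mu(R))=0$ for all $i\in I$ and applying Lemma~\ref{lem:lusztig}\eqref{lem:lusztig.i}, which reduces each identity to a finite scalar check at~$q=1$ (or to previously established relations, e.g. quantum Serre).

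For part (iv), one extends the argument of Section~4.5 (the braid action part of Theorem~\ref{th:D_n}). First verify, in analogy with Lemma~\ref{lem:braid-tmp}, that the proposed $T_0(w)$, $T_0(z_j)$ satisfy $[F_0,T_0(w)]$ and $[F_0,T_0(z_j)]$ produce the expected elements back (giving the cross relations with $K_0^{\pm 1}, F_0$ automatically). Then check invertibility of $T_0$ on $w,z_1,z_2$ via explicit formulas analogous to the $T_1^{-1},T_2^{-1}$ given there. Finally, show that every defining relation in $\mathcal U_{q,G_2}$ is preserved under $T_0$; equivariance of $\mu$ and $\hat\iota$ follows because $T_0$ corresponds under $\mu$ to $\hat T_0 = T_1T_2T_3$ in the folded braid group (Proposition~\ref{prop:WeylArtin folding}\eqref{prop:WeylArtin folding.ii}), and under $\hat\iota$ to the Lusztig automorphism $T_0$ of $U_q(G_2)$.

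The main obstacle is purely computational complexity: as the authors explicitly note, the list of Serre-like relations in Theorem~\ref{th:second folding G2}\eqref{th:G2.iii} is incomplete because individual relations involve 30+ terms, and the full PBW presentation of $A_q$ together with the braid action verification can involve hundreds of terms. Unlike the $\lie{sp}_{2n}$ case where the Hecke-algebra interpretation of the braiding $\Psi$ collapsed many identities to manageable algebraic checks via Proposition~\ref{prop:PsiProp}, here no such compact representation-theoretic structure is available, so the verifications of Diamond-Lemma ambiguities for $A_q$, of each defining relation in $\mathcal U_{q,G_2}$ under $\mu$, and of the braid-action compatibilities must ultimately be performed by direct expansion (in practice on a computer algebra system, as the reference to the supplementary webpage in the paper indicates).
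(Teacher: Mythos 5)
Your plan is, in outline, exactly what the authors do: the paper's proof of Theorem~\ref{th:G_2} consists of the remark that its structure is the same as that of Theorem~\ref{th:D_n} and of the diagonal foldings --- a PBW presentation of $A_q$ checked by the Diamond Lemma (Proposition~\ref{prop:diamond lemma}), a graded-dimension comparison with the specialization $U(\lie n_{G_2}\rtimes(\lie{sl}_2)_+)$ via Lemma~\ref{lem:spec of ideals}, verification of the structural homomorphisms through the quasi-derivations ${}_i r$, $r_i$ and Lemma~\ref{lem:lusztig}\eqref{lem:lusztig.i}, and a braid-action check modeled on Lemma~\ref{lem:braid-tmp} --- with the bulk of the identities delegated to a computer.

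Two identifications in your sketch are reversed and would make the corresponding verifications fail if carried out literally. First, the quasi-derivation pattern in your treatment of the map to $U_q(\lie{so}_8)$: here the Levi node is $0$, so the correct analogue of \eqref{eq:quasi-derW}--\eqref{eq:quasi-derZ} is ${}_0 r(W)=r_0(W)=0$ and ${}_0 r(Z_j)=r_0(Z_j)=0$ (this vanishing is forced by $[F_0,w]=[F_0,z_j]=0$ together with \eqref{eq:defn-quasi-der}, and for the $Z_j$ it is precisely what the coefficient $\frac{q^2+1+q^{-2}}{(q-q^{-1})^2}$ is tuned to achieve, since $(q-q^{-1})(q^2+1+q^{-2})=q^3-q^{-3}$), while the nonvanishing quasi-derivations are ${}_i r$, $r_i$ for $i\in\{1,2,3\}$, e.g. ${}_1 r(E_1E_2E_3)=E_2E_3$; you state the opposite. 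Second, the equivariance in the braid-action part: the node $0$ is fixed by $\sigma$, so by Proposition~\ref{prop:WeylArtin folding}\eqref{prop:WeylArtin folding.ii} one has $\hat T_0=T_0$, whereas $T_1T_2T_3=\hat T_1$ is attached to the other node of $G_2$ and plays no role in the $Br_{\lie{sl}_2}$-action. The identity to check is $\mu\circ T_0=T_0\circ\mu$ with Lusztig's $T_0$ of $U_q(\lie{so}_8)$ on the right (indeed $\mu(T_0(E_0))=-K_0^{-1}F_0=T_0(\mu(E_0))$, while $T_1T_2T_3(E_0)$ has weight $\alpha_0+\alpha_1+\alpha_2+\alpha_3$), so checking $\mu\circ T_0=(T_1T_2T_3)\circ\mu$ as you propose fails already on $E_0$. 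With these two corrections your outline coincides with the paper's argument.
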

Most of the computations necessary to prove this theorem were performed on a computer and were involving rather heavy computations (for example,
it took about 22 hours for the UCR cluster to check that the Diamond Lemma holds). Otherwise, the structure of the proof is rather similar 
to the ones discussed above.

\appendix

\section{Naive quantum folding does not exist}\label{sec:non-existent}

In this appendix, we show that the classical additive folding does not admit a quantum deformation even in the simplest possible case of~$\lie{sl}_4$.
We use the standard numbering of the nodes of its Dynkin diagram.
Let $u_1=E_1+E_3$, $u_2=E_2$. We obviously have
$$
u_2^2 u_1-(q+q^{-1})u_2 u_1 u_2+u_1u_2^2=0.
$$
On the other hand, suppose that we have a relation 
\begin{equation}\label{A.10}
\sum_{j=0}^3 c_j u_1^j u_2 u_1^{3-j}=0.
\end{equation}
Retain the notation of~\ref{subs:mod alg semidirect}.
Applying $r_2$ and $r_2 r_1$ to the left hand side of~\eqref{A.10} and considering 
the coefficients of linearly independent monomials we obtain a system of linear equations for the~$c_i$, $0\le i\le 3$ with the 
matrix 
$$
\begin{pmatrix}
1 & q^{-1} & q^{-2} & q^{-3} \\
    3 q^2 & q (q^2+2) & 2 q^2+1 & 3 q \\
    q^2+1+q^{-2} & q+2q^{-1} & 2+q^{-2} &
      q+q^{-1}+q^{-3} \\
    3 (q^2+1) & 5 q+q^{-1} & q^2+5 & 3 (q+q^{-1})
    \end{pmatrix}
$$
of determinant $-(q-q^{-1})^6$. Thus, there is no relation of the form~\eqref{A.10}.
Clearly, replacing $u_1$ and $u_2$ by $E_1+E_3+(q-1)a$, $E_2+(q-1)b$, where $a,b$ are $\sigma$-invariant elements of degree greater than one will not affect the
above calculation. Thus, there is no embedding of $U_q^+(\lie{so}_5)$ into $U_q^+(\lie{sl}_4)$ which deforms the embedding of $\lie{so}_5$ into~$\lie{sl}_4$.

\end{document}